\documentclass[12pt]{amsart}

\marginparwidth     0pt     \marginparsep 0pt
\oddsidemargin   -0.1in   \evensidemargin   0pt
\topmargin  -.3in
\textwidth  6.5in
\textheight  9in

%%% Two column landscape
%\documentclass[10,twocolumn,landscape]{article}
% Also need this package to make pdflatex work correctly with landscape
%\usepackage[landscape,pdftex]{geometry}

%%% Packages 
\usepackage{amsmath,amsthm,amsfonts,amssymb,latexsym,verbatim,bbm}
\input xy
\xyoption{all}
\usepackage{hyperref,multirow}

% theorem style
\theoremstyle{plain}
\newtheorem{theorem}{Theorem}[section]
\newtheorem{thm}[theorem]{Theorem}
\newtheorem{lemma}[theorem]{Lemma}

\newtheorem{prop}[theorem]{Proposition}

\newtheorem{defn}[theorem]{Definition}

\newtheorem{cor}[theorem]{Corollary}

% misc
\newcommand{\iso}{\cong}
\newcommand{\arr}{\rightarrow}
\newcommand{\Id}{\mathbbm{1}}
\newcommand{\incl}{\hookrightarrow}
\DeclareMathOperator{\Hom}{Hom}
\DeclareMathOperator{\Spec}{Spec}
\DeclareMathOperator{\End}{End}

\DeclareMathOperator{\total}{total}
%\DeclareMathOperator{\ker}{ker}

% letters
\newcommand{\R}{\mathbb{R}}
\newcommand{\C}{\mathbb{C}}
\newcommand{\Z}{\mathbb{Z}}

\newcommand{\mbA}{\mathbb{A}}
\newcommand{\eps}{\epsilon}

\newcommand{\mcB}{\mathcal{B}}

\newcommand{\mcM}{\mathcal{M}}

\newcommand{\mcP}{\mathcal{P}}
\newcommand{\mcN}{\mathcal{N}}

\newcommand{\mfg}{\mathfrak{g}}
\newcommand{\mfp}{\mathfrak{p}}
\newcommand{\mfh}{\mathfrak{h}}
\newcommand{\mfk}{\mathfrak{k}}
\newcommand{\mfm}{\mathfrak{m}}
\newcommand{\mfM}{\mathfrak{M}}
\newcommand{\mfn}{\mathfrak{n}}
\newcommand{\mfq}{\mathfrak{q}}
\newcommand{\mfr}{\mathfrak{r}}

% linear algebra and functional analysis

\DeclareMathOperator{\tr}{tr}

\DeclareMathOperator{\re}{Re}

% topology

% algebraic topology

% differential geometry

% lie groups

\DeclareMathOperator{\mfsl}{\mathfrak{sl}}
\DeclareMathOperator{\mfso}{\mathfrak{so}}

\DeclareMathOperator{\mfu}{\mathfrak{u}}
\DeclareMathOperator{\mfb}{\mathfrak{b}}

\DeclareMathOperator{\mfz}{\mathfrak{z}}

\DeclareMathOperator{\ad}{ad}

\DeclareMathOperator{\Switch}{Switch}

% notation

\newcommand{\deltabar}{\bar{\partial}}
\newcommand{\boxbar}{\overline{\square}}
\DeclareMathOperator{\Coinv}{\overline{R}}

% For tables
\newcommand\TabTop{\rule{0pt}{2.6ex}}
\newcommand\TabBot{\rule[-1.2ex]{0pt}{0pt}}

\begin{document}

\title{Twisted strong Macdonald theorems and adjoint orbits}
\author{William Slofstra}
\email{slofstra@math.berkeley.edu}
\begin{abstract}
    The strong Macdonald theorems state that, for $L$ reductive and $s$ an odd
    variable, the cohomology algebras $H^*(L[z]/z^N)$ and $H^*(L[z,s])$ are
    freely generated, and describe the cohomological, $s$-, and $z$-degrees of
    the generators. The resulting identity for the $z$-weighted Euler
    characteristic is equivalent to Macdonald's constant term identity for a
    finite root system. We calculate $H^*(\mfp / z^N \mfp)$ and $H^*(\mfp[s])$
    for $\mfp$ a standard parahoric in a twisted loop algebra, giving strong
    Macdonald theorems that take into account both a parabolic component and a
    possible diagram automorphism twist. In particular we show that $H^*(\mfp /
    z^N \mfp)$ contains a parabolic subalgebra of the coinvariant algebra of
    the fixed-point subgroup of the Weyl group of $L$, and thus is no longer
    free. We also prove a strong Macdonald theorem for $H^*(\mfb; S^* \mfn^*)$
    and $H^*(\mfb / z^N \mfn)$ when $\mfb$ and $\mfn$ are Iwahori and nilpotent
    subalgebras respectively of a twisted loop algebra. For each strong
    Macdonald theorem proved, taking $z$-weighted Euler characteristics gives
    an identity equivalent to Macdonald's constant term identity for the
    corresponding affine root system. As part of the proof, we study the
    regular adjoint orbits for the adjoint action of the twisted arc group
    associated to $L$, proving an analogue of the Kostant slice theorem.
\end{abstract}
\maketitle

\section{Introduction}
Macdonald's constant term identity states that if $\Delta$ is a reduced root
system then
\begin{equation}\label{E:constantterm}
    [e^0] \prod_{\alpha \in \Delta^+ } \prod_{i=1}^N (1-q^{i-1} e^{-\alpha}) 
        (1-q^i e^{\alpha}) = \prod_{i=1}^l \binom{N(m_i+1)}{N}_q,
\end{equation}
where $m_1,\ldots,m_l$ is the list of exponents of $L$ and $\binom{a}{b}_q$ is
the $q$-binomial coefficient. Macdonald presented the identity as a conjecture
in \cite{Ma82}, and observed that it constitutes the untwisted case of a
constant term identity for affine root systems. Further extensions (including a
$(q,t)$-version) and proofs for individual affine root systems followed
(see for instance \cite{ZB85} \cite{Hab86} \cite{Ze87} \cite{St88} \cite{Ze88}
\cite{Ma88} \cite{Gu90} \cite{GG91} \cite{Kad94}) until Cherednik gave a uniform
proof of the most general version using double affine Hecke algebras
\cite{Ch95}.

Suppose $\Delta$ is the root system of a semisimple Lie algebra $L$ with
exponents $m_1,\ldots,m_l$. Prior to Cherednik's proof, Hanlon observed in
\cite{Ha86} that the constant term identity would follow from a stronger
conjecture:
\begin{equation}\label{E:strongmacdonald1}
    \parbox{5.7in}{
    The cohomology $H^*\left(L[z]/z^{N}\right)$ is a free super-commutative algebra with
    $N$ generators of cohomological degree $2m_i+1$ for each $i=1,\ldots,l$,
    of which, for fixed $i$, one has $z$-degree $0$ and the others have
    $z$-degree $Nm_i+j$ for $j=1,\ldots,N-1$.}
\end{equation}
Hanlon termed this the \emph{strong Macdonald conjecture}, and gave a proof for
$L = \mfsl_n$. Feigin observed in \cite{Fe91} that the identity of
(\ref{E:constantterm}) and the theorem of (\ref{E:strongmacdonald1}) follow
from:
\begin{equation}\label{E:strongmacdonald2}
    \parbox{5.7in}{
        The (restricted) cohomology $H^*(L[z,s])$ for $s$ an odd variable is a
        free super-commutative algebra with generators of tensor degree $2m_i+1$
        and $2m_i+2$, $z$-degree $n$, for $i=1,\ldots,l$ and $n \geq 0$, where
        tensor degree refers to combined cohomological and $s$-degree. 
    }
\end{equation}
This version of the strong Macdonald conjecture corresponds to the $(q,t)$
version of the Macdonald constant term conjecture.  However, an error was
discovered in Feigin's proof of (\ref{E:strongmacdonald2}).  A complete proof
of (\ref{E:strongmacdonald1}) and (\ref{E:strongmacdonald2}) was given by
Fishel, Grojnowski, and Teleman \cite{FGT08}, using an explicit description of
the relative cocycles combined with Feigin's idea (a spectral sequence
argument) to prove (\ref{E:strongmacdonald1}) from (\ref{E:strongmacdonald2}).
The free algebra $H^*(L[s])$ (which can easily be calculated from the
Hochschild-Serre spectral sequence) appears as a subalgebra of $H^*(L[z,s])$,
and Fishel, Grojnowski, and Teleman also prove that if $\mfb$ is the Iwahori
subalgebra $\{f \in L[z] : f(0) \in \mfb_0\}$ then $H^*(\mfb[s])$ is the free
algebra $H^*(\mfb_0[s]) \otimes_{H^*(L[s])} H^*(L[z,s])$. In this case
their proof does not yield explicit generating cocycles.

The purpose of this paper is to show that $H^*(\mfp[s])$ is a free
super-commutative algebra, and determine the degrees of the generators, when
$\mfp$ is a standard parahoric in the twisted loop algebra $L[z^{\pm
1}]^{\tilde{\sigma}}$, for $\sigma$ a (possibly trivial) diagram
automorphism of $L$. Our proof is along the same lines as \cite{FGT08}; in
particular, we are able to give an explicit description of cocycles for the
relative cohomology, and hence apply Feigin's spectral sequence to determine
the cohomology of the truncatations $\mfp / z^N \mfp$ when $N$ is a multiple of
the order of $\sigma$. Combined, our results for $L[z]^{\tilde{\sigma}}$ give
an extension of the strong Macdonald theorems to match the affine version of
Macdonald's constant term identity. For a general parahoric, our calculation
reveals that $H^*(\mfp[s])$ is isomorphic to $H(\mfp_0[s])
\otimes_{H^*(L_0[s])} H(L[z,s]^{\tilde{\sigma}})$, and hence can be viewed as
providing an interpolation between the two extremal results of Fishel,
Grojnowski, and Teleman.

The algebras $H^*(\mfp / z^N \mfp)$ also have an interesting description.  As
in (\ref{E:strongmacdonald1}), the algebras $H^*(L[z]^{\tilde{\sigma}} / z^N)$
are free, but this is no longer the case with a non-trivial parabolic
component.  The algebra $H^*(\mfp / z^N \mfp)$ is isomorphic to $H^*(\mfg_0)
\otimes \Coinv(L^{\sigma}, \mfg_0) \otimes_{H^*(L^{\sigma})}
H^*(L[z]^{\tilde{\sigma}}/z^N)$, where $\mfg_0 = \mfp_0 \cap \overline{\mfp_0}$
is the reductive component of the parabolic $\mfp_0$, and
$\Coinv(L^{\sigma},\mfg_0)$ is the parabolic subalgebra of the coinvariant
algebra of the Weyl group of $L^{\sigma}$.  A classic theorem of Borel states
that $\Coinv(L^{\sigma},\mfg_0)$ is isomorphic to the cohomology algebra of the
generalized flag variety $X$ corresponding to the Lie algebra pair
$(L^{\sigma}, \mfp_0)$ \cite{Bo53} \cite{BGG73}. The cohomology of $X$ is in
turn isomorphic to the Lie algebra cohomology algebra $H^*(L^{\sigma},
\mfg_0)$. If $\mfp$ is a parahoric in an untwisted loop algebra, then it is not
hard to show that $H^*(\mfp / z \mfp, \mfg_0)$ is isomorphic to
$H^*(L^{\sigma},\mfg_0)$, and hence in the simplest case our result gives a Lie
algebraic proof of Borel's theorem. This is not the first description of
$H^*(L^{\sigma},\mfg_0)$ using Lie algebraic methods: the cohomology of $X$ can
also be described using the Schubert cells, and a basis of $H^*(L^{\sigma},
\mfg_0)$ dual to the Schubert cells has been worked out in Lie algebraic terms
by Kostant \cite{Ko63a}. However, this description omits the ring structure.

One intriguing consequence of Hanlon's conjecture (\ref{E:strongmacdonald1}) is
that $H^*(L[z]/z^{N})$ is isomorphic as a vector space to $H^*(L)^{\otimes N}$.
Since $L[z]/(z^{N} - t) \iso L^{\oplus N}$ for $t \neq 0$, this means that
while the structure of $L[z]/(z^{N} - t)$ changes dramatically as $t$
degenerates to zero, the cohomology is unchanged.  Hanlon termed this
``property M'', and conjectured that it holds not only for semisimple Lie
algebras, but also for the nilpotent radical of a parabolic in a semisimple Lie
algebra and the Heisenberg Lie algebras \cite{Ha90}. Kumar gave counterexamples
to property M for the nilpotent radical of a parabolic \cite{Ku99}. The
conjecture for Heisenberg Lie algebras remains open, along with a number of
other questions \cite{Ha94} \cite{HW03}. In the case of a parahoric in a
twisted loop algebra $L[z^{\pm 1}]^{\tilde{\sigma}}$, if $t \neq 0$ then the
truncation $\mfp / (z^N - t) \mfp$ is isomorphic to $L^{\bigoplus N/k}$,
irregardless of the parahoric component. Our calculation shows that the
cohomology is unchanged for $L[z]^{\tilde{\sigma}} / (z^N - t)$ as $t$
degenerates to zero, but degenerates from $H^*(L)^{\otimes N/k}$ to
$H^*(\mfg_0) \otimes H^*(L^{\sigma}, \mfg_0) \otimes_{H^*(L^{\sigma})}
H^*(L)^{\otimes N/k}$ for a general parahoric truncation $\mfp / (z^N-t) \mfp$.

The proof of the strong Macdonald theorem in \cite{FGT08} is based on a
Laplacian calculation for $H^*(L[z,s])$ using the unique Kahler metric on the
loop Grassmannian. The Laplacian calculation shows that the ring of harmonic
forms is isomorphic to a ring of basic and invariant forms on the arc space
$L[[z]]$.  Kostant's theorems about adjoint orbits in a reductive Lie algebra
extend immediately from $L$ to $L[[z]]$, and can be used to determine the ring
of basic and invariant forms on $L[[z]]$. In the case of the parahoric, the
corresponding homogeneous space has many Kahler metrics. To follow the line of
the proof in \cite{FGT08}, we show that there is a particular choice of Kahler
metric that makes an analogous Laplacian calculation work. The ring of harmonic
forms is isomorphic to (a ring similar to) the ring of basic and invariant
forms on $\hat{\mfp}$. To calculate this ring, we study the adjoint orbits on
the twisted arc space $L[[z]]^{\tilde{\sigma}}$, proving a slice theorem for
twisted arcs in the regular semisimple locus, and an analogue of the Kostant
slice theorem (the significant facts about adjoint orbits extend immediately to
the arc space, but this is no longer the case when the diagram automorphism is
involved).  

Removing the super-notation, the cohomology ring of $\mfp[s]$ is isomorphic to
the cohomology ring of $\mfp$ with coefficients in the symmetric algebra $S^*
\mfp^*$ of the restricted dual of $\mfp$. Frenkel and Teleman have shown that
$H^*(\mfb; S^* \mfn^*)$ is a free algebra (and determined the degrees of the
generators) when $\mfb$ and $\mfn$ are Iwahori and nilpotent subalgebras
respectively of an untwisted loop algebra \cite{FT06}. We prove Frenkel and
Teleman's result in the twisted case and calculate the cohomology of the
corresponding truncation $\mfb / z^N \mfn$.  More generally, strong Macdonald
theorems for different choices of coefficients might allow us to determine the
cohomology of other truncations, such as $L[z]^{\tilde{\sigma}} / z^N$ when $N$
is not divisible by the order of $\sigma$. At the moment, this question appears
to be open. The question of finding a Lie algebra analogue of the constant
term identity for Koornwinder-Macdonald polynomials \cite{Di96} also seems to
be open.

\subsection{Organization}
Section \ref{S:overview} contains an overview of our cohomology results and the
connection with the constant term identity. Section \ref{S:laplacian} contains
the Laplacian calculation. Section \ref{S:jetschemes} defines twisted jet and
arc schemes.  Section \ref{S:orbit} contains theorems on adjoint orbits of
twisted jet and arc groups. Section \ref{S:mainproof} contains the cohomology
calculations of $H^*(\mfp[s])$ and $H^*(\mfb,S^* \mfn^*)$, while Section
\ref{S:truncatedcoh} contains the spectral sequence argument for calculating
the cohomology of the truncation $\mfp / z^N \mfp$.

\subsection{Acknowledgements}
This paper forms part of my Ph.D. thesis at U.C. Berkeley.  I thank my advisor,
Constantin Teleman, for suggesting the project and for many helpful
conversations. I also thank Dustin Cartwright, Anton Gerashenko, and Kevin Lin
for helpful conversations about algebraic geometry. This work was supported in
part by NSERC. Additional support was received from NSF grants DMS-1007255 and
DMS-0709448.

\tableofcontents

\section{Cohomology of standard parahorics}\label{S:overview}

\subsection{Notation and terminology}\label{SS:notation}

We fix the following terminology and notation throughout the paper, except
where explicitly stated. $L$ will be a reductive Lie algebra with diagram
automorphism $\sigma$ of finite order $k$. By definition $L$ has a triangular
decomposition $L = \overline{\mfu_0} \oplus \mfh \mfu_0$ where the Cartan
algebra $\mfh$ and nilpotent radicals $\overline{\mfu_0}$ and $\mfu_0$ are
$\sigma$-invariant, and such that $\sigma$ permutes the simple roots
corresponding to the Borel $\mfh \oplus \mfu_0$. We say that a Cartan,
Borel, or nilpotent radical is \emph{compatible with $\sigma$} if it
appears in such a decomposition.  The \emph{twisted loop algebra} is the Lie
algebra $\mfg = L[z^{\pm 1}]^{\tilde{\sigma}}$, where $\tilde{\sigma}$ is the
automorphism sending $f(z) \mapsto \sigma(f(q^{-1} z))$ for $q$ a fixed $k$th
root of unity.  $\mfg$ can be written as
\begin{equation*}
    \mfg = \bigoplus_{i=0}^{k-1} L_{a} \otimes z^a \C[z^{\pm k}],
\end{equation*}
where $L_a$ is the $q^a$th eigenspace of $\sigma$. If $L$ is simple then each
$L_a$ is an irreducible $L_0$-module. In particular if $L$ is simple then $L_0$
is also simple; in general $L_0$ will be reductive. A reductive Lie algebra $L$
has an anti-linear Cartan involution $\overline{\cdot}$ and a contragradient
positive-definite Hermitian form $\{,\}$. These two structures extend to the
twisted loop algebra $\mfg$ so that for any grading of type $d$,
$\overline{\mfg_n} = \mfg_{-n}$ and $\mfg_m \perp \mfg_n$ when $m \neq n$. 

The root system of $\mfg$ can be described as follows.  Let $\mfh$ be a Cartan
subalgebra of $L$ compatible with the diagram automorphism. Then $\mfh_0 :=
\mfh^{\sigma}$ is a Cartan in $L_0$, and $L_0$ has a set
$\alpha_1,\ldots,\alpha_l$ of simple roots which are projections of simple
roots of $L$. The roots of $\mfg$ can be described as $\alpha+ n \delta
\in \mfh_0^* \times \Z$ where either $\alpha$ is a weight of $L_a$ with
$n \equiv a \mod k$, or $\alpha=0$ and $n\neq 0$, and $\delta$ comes from the
rotation action of $\C^*$ on $\mfg$. Assume that $L$ is simple, and let
$\psi$ be either the highest weight of $L_1$ (an irreducible $L_0$-module) if
$k>1$, or the highest root of $L$, if $k=1$.  Then the set $\alpha_0 = \delta -
\psi, \alpha_1, \ldots, \alpha_l$ is a complete set of simple roots for $\mfg$.
If $L$ is reductive then we can choose a set of simple roots by decomposing
$L$ as a direct sum of $\sigma$-invariant simple subalgebras plus centre, and
taking the simple root sets from each corresponding factor of $\mfg$.

The twisted loop algebra $\mfg$ can be given a $\Z$-grading by assigning degree
$d_i \geq 0$ to the positive root vector associated to $\alpha_i$. In Kac's
terminology this is called a grading of type $d$ \cite{Ka83}. A \emph{parahoric
subalgebra} of $\mfg$ is a subalgebra of the form $\mfp = \bigoplus_{n \geq 0}
\mfg_n$, for some $\Z$-grading of $\mfg$ of type $d$.  A parahoric subalgebra
contains a \emph{nilpotent subalgebra} defined by $\mfu = \bigoplus_{n > 0}
\mfg_n$. We will say that a parahoric is \emph{standard} with respect to 
the choice of simple roots if it comes from a grading of type $d$ such that
$d_i>0$ whenever $\alpha_i$ is of the form $\delta - \psi$ for $\psi \in
\mfh_0^*$. Suppose $\mfp$ is a standard parahoric. Let $S = \{ \alpha_i : d_i =
0 \}$, and $\mfp_0$ be the standard parabolic subalgebra of $L_0$ defined by
\begin{equation*}
    \mfp_0 = \mfh_0 \oplus \bigoplus_{\alpha \in \Delta^+} (L_0)_{\alpha}
        \oplus \bigoplus_{\alpha \in \Delta^- \cap \Z[S]} (L_0)_{\alpha},
\end{equation*}
where $\Delta^{\pm}$ are the positive and negative roots of $L_0$ with respect
to the chosen simple roots. Then $\mfp = \{ f \in \mfg : f(0) \in \mfp_0\}$,
while $\mfu = \{ f \in \mfg : f(0) \in \mfu_0 \}$, where $\mfu_0$ is the
nilpotent radical of $\mfp_0$. Note that in this context the nilpotent radical
of an algebra $\mfk$ is defined to be the largest nilpotent ideal in
$[\mfk,\mfk]$ (or equivalently the intersection of the kernels of all
irreducible representations), so that $\mfu_0$ does not intersect the centre of
$L$. If $\mfp_0$ is a Borel, then $\mfp$ is called a standard Iwahori
subalgebra.

The completion of a subalgebra $K \subset \mfg$ with respect to a $\Z$-grading
is the algebra $\hat{K} = \lim_{\leftarrow} K / K^{(k)}$, where $K^{(k)} =
\bigoplus_{n > k} K_n$. If a parahoric subalgebra $\mfp$ is completed with
respect to a grading of $\mfg$ of type $d$, the result is a pro-Lie algebra
$\hat{\mfp}$. The pro-algebra structure on $\hat{\mfp}$ is independent of the
choice of grading. The dual of a pro-algebra $\hat{\mfp}$ will always refer to
the continuous dual $\hat{\mfp}^* \iso \bigoplus \mfp_n^*$ with respect to
the inverse limit topology. The continuous cohomology $H^*_{cts}(\hat{\mfp};V)$
is defined similarly to the ordinary cohomology using a version of the Koszul
complex with continuous cochains.  We refer to \cite{Fu86} for more details on
continuous cohomology. In this case, continuous cohomology is the same as the
restricted cohomology of \cite{FGT08}.

\subsection{Exponents and diagram automorphisms}
The exponents of $L$ are integers $m_1,\ldots,m_l$ such that $H^*(L)$ is the
free super-commutative algebra generated in degrees $2m_1+1,\ldots,2m_l+1$,
where $l$ is the rank of $L$. Equivalently, we can define the exponents by
saying that $(S^* L^*)^L$ is the free commutative algebra generated in degrees
$m_1+1,\ldots,m_l+1$. Extend the action of $\sigma$ to $S^* L^*$ by
$\sigma(f)(z) = f(\sigma^{-1} z)$. This convention is chosen so that
$\sigma(\ad^t(x) f) = \ad^t(\sigma(x)) \sigma(f)$ for all $f \in S^* L^*$ and
$x \in L$. Let $\mfM$ be the ideal in $(S^* L^*)^L$ generated by elements of
degree greater than zero. The diagram automorphism $\sigma$ acts diagonalizably
on the space $\mfM / \mfM^2$ of generators for $(S^* L^*)^L$, and consequently
it is possible to find homogeneous generators of $\C[Q]$ which are eigenvectors
of $\sigma$. 
\begin{defn}\label{D:twistedexp}
    Choose a set of homogeneous generators for $(S^* L^*)^L$ which are
    eigenvectors of $\sigma$. The exponents of $L$ can be sorted into different
    sets $m_1^{(a)},\ldots,m_{l_a}^{(a)}$, $a \in \Z_k$, by letting
    $m_1^{(a)}+1, \ldots,m_{l_a}^{(a)}+1$ be the list of degrees of homogeneous
    generators of $(S^* L^*)^L$ with eigenvalue $q^{-a}$ (note the negative
    exponent). We call the elements of these sets the exponents of $L_a$. 
\end{defn}
%Note that these exponents are different from the $k$-exponents of \cite{Ka83}.

Recall that if $V$ is an $L_0$-module and $\{h,e,f\}$ is a principal
$\mfsl_2$-triple in $L_0$, then the generalized exponents of $V$ are the
eigenvalues of $h/2$ on the subspace $V^{L_0^e}$ fixed by the abelian
subalgebra $L_0^e$. The generalized exponents are always non-negative integers,
and the dimension of $V^{L_0^e}$ is equal to the dimension of the zero weight
space of $V$.
\begin{prop}\label{P:twistedexp}
    The exponents of $L_a$ are the generalized exponents of $L_a$ as an $L_0$-module.
\end{prop}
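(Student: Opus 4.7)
The plan is to apply Kostant's slice theorem to $L$ itself, exploiting the fact that for any diagram automorphism of a simple Lie algebra a principal $\mfsl_2$-triple of $L_0$ remains a principal $\mfsl_2$-triple of $L$. I first reduce to the case of simple $L$: both the $L_0$-module decomposition of $L_a$ and the $\sigma$-eigendecomposition of $(S^* L^*)^L$ split compatibly across the decomposition of a reductive $L$ into $\sigma$-stable simple summands plus centre, and the centre contributes only trivial zero exponents. For simple $L$, fix a principal $\mfsl_2$-triple $(e,h,f) \subset L_0$; a case-by-case inspection over the diagram automorphisms of the simple Lie algebras shows that $e$ is also a regular nilpotent of $L$, so $\dim L^f = l = \mathrm{rank}(L)$.

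Next, Kostant's slice theorem supplies an isomorphism $\Sigma : e + L^f \iso \Spec((S^* L^*)^L)$. Fix homogeneous $\sigma$-eigenvector generators $f_1,\ldots,f_l$ of $(S^* L^*)^L$ with $\sigma f_i = q^{c_i} f_i$ and $\deg f_i = m_i + 1$, so that by Definition \ref{D:twistedexp} the exponents of $L_a$ are exactly $\{m_i : c_i = -a\}$. The differentials $df_i(e)|_{L^f}$ form a basis of $(L^f)^*$, dual to a basis $\{v_i\} \subset L^f$; the classical Kostant analysis shows that $v_i$ has $h/2$-weight $-m_i$. Since $(e,h,f)$ is $\sigma$-fixed the slice is $\sigma$-equivariant: the convention $(\sigma g)(z) = g(\sigma^{-1} z)$ combined with $\sigma e = e$ yields $f_i(\sigma(e+y)) = q^{-c_i} f_i(e+y)$, so $\sigma$ scales the $i$-th target coordinate of $\Sigma$ by $q^{-c_i}$; the evaluation-pairing duality between $L^f$ and $(L^f)^*$ then forces $v_i$ to have $\sigma$-eigenvalue $q^{-c_i}$.

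Consequently $L^f \cap L_a$ has basis $\{v_i : c_i = -a\}$ with $h/2$-weights $\{-m_i : c_i = -a\}$. For any semisimple action of a principal $\mfsl_2$-triple on a module $V$, the generalized exponents of $V$ can be read equivalently as the $h/2$-weights on $V^e$ or as the negatives of the $h/2$-weights on $V^f$, by the standard $e \leftrightarrow f$ symmetry exchanging highest and lowest weight vectors. Applied to $V = L_a$, this produces generalized exponents $\{m_i : c_i = -a\}$, matching the exponents of $L_a$ as claimed.

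The main obstacle I expect is verifying that a principal nilpotent of $L_0$ is regular in $L$; this is true for each of the diagram automorphisms of the simple Lie algebras but lacks a short uniform proof. The bookkeeping of $\sigma$-eigenvalues through the contragredient action on $L^*$ and the evaluation-pairing duality on $L^f$ is routine but must be handled carefully so that the signs of the $c_i$ come out as in the proposition.
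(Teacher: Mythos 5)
Your argument is essentially the same as the paper's: both rely on (i) the fact that a principal $\mfsl_2$-triple of $L_0$ is still principal in $L$, and (ii) Kostant's slice theorem in the refined form (Theorem 7, p.\ 381 of \cite{Ko63b}) asserting that a degree-$(m+1)$ generator restricts to a slice coordinate of $h/2$-weight $\pm m$ up to lower-order corrections, which makes the $\sigma$-eigenvalue bookkeeping go through. The only material differences are presentational. First, you use the slice $e+L^f$, whose weight basis lies in $L^f$ (weights $-m_i$), so you need the extra $e\leftrightarrow f$ symmetry step at the end to translate back to generalized exponents, which are by definition read off $L^e$; the paper uses the slice $f + L^e$ directly and avoids that flip. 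Second, you phrase the comparison via a dual basis $\{v_i\}\subset L^f$, while the paper phrases it via a $\sigma$-equivariant filtration isomorphism $\C[Q]_m \iso \C[\nu]_m$; these are two packagings of the same content.

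The one genuine gap you flag yourself — that a principal nilpotent of $L_0$ is regular in $L$ — is handled uniformly in the paper by Lemma \ref{L:diagramregular}, whose proof identifies the Jordan-type data of an element of $L_0$ inside $L_0$ and inside $L$ via the closed Weyl chamber and the description of simple root vectors of $L_0$ as sums over $\sigma$-orbits of simple root vectors of $L$. So you do not actually need a case-by-case check; citing that lemma closes the gap and makes your argument complete.
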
 
The proof of Proposition \ref{P:twistedexp} will be given in Subsection
\ref{SS:twistedexp}. The generalized exponents of $L_0$ are the same as the
ordinary exponents, and $l_0$ is the rank of $L_0$, so there is no conflict in
our terminology. In general $l_a$ is the dimension of $\mfh \cap L_a$, where $\mfh$
is a Cartan compatible with $\sigma$.  If $L$ is simple, then $k$ is either $1$
or $2$, except when $L=\mfso(8)$ in which case $k$ can be $3$ and $L_1$ is
isomorphic to $L_2$. As a result, the exponents of $L_a$ are the same as the
exponents of $L_{-a}$. A principal $\mfsl_2$-triple in $L_0$ is also principal
in $L$ (see Lemma \ref{L:diagramregular}), so $L^e$ is abelian and hence
$L_a^{L_0^e} = L_a^e$, simplifying the definition of generalized exponents in
this case. The eigenvalues of $h/2$ give a principal grading $L_a = \bigoplus
L_a^{(i)}$ of each $L_a$ such that $L = \bigoplus_i \bigoplus_a L_a^{(i)}$ is a
principal grading for $L$. The representation theory of $\mfsl_2$ then implies:
\begin{cor}\label{C:twistedexp}
    The multiplicity of $m$ in the list of exponents of $L_a$
    is $\dim L_a^{(m)} - \dim L_a^{(m+1)}$, where $L_a = \bigoplus
    L_a^{(i)}$ is a principal grading.
\end{cor}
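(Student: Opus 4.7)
The plan is to deduce Corollary \ref{C:twistedexp} directly from Proposition \ref{P:twistedexp} together with elementary $\mfsl_2$-representation theory applied to $L_a$ as a module over a principal $\mfsl_2$-triple $\{h,e,f\} \subset L_0$.

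First I would invoke Proposition \ref{P:twistedexp} to replace the exponents of $L_a$ with the generalized exponents, i.e.\ the eigenvalues of $h/2$ acting on $L_a^{L_0^e}$. Next, using the fact (established in subsection \ref{SS:regular} and quoted in the paragraph preceding the corollary) that a principal $\mfsl_2$-triple in $L_0$ is also principal in $L$, so that $L^e$ is abelian, I would identify $L_a^{L_0^e}$ with $L_a^e$. Thus the task reduces to counting the number of $h/2$-eigenvalues equal to $m$ on the space of $e$-highest weight vectors in $L_a$.

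To finish, I would decompose $L_a$ as a direct sum $\bigoplus_j V(2n_j)$ of irreducible $\mfsl_2$-submodules; the existence of integer $n_j$ is guaranteed by the fact that $h/2$ acts with integer eigenvalues (this is the content of the statement that the eigenvalues of $h/2$ give a principal grading of $L$). In each summand $V(2n_j)$ the space $V(2n_j)^e$ is one-dimensional, spanned by a vector of $h/2$-eigenvalue $n_j$, and $\dim V(2n_j)^{(i)} = 1$ precisely when $|i| \leq n_j$ and zero otherwise. Therefore
\begin{equation*}
    \#\{j : n_j = m\} \;=\; \#\{j : n_j \geq m\} - \#\{j : n_j \geq m+1\}
        \;=\; \dim L_a^{(m)} - \dim L_a^{(m+1)},
\end{equation*}
which gives the claimed multiplicity. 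There is no real obstacle here beyond the two previously cited inputs; the heavy lifting — the identification of exponents with generalized exponents, and the principality of the $\mfsl_2$-triple — is done elsewhere, and the remaining step is a one-line count in $\mfsl_2$-representation theory.
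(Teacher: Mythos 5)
Your proof is correct and matches the paper's intent: the paper itself states the corollary as an immediate consequence of Proposition~\ref{P:twistedexp} and ``the representation theory of $\mfsl_2$,'' and your argument is precisely the one-line count (decompose $L_a$ into irreducibles $V(2n_j)$, note $\dim L_a^{(m)} = \#\{j: n_j\geq m\}$ for $m\geq 0$, and subtract) that the paper leaves implicit.
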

The exponents of $L_a$ can be easily determined when $L$ is simple, and are
given in the following table:

\begin{tabular}{c|c|c|c|c}
    \hline
    Type of $L$ \TabTop \TabBot & $k$ & Type of $L_0$ & Exponents of $L_0$ & Exponents of $L_{\pm 1}$ \\
    \hline
    $A_{2n}$ \TabTop \TabBot & 2 & $B_n$ & $1,3,\ldots,2n-1$ & $2,4,\ldots,2n$ \\
    \hline
    $A_{2n-1}$ \TabTop \TabBot & 2 & $C_n$ & $1,3,\ldots,2n-1$ & $2,4,\ldots,2n-2$ \\
    \hline
    $D_{n} $ \TabTop \TabBot & 2 & $ B_{n-1} $ & $1,3,\ldots,2n-3$ & $n-1$ \\
    \hline
    $E_6$ \TabTop \TabBot & 2 & $ F_4 $ & $1,5,7,11$ & $4,8$ \\
    \hline
    $D_4$ \TabTop \TabBot & 3 & $ G_2 $ & $1,5$ & $3$ \\
\end{tabular}

\subsection{Cohomology of superpolynomials in a standard parahoric} 
Let $\mfp = \{f \in \mfg : f(0) \in \mfp_0\}$ be a standard parahoric in a
twisted loop algebra $\mfg$, and let $\hat{\mfp}[s]$ denote the superpolynomial
algebra in one odd variable with values in $\hat{\mfp}$. The cohomology of the
super Lie algebra $\hat{\mfp}[s]$ can be calculated as in the ordinary case
using the Koszul complex, so any grading on $\hat{\mfp}[s]$ induces a grading
on $H^*_{cts} (\hat{\mfp}[s])$. In particular $H^*_{cts}(\hat{\mfp}[s])$ is
graded by $z$-degree and by $s$-degree. 
\begin{thm}\label{T:parahoriccoh}
    Let $m^{(a)}_1,\ldots,m^{(a)}_{l_a}$ denote the exponents of $L_a$, and let
    $r_1,\ldots, r_{l_0}$ denote the exponents of the reductive algebra $\mfp_0
    \cap \overline{\mfp_0}$, where $\mfp_0$ is a parabolic in $L_0$. If $\mfp$
    is the standard parahoric $\{f \in L[[z]]^{\tilde{\sigma}}
    : f(0) \in \mfp_0\}$ then the cohomology ring $H^*_{cts}(\hat{\mfp}[s])$ is
    a free super-commutative algebra generated in degrees given in the
    following table:

    \noindent
    \begin{tabular}{c|c|c|c}
        \hline
        Cohomological degree & $s$-degree & $z$-degree & Index set \TabTop \TabBot \\
        \hline
        $2 r_i+1$ & $0$ & $0$ & $i=1,\ldots,l_0$ \TabTop \TabBot\\
        \hline
        $r_i+1$ & $r_i+1$ & $0$ & $i=1,\ldots,l_0$ \TabTop \TabBot \\
        \hline
        $m_i^{(-a)}+1$ & $m_i^{(-a)}+1$ & $kn -a$ & 
        $n \geq 1$, $a=0,\ldots,k-1$, $i=1,\ldots,l_{-a}$ \TabTop \TabBot \\
        \hline
        $m_i^{(-a)}+1$ & $m_i^{(-a)}$ & $kn -a$ & 
        $n \geq 1$, $a=0,\ldots,k-1$, $i=1,\ldots,l_{-a}$ \TabTop \TabBot \\ 
    \end{tabular}
\end{thm}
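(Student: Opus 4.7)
The plan is to follow the harmonic-forms strategy of Fishel--Grojnowski--Teleman \cite{FGT08}, adapted to the twisted parahoric setting. I begin by identifying $H^*_{cts}(\hat{\mfp}[s])$ with the continuous Lie algebra cohomology $H^*_{cts}(\hat{\mfp}; S^*\hat{\mfp}^*)$, computed by a super Koszul complex that is naturally bigraded by exterior degree and symmetric degree; the theorem's cohomological degree is the sum of the two, and its $s$-degree is the symmetric degree.

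The heart of the proof is the Laplacian calculation carried out in Section~\ref{S:laplacian}. One equips the partial affine flag ind-variety associated to $\mfp$ with a Kähler metric compatible with the grading of type $d$ defining $\mfp$, and sets up Hodge theory of the resulting Dolbeault-like bicomplex computing $H^*_{cts}(\hat{\mfp}; S^*\hat{\mfp}^*)$. The Hodge identity should express the Laplacian as a Casimir operator whose kernel consists exactly of forms on $\hat{\mfp}$ that are $\mfg_0$-invariant and basic with respect to the nilpotent subalgebra $\hat{\mfu}$. In this way $H^*_{cts}(\hat{\mfp}[s])$ is identified as a bigraded algebra with a ring of basic $\mfg_0$-invariants in $\Lambda^*\hat{\mfp}^* \otimes S^*\hat{\mfp}^*$ supported on the twisted arc space $L[[z]]^{\tilde{\sigma}}$. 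This invariant ring is then computed using the adjoint-orbit analysis of Section~\ref{S:orbit}: the GIT quotient identification $L[[z]]^{\tilde{\sigma}} /\!/ G[[z]]^{\tilde{\sigma}} \cong Q[[z]]^{\tilde{\sigma}}$ (with $Q = L /\!/ G$) reduces the calculation of polynomial invariants to the $\sigma$-eigenvector generators of $(S^* L^*)^L$ supplied by Definition~\ref{D:twistedexp}; each generator in degree $m_i^{(-a)}+1$ of eigenvalue $q^{-a}$ produces, upon $z$-series expansion, one polynomial generator of $z$-degree $kn-a$ for every $n \geq 0$, together with a de Rham partner of one lower $s$-degree. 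Sorting the contributions by whether $n=0$ (the modes supported on the Levi factor $\mfg_0 = \mfp_0 \cap \overline{\mfp_0}$, producing rows 1 and 2 via the classical Chevalley isomorphism $H^*(\mfg_0[s]) \cong H^*(\mfg_0) \otimes (S^*\mfg_0^*)^{\mfg_0}$) or $n \geq 1$ (twisted arc modes, producing rows 3 and 4) reproduces the table exactly, with Corollary~\ref{C:twistedexp} providing the correct multiplicities.

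The main obstacle is the Laplacian step. Unlike in the loop Grassmannian case of \cite{FGT08}, the partial affine flag variety admits a multi-parameter family of Kähler metrics indexed by the positive weights $(d_i)$, and no metric is canonical; one must verify that a suitable choice makes the pointwise Hodge identity compatible with the Casimir operator so that harmonic representatives coincide with the basic invariants. The second significant technical ingredient is the twisted Kostant slice theorem and the $G[[z]]^{\tilde{\sigma}}$-GIT quotient identification, both of which require the separate analysis of Section~\ref{S:orbit}, since the corresponding statements for the adjoint action on $L$ do not extend tautologically in the presence of the $\sigma$-twist.
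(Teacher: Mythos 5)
Your overall strategy---super Koszul reduction, K\"ahler/Laplacian identification of harmonic cocycles with a ring of basic invariant forms, then the twisted Kostant slice analysis---is the right one, and most of the structural pieces are in place. However, there are two genuine gaps that would prevent the proposal from producing the first two rows of the table, which is precisely where the parabolic component enters.

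First, the characterization of harmonic forms is too weak. You describe the kernel of the Laplacian as forms that are \emph{$\mfg_0$-invariant} and $\hat{\mfu}$-basic. In fact the paper's Nakano identity (Theorem~\ref{T:basicforms}) characterizes harmonic cocycles in the relative complex $C^*(\hat{\mfp},\mfg_0; S^*\hat{\mfp}^*)$ as the $\hat{\mfu}$-basic \emph{$\hat{\mfp}$-invariant} elements of $\bigwedge^*\hat{\mfu}^*\otimes S^*\hat{\mfp}^*$; the $\mfg_0$-invariance is built into the complex from the start and contributes nothing. Without full $\hat{\mfp}$-invariance the space of harmonic cocycles is far too large. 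Relatedly, the Hodge machinery in Section~\ref{S:laplacian} is built for the \emph{relative} complex $(\bigwedge^q\hat{\mfu}^*\otimes V)^{\mfg_0}$ via a semi-infinite bicomplex; you propose to apply it directly to the absolute complex $\bigwedge^*\hat{\mfp}^*\otimes S^*\hat{\mfp}^*$. The paper instead invokes the Hochschild--Serre spectral sequence to split off the $H^*(\mfg_0)$ factor first, $H^*_{cts}(\hat{\mfp};S^*\hat{\mfp}^*)\cong H^*(\mfg_0)\otimes H^*_{cts}(\hat{\mfp},\mfg_0;S^*\hat{\mfp}^*)$, and only then runs the Laplacian argument on the relative cohomology. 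Your ``classical Chevalley isomorphism'' is morally this step, but you have not situated it logically; without the Hochschild--Serre reduction, there is no mechanism producing row~1.

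Second, the GIT quotient you invoke is the one for the full twisted arc algebra, $L[[z]]^{\tilde\sigma}/\!/G[[z]]^{\tilde\sigma}\cong Q[[z]]^{\tilde\sigma}$. This cannot see the parabolic component: its $z$-degree-zero coordinates are $\C[Q^\sigma]$, whose generators have degrees $m_i^{(0)}+1$ (exponents of $L_0$), not $r_i+1$ (exponents of $\mfg_0=\mfp_0\cap\overline{\mfp_0}$). The crucial extra ingredient in the paper is that the $\mcP$-invariant $\mcN$-basic forms on $\hat{\mfp}$ are pulled back from $R\times_{Q^\sigma}Q[[z]]^{\tilde\sigma}$ (Propositions~\ref{P:kostant1}, \ref{P:orbit}, \ref{P:pullback}), where $R=\mfp_0/\!/\mcP_0\cong\mfh_0/\!/W(\mfg_0)$. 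The fibre product replaces the $Q^\sigma$ factor at $z$-degree zero with $R$, and it is exactly this replacement that produces the $\mfg_0$-exponents $r_i$ in row~2. Your ``sorting'' of $n=0$ modes to the Levi factor asserts the desired conclusion but supplies no mechanism for it: under your stated quotient the $n=0$ modes carry $L_0$-exponents, which is wrong whenever $\mfp_0\neq L_0$. Rows~3 and~4 and the $n\geq 1$ treatment are correct as you state them, provided the above two points are repaired.
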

To prove Theorem \ref{T:parahoriccoh}, we give an explicit description of a
generating set of cocycles for the relative cohomology ring
$H_{cts}^*(\hat{\mfp},\mfg_0; S^* \hat{\mfp}^*)$, where $\mfg_0 = \mfp_0 \cap
\overline{\mfp_0}$.  Choose a set of generators $I_i^{a}$, $a \in \Z_k$,
$i=1,\ldots, l_a$ for $(S L^*)^*$ such that $I_i^{a}$ is an eigenvector of
$\sigma$ with eigenvalue $q^{-a}$. Also choose a set of homogeneous generators
$R_1,\ldots,R_{l_0}$ for $(S^* \mfg_0^*)^{\mfg_0}$. The polynomial functions
$I_k^a$ on $L$ induce functions $\tilde{I}_k^a : L[[z]] \arr \C[[z]]$, and the
coefficients $[z^n] \tilde{I}^a_k$ of $z^n$ in $\tilde{I}_k^a$ restrict to
$\hat{\mfp}$-invariant polynomial functions on $\hat{\mfp}$. Similarly, the
polynomials $R_i$ on $\mfp_0$ can be pulled back via the quotient map $\mfp
\arr \mfg_0$ to $\hat{\mfp}$-invariant polynomials on $\hat{\mfp}$.  Finally,
1-cocycles can be constructed as follows. If $J$ is a derivation of
$\hat{\mfp}$ that kills $\mfg_0$ and $\phi \in S^k \hat{\mfp}^*$ is
$\hat{\mfp}$-invariant then the tensor
\begin{equation}\label{E:1cocycle}
    \hat{\mfu} \otimes S^{k-1} \hat{\mfp} \arr \C : x \otimes s_1 \circ \ldots
        \circ s_{k-1} \mapsto \phi(Jx \circ s_1\circ \cdots \circ s_{k-1}).
\end{equation}
is a cocycle (see Lemma \ref{L:cocycle}). 

\begin{thm}\label{T:relativecoh} Let $\mfp$ be a standard parahoric in $\mfg$,
    and let $J$ be the derivation from Theorem \ref{T:basicforms}. Then there
    is a metric on the Koszul complex such that the harmonic cocycles for
    $H^*_{cts}(\hat{\mfp}, \mfg_0; S^* \hat{\mfp}^*)$ form a free
    supercommutative ring generated by the cocycles in the following table:
    \vspace{0.1in}

    \noindent
    \begin{tabular}{c|c|c|c|c}
        \hline
         Cocycle description \TabTop \TabBot & Coh. deg. & Sym. deg & $z$-deg. & Index set \\
        \hline
        $R_i$\TabTop \TabBot & $0$ & $\deg R_i$ & $0$ & $i=1,\ldots, l_0$ \\
        \hline
        \multirow{2}{*}{$[z^{kn-a}] \tilde{I}_i^{-a}$} \TabTop & \multirow{2}{*}{$0$} & 
            \multirow{2}{*}{$\deg I_i^{-a}$} & \multirow{2}{*}{$kn-a$} & $n \geq 1,\ i=1,\ldots,l_{-a}$, \\
        \TabBot & & & & $a=0,\ldots,k-1$ \\
        \hline
        \multirow{2}{*}{$x \otimes s \mapsto [z^{kn-a}] \tilde{I}_j^{-a} (Jx\circ s)$} \TabTop & 
            \multirow{2}{*}{$1$} & \multirow{2}{*}{$\deg I_j^{-a} -1$} & \multirow{2}{*}{$kn-a$} & $n \geq 1,\ i=1,\ldots,l_{-a}$, \\
        \TabBot & & & & $a=0,\ldots,k-1$ \\
    \end{tabular}
\end{thm}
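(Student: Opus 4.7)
The plan is to follow the harmonic-theoretic blueprint of \cite{FGT08}, adapted to the twisted parahoric setting, and to combine it with the twisted orbit theory from Section \ref{S:orbit} in order to identify the ring of harmonic cocycles. First, I would reformulate $H^*_{cts}(\hat{\mfp}, \mfg_0; S^* \hat{\mfp}^*)$ as the cohomology of the $\mfg_0$-basic subcomplex of the Koszul complex $\Lambda^* \hat{\mfu}^* \otimes S^* \hat{\mfp}^*$, where $\hat{\mfu}$ is the standard nilpotent subalgebra of $\hat{\mfp}$. Equip this complex with a Hermitian metric induced by the Cartan involution and contragredient form fixed in Subsection \ref{SS:notation}, arising from an invariant K\"ahler metric on the associated parahoric homogeneous space. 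The main preparatory step, carried out in Section \ref{S:laplacian}, is that for a judicious choice of K\"ahler metric the Koszul Laplacian splits compatibly with the bigrading by cohomological degree and $s$-degree, and its kernel admits an explicit description in terms of basic and invariant data on $\hat{\mfp}$.

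Second, using this Laplacian decomposition, identify the cohomological-degree-zero harmonic cocycles with the $\hat{\mfp}$-invariant, $\mfg_0$-basic elements of $S^* \hat{\mfp}^*$, and the cohomological-degree-one harmonic cocycles with a $J$-twisted version of the same space, where $J$ is the derivation supplied by Theorem \ref{T:basicforms}. Lemma \ref{L:cocycle} ensures that the formula (\ref{E:1cocycle}) produces closed cochains from $J$ together with any $\hat{\mfp}$-invariant element of $S^* \hat{\mfp}^*$, so the construction is compatible with the free super-commutative structure claimed in the statement; higher cohomological degrees arise as wedge products of these 1-cocycles with the degree-zero invariants.

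Third, compute the ring of $\hat{\mfp}$-invariant polynomials on $\hat{\mfp}$ using the orbit theory of Section \ref{S:orbit}. The identification of the GIT quotient $L[[z]]^{\tilde{\sigma}} /\!/ G[[z]]^{\tilde{\sigma}}$ with $Q[[z]]^{\tilde{\sigma}}$, together with the twisted Kostant slice theorem, shows that the $G[[z]]^{\tilde{\sigma}}$-invariants restricted to $\hat{\mfp}$ are freely generated by the coefficients $[z^{kn-a}] \tilde{I}_i^{-a}$, giving the second row of the table directly and the third row after $J$-twisting via (\ref{E:1cocycle}). The finite-dimensional $\mfg_0$-invariants $R_i$ pulled back along the quotient $\mfp \to \mfg_0$ supply the first row. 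A rank count using Proposition \ref{P:twistedexp} and Corollary \ref{C:twistedexp} matches the multiplicities with the claimed degrees, and restriction to a regular semisimple twisted slice exhibits the generators as algebraically independent, so they freely generate the harmonic ring.

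The main obstacle is the Laplacian computation itself: unlike in the untwisted, nonparabolic case, the parahoric homogeneous space admits many invariant K\"ahler metrics, and one must isolate a metric for which the Laplacian decomposes in the required way, whose harmonic forms match the basic-plus-invariant description above, and which is compatible with the spectral sequence used to descend to the truncations $\mfp / z^N \mfp$ in Section \ref{S:truncatedcoh}. The secondary technical hurdle is the twisted orbit theory on $L[[z]]^{\tilde{\sigma}}$, where the familiar facts about adjoint orbits in a reductive Lie algebra no longer extend immediately and must be re-established using a twisted analogue of the Kostant slice theorem.
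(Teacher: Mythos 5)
Your proposal follows essentially the same route as the paper: you correctly identify the two main ingredients — the Laplacian/K\"ahler-metric calculation of Section~\ref{S:laplacian} (Theorem~\ref{T:basicforms}) identifying harmonic cocycles with $\hat{\mfu}$-basic $\hat{\mfp}$-invariant elements of $\Omega_{>0}^*\hat{\mfp}$ via the derivation $J$, and the twisted orbit theory of Section~\ref{S:orbit} (the regular-semisimple slice theorem and the twisted Kostant slice theorem, culminating in Proposition~\ref{P:pullback}) identifying those basic invariants with $\Omega^*_{const}\,R\times_{Q^\sigma}Q[[z]]^{\tilde\sigma}$. One small imprecision: you describe the degree-zero harmonics as ``restrictions of $G[[z]]^{\tilde\sigma}$-invariants,'' but the $\hat{\mfp}$-invariant polynomials form the coordinate ring of the \emph{fiber product} $R\times_{Q^\sigma}Q[[z]]^{\tilde\sigma}$, which is strictly larger than $\C[Q[[z]]^{\tilde\sigma}]$ by the parabolic invariants $R_i$ and in which the constant-term coefficients $[z^0]\tilde I^{(0)}_i$ are generated by the $R_i$'s (which is why $n\geq 1$ in the second and third rows); you acknowledge the $R_i$'s supply the first row, so this is a matter of phrasing rather than a gap.
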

Proving Theorem \ref{T:relativecoh} is the main concern of the paper; the proof is
finished in Subsection \ref{SS:standardbasic}.
\begin{proof}[Proof of Theorem \ref{T:parahoriccoh} from Theorem \ref{T:relativecoh}]
    Since the bracket of $\hat{\mfp}[s]$ is zero on the odd component, the
    Koszul complex for $\hat{\mfp}[s]$ reduces to the Koszul complex for
    $\hat{\mfp}$ with coefficients in $S^* \hat{\mfp}^*$. Thus there is a ring
    isomorphism $H_{cts}^*(\hat{\mfp}[s]) \iso H_{cts}^*(\hat{\mfp};S^*
    \hat{\mfp}^*)$ in which $H_{cts}^{n-q}(\hat{\mfp};S^q \hat{\mfp}^*)$
    corresponds to the cohomology classes in $H_{cts}^n(\hat{\mfp}[s])$ of
    $s$-degree $q$.  This isomorphism preserves $z$-degree. 
    The degree zero component of $\mfp$ is $\mfg_0 = \mfp_0 \cap
    \overline{\mfp_0}$, a reductive algebra which is the quotient of $\mfp$ by
    the standard nilpotent subalgebra $\mfu$. It follows from the
    Hochschild-Serre spectral sequence (in particular Theorem 12 of
    \cite{HS53}) that there is a ring isomorphism
    \begin{equation*}
        H^*_{cts}(\hat{\mfp}; S^* \hat{\mfp}^*) \iso H^*(\mfg_0) \otimes
            H^*_{cts}(\hat{\mfp}, \mfg_0; S^* \hat{\mfp}^*).
    \end{equation*}
    Then Theorem \ref{T:parahoriccoh} follows from the description of
    relative cohomology.
\end{proof}
When $\mfp_0 = L_0$, Theorem \ref{T:parahoriccoh} states that the algebra
$H^*_{cts}(L[z,s])^{\tilde{\sigma}}$ is the free super-commutative algebra with
generators in tensor degree $2m_i^{(a)}+1$ and $2m_i^{(a)}+2$, and $z$-degree
$nk+a$, for $a=0, \ldots,k-1$, $i=1,\ldots,l_a$, and $n \geq 0$. In addition
the Hochschild-Serre spectral sequence implies that $H^*(\mfp_0[s]) \iso
H^*(\mfg_0) \otimes (S^* \mfg_0^*)^{\mfg_0}$, where $\mfg_0 = \mfp_0 \cap
\overline{\mfp_0}$, so $H^*(\mfp_0[s])$ is isomorphic to the subalgebra of
$H^*_{cts}(\hat{\mfp}[s])$ of $z$-degree zero. In fact, the inclusion is the
pullback map given by evaluation at zero, as can be seen from the explicit
description of harmonic cocycles, so $H^*_{cts}(\hat{\mfp}[s])$ is naturally
isomorphic to $H^*(\mfp_0[s]) \otimes_{H^*(L_0[s])}
H^*_{cts}(L[z,s]^{\tilde{\sigma}})$. 

We can also ask for an explicit description of the relative cohomology groups
$H^*_{cts}(\hat{\mfp},\mfg_0; S^* \hat{\mfu}^*)$. In this case, we can only
provide an answer when $\mfp$ is an Iwahori subalgebra---that is, a standard
parahoric $\{ f \in L[[z]]^{\tilde{\sigma}} : f(0) \in \mfp_0\}$ where $\mfp_0$
is a Borel subalgebra. 
\begin{thm}\label{T:nilpotentcoh}
    Let $\mfb$ be an Iwahori subalgebra of $\mfg$, and let $\mfn$ be the nilpotent
    subalgebra. Let $J$ be the derivation from Theorem \ref{T:basicforms}. Then
    there is a metric on the Koszul complex such that the harmonic cocycles for
    $H^*_{cts}\left(\hat{\mfb}, \mfh_0; S^* \hat{\mfn}^*\right)$ form a free
    supercommutative ring generated by the cocycles in the following table:
    \vspace{0.1in}

    \noindent
    \begin{tabular}{c|c|c|c|c}
        \hline
         Cocycle description \TabTop \TabBot & Coh. deg. & Sym. deg & $z$-deg. & Index set \\
        \hline
        \multirow{2}{*}{$[z^{kn-a}] \tilde{I}_i^{-a}$} \TabTop &  
            \multirow{2}{*}{$0$} & \multirow{2}{*}{$\deg I_i^{-a}$} & \multirow{2}{*}{$kn-a$} 
            & $n \geq 1,\ i=1,\ldots,l_{-a}$, \\
        \TabBot & & & & $a=0,\ldots,k-1$ \\
        \hline
        \multirow{2}{*}{$x \otimes s \mapsto [z^{kn-a}] \tilde{I}_j^{-a} (Jx\circ s)$} \TabTop & 
         \multirow{2}{*}{$1$} & \multirow{2}{*}{$\deg I_j^{-a} -1$} & \multirow{2}{*}{$kn-a$} 
          & $n \geq 1,\ i=1,\ldots,l_{-a}$, \\
         \TabBot & & & & $a=0,\ldots,k-1$ \\
    \end{tabular}
\end{thm}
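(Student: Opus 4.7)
The proof proceeds in parallel with that of Theorem \ref{T:relativecoh}, using the Laplacian framework established in Section \ref{S:laplacian}. First, I set up the relative Koszul complex computing $H^*_{cts}(\hat{\mfb}, \mfh_0; S^* \hat{\mfn}^*)$; since $\mfh_0$ is abelian, this complex has the form $(\Lambda^* \hat{\mfn}^* \otimes S^* \hat{\mfn}^*)^{\mfh_0}$. The restriction map $\hat{\mfb}^* \twoheadrightarrow \hat{\mfn}^*$ (dual to the inclusion $\hat{\mfn} \hookrightarrow \hat{\mfb}$) is a surjection of $\hat{\mfb}$-modules, and I use it to transport the metric from the parahoric case onto the present complex. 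The Laplacian calculation of Theorem \ref{T:basicforms} then identifies the harmonic cocycles with an appropriate ring of basic and invariant forms on the twisted arc space.

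The difference from Theorem \ref{T:relativecoh} is precisely that the $\mfh_0^*$ summand of $\hat{\mfb}^*$ is absent from $\hat{\mfn}^*$. Concretely, the generators $R_i$ of Theorem \ref{T:relativecoh}, which come from $(S^* \mfg_0^*)^{\mfg_0} = S^* \mfh_0^*$ pulled back along the quotient $\mfb \to \mfg_0 = \mfh_0$, no longer arise with values in $\hat{\mfn}^*$ and drop out of the table. The remaining generators survive unchanged: the polynomials $\tilde{I}_i^{-a}$ are defined on all of $L[[z]]^{\tilde{\sigma}}$, so their coefficients $[z^{kn-a}] \tilde{I}_i^{-a}$ restrict to $\hat{\mfb}$-invariant functions on $\hat{\mfn}$; and because the derivation $J$ of Theorem \ref{T:basicforms} annihilates $\mfh_0$, the 1-cocycle formula (\ref{E:1cocycle}) descends to give the claimed degree-$1$ generators.

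The main obstacle is to show that no further harmonic cocycles appear, i.e.\ to prove the $\hat{\mfn}$-analogue of the basic-and-invariant-form calculation underlying Theorem \ref{T:relativecoh}. Concretely, one must show that the ring of $\hat{\mfb}$-invariant polynomial functions on $\hat{\mfn}$ is generated by the restrictions of the $[z^{kn-a}] \tilde{I}_i^{-a}$, and that the corresponding $J$-derivations span the transverse 1-cocycle directions. I expect this to follow from the slice arguments of Section \ref{S:orbit}, together with the standard fact that a principal nilpotent element of $L$ lies in the nilpotent radical of any Borel, so that the principal Kostant slice constructed for $\hat{\mfp}$ restricts cleanly to $\hat{\mfn}$. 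Once the harmonic subring is pinned down, freeness follows by the same Hilbert-series comparison as in Theorem \ref{T:relativecoh}: the tabulated free super-commutative algebra has the same character (computed via Proposition \ref{P:twistedexp} and Corollary \ref{C:twistedexp}) as the space of harmonic forms, forcing the desired isomorphism.
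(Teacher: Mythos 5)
Your outline correctly identifies the Laplacian reduction and the fact that the $R_i$ generators disappear because $\hat{\mfn}^*$ has no $\mfh_0^*$ summand. (Small point: the coefficient module $S^*\hat{\mfn}^*$ puts you in the setting of Theorem \ref{T:basicforms2}, which identifies the harmonics with $\mcB$-basic invariant forms on $\hat{\mfn}$, not Theorem \ref{T:basicforms}.) But the step you relegate to ``the slice arguments of Section \ref{S:orbit}''---showing that the ring of $\mcB$-basic invariant forms on $\hat{\mfn}$ is generated by the pullbacks from $(zQ[[z]])^{\tilde{\sigma}}$---does not follow from that section, and the reason matters. The proof of Theorem \ref{T:relativecoh} works on the open dense regular semisimple locus $\hat{\mfp} \cap L^{rs}[[z]]^{\tilde{\sigma}}$, where Proposition \ref{P:parahoricslice} gives an explicit slice description and forms extend uniquely from that locus. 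For $\hat{\mfn}$, however, $\hat{\mfn} \cap L^{rs}[[z]]^{\tilde{\sigma}}$ is \emph{empty}: every $f \in \hat{\mfn}$ has $f(0)$ nilpotent, hence never regular semisimple. The only available dense locus is $\hat{\mfn} \cap L^{reg}[[z]]^{\tilde{\sigma}}$, where the needed slice statement---that every $G[[z]]^{\tilde{\sigma}}$-orbit in $L^{reg}[[z]]^{\tilde{\sigma}}$ meets $\nu[[z]]^{\tilde{\sigma}}$ in exactly one point---is genuinely stronger than the smoothness and surjectivity of Proposition \ref{P:kostant1}.

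That orbit map statement is Proposition \ref{P:orbit}, proved in Section \ref{S:kostant} via a Jordan decomposition for twisted arcs (Lemmas \ref{L:semisimple} and \ref{L:principal}) which conjugates pro-semisimple parts into $\mfh_0$ and pro-nilpotent parts into $e + (zL^f[[z]])^{\tilde{\sigma}}$. Without it, Lemma \ref{L:nilpotentkostant}---which identifies $G[[z]]^{\tilde{\sigma}}_e$-orbits in $e + (zL[[z]])^{\tilde{\sigma}}$ with points of the slice $\eta$---has no proof, and Proposition \ref{P:nilpotentbasic} does not follow. Your observation that a principal nilpotent lies in the nilpotent radical of any Borel is true but beside the point: the issue is not whether $\eta$ sits inside $\hat{\mfn}$ but whether each orbit actually meets it exactly once. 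Finally, the closing Hilbert-series comparison is not how the paper argues and cannot stand alone, since you cannot compute the Hilbert series of the harmonic ring without first establishing its structure; the paper instead shows directly that $\Omega_{const}^*(zQ[[z]])^{\tilde{\sigma}}$ is a free super-commutative ring and that the pullback map to $\mcB$-basic invariant forms on $\hat{\mfn}$ is an isomorphism, so freeness is structural, not enumerative.
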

Theorem \ref{T:nilpotentcoh} can be used to calculate
$H^*_{cts}\left(\hat{\mfb}; S^* \hat{\mfn}\right)$ as in the proof of Theorem
\ref{T:parahoriccoh}. With an appropriate degree shift, the cohomology ring
$H^*_{cts}\left(\hat{\mfb},\mfh; S^* \hat{\mfn}\right)$ can also be regarded as
the $\mfh$-invariant part of $H^*_{cts}(\hat{\mfn}[s])$. The proof of Theorem
\ref{T:nilpotentcoh} will be completed in Subsection \ref{SS:nilpotentbasic}.

\subsection{Cohomology of the truncated algebra}
If $N$ is a multiple of $k$ then $z^N L[z]^{\tilde{\sigma}}$ is a subset of
$L[z]^{\tilde{\sigma}}$, and hence $z^N \mfp$ is an ideal of $\mfp$.  Theorem
\ref{T:relativecoh} can be used to determine the cohomology of the
finite-dimensional Lie algebra $\mfp / z^N \mfp$.

Recall that the coinvariant algebra of the Weyl group $W(L_0)$ is the quotient
of $S^* \mfh_0^*$ by the ideal generated by $(S^{>0} \mfh_0^*)^{W(L_0)}$.  We
define $\Coinv(L_0,\mfg_0)$ to be the graded algebra which is the quotient of
$(S^* \mfg_0^*)^*{\mfg_0}$ by the ideal generated by $(S^{>0} L_0^*)^{L_0}$,
where $S^* L_0^*$ acts on $S^* \mfg_0^*$ by restriction. By the Chevalley
restriction theorem, $\Coinv(L_0,\mfg_0)$ is isomorphic to the subalgebra of
$W(\mfg_0)$-invariants in the coinvariant algebra of $W(L_0)$. It is well-known
that the Poincare series for $\Coinv(L_0,\mfg_0)$ with the symmetric grading is
$\prod_{i=1}^{l_0} (1-q^{r_i+1})^{-1} \prod_{i=1}^{l_0}
\left(1-q^{m_i^{(0)}+1}\right)$, where $m_i^{(0)}$ refers to the exponents of
$L_0$ and $r_i$ refers to the exponents of $\mfg_0$. The dimension of
$\Coinv(L_0,\mfg_0)$ is $|W(L_0)| / |W(\mfg_0)|$. 

%$(S^*\mfg_0^*)^{\mfg_0}$ is
%a free $(S^* L_0^*)^{L_0}$-module (see Lemma \ref{L:free}), so it follows that
%the Poincare series for 
\begin{thm}\label{T:truncatedcoh}
    Let $m^{(a)}_1,\ldots,m^{(a)}_{l_a}$ denote the exponents of $L_a$, and let
    $r_1,\ldots,r_{l_0}$ be the exponents of the reductive Lie algebra $\mfg_0
    = \mfp_0 \cap \overline{\mfp_0}$. Let $\Coinv(L_0, \mfg_0)$ denote the
    coinvariant algebra, with a cohomological grading (resp. $z$-grading)
    defined by setting the cohomological degree (resp. $z$-degree) to twice
    (resp.  $N$ times) the symmetric degree. 

    If $\mfp$ is the standard parahoric $\{f \in L[[z]]^{\tilde{\sigma}} : f(0)
    \in \mfp_0\}$ and $N$ is a multiple of $k$ then the cohomology algebra
    $H^*(\mfp / z^N \mfp)$ is isomorphic to $\Coinv(L_0,\mfg_0) \otimes
    \Lambda$, where $\Lambda$ is the free super-commutative algebra
    generated in degrees given by the following table:

    \noindent
    \begin{tabular}{c|c|c}
        \hline
        Cohomological degree \TabTop \TabBot & $z$-degree & Index set\\
        \hline
        $2r_i + 1$ \TabTop \TabBot & $0$ & $i=1,\ldots,l_0$ \\
        \hline
        $2m_i^{(a)}+1$ \TabTop \TabBot & $N m_i^{(a)} + nk + a$ 
            &  $a=0,\ldots,k-1$, $i=1,\ldots,l_{a}$, $0 < nk+a < N$ \\
    \end{tabular}
\end{thm}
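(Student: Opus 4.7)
The plan is to derive $H^*(\mfp/z^N\mfp)$ from the known $H^*_{cts}(\hat{\mfp}[s]) \iso H^*_{cts}(\hat{\mfp}; S^*\hat{\mfp}^*)$ (Theorems \ref{T:parahoriccoh} and \ref{T:relativecoh}) via Feigin's spectral sequence. First I would define an odd super derivation $D$ on $\hat{\mfp}[s]$ by $D|_{\hat{\mfp}} = 0$ and $D(sy) = -z^N y$. Since $N$ is a multiple of $k$, $z^N y$ remains in $\hat{\mfp}$ for $y \in \hat{\mfp}$, so $D$ is well-defined; a short calculation shows $D$ is a super derivation with $D^2 = 0$ and $H_*(\hat{\mfp}[s], D) \iso \mfp/z^N\mfp$ as a Lie algebra. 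The dual $D^*$ anticommutes with the Chevalley--Eilenberg differential $d$ on $C^*(\hat{\mfp}[s])$, so $(C^*(\hat{\mfp}[s]), d + D^*)$ is a DGLA cochain complex; since the underlying DGLA homology is concentrated in degree zero, the total cohomology agrees with $H^*(\mfp/z^N\mfp)$.

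With bidegrees $(+1, 0)$ for $d$ and $(-1, +1)$ for $D^*$ (where $P$ is exterior and $Q$ is symmetric), and total cohomological degree $P + 2Q$, filtering by $Q$ yields a spectral sequence with $E_1 \iso H^*_{cts}(\hat{\mfp}[s])$ and $d_1$ induced by $D^*$. The main computational step is the action of $d_1$ on the generators of Theorem \ref{T:relativecoh}. The exterior $H^*(\mfg_0)$-generators (first row) and the symmetric $R_i$-generators (second row) are $D^*$-closed for bidegree reasons and survive all higher differentials. The third-row invariants $[z^{kn-a}]\tilde{I}_i^{(-a)}$ are hit by $d_1$ applied to the corresponding fourth-row 1-cocycles at $z$-degree shifted by $+N$, so every third-row class becomes a boundary. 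A fourth-row generator at input $z$-degree $d$ is a $d_1$-cocycle iff the $D^*$-shifted target at $z$-degree $d - N$ vanishes, which holds precisely when $d \leq N - 1$. In the exceptional case $d = N$ with $a = 0$, the shifted target is the $L_0$-invariant polynomial $[z^0]\tilde{I}_i^{(0)}|_{\mfg_0}$, which lies in the subring generated by the $R_i$; the fourth-row class then dies but its image kills this $L_0$-invariant, cutting the polynomial ring down to $\Coinv(L_0, \mfg_0)$.

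It remains to translate $z$-degrees from $H^*(\hat{\mfp}[s])$ to $H^*(\mfp/z^N\mfp)$. Under the quasi-isomorphism determined by $D$, each $s$-factor in a cochain representative corresponds to a $z^N$-factor in the truncation (equivalently, one may regrade $\hat{\mfp}[s]$ so that $s$ carries $z$-degree $N$, making $D$ preserve the $z$-grading); thus the truncation $z$-degree equals the input $z$-degree plus $N$ times the $s$-degree. For a surviving fourth-row generator with $s$-degree $m_i^{(a)}$ and input $z$-degree $kn' - a_4$ (indexed so that $a_4 \equiv -a \bmod k$), one obtains truncation $z$-degree $(kn' - a_4) + Nm_i^{(a)} = Nm_i^{(a)} + nk + a$ for appropriately shifted $n$, and the survival condition $d \leq N-1$ translates to $0 < nk + a < N$. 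Together with the exterior $H^*(\mfg_0)$-generators in degrees $2r_i+1$ and the $\Coinv(L_0,\mfg_0)$-factor, this reproduces the statement of Theorem \ref{T:truncatedcoh}.

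The principal technical obstacle is confirming the precise action of $d_1 = D^*$ on the 1-cocycles built from the derivation $J$ of Theorem \ref{T:basicforms}, and in particular verifying that at the critical $z$-degree $d = N$ with $a = 0$ the image is nonzero and coincides (up to scalar) with the $L_0$-invariant polynomial $[z^0]\tilde{I}_i^{(0)}|_{\mfg_0}$ rather than some other class of the right bidegree. A secondary point is to verify the $z$-degree shift under the quasi-isomorphism with enough precision that the formula $Nm_i^{(a)} + nk + a$ emerges as stated.
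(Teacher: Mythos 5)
Your approach matches the paper's: this is Feigin's spectral sequence, with your DGLA $(\hat{\mfp}[s], D)$ giving the same bicomplex that the paper writes as $(K^{*,*},\deltabar, d_R(P))$ in dual terms (your $D^*$ is their $d_R(P)$, where $P$ is the transpose of $z^N\cdot$), and the $Q$-filtration gives the same spectral sequence with $E_1\iso H^*_{cts}(\hat{\mfp}[s])$.

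However, the outline leaves several real gaps, only one of which you flag. (a) The action of $d_1$ on the 1-cocycle generators is the heart of the argument and you explicitly leave it unverified; the paper's Lemma~\ref{L:differential} proves $d_R(P)\,\omega_i^{nk-a}$ is a \emph{nonzero} multiple of $f_i^{nk-a-N}$ by a computation involving the Kac-Moody invariant form, the derivation $J$, and the anticommutator identity of Lemma~\ref{L:anticommutator} --- without it the whole computation is conditional. (b) You assert the $H^*(\mfg_0)$- and $R_i$-classes ``survive all higher differentials,'' but you never prove the spectral sequence degenerates at $E_2$. The paper's Lemma~\ref{L:spectral} does this by showing the whole $E_2$-term sits in bidegrees $(a,b)$ with $a\le b$ and is generated in bidegrees $(a,a)$ and $(a,a+1)$, which forces all higher differentials to vanish on generators. (c) To identify the $E_2$-term as $\Coinv(L_0,\mfg_0)\otimes\Lambda_1$ you need to know the differential $d_1$ is $\C[Q^\sigma]$-linear \emph{and} that $\C[R]=(S^*\mfg_0^*)^{\mfg_0}$ is a free $(S^*L_0^*)^{L_0}$-module (Lemma~\ref{L:free}); ``its image kills this $L_0$-invariant, cutting the polynomial ring down to $\Coinv$'' is the right picture but does not constitute a Koszul-type argument. (d) Most seriously, the spectral sequence only delivers $\gr H^*(\mfp/z^N\mfp)$; since $\Coinv(L_0,\mfg_0)\otimes\Lambda$ is not free, the isomorphism of associated graded algebras does not automatically lift to an isomorphism of rings, and the paper has to close this gap by exhibiting an explicit subalgebra $\C[R]\otimes\Lambda\subset K^{*,*}$ annihilated by both differentials, mapping it to $H^*(\total K^{*,*})$, and then using the associated-graded computation to show the induced map from $\Coinv(L_0,\mfg_0)\otimes\Lambda$ is bijective. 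Your proposal stops at the level of the associated graded and therefore proves a weaker statement than Theorem~\ref{T:truncatedcoh} as written.
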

As in the proof of Theorem \ref{T:parahoriccoh}, we have
\begin{equation*}
    H^*(\mfp / z^N \mfp) \iso H^*(\mfg_0) \otimes
        H^*(\mfp / z^N \mfp, \mfg_0),
\end{equation*}
so we only need to compute the relative cohomology. This will be done with a
spectral sequence argument in Section \ref{S:truncatedcoh} (see Proposition
\ref{P:truncatedrel}).

When the parabolic component is trivial, $H^*\left(L[z]^{\tilde{\sigma}}/
z^N\right)$ is simply the free super-commutative algebra with one set of
generators in cohomological degree $2m_i^{(0)}+1$ and $z$-degree $0$ for
$i=1,\ldots,l_0$, and another set of generators in cohomological degree
$2m_i^{(a)}+1$ and $z$-degree $N m_i^{(a)} + nk + a$, where $a=0,\ldots,k-1$,
$i=1,\ldots,l_a$, and $n$ such that $0 < nk+a < N$. Theorem
\ref{T:truncatedcoh} can be restated as saying that $H^*(\mfp / z^N \mfp)$ is
the algebra $H^*(\mfg_0) \otimes \Coinv(L_0,\mfg_0) \otimes_{H^*(L_0)}
H^*(L[z]^{\tilde{\sigma}} / z^N)$. 

In Lemma \ref{L:borelpic}, we prove that if $\mfg$ is untwisted and $N=1$ then
$H^*(\mfp / z \mfp,\mfg_0)$ is isomorphic to $H^*(L_0,\mfg_0)$. This algebra is
in turn isomorphic to the cohomology ring of the generalized flag variety
corresponding to the pair $(L_0,\mfp_0)$. The $z$-grading on $H^*(\mfp / z
\mfp, \mfg_0)$ corresponds to the holomorphic grading appearing in the Hodge
decomposition. The fact that $\Coinv(L_0,\mfg_0)$ is isomorphic to
$H^*(L_0,\mfg_0)$ is a classic theorem of Borel (\cite{Bo53}, see Theorem 5.5
of \cite{BGG73} for the parabolic case). Thus Theorem \ref{T:truncatedcoh} can
be seen as a generalization of Borel's theorem.

We can compare the cohomology of $\mfp / z^N \mfp$ with the cohomology of
more general truncations. If $P(z)$ is a polynomial in $z$, then $P(z^k)
L[z]^{\tilde{\sigma}}$ is a subset of $L[z]^{\tilde{\sigma}}$, and hence
$P(z^k) \mfp$ is an ideal of $\mfp$. We can assume that $P$ is monic, and write
$P = z^d + P_0$, where $d$ is the degree of $P$ and $P_0$ contains lower degree
terms. Suppose $x \in L_{i}$ for $i \geq 0$. Then $(z^{dk} + P_0(z^k)) x z^i$
is in $P(z^k) \mfp$ if and only if either $i>0$ or $x \in
\mfp_0$, so the dimension of $\mfp / P(z^k)\mfp$ is $d \cdot \dim L$. 
\begin{lemma}\label{L:coprime}
    If $P$ and $Q$ are coprime then $\mfp / P(z^k)Q(z^k) \mfp \iso \mfp /
    P(z^k) \mfp \oplus \mfp / Q(z^k) \mfp$.
\end{lemma}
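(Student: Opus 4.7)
The plan is to recognize this as a Chinese Remainder Theorem statement for the commutative ring $\C[z^k]$ acting on $\mfp$. First I would check that $\mfp$ carries a natural $\C[z^k]$-module structure: for $F \in \C[z^k]$ one has $\tilde\sigma(F) = F$ (because $q^k=1$), so multiplication by $F$ preserves $L[[z]]^{\tilde\sigma}$, and since $F(0)f(0) \in \mfp_0$ whenever $f(0) \in \mfp_0$, it also preserves $\mfp$. Moreover, because the bracket on $\mfg$ is pointwise, $[Ff, g] = F[f,g]$ for $F \in \C[z^k]$ and $f, g \in \mfp$; in particular $P(z^k)\mfp$, $Q(z^k)\mfp$, and $P(z^k)Q(z^k)\mfp$ are all Lie ideals of $\mfp$.

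Next, I would check that $P(z^k)$ and $Q(z^k)$ are coprime in $\C[z^k]$. Viewing them as polynomials in the single variable $u = z^k$, they are the very same $P$ and $Q$, which are coprime in $\C[u]$ by hypothesis; Bezout therefore produces $\tilde A(u), \tilde B(u) \in \C[u]$ with
\begin{equation*}
    \tilde A(z^k) P(z^k) + \tilde B(z^k) Q(z^k) = 1.
\end{equation*}

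With this in hand the standard CRT argument goes through verbatim. Consider the map
\begin{equation*}
    \Phi : \mfp / P(z^k)Q(z^k)\mfp \arr \mfp / P(z^k)\mfp \oplus \mfp / Q(z^k)\mfp
\end{equation*}
given by the pair of quotient maps. It is surjective: given $(a, b)$, the element $\tilde B(z^k) Q(z^k) a + \tilde A(z^k) P(z^k) b$ maps to $(a,b)$, using that $\tilde B Q \equiv 1 \bmod P$ and $\tilde A P \equiv 1 \bmod Q$. It is injective: if $x = P(z^k) a = Q(z^k) b$ lies in $P(z^k)\mfp \cap Q(z^k)\mfp$, then $x = (\tilde A P + \tilde B Q) x = P(z^k)Q(z^k)(\tilde A(z^k) b + \tilde B(z^k) a) \in P(z^k)Q(z^k)\mfp$. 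Finally, $\Phi$ is a Lie algebra homomorphism because each component is, so being a vector space isomorphism it is automatically a Lie algebra isomorphism.

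There is no real obstacle here; the only point requiring even a moment's thought is the verification that $P(z^k)$ and $Q(z^k)$ remain coprime (which is immediate upon substituting $u = z^k$) and that multiplication by $\C[z^k]$ is compatible with the bracket on $\mfp$.
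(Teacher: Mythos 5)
Your argument is correct and complete. The paper states this lemma without proof, treating it as a standard Chinese-Remainder-Theorem fact, and your proof supplies exactly the obvious CRT argument together with the right preliminary checks: that $\mfp$ is a $\C[z^k]$-module (using $q^k=1$ and the fact that multiplying by a scalar preserves $\mfp_0$), that scalar multiplication commutes with the bracket so the $P(z^k)\mfp$ are Lie ideals, and that the resulting vector-space isomorphism is automatically a Lie algebra isomorphism since each component is a quotient map. Nothing to add.
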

%\begin{proof}
%    Find polynomials $A$ and $B$ such that $AP + BQ = 1$. Map $[f] \in \mfp /
%    P(z^k)Q(z^k) \mfp$ to $[f]\oplus[f] \in \mfp / P(z^k) \mfp \oplus \mfp /
%    Q(z^k) \mfp$. For an inverse, send $[f_1] \oplus [f_2] \in \mfp / P(z^k)
%    \mfp \oplus \mfp / Q(z^k) \mfp$ to $[B(z^k)Q(z^k) f_1 + A(z^k)P(z^k)f_2]$.
%    This map is clearly well-defined, and inverts the first map. Finally $[f_1]
%    = [(A(z^k)P(z^k) + B(z^k)Q(z^k)) f_1] = [B(z^k)Q(z^k) f_1 +
%    A(z^k)P(z^k)f_2]$ in $\mfp / P(z^k) \mfp$, and similarly in $\mfp / Q(z^k)
%    \mfp$.
%\end{proof}
By Lemma \ref{L:coprime} the study of $\mfp / P(z^k) \mfp$ reduces to the case
where $P$ is the power of a linear factor. In the untwisted case, $L[z] \iso
L[z-\alpha]$, so $L[z]/(z-\alpha)^N \iso L[z] / z^N$. However, in the twisted
case this argument does not apply, since the automorphism $z \mapsto q^{-1} z$
is different from $z-\alpha \mapsto q^{-1} (z-\alpha)$. In particular:
\begin{lemma}\label{L:deform}
    If $\alpha \neq 0$ then $\mfp / (z^k - \alpha) \mfp$ is isomorphic to $L$. 
\end{lemma}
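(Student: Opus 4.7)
The plan is to construct an explicit isomorphism by evaluating at a $k$-th root of $\alpha$. Since $\alpha\neq 0$, choose $\beta\in\C^*$ with $\beta^k=\alpha$ and consider the Lie algebra homomorphism
\[
    \ev_\beta:\mfp\arr L,\quad f\mapsto f(\beta).
\]
I will show that $\ev_\beta$ is surjective with kernel $(z^k-\alpha)\mfp$, which gives the isomorphism.

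For surjectivity, decompose $L=\bigoplus_{i=0}^{k-1}L_i$ into $\sigma$-eigenspaces. For $x\in L_i$ with $1\le i\le k-1$, the element $\beta^{-i}z^i x$ lies in $L_i\otimes z^i\C[z^k]\subseteq\mfp$ (it has no constant term, so the parabolic condition is vacuous) and evaluates to $x$ at $\beta$. For $x\in L_0$, the element $\alpha^{-1}z^k x$ lies in $\mfp$ and likewise evaluates to $x$ at $\beta$. Hence the map hits every summand of $L$.

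For the kernel, the inclusion $(z^k-\alpha)\mfp\subseteq\ker\ev_\beta$ is immediate. Conversely, suppose $f\in\mfp$ satisfies $f(\beta)=0$. The $\tilde{\sigma}$-invariance identity $f(qz)=\sigma(f(z))$ iterates to $f(q^i\beta)=\sigma^i(f(\beta))=0$ for every $i$, so $f$ vanishes at every $k$-th root of $\alpha$. Hence $f=(z^k-\alpha)h$ for some polynomial $h\in L[z]$. Because $z^k-\alpha$ is $\tilde{\sigma}$-fixed and not a zero-divisor in $L[z]$, applying $\tilde{\sigma}$ to both sides forces $h\in L[z]^{\tilde{\sigma}}$. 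Finally, comparing constant terms gives $f(0)=-\alpha h(0)$, and since $\alpha\neq 0$ this forces $h(0)=-\alpha^{-1}f(0)\in\mfp_0$, so $h\in\mfp$.

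The step most likely to require care is this last containment $\ker\ev_\beta\subseteq(z^k-\alpha)\mfp$, which rests on both the Chinese-remainder observation that $\tilde{\sigma}$-invariance propagates vanishing at $\beta$ to vanishing at the entire $\langle q\rangle$-orbit of roots, and on the verification that the polynomial quotient $h$ inherits the parabolic condition at $z=0$. The hypothesis $\alpha\neq 0$ is essential in both places: without it $\beta=0$ collapses the orbit of roots to a single point, and the scaling $h(0)=-\alpha^{-1}f(0)$ breaks down.
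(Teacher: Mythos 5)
Your proof is correct and follows essentially the same route as the paper's: evaluation at a fixed $k$-th root $\beta$ of $\alpha$, with the same explicit preimages establishing surjectivity. The only real difference is how injectivity is settled. The paper relies on the dimension count $\dim\bigl(\mfp/P(z^k)\mfp\bigr) = (\deg P)\cdot\dim L$, established in the discussion immediately preceding the lemma, so that a surjection between spaces of the same dimension is automatically an isomorphism. You instead compute $\ker\ev_\beta$ directly: $\tilde{\sigma}$-invariance propagates vanishing at $\beta$ to all roots of $z^k-\alpha$; since these roots are distinct for $\alpha\neq 0$, the polynomial $z^k-\alpha$ divides $f$; the factor $z^k-\alpha$ being $\tilde{\sigma}$-fixed and a non-zero-divisor forces the cofactor $h$ into $L[z]^{\tilde{\sigma}}$; and the relation $f(0)=-\alpha h(0)$ (again using $\alpha\neq 0$) forces $h(0)\in\mfp_0$, hence $h\in\mfp$. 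Your version is self-contained, whereas the paper's is shorter given the framework it has already set up. One point worth stating explicitly in your kernel argument: the orbit $\{q^i\beta\}$ exhausts the $k$-th roots of $\alpha$ only because $q$ is a \emph{primitive} $k$-th root of unity; this is implicit in the paper's construction of $\tilde{\sigma}$, but your step genuinely depends on it.
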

\begin{proof}
    Let $\beta$ be a $k$th root of $\alpha$. Then evaluation at $\beta$ defines
    a morphism $\mfp / (z^k - \alpha) \mfp \arr L$. Both $L$ and $\mfp / (z^k -
    \alpha) \mfp$ have the same dimension, so we just need to show that this
    map is onto. Given $x \in L$, write $x = \sum_{i=0}^{k-1} x_i$ where $x_i
    \in L_i$.  Let $f = \sum_{i=1}^{k-1} x_i \beta^{-i} z^i + x_0 \alpha^{-1}
    z^k$. Then $f(\beta)=x$.
\end{proof}
The author does not know if an analogue of Lemma \ref{L:deform} holds for higher
powers of $(z^k - \alpha)$.  The main case of interest is $\mfp / (z^N - t)
\mfp$, which can be regarded as a deformation of $\mfp / z^N \mfp$. Since
$z^{N/k} - t$ splits into $N/k$ coprime linear factors, the algebra $\mfp /
(z^N - t) \mfp$ is isomorphic to $L^{\oplus N/k}$ for $t \neq 0$. At $t = 0$,
the algebra $\mfp / z^N \mfp$ has a large nilpotent ideal. Ignoring
$z$-degrees, Theorem \ref{T:truncatedcoh} tells us that
$H^*\left(L[z]^{\tilde{\sigma}}/z^N\right) \iso H^*(L)^{\otimes N/k}$, so the
cohomology of $L[z]^{\tilde{\sigma}}/(z^N - t)$ is independent of the value of
$t$. On the other hand, Theorem \ref{T:truncatedcoh} tell us that
$H^*\left(\mfp / (z^N-t) \mfp \right)$ changes from $H^*(L)^{\otimes N/k}$ to
$H^*(\mfg_0) \otimes H^*(L_0,\mfg_0) \otimes_{H^*(L_0)} H^*(L)^{\otimes N/k}$
as $t$ degenerates to zero, where $H^*(L_0)$ acts on $H^*(\mfg_0)$ via
pullback. Interestingly, the cohomology of $\mfp / z^k (z^N - t) \mfp$ is
unchanged as $t$ degenerates to zero.

If $\mfp = \mfb$ is an Iwahori and $\mfn$ is the nilpotent subalgebra, then a
similar analysis can be performed for $\mfb / z^N \mfn$. 
\begin{thm}\label{T:truncatednil}
    Let $m_1^{(a)},\ldots,m_{l_a}^{(a)}$ denote the exponents of $L_a$,
    let $\mfb$ be an Iwahori subalgebra of the twisted loop algebra $\mfg$, and
    let $\mfn$ be the nilpotent subalgebra.  Then $H^*(\mfb/z^N \mfn)$ is the
    free super-commutative algebra with a generator in cohomological degree
    $2m_i^{(a)}+1$ and $z$-degree $Nm_i^{(a)}+nk+a$ for every $a=0,\ldots,k-1$,
    $i=1,\ldots,l_a$, and $n$ such that $0 < nk +a\leq N$, as well as $l_0$
    generators of cohomological degree $1$ and $z$-degree $0$.
\end{thm}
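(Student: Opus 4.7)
The plan is to follow the same strategy used to prove Theorem \ref{T:truncatedcoh}, substituting Theorem \ref{T:nilpotentcoh} for Theorem \ref{T:relativecoh} as the input. First, apply the Hochschild-Serre spectral sequence to the short exact sequence
\[
0 \to \mfn/z^N\mfn \to \mfb/z^N\mfn \to \mfh_0 \to 0,
\]
in which $\mfh_0$ is a reductive (indeed abelian) subalgebra complementary to the nilpotent ideal $\mfn/z^N\mfn$. Since $\mfh_0$ is reductive, by Theorem 12 of \cite{HS53} the Hochschild-Serre spectral sequence collapses and gives the ring isomorphism
\[
H^*(\mfb/z^N\mfn) \iso H^*(\mfh_0) \otimes H^*(\mfb/z^N\mfn, \mfh_0).
\]
The factor $H^*(\mfh_0) \iso \Lambda^*\mfh_0^*$ contributes exactly the $l_0$ generators of cohomological degree $1$ and $z$-degree $0$ listed in the statement, so the problem reduces to computing the relative cohomology.

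Second, apply the spectral sequence argument from Section \ref{S:truncatedcoh} that is used in the proof of Theorem \ref{T:truncatedcoh}, but now with the input $H^*_{cts}(\hat{\mfb}, \mfh_0; S^*\hat{\mfn}^*)$ in place of $H^*_{cts}(\hat{\mfp}, \mfg_0; S^*\hat{\mfp}^*)$. Theorem \ref{T:nilpotentcoh} identifies this input as a free super-commutative algebra on paired generators indexed by $n \geq 1$, $a \in \{0,\ldots,k-1\}$, and $i \in \{1,\ldots,l_{-a}\}$: the cocycle $[z^{kn-a}]\tilde{I}_i^{-a}$ of cohomological degree $0$ and symmetric degree $m_i^{(-a)}+1$, paired with its $J$-derivation counterpart of cohomological degree $1$ and symmetric degree $m_i^{(-a)}$. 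The spectral sequence produces one generator of $H^*(\mfb/z^N\mfn, \mfh_0)$ per surviving source pair, of cohomological degree $2m_i^{(-a)}+1$ and $z$-degree $Nm_i^{(-a)} + (kn-a)$, and discards those source pairs whose $z$-degree $kn-a$ exceeds $N$. Relabeling $a \mapsto k-a \pmod{k}$ and invoking the symmetry $m_i^{(-a)} = m_i^{(a)}$ matches these to the generators in the statement with $0 < nk+a \leq N$.

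Two features distinguish this from the parahoric case. First, no coinvariant factor $\Coinv(L_0,\mfg_0)$ appears, because Theorem \ref{T:nilpotentcoh} contains no analogues of the $R_i$ cocycles: the reductive component is the abelian $\mfh_0$, and with coefficients in $S^*\hat{\mfn}^*$ rather than $S^*\hat{\mfb}^*$, no $\mfg_0$-invariant polynomials contribute. Consequently $H^*(\mfb/z^N\mfn)$ emerges as genuinely free super-commutative. Second, the inclusive bound $nk+a\leq N$ (rather than strict $<N$ as in Theorem \ref{T:truncatedcoh}) reflects that $z^N\mfh_0 \subset \mfn$ while $z^N\mfh_0 \not\subset z^N\mfn$: the truncation $\mfb/z^N\mfn$ therefore carries an extra $z^N\mfh_0$ summand relative to $\mfb/z^N\mfb$, which supports the additional generators at $nk+a=N$. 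The main technical obstacle is verifying that the spectral sequence of Section \ref{S:truncatedcoh} runs verbatim with $S^*\hat{\mfn}^*$ coefficients---in particular that it degenerates at the appropriate page and that the $z^N$-cutoff acts cleanly on the Theorem \ref{T:nilpotentcoh} generators---but once this is established, the free algebra structure of the target is inherited directly from that of the input.
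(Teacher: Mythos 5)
Your proposal follows the paper's own route exactly: a Hochschild--Serre factorization $H^*(\mfb/z^N\mfn) \iso H^*(\mfh_0)\otimes H^*(\mfb/z^N\mfn,\mfh_0)$ producing the $l_0$ degree-$1$ generators, followed by the Feigin-style spectral sequence of Section \ref{S:truncatedcoh} run with $S^*\hat{\mfn}^*$ coefficients and the cocycles of Theorem \ref{T:nilpotentcoh} as the $E_1$ page, which is precisely Proposition \ref{P:nilpotentrel}. The one small infelicity is the appeal to a ``symmetry $m_i^{(-a)}=m_i^{(a)}$'': after the relabeling $a\mapsto -a\pmod k$ the identity $m_i^{(-a)}=m_i^{(k-a)}$ is a tautology of the $\Z_k$-indexing of the $L_a$, so no nontrivial symmetry of exponents is needed.
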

As with Theorem \ref{T:truncatedcoh}, the proof of Theorem \ref{T:truncatednil}
reduces via the Hochschild-Serre spectral sequence to the computation of the
relative cohomology, which is also completed in Section \ref{S:truncatedcoh}
(see Proposition \ref{P:nilpotentrel}).

If $P(z)$ is a polynomial of degree $d$, then $\mfb / P(z^k) \mfn$ has
dimension $d \cdot \dim L + l_0$. Furthermore, $[\mfb, P(z^k) \mfh_0]$ is
contained in $P(z^k) \mfn$, and there is a morphism  $\mfb / P(z^k) \mfn \arr
\mfb / P(z^k) \mfb$ with kernel $P(z^k) \mfh_0$, so $\mfb / P(z^k) \mfn$ is a
central extension of $\mfb / P(z^k) \mfb$ of rank $l_0$. 
\begin{lemma}
    If $t \neq 0$ then $\mfb / (z^N - t) \mfn$ is isomorphic to $L^{\oplus N/k}
    \oplus \C^{l_0}$, where the second summand is abelian.
\end{lemma}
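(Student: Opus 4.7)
The plan is to split the central extension
\[
0 \to (z^N-t)\mfb/(z^N-t)\mfn \to \mfb/(z^N-t)\mfn \to \mfb/(z^N-t)\mfb \to 0
\]
identified just before the statement: the kernel is $\cong \mfh_0$ (central, abelian, of dimension $l_0$) and the quotient is $\mfb/(z^N-t)\mfb \cong L^{\oplus N/k}$ by the preceding lemmas applied to the Iwahori (which is in particular a parahoric). My candidate splitting is the subalgebra $\mfn/(z^N-t)\mfn \subset \mfb/(z^N-t)\mfn$ induced by the inclusion $\mfn \subset \mfb$, which is well defined since $(z^N-t)\mfn \subset \mfn$.

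Writing $z^N-t = \prod_m(z^k-\alpha_m)$ and choosing $\beta_m$ with $\beta_m^k=\alpha_m$, the main technical step is to establish the Lie algebra isomorphism
\[
\mfn/(z^N-t)\mfn \xrightarrow{\sim} L^{\oplus N/k}, \qquad f \mapsto (f(\beta_1),\ldots,f(\beta_{N/k})).
\]
For surjectivity, decompose $(x_m)\in L^{\oplus N/k}$ into homogeneous pieces $x_m=\sum_i x_{m,i}$ with $x_{m,i}\in L_i$, and for each $i$ use Lagrange interpolation to produce the unique scalar polynomial $g_i(w)$ of degree $<N/k$, valued in $L_i$, with $\beta_m^i g_i(\alpha_m)=x_{m,i}$. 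The function $f_0=\sum_i z^i g_i(z^k)\in L[z]^{\tilde\sigma}$ then satisfies $f_0(\beta_m)=x_m$, and adjusting by $t^{-1}(z^N-t)f_0(0)$ produces an element of $\mfn$ with the same evaluations (using $(z^N-t)|_{z=0}=-t\neq 0$). For the kernel, an $f=\sum_i z^i g_i(z^k)\in\mfn$ vanishing at every $\beta_m$ forces each $g_i$ to vanish at every $\alpha_m$ (the $L_i$ components being independent and $\beta_m\neq 0$); hence each $g_i$ is divisible by $w^{N/k}-t$, so $f=(z^N-t)\tilde f$ with $\tilde f\in L[z]^{\tilde\sigma}$, and the condition $f(0)\in\mfn_0$ together with $-t\neq 0$ forces $\tilde f\in\mfn$.

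Given this, the composition $\mfn/(z^N-t)\mfn \hookrightarrow \mfb/(z^N-t)\mfn \twoheadrightarrow L^{\oplus N/k}$ is exactly the isomorphism above, so $\mfn/(z^N-t)\mfn$ is a Lie subalgebra of $\mfb/(z^N-t)\mfn$ complementary to the central kernel $\mfh_0\cong\C^{l_0}$. The decomposition $\mfb/(z^N-t)\mfn \cong L^{\oplus N/k}\oplus\C^{l_0}$ with the second summand central and abelian follows.

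I expect the conceptual obstacle to be recognizing that $\mfn$, rather than $\mfb$, is the right place to look for a splitting. A direct Lagrange-interpolation lift $L^{\oplus N/k}\to\mfb$ does not close up as a Lie algebra modulo $(z^N-t)\mfn$, because commutator pairs like $[e_\alpha,f_\alpha]=h_\alpha$ produce genuine residues in the central $\mfh_0$ direction, giving an a priori nontrivial 2-cocycle. The key point is that restricting to $\mfn$ suppresses exactly these $\mfh_0$-valued obstructions, while evaluation at the nonzero points $\beta_m$ remains surjective onto $L$; the residual adjustment by $(z^N-t)f_0(0)$ takes care of the difference between $\mfn$ and $\mfb$ on the interpolation side.
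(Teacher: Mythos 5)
Your proof is correct, but it takes a genuinely different route from the paper's. The paper sets up the same central extension $0 \to \mfh_0 \to \mfb/(z^N-t)\mfn \to L^{\oplus N/k} \to 0$ and then simply invokes Whitehead's lemma: a semisimple Lie algebra has $H^2 = 0$, so every central extension splits; the reductive case is handled by first splitting off the centre of $L$. You instead exhibit an explicit complementary subalgebra, namely the image of $\mfn$ in $\mfb/(z^N-t)\mfn$, and verify by Lagrange interpolation at the roots $\beta_m$ that it maps isomorphically onto $L^{\oplus N/k}$. Your approach is more elementary (no appeal to Lie algebra cohomology), produces an actual section rather than an abstract existence statement, and works uniformly for reductive $L$ without the detour through the semisimple case. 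One small economy you could have made: once you observe that $\mfn/(z^N-t)\mfn$ is a subalgebra complementary (as a subspace) to the central kernel $(z^N-t)\mfb/(z^N-t)\mfn$, the interpolation computation is only needed to confirm that the restriction of the quotient map $\mfb/(z^N-t)\mfn \twoheadrightarrow \mfb/(z^N-t)\mfb$ to this subalgebra is bijective, which can also be checked directly by showing $\mfn + (z^N-t)\mfb = \mfb$ (since $h = (z^N-t)(-t^{-1}h) + z^N(t^{-1}h)$ for $h \in \mfh_0$) and $\mfn \cap (z^N-t)\mfb = (z^N-t)\mfn$ (since $f = (z^N-t)g \in \mfn$ forces $g(0) = -t^{-1}f(0) \in \mfn_0$). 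Your closing remark about a naive $\mfb$-valued interpolation lift failing to close up is a good heuristic for why $\mfn$ is the right place to look, though of course a posteriori the $2$-cocycle is a coboundary.
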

\begin{proof}
    $\mfb / (z^N - t) \mfb$ is isomorphic to the direct sum of $N/k$ copies of
    $L$. If $L$ is semisimple, then so is $\mfb / (z^N - t) \mfb$, so
    all central extensions are trivial. The reductive case reduces to the
    semisimple case by splitting off the centre. 
\end{proof}
Thus $H^*\left(\mfb / (z^N-t) \mfn\right)$ is also independent of $t$ when
$z$-degrees are disregarded.

\subsection{The Macdonald constant term identity}
Theorem \ref{T:truncatedcoh} can be used to prove the affine version of
Macdonald's constant term conjecture. If $\hat{\alpha} = \alpha + n \delta$ is
an affine root, $\alpha$ a weight of $L_0$, set $e^{\hat{\alpha}} = q^{-n}
e^{\alpha}$.  In a slight abuse of notation, the operator $[e^0]$ will denote
the sum of the $e^{n\delta}$ terms, ie. it is $\C(q)$-linear. Let $\delta^*$
denote the dual element to $\delta$.  The following theorem is Conjecture 3.3
of \cite{Ma82}, and was proven for all root systems by Cherednik \cite{Ch95}.
\begin{thm}[Cherednik]\label{T:affineconstant}
    Let $N$ be a multiple of $k$, and let $S_N$ be the set of real
    roots\footnote{ie. $\alpha \neq 0$} $\alpha + n \delta$ of the twisted loop
    algebra $\mfg$ with $0 \leq n \leq N$, such that $\alpha$ is a positive
    (resp. negative) root of $L_0$ if $n=0$ (resp. $n=N$). Let $\rho$ be the
    element of $\mfh_0$ such that $\alpha_i(\rho) = 1$ for all simple roots
    $\alpha_1,\ldots,\alpha_{l_0}$ of $L_0$, and let $\rho_N =
    -N\rho+\delta^*$.  Then
    \begin{equation*}
        [e^0] \prod_{\alpha \in S_N} (1-e^{-\alpha}) =
            \prod_{\alpha \in S_N} \left(1-q^{|\alpha(\rho_N)|}\right)^{\eps(\alpha)},
    \end{equation*}
    where $\eps(\alpha)$ is the sign of $\alpha(\rho_N)$.
\end{thm}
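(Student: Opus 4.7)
The strategy, following Hanlon and Feigin, is to compute the $\mfh_0 \times \C^*$-equivariant Euler characteristic of $H^*(L[z]^{\tilde{\sigma}}/z^N)$ in two ways and equate them. We apply Theorem~\ref{T:truncatedcoh} to the trivial-parahoric case $\mfp = L[[z]]^{\tilde{\sigma}}$, for which $\mfp_0 = L_0 = \mfg_0$ and $\Coinv(L_0,L_0) = \C$. Throughout we regard $H^*(\mfp/z^N\mfp)$ as bigraded under the adjoint action of the Cartan $\mfh_0 \subset L_0 \subset \mfp$ and the $z$-rotation action of $\C^*$, and write $q = e^{-\delta}$.

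The Koszul complex $\Lambda^\bullet(\mfp/z^N\mfp)^*$ computes $H^*(\mfp/z^N\mfp)$, so its alternating character satisfies
\begin{equation*}
    \chi := \sum_n (-1)^n \ch H^n = \prod_\mu (1 - e^{-\mu}),
\end{equation*}
where $\mu$ ranges over the weights of $\mfp/z^N\mfp$, namely $\mu = \alpha + n\delta$ with $0 \leq n < N$ and $\alpha$ a weight (with multiplicity) of $L_{n \bmod k}$. The zero-weight (imaginary) contributions produce a factor $\prod_{n=0}^{N-1}(1 - q^n)^{l_{n \bmod k}}$, which vanishes formally at $n=0$. We resolve this by passing to $\mfh_0$-relative cohomology: Hochschild--Serre yields $H^*(\mfp/z^N\mfp) \cong H^*(\mfh_0) \otimes H^*(\mfp/z^N\mfp,\mfh_0)$, and cancelling the $(1-1)^{l_0}$ factor from $H^*(\mfh_0) = \Lambda^*\mfh_0^*$ leaves
\begin{equation*}
    \chi(H^*(\mfp/z^N\mfp,\mfh_0)) = \prod_{n=1}^{N-1}(1 - q^n)^{l_{n \bmod k}} \cdot \prod_{\text{real }\mu}(1 - e^{-\mu}).
\end{equation*}
A change of variable sending each negative root $\alpha<0$ of $L_0$ at $n=0$ to its $S_N$-representative at $n = N$, at the price of the explicit $q^N$-correction $(1-e^{-\alpha})/(1-e^{-\alpha}q^N)$, rewrites the real-weight product as $\prod_{\beta \in S_N}(1-e^{-\beta})$. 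Applying the constant-term operator $[e^0]$ then recovers the left-hand side of the identity.

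On the other side, Theorem~\ref{T:truncatedcoh} combined with Borel's theorem $H^*(L_0,\mfh_0) \cong \Coinv(L_0,\mfh_0)$ expresses $H^*(\mfp/z^N\mfp,\mfh_0)$ as $\Coinv(L_0,\mfh_0) \otimes \Lambda'$, where $\Lambda'$ is the free super-commutative algebra on the second row of the table in Theorem~\ref{T:truncatedcoh}. The explicit cocycles of Theorem~\ref{T:relativecoh} show that every generator is $\mfh_0$-invariant, so $\chi(H^*(\mfp/z^N\mfp,\mfh_0))$ is a function of $q$ alone, equal to
\begin{equation*}
    \prod_{i=1}^{l_0}\frac{1-q^{N(m_i^{(0)}+1)}}{1-q^N} \cdot \prod_{a=0}^{k-1}\prod_{i=1}^{l_a}\prod_{0 < nk+a < N}(1-q^{Nm_i^{(a)}+nk+a}).
\end{equation*}
Equating the two expressions for $\chi(H^*(\mfp/z^N\mfp,\mfh_0))$ yields the $z$-weighted Euler-characteristic form of Cherednik's identity.

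The main obstacle is the combinatorial bookkeeping required to recognise the resulting equation in Cherednik's closed form $\prod_{\beta \in S_N}(1-q^{|\beta(\rho_N)|})^{\eps(\beta)}$. The key step is to pair, for each positive real root $\alpha$ of $L_a$, the contributions from $\pm\alpha + n\delta$ with $n \equiv a \bmod k$ and $0 \leq n \leq N$, and to show that the corresponding subproduct on the theorem side assembles into a ratio of $q^k$-binomial coefficients of the form $\binom{N(m+1)/k}{N/k}_{q^k}$ analogous to those appearing in Macdonald's finite-type identity. The coinvariant denominator $(1-q^N)^{l_0}$ together with the imaginary factors supplies the Macdonald normalization constants; the sign $\eps(\beta)$ records whether each factor sits in the numerator or denominator of the resulting $q$-binomial, matching the $n = 0 \mapsto n = N$ shift that relates the Koszul and $S_N$ descriptions of the real weights.
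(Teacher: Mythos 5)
Your overall strategy — equating the $z$-weighted Euler characteristic of the Koszul complex with that of the cohomology ring from Theorem~\ref{T:truncatedcoh} — is exactly the one the paper uses. However, you have skipped the key lemma: a Weyl-averaged $q$-analogue of the Weyl denominator formula, which the paper cites from Macdonald. Both of the concrete steps you assert are wrong in a way that hides this omission.

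On the Koszul side, moving each negative root $\alpha<0$ at $n=0$ to $n=N$ replaces $(1-e^{-\alpha})$ by $(1-q^{N}e^{-\alpha})$, introducing the discrepancy $\prod_{\alpha\in\Delta^+(L_0)}\frac{1-e^{\alpha}}{1-q^{N}e^{\alpha}}$. The constant-term operator does not commute with this change: $[e^0]\prod_{\mu\text{ real wt of }\mfp/z^N\mfp}(1-e^{-\mu})$ and $[e^0]\prod_{\beta\in S_N}(1-e^{-\beta})$ are different elements of $\C(q)$, so your conclusion that applying $[e^0]$ ``recovers the left-hand side'' is false. The paper resolves exactly this mismatch by Weyl-averaging and invoking the identity
\begin{equation*}
\sum_{w\in W(\mfg_0)}\prod_{\alpha\in\Delta^+(\mfg_0)}\frac{1-q^{N}e^{w\alpha}}{1-e^{w\alpha}}
=\prod_{i=1}^{l_0}\frac{1-q^{N(r_i+1)}}{1-q^{N}},
\end{equation*}
a genuine $q$-deformation of the Weyl dimension/denominator formula (from Macdonald, cited in the paper as \cite{Ma82} and \cite{Ma72}). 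This is not ``combinatorial bookkeeping''; it is the heart of the argument.

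On the cohomology side, you give $\Coinv(L_0,\mfh_0)$ the $z$-grading ``$N$ times symmetric degree'' and thus the Poincar\'e series $\prod_i\frac{1-q^{N(m_i^{(0)}+1)}}{1-q^N}$. But in the trivial-parahoric case you chose, the only way $\Coinv(L_0,\mfh_0)$ enters $H^*(\mfp/z^N\mfp,\mfh_0)$ is via Borel's theorem applied to the $z$-degree-zero quotient $L_0$ of $\mfp/z^N\mfp$, i.e.\ as $H^*(L_0,\mfh_0)$, which lies entirely in $z$-degree zero. Its contribution to the $z$-weighted Euler characteristic is the constant $|W(L_0)|$, not a polynomial in $q^N$. (The $z$-graded copy of $\Coinv$ in Theorem~\ref{T:truncatedcoh} corresponds to the $\mfg_0$-relative computation, and carries that grading precisely because it is produced by the spectral sequence of Section~\ref{S:truncatedcoh}, not by Borel's theorem sitting in degree zero; if you wanted $\mfg_0=\mfh_0$ you would have to use the Iwahori truncation $\mfb/z^N\mfb$, which has a different weight set than $L[z]^{\tilde\sigma}/z^N$.)

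These two errors are two faces of the same omission: the correction factor $\prod_{\alpha>0}\frac{1-e^{\alpha}}{1-q^{N}e^{\alpha}}$ you dropped on the Koszul side is exactly what the Macdonald identity turns, after averaging over $W(L_0)$, into the ratio $\prod_i\frac{1-q^{N(m_i^{(0)}+1)}}{1-q^N}$ divided by $|W(L_0)|$ on the cohomology side. To repair the proof you must either carry out the Weyl integral / Weyl-averaging argument as the paper does, or correct both sides and then explicitly invoke the Macdonald--Kostant $q$-identity.
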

Define a twisted $q$-binomial coefficent for $a \in \Z_k$ and multiples $N,M$
of $k$ by

\begin{equation*}
    \binom{N}{M}_{k,a} = 
        \prod_{\substack{ N-M < i \leq N \\ i \equiv a \mod k }} (1-q^i)
             \prod_{\substack{ 0 < i \leq M \\ i \equiv a \mod k}} (1-q^i)^{-1}.
%    \frac{(1-q^{N-a})(1-q^{N-k-a}) \cdots (1-q^{N-M+k-a})}
%        {(1-q^{M-a}) (1-q^{M-k-a}) \cdots (1-q^{k-a})}.
\end{equation*}
The right-hand side of Theorem \ref{T:affineconstant} can be simplified by
extending an idea of \cite{Ma82} from the untwisted case.
\begin{lemma}\label{L:affineconstant}
    The identity of Theorem \ref{T:affineconstant} is equivalent to
    \begin{equation}\label{E:affineconstant}
        [e^0] \prod_{\alpha \in S_N} (1-e^{-\alpha}) = 
            \prod_{a\in\Z_k} \prod_{i=1}^{l_a} \binom{N(m_i^{(a)}+1)}{N}_{k,a}.
    \end{equation}
\end{lemma}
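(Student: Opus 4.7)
The plan is to prove the lemma by directly transforming the right-hand side of Theorem \ref{T:affineconstant} into that of \eqref{E:affineconstant}; since the two identities share the same left-hand side, this establishes the equivalence. The argument extends Macdonald's telescoping computation from the untwisted case by tracking residue classes modulo $k$.

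First I would expand the evaluation: for $\alpha = \beta + n\delta \in S_N$, the formula $\rho_N = -N\rho + \delta^*$ gives $\alpha(\rho_N) = n - Nh$ with $h := \beta(\rho)$. Because the principal grading satisfies $L_a^{(0)} = \mfh \cap L_a$, every nonzero $\mfh_0$-weight $\beta$ of $L_a$ has $h \neq 0$. Combined with the range $0 \leq n \leq N$ and the endpoint sign conditions defining $S_N$, this forces $\eps(\alpha) = -1$ and $|\alpha(\rho_N)| = Nh - n$ when $h > 0$, and $\eps(\alpha) = +1$ with $|\alpha(\rho_N)| = n + N|h|$ when $h < 0$.

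Next I would collect the factors $(1-q^m)^{\eps(\alpha)}$ by the residue class of $m$ modulo $k$. Since $k \mid N$, a positive-height weight of $L_a$ (which has $n \equiv a \pmod k$) contributes at $m \equiv -a \pmod k$, while a negative-height weight of $L_a$ contributes at $m \equiv a \pmod k$. Consequently the factors with $m \equiv a \pmod k$ come from positive heights in $L_{-a}$ and negative heights in $L_a$; the bijection $\beta \mapsto -\beta$ identifies $L_a$ with $L_{-a}$ as $L_0$-modules, so both contributions are controlled by $\dim L_{-a}^{(r)}$. Substituting $r = h$ for the positive-height terms and $r = |h|+1$ for the negative-height terms, the net exponent of $(1-q^m)$ in the range $N(r-1) < m \leq Nr$ works out to $\dim L_{-a}^{(r-1)} - \dim L_{-a}^{(r)}$ for $r \geq 2$ and $-\dim L_{-a}^{(1)} = -l_{-a}$ for $r = 1$ (the latter equality uses that $0$ is not an exponent of any $L_a$ when $L$ is semisimple). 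By Corollary \ref{C:twistedexp} these differences are precisely the exponent multiplicities of $L_{-a}$, so the $m \equiv a \pmod k$ contribution telescopes into
\begin{equation*}
\prod_{i=1}^{l_{-a}} \binom{N(m_i^{(-a)}+1)}{N}_{k,a}.
\end{equation*}
Invoking the symmetry $m_i^{(-a)} = m_i^{(a)}$ and $l_{-a} = l_a$ (remarked after Proposition \ref{P:twistedexp}) and reindexing $a \mapsto -a$, the full product reassembles as $\prod_{a \in \Z_k} \prod_{i=1}^{l_a} \binom{N(m_i^{(a)}+1)}{N}_{k,a}$, matching the right-hand side of \eqref{E:affineconstant}.

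The main delicate point is the residue-class bookkeeping, in particular handling the endpoint values $n = 0$ and $n = N$ (which contribute only when $a \equiv 0 \pmod k$) and confirming that the $r = 1$ coefficient equals $-l_{-a}$. The underlying telescoping parallels Macdonald's original argument in \cite{Ma82}, with Corollary \ref{C:twistedexp} providing the twisted analogue of the classical link between root heights and exponents.
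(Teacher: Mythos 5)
Your proposal is correct and is essentially the paper's own argument: both compute $\alpha(\rho_N) = n - N\beta(\rho)$ together with its sign, sort the resulting factors $(1-q^m)^{\pm 1}$ by the residue of $m$ modulo $k$, and convert height counts into exponent multiplicities via Corollary \ref{C:twistedexp} and the isomorphism $L_a \cong L_{-a}$, which is exactly what the paper does when it rewrites the products $A_a$ in terms of the auxiliary functions $\theta_{\pm a}$ and telescopes. The only difference is bookkeeping --- you count the net exponent of each factor $(1-q^m)$ directly instead of invoking the paper's telescoping identity for an arbitrary function $\theta$ --- and your caveat that $0$ is not an exponent is not actually needed, since the $r=1$ count comes out to $-\dim L_{-a}^{(1)}$ on both sides regardless.
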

\begin{proof}
    Let $\Delta_a$ be the set of weights of the $L_0$-module $L_a$, and
    let $\Delta_a^+$ denote the subset of $\alpha \in \Delta_a$ such that
    $\alpha(\rho) > 0$. If $\theta$ is an arbitrary function from positive
    integers to a multiplicative group, then
    \begin{equation*}
        \prod_{\alpha \in \Delta_a^+}
            \frac{\theta(\alpha(\rho)+1)}{\theta(\alpha(\rho))} = 
                \prod_{i=1}^{l_a} \frac{\theta\left(m_i^{(a)}+1\right)}{\theta(1)}.
    \end{equation*}
    To prove this, note that the eigenvalues of $\rho$ on $L_a$ are integers
    giving the principal grading of $L_a$, so the identity follows immediately
    from Corollary \ref{C:twistedexp} by comparing the number of times
    $\theta(m)$ occurs on the top versus the bottom.

    Define
    \begin{equation*}
        A_a = \prod_{\substack{\alpha + n \delta \in S_N \\ \alpha \in \Delta_a}}
            \left(1-q^{|\alpha(\rho_N)|}\right)^{\eps(\alpha)},
    \end{equation*}
    and set $\theta_{-a}(m) = (1-q^{Nm-a}) (1-q^{Nm-k-a}) \cdots (1-q^{Nm
    -N+k-a})$, for $a \in \Z_k$ represented by one of $0,\ldots,k-1$.  Then
    \begin{equation*}
        A_0 = \prod_{\alpha \in \Delta_0^+} \prod_{n=0}^{N/k-1} 
                \left(1-q^{N \alpha(\rho) - nk}\right)^{-1}
            \prod_{n=1}^{N/k} \left(1-q^{N\alpha(\rho) + nk}\right)
    \end{equation*}
    while if $a \neq 0$ we have
    \begin{equation*}
        A_a = \prod_{\alpha \in \Delta_a^+} \prod_{n=0}^{N/k-1}
            \left(1-q^{N\alpha(\rho)-nk-a}\right)^{-1} \left(1-q^{N\alpha(\rho) + nk+a}\right).
    \end{equation*}
    In both cases,
    \begin{equation*}
        A_a = \prod_{\alpha \in \Delta_a^+} \theta_{-a}(\alpha(\rho))^{-1} 
            \theta_{a}(\alpha(\rho)+1).
    \end{equation*}
    Even if $-a$ and $a$ are not congruent, $L_a$ and $L_{-a}$ are still isomorphic, so
    \begin{equation*}
        A_a A_{-a} = \prod_{\alpha \in \Delta_a^+} \theta_{-a}(\alpha(\rho))^{-1}
            \theta_{a}(\alpha(\rho)+1) \theta_{a}(\alpha(\rho))^{-1}
            \theta_{-a}(\alpha(\rho)+1).
    \end{equation*}
    Hence the right hand side of Theorem \ref{T:affineconstant} is equal to 
    \begin{equation*}
        \prod_{a =0}^{k-1} A_a = \prod_{a \in \Z_k} \prod_{i=1}^{l_a}
            \frac{\theta_{a}\left(m_i^{(a)}+1\right)}{\theta_{a}(1)},
    \end{equation*}
    as required.
\end{proof}

Let $C^*$ be a chain complex with an additional grading $C^* = \bigoplus
C^*_n$. The weighted Euler characteristic of $C^*$ is
\begin{equation*}
    \chi(C^*; q) = \sum_{n,i} (-1)^* \dim C^i_n q^n.
\end{equation*}
As in the unweighted case, the weighted Euler characteristic is invariant under
taking homology. Let $\mfp = \{f \in L[[z]]^{\tilde{\sigma}} : f(0) \in
\mfp_0\}$ be a standard parahoric and $\mfg_0 = \mfp_0 \cap \overline{\mfp_0}$.
Theorem \ref{T:affineconstant} can be proved by comparing the $z$-weighted
Euler characteristic for the Koszul complex of the pair $(\mfp/z^N\mfp,
\mfg_0)$ with the weighted Euler characteristic of the cohomology ring:
\begin{proof}
    Write $\mfp_0 = \mfg_0 \oplus \mfu_0$, for $\mfu_0$ the nilpotent radical.  Let
    $K$ be a compact subgroup acting on $L$ with complexified Lie algebra
    $\mfg_0$, and let $T$ be a maximal torus in $K$ with complexified Lie
    algebra $\mfh_0$. Let $\pi_a$ denote the respresentation of $K$ on $L_a$,
    and let $\phi$ and $\overline{\phi}$ denote the representation of $K$ on
    $\mfu_0$ and $\overline{\mfu_0}$ respectively. The weighted Euler
    characteristic of the Koszul complex is
    \begin{equation*}
        \chi(q) = \sum (-1)^i q^i \dim \left(\bigwedge^i (\mfp / z^N \mfp)^* \right)^{K}.
    \end{equation*}
    By orthogonality of traces of representations with respect to Haar measure, 
    \begin{equation*}
        \chi(q) = \int_K \det(\Id - \phi(k)) \det(\Id - q^N \overline{\phi}(k))
            \prod_{0 < n < N} \det(\Id - q^n \pi_n(k)) dk.
    \end{equation*}
    The integrand is conjugation invariant, so by the Weyl integral formula,
    \begin{align*}
        \chi(q) & = \frac{1}{|W(\mfg_0)|} 
            \int_T \det(\Id - \phi(t)) \det(\Id - q^N \overline{\phi}(t)) 
                \prod_{0 < n < N} \det(\Id - q^n \pi_n(t)) 
                \prod_{\alpha \in \Delta(\mfg_0)} (1 - e^{\alpha(t)})dt \\
                & = \frac{1}{|W(\mfg_0)|} [e^0] \prod_{\alpha \in \Delta(\mfg_0)} 
                        (1 - e^{\alpha}) \cdot \Phi,
    \end{align*}
    where $\Delta(\mfg_0)$ is the root system of $\mfg_0$ and
    \begin{equation*}
        \Phi = \prod_{\alpha \in \Delta^+(\mfg_0)} 
                (1-e^{-\alpha})^{-1} (1-q^N e^{\alpha})^{-1}
            \prod_{\alpha \in S_N} (1 - e^{-\alpha}) \prod_{0 < n < N} (1-q^{n})^{l_n}
    \end{equation*}
    (note that the inverses divide into the other multiplicands).  The
    coefficient of $q^j$ in $\Phi$ is (up to sign) the character of a
    $\mfg_0$-module, so $\Phi$ is $W(\mfg_0)$ invariant. Now we use the
    identity 
    \begin{equation}\label{E:macdonaldid}
        \sum_{w \in W(\mfg_0)} \prod_{\alpha \in \Delta^+(\mfg_0)}
            \frac{1 - q^N e^{w \alpha}}{1-e^{w \alpha}} = \prod_{i = 1}^{l_0}
                \frac{1-q^{N(r_i+1)}}{1-q^N}
    \end{equation}
    found in \cite{Ma82}\cite{Ma72} to get
    \begin{align*}
        \chi(q) & = \frac{1}{|W(\mfg_0)|} \prod_{i = 1}^{l_0} \frac{1-q^N}{1-q^{N(r_i+1)}} 
            \cdot \left[e^0\right] \sum_{w \in W(\mfg_0)} \prod_{\alpha \in \Delta^+(\mfg_0)}
                 \frac{1 - q^N e^{w \alpha}}{1-e^{w \alpha}} \prod_{\alpha \in \Delta(\mfg_0)}
                        (1 - e^{\alpha}) \cdot \Phi \\
            & = \frac{1}{|W(\mfg_0)|} \prod_{i = 1}^{l_0} \frac{1-q^N}{1-q^{N(r_i+1)}}
                \cdot \left[e^0\right] \sum_{w \in W(\mfg_0)} w \cdot 
                    \prod_{\alpha \in \Delta^+(\mfg_0)} \frac{1-q^N e^{\alpha}}{1-e^{\alpha}} 
                        \prod_{\alpha \in \Delta(\mfg_0)} (1 - e^{\alpha}) \cdot \Phi. 
    \end{align*}
    Since the action of $W(\mfg_0)$ does not change the constant term, this last sum gives
    \begin{equation*}
        \chi(q) = \prod_{i = 1}^{l_0} (1-q^{N(r_i+1)})^{-1} \prod_{0 < n \leq N} (1-q^{n})^{l_n}
                \cdot \left[e^0\right]  \prod_{\alpha \in S_N} (1 - e^{-\alpha}).
    \end{equation*}

    On the other hand, Theorem \ref{T:truncatedcoh} implies
    \begin{equation*}
        \chi(q) = \prod_{i=1}^{l_0} \left(1-q^{N(r_i+1)}\right)^{-1}
            \prod_{0 < n \leq N}\prod_{i=1}^{l_n} \left(1 - q^{Nm_i^{(n)} + n}\right)
    \end{equation*}
    Identifying these two equations gives the identity of Lemma
    \ref{L:affineconstant}. 
\end{proof}
Note that when $\mfp = \mfb$ is an Iwahori the equivalence follows without
using the Weyl integration argument or identity (\ref{E:macdonaldid}). The
$z$-weighted Euler characteristic identity for $H^*(\mfb / z^N \mfn, \mfh_0)$,
is similarly equivalent to the identity of Lemma \ref{L:affineconstant}.

\section{The Lapalacian calculation and the set of harmonic forms}\label{S:laplacian}

As in the overview, let $\mfp$ be a parahoric (in this case not necessarily
standard) in the twisted loop algebra $\mfg$, and $\mfu_0$ the corresponding
nilpotent.  Choose a homogeneous basis $\{z_k\}$ for $\mfu$, and let $\{z^k\}$
be the dual basis of $\hat{\mfu}^*$.  Let $(V,\pi)$ be a $\hat{\mfp}$-module.
The Koszul complex for $H_{cts}^*(\hat{\mfp},\mfg_0; V)$ is the chain complex
$(C^q,\deltabar)$ defined by
\begin{equation*}
    C^q(\hat{\mfp},\mfg_0; V) = \left(\bigwedge^q \hat{\mfu}^* \otimes V\right)^{\mfg_0}
\end{equation*}
and
\begin{equation*}
    \deltabar = \sum_{k \geq 1} \eps(z^k) \left( \frac{1}{2} \ad^t_{\mfu}(z_k)
        + \pi(z_k) \right).
\end{equation*}
If $C^q$ is given a positive-definite Hermitian form then the cohomology $H^*$
can be identified with the set $\ker \boxbar$ of harmonic forms, where $\boxbar
= \deltabar \deltabar^* + \deltabar^* \deltabar$. The goal of this section is
to calculate $\ker \boxbar$ for $V = S^* \hat{\mfp}^*$ and $V = S^*
\hat{\mfu}^*$, in a metric that we will introduce.
\begin{lemma}\label{L:cocycle}
    Let $V$ be an $\hat{\mfp}$-module, and $J$ a derivation of $\mfp$ which
    annihilates $\mfg_0$. If $\phi \in \bigwedge^k \hat{\mfu}^* \otimes V$
    is $\hat{\mfp}$-invariant then 
    \begin{equation}\label{E:cocycle}
        x_1 \wedge \cdots \wedge x_k \mapsto \phi(Jx_1,\ldots,Jx_k)
    \end{equation}
    is a cocycle in $C^k(\hat{\mfp},\mfg_0; V)$.
\end{lemma}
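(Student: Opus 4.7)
Denote by $J^*\phi$ the tensor defined by the assignment $x_1 \wedge \cdots \wedge x_k \mapsto \phi(Jx_1,\ldots,Jx_k)$. The plan is to first check that $J^*\phi$ is a well-defined element of $C^k(\hat{\mfp},\mfg_0;V)$, and then to verify directly that $\deltabar(J^*\phi)=0$ by combining the Leibniz rule for $J$ with the $\hat{\mfp}$-invariance of $\phi$.

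For the first step I would observe that since $J$ annihilates $\mfg_0$, any argument $x_i \in \mfg_0$ forces $Jx_i = 0$ and hence $J^*\phi$ descends from $\bigwedge^k \hat{\mfp}^*$ to $\bigwedge^k (\hat{\mfp}/\mfg_0)^* = \bigwedge^k \hat{\mfu}^*$. For the $\mfg_0$-invariance, take $y \in \mfg_0$ and apply the standard formula for the Lie algebra action on $\bigwedge^k\hat{\mfu}^*\otimes V$; using $J[y,x_i] = [Jy,x_i]+[y,Jx_i] = [y,Jx_i]$ (because $Jy=0$), one gets
\begin{equation*}
(y\cdot J^*\phi)(x_1,\ldots,x_k) \;=\; (y\cdot\phi)(Jx_1,\ldots,Jx_k) \;=\; 0
\end{equation*}
by $\hat{\mfp}$-invariance of $\phi$. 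So $J^*\phi \in C^k(\hat{\mfp},\mfg_0;V)$.

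For the cocycle identity, I would expand the Chevalley--Eilenberg formula
\begin{equation*}
(\deltabar J^*\phi)(x_0,\ldots,x_k) \;=\; \sum_i (-1)^i \pi(x_i)\,\phi(Jx_0,\ldots,\widehat{Jx_i},\ldots,Jx_k) \;+\; \sum_{i<j} (-1)^{i+j}\,\phi(J[x_i,x_j],Jx_0,\ldots),
\end{equation*}
and then apply Leibniz to split $J[x_i,x_j] = [Jx_i,x_j]+[x_i,Jx_j]$, splitting the second sum into two pieces $T_2$ and $T_3$. Separately, using $\hat{\mfp}$-invariance of $\phi$ with $y = x_i \in \hat{\mfu}\subset\hat{\mfp}$, the first sum $T_1$ rewrites as
\begin{equation*}
T_1 \;=\; \sum_i (-1)^i \sum_{j\ne i} \phi(Jx_0,\ldots,[x_i,Jx_j],\ldots,\widehat{Jx_i},\ldots,Jx_k),
\end{equation*}
and a careful bookkeeping of signs (moving $[x_i,Jx_j]$ to the front in each summand, and using $[x_j,Jx_i] = -[Jx_i,x_j]$ to re-sum the $j<i$ half) shows $T_1 = -T_2 - T_3$, giving $\deltabar J^*\phi = 0$.

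The only nontrivial part is the sign bookkeeping in the final step; the rest is a direct application of Leibniz plus invariance. I do not expect any genuine obstacle, so this proof is essentially a careful combinatorial verification.
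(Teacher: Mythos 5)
Your proof is correct and is essentially the paper's proof: both expand $\deltabar(J^*\phi)$, apply the Leibniz rule for $J$ to the bracket sum, and invoke $\hat{\mfp}$-invariance of $\phi$ to conclude; the paper combines the two sums into $\sum_i (-1)^i\left((\ad^t(x_i)+\pi(x_i))\phi\right)(Jx_0,\ldots,\check{x}_i,\ldots)$ and kills them at once by invariance, whereas you apply invariance to rewrite the $\pi$-term and then cancel against the Leibniz pieces, but the algebraic content is identical.
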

\begin{proof}
    Let $f$ be the cochain constructed as in equation (\ref{E:cocycle}). Then
    \begin{align*}
        (\deltabar f)(x_0,\ldots,x_k) 
            & = \sum_{i < j} (-1)^{i+j} f([x_i,x_j],\ldots,\check{x}_j,\ldots)  
                + \sum_i (-1)^i x_i f(\ldots, \check{x}_i, \ldots) \\
            & = \sum_{i < j} (-1)^{i+j} \phi(J[x_i,x_j],Jx_0, \ldots) 
                + \sum_i (-1)^i x_i \phi( Jx_0,\ldots) \\
            & = \sum_i (-1)^i \left(\ad^t(x_i) \phi\right) (J x_0,\ldots,\check{x}_i,\ldots) 
                + \sum_i (-1)^i x_i \phi( Jx_0,\ldots,\check{x}_i,\ldots),
    \end{align*}
    where the last equality follows from the fact that $J$ is a derivation.  If
    $\phi$ is $\hat{\mfp}$-invariant then the last line is zero, so $f$ is a
    cocycle. That $f$ is $\mfg_0$-invariant is clear from the
    $\hat{\mfp}$-invariance and the fact that $J$ annihilates $\mfg_0$. 
\end{proof}

\begin{defn}
    A linear function $f : \hat{\mfp} \arr \hat{\mfu}$ defines a contraction
    operator 
    \begin{equation*}
        \iota(f) : \bigwedge^p \hat{\mfu}^* \otimes S^q \hat{\mfp}^* \arr
            \bigwedge^{p-1} \hat{\mfu}^* \otimes S^{q+1} \hat{\mfp}^*.
    \end{equation*}
    A cochain $\omega \in \bigwedge^* \hat{\mfu}^* \otimes S^* \hat{\mfp}^*$ is
    \emph{$\hat{\mfu}$-basic} if $\iota(f) \omega = 0$ for all $f$ of the form $y
    \mapsto [x,y]$, $x \in \hat{\mfu}$.
\end{defn}

The main theorem of this section allows us to identify
$H_{cts}^*(\hat{\mfp},\mfg_0; S^* \hat{\mfp}^*)$ with the ring of
$\hat{\mfu}$-basic $\hat{\mfp}$-invariant cochains.
\begin{thm}\label{T:basicforms}
    Let $\mfp$ be a parahoric in a twisted loop algebra $\mfg$ and let
    $\mfu$ be the nilpotent subalgebra.  Then there is a positive-definite
    Hermitian form on $C^* = C^*(\hat{\mfp},\mfg_0; S^*\hat{\mfp}^*)$ and a
    derivation $J$ of $\mfp$ such that the harmonic forms in $C^*$ are closed
    under multiplication, and furthermore the map in Lemma \ref{L:cocycle}
    gives an isomorphism between the ring of $\hat{\mfu}$-basic
    $\hat{\mfp}$-invariant elements of $\bigwedge^* \hat{\mfu}^* \otimes S^*
    \hat{\mfp}^*$ and the ring of harmonic forms. 
\end{thm}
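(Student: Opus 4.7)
The plan is to adapt the Kähler Laplacian calculation of Fishel, Grojnowski, and Teleman to the parahoric setting. The key new difficulty is that for a general standard parahoric the relevant homogeneous space $G[[z]]^{\tilde{\sigma}}/\mathcal{P}$ carries a cone of invariant Hermitian metrics rather than a canonical one, so I will exploit this freedom by introducing positive weights $\lambda_n$ on the graded pieces of $\hat{\mfp}$ and selecting them so that the Laplacian decomposes into manifestly nonnegative pieces.

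Fix a grading of type $d$ defining $\mfp$ and let $J$ act on $\mfg_n$ by multiplication by $n$. Since $[\mfg_m,\mfg_n]\subset\mfg_{m+n}$, $J$ is a derivation of $\mfp$ that annihilates $\mfg_0$ and is injective on $\hat{\mfu}$. Equip $\hat{\mfp}$ with the Hermitian form given by $\lambda_n^{-1}\{\cdot,\cdot\}$ on each $\mfg_n$, where $\{\cdot,\cdot\}$ is the contragredient form on $\mfg$; extend this to $\bigwedge^*\hat{\mfu}^*\otimes S^*\hat{\mfp}^*$ by the standard tensor construction. The weights $\lambda_n$ will be pinned down in the course of the Laplacian computation.

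Now split $\deltabar=\deltabar_1+\deltabar_2$ where $\deltabar_1=\tfrac12\sum\eps(z^k)\ad^t_{\mfu}(z_k)$ and $\deltabar_2=\sum\eps(z^k)\pi(z_k)$ is the piece coming from the module action on $S^*\hat{\mfp}^*$, and expand $\boxbar=\deltabar\deltabar^*+\deltabar^*\deltabar$. Using the orthogonality of the graded pieces under $\{\cdot,\cdot\}$ together with the observation that $\ad(z_k)$ and $\pi(z_k)$ both express the same $\hat{\mfp}$-action in different representations, the calculation should produce a Casimir-type operator on $\mfg_0$ (whose kernel is the $\mfg_0$-invariants, which are already enforced by working in $C^*$), plus a sum indexed by positive degrees $n$ of operators that detect either failure of invariance under $\mfg_n$ or failure of the $\hat{\mfu}$-basic condition. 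The weights $\lambda_n$ will be chosen precisely so that the cross terms $\deltabar_1^*\deltabar_2+\deltabar_2^*\deltabar_1$ combine with the diagonal pieces into a nonnegative operator. The common kernel of these Laplacian summands is then exactly the space of $\hat{\mfu}$-basic, $\hat{\mfp}$-invariant elements of $\bigwedge^*\hat{\mfu}^*\otimes S^*\hat{\mfp}^*$.

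Finally, by Lemma \ref{L:cocycle} the assignment $\phi\mapsto(x_1\wedge\cdots\wedge x_k\mapsto\phi(Jx_1,\ldots,Jx_k))$ sends $\hat{\mfu}$-basic $\hat{\mfp}$-invariant cochains to cocycles, and the Laplacian analysis shows each such cocycle is in fact harmonic. Injectivity of this map follows from the injectivity of $J$ on $\hat{\mfu}$, while surjectivity follows from a bidegree-wise comparison using the explicit description of the Laplacian kernel. Closure of the set of harmonic forms under multiplication is then inherited from the ambient graded-commutative ring structure. The main obstacle will be the Laplacian decomposition itself: verifying that one can choose weights $\lambda_n$ compatibly so that every cross term contributes nonnegatively, uniformly across the twisted parahoric grading. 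This positivity and normalization check, which is where the extra freedom in the Kähler metric is genuinely needed, is the technical heart of the theorem.
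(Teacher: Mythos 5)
Your proposal has the right general shape---use a Kähler-type Laplacian so that harmonic forms pick out the basic invariant cochains, and recognize that the parahoric homogeneous space offers a cone of invariant metrics from which a good one must be chosen---but two central choices are wrong or missing, and the step you yourself flag as ``the technical heart'' is precisely what the paper has to work for.

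First, your derivation $J$ (multiplication by $n$ on $\mfg_n$) is not the right one, and the freedom you ascribe to weights $\lambda_n$ constant on each $\mfg_n$ is illusory. The paper's $J$ acts on the \emph{root space} $\mfg_\alpha$ by $2\langle\rho,\alpha\rangle$, where $\rho$ is the Kac--Moody weight with $\rho(\alpha_i^\vee)=0$ or $1$ according as $d_i=0$ or $d_i>0$, and $\langle\cdot,\cdot\rangle$ is the standard invariant form. This is \emph{not} constant on the $\Z$-graded pieces (simple roots in the same degree can have different lengths, and general $\alpha$ in the same $\Z$-degree can have different $\langle\rho,\alpha\rangle$), so it is not of the form you propose. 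Moreover, the metric is not free: the requirement that the Kähler metric $(\cdot,\cdot)=\{J\cdot,\cdot\}$ have fundamental form $i\gamma$, where $\gamma$ is the semi-infinite cocycle determined by the grading, essentially pins $J$ down. That constraint is exactly what makes the $\gamma$-contribution to Nakano's identity cancel a $-\deg$ term coming out of the curvature computation; with your $J$ the cancellation fails and you do not get a manifestly nonnegative operator.

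Second, the argument you defer---``verifying that one can choose weights $\lambda_n$ compatibly so that every cross term contributes nonnegatively''---is not a normalization check one can push through degree by degree; it is the content of Proposition \ref{P:curvature}, which requires the semi-infinite bicomplex $C^{*,*}(V)$ and Nakano's identity $\boxbar=\square+i[\eps(\gamma+\Theta),\Lambda]$ (Proposition \ref{P:nakano}). On $C^{0,q}$ the curvature term is shown to be $\sum_{i>0} d_R(\widetilde{\ad}^t_{\mfp}(x_i)J^{-1})^* d_R(\widetilde{\ad}^t_{\mfp}(x_i)J^{-1})$, a sum of squares. The kernel of $\square=DD^*$ and the curvature operators, after conjugating by $J_\Delta^{-1}$, is identified with the $\mfu$-basic $\mfp$-invariant cochains, and the conjugation is exactly what produces the map of Lemma \ref{L:cocycle}. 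Your proposal does not engage with the bicomplex or Nakano's identity, and without them the claimed decomposition of $\boxbar$ into nonnegative pieces is unsupported. In short: the blueprint is right but the two load-bearing ideas---the forced root-space-dependent $J$ tied to $i\gamma$, and the semi-infinite Nakano identity used to organize the Laplacian---are absent.
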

Before proceeding to the proof, we note that Theorem \ref{T:basicforms} can be
rephrased in a geometric manner.  Let $\mcP$ and $\mcN$ be pro-Lie groups with
Lie algebras $\hat{\mfp}$ and $\hat{\mfu}$ respectively. The space $\mfp /
\bigoplus_{n > k} \mfg_n$ has the structure of an affine variety, so the
pro-algebra $\hat{\mfp}$ can be regarded as a scheme with coordinate ring
$S^* \hat{\mfp}^*$. 
\begin{defn} 
    The tangent space $T \hat{\mfp}$ is isomorphic to $\hat{\mfp} \times
    \hat{\mfp}$. Let $T_{>0} \hat{\mfp}$ denote the subbundle of $T \hat{\mfp}$
    isomorphic to $\hat{\mfp} \times \hat{\mfu}$, and $T_{>0}^* \hat{\mfp}$
    the continuous dual bundle of $T_{>0}$. Let $\Omega_{>0}^* \hat{\mfp}$
    denote the ring of global sections of $T^*_{>0} \hat{\mfp}$.

    The bundle $T_{>0} \hat{\mfp}$ contains all tangents to $\mcN$-orbits. We
    will say that an element of $\Omega_{>0}^* \hat{\mfp}$ is
    \emph{$\mcN$-basic} if it vanishes on all tangents to $\mcN$-orbits.
\end{defn}
With this terminology, we can identify the ring of $\hat{\mfu}$-basic
$\hat{\mfp}$-invariant cochains with the ring of $\mcP$-invariant $\mcN$-basic
elements of $\Omega_{>0}^* \hat{\mfp}$.

Although Theorem \ref{T:basicforms} covers the main case of interest, a more
natural result occurs if $S^* \hat{\mfp}^*$ is replaced with $S^*
\hat{\mfu}^*$. An element $\omega$ of $\bigwedge^* \hat{\mfu}^* \otimes S^*
\hat{\mfu}^*$ is $\hat{\mfp}$-basic if $\iota(f) \omega = 0$ for all linear
endomorphisms $f$ of $\hat{\mfu}$ of the form $y \mapsto [x,y]$, $x \in
\hat{\mfp}$.
\begin{theorem}\label{T:basicforms2}
    Let $\mfp$ be a parahoric in a twisted loop algebra $\mfg$, and let $\mfu$
    be the nilpotent subalgebra. Then there is a positive-definite Hermitian
    form on $C^*(\hat{\mfp},\mfg_0; S^* \hat{\mfu}^*)$ and a derivation $J$
    (the same as in Theorem \ref{T:basicforms}) such that the harmonic
    forms are closed under multiplication, and furthermore the map of 
    Lemma \ref{L:cocycle} gives an isomorphism between the ring of
    $\hat{\mfp}$-basic and invariant elements of $\bigwedge^* \hat{\mfu}^*
    \otimes S^* \hat{\mfu}^*$ and the ring of harmonic forms.
\end{theorem}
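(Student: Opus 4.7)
The plan is to mirror the proof of Theorem \ref{T:basicforms} with the coefficient module $S^*\hat{\mfp}^*$ replaced by $S^*\hat{\mfu}^*$ throughout. The Cartan involution of $\mfg$ induces a positive-definite Hermitian form on $\hat{\mfu}^*$, and tensoring with the Hermitian form already chosen on $\bigwedge^*\hat{\mfu}^*$ yields a positive-definite Hermitian form on $C^* = C^*(\hat{\mfp},\mfg_0; S^*\hat{\mfu}^*)$; this is the metric with respect to which I would compute $\deltabar^*$ and the Laplacian $\boxbar = \deltabar\deltabar^* + \deltabar^*\deltabar$.

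First I would decompose $\deltabar$ according to the bigrading by cohomological and symmetric degree, and establish a K\"ahler-type identity for $\boxbar$ analogous to the one in the proof of Theorem \ref{T:basicforms}, writing $\boxbar$ as a sum of commuting non-negative operators. The structural feature that makes this possible for $S^*\hat{\mfu}^*$ is that $\hat{\mfu}$ is an ideal of $\hat{\mfp}$, so the $\hat{\mfp}$-action on $\hat{\mfu}^*$ and on its symmetric powers is defined with no projection, and the various pieces of $\deltabar$ and $\deltabar^*$ can be organized exactly as in the proof of the earlier theorem.

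Next I would identify the common kernel of the summands of $\boxbar$ with the $\hat{\mfp}$-invariant and $\hat{\mfp}$-basic elements of $\bigwedge^*\hat{\mfu}^* \otimes S^*\hat{\mfu}^*$. Concretely, $\hat{\mfp}$-invariance corresponds to the vanishing of the summand of $\boxbar$ built from the $\hat{\mfp}$-action, while the $\hat{\mfp}$-basic condition corresponds to the vanishing of the summand built from the contractions $\iota(f)$ with $f(y) = [x,y]$, $x \in \hat{\mfp}$. Closure of harmonic forms under multiplication is then formal. Finally I would check that the cocycle map of Lemma \ref{L:cocycle}, applied with the same derivation $J$ as in Theorem \ref{T:basicforms}, sends $\hat{\mfp}$-basic $\hat{\mfp}$-invariant elements of $\bigwedge^*\hat{\mfu}^*\otimes S^*\hat{\mfu}^*$ to harmonic cochains, and is surjective onto the harmonics by a Poincar\'e-series comparison against the basic invariants computed from the orbit picture.

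The main obstacle is the K\"ahler-type identity itself in the $S^*\hat{\mfu}^*$ setting. Although $\hat{\mfu}$ is an ideal of $\hat{\mfp}$, the action of the pro-group $\mcP$ on $\hat{\mfu}$ has a genuine semisimple component coming from $\mfg_0 \subset \hat{\mfp}$, unlike the essentially translational action of $\mcN$ on $\hat{\mfp}$ used in Theorem \ref{T:basicforms}. Consequently the cross terms contributing to $\boxbar$ appear with different commutator coefficients than before, and one must carefully verify that these cross terms still cancel so as to express $\boxbar$ as a sum of squares. Once this cancellation is in hand the remainder of the argument proceeds in direct parallel with the proof of Theorem \ref{T:basicforms}.
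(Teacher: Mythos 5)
Your overall plan---rerun the proof of Theorem \ref{T:basicforms} with the coefficient module changed---is exactly how the paper handles this (the paper's proof section states that the two proofs are identical). But two parts of your write-up point away from what actually makes the argument work.

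First, the worry about cross terms needing to cancel is misplaced. Nakano's identity (Proposition \ref{P:nakano}) is proved for an arbitrary locally finite $\mfb$-module $V$ with a contragradient positive-definite metric whose $\mfg_0$-action extends to $\overline{\mfb}$; it applies verbatim with $V = S^*\hat{\mfu}^*$. What changes is the explicit form of the curvature term, and that is Proposition \ref{P:curvature2}: instead of a sum $\sum_{i>0} d_R(\widetilde{\ad}^t_{\mfp}(x_i)J^{-1})^* d_R(\widetilde{\ad}^t_{\mfp}(x_i)J^{-1})$ over a basis $\{x_i\}$ of $\mfu$, one gets $\sum_{i\geq 0} d_R(\widetilde{\ad}^t_{\mfu}(y_i)J^{-1})^* d_R(\widetilde{\ad}^t_{\mfu}(y_i)J^{-1})$ over a basis $\{y_i\}$ of \emph{all} of $\mfp$, because the truncated $\hat{\mfp}$-action on $\hat{\mfu}^*$ now has a nontrivial $\mfg_0$ component. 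The curvature is still manifestly a sum of non-negative squares---there is nothing to cancel---and the enlargement of the index set is precisely what turns the condition from $\mfu$-basic into $\mfp$-basic. You have correctly sensed that the $\mfg_0$-component is the new ingredient, but the right way to see it is through the curvature computation, not through a cancellation.

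Second, the Poincar\'e-series comparison for surjectivity is a genuine gap. The ring of $\mcP$-basic and invariant forms on $\hat{\mfu}$ is only computed from the orbit picture in the Iwahori case (Proposition \ref{P:nilpotentbasic}); the paper explicitly notes that the orbit argument fails for other parahorics since $\hat{\mfu}\cap L^{reg}[[z]]^{\tilde{\sigma}}$ is nonempty only when $\mfp$ is an Iwahori, whereas Theorem \ref{T:basicforms2} is for arbitrary parahorics. Even when the orbit picture is available, the Poincar\'e series of the harmonic forms is the Poincar\'e series of the cohomology---the thing you are trying to compute---so the comparison is circular. The paper avoids surjectivity altogether: after writing the harmonic condition as the joint kernel of $D^*$ and the curvature operators, it conjugates by the invertible operator $J_\Delta$ and shows this joint kernel becomes precisely the joint kernel of the operators $d_R(\widetilde{\ad}^t_{\mfu}(y_i))$ and $\widetilde{\ad}^t_{\mfu}(y_i)^{\wedge} + \widetilde{\ad}^t_{\mfu}(y_i)^{Sym}$, which is by definition the ring of $\mfp$-basic $\mfp$-invariant forms. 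The cocycle map of Lemma \ref{L:cocycle} is $J_\Delta$ itself, so the identification is automatically a bijection. You should replace the Poincar\'e-series step with this direct conjugation argument.
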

In geometric language, the ring of $\hat{\mfp}$-basic and invariant cochains is
the same as the ring of $\mcP$-basic and invariant algebraic forms on
$\hat{\mfu}$. 

For the proofs of Theorems \ref{T:basicforms} and \ref{T:basicforms2}, we
assume that the underlying Lie algebra $L$ is semisimple, as this simplifies
our Kahler metric construction.  If $L$ is reductive then $L = [L,L] \oplus
\mfz$, where $\mfz$ is the centre, and consequently $\mfg = [\mfg,\mfg] \oplus
\mfz[z^{\pm 1}]$. It is easy to deduce the reductive case of Theorems \ref{T:basicforms} 
and \ref{T:basicforms2} from the semisimple case by splitting off the centre (for
instance we can extend $J$ to be the identity on $\mfz[z]$). The proof we give
actually holds in more generality.  Let $\mfg$ be a $\Z$-graded Lie algebra
(such that $\dim \mfg_n < +\infty$ for all $n$) with a conjugation (an
anti-linear automorphism sending $\mfg_n \mapsto \mfg_{-n}$) and a
contragradient positive-definite Hermitian form (satisfying $\mfg_n \perp
\mfg_m$ for $m \neq n$). Let $\mfb = \bigoplus_{n \geq 0} \mfg_n$ and $\mfu =
\bigoplus_{n > 0} \mfg_n$. Notice that $\overline{\mfu} = \mfg / \mfb$ is an
$\mfb$-module. If $\overline{\mfu}^{\mfb} = 0$\footnote{This is the condition
that does not hold if $L$ has centre.} then Theorems \ref{T:basicforms} and
\ref{T:basicforms2} hold for $C^*(\hat{\mfb},\mfg_0; S^* \hat{\mfb}^*)$
and $C^*(\hat{\mfb},\mfg_0; S^* \hat{\mfu}^*)$.

\subsection{Nakano's identity and the semi-infinite chain complex}

For the purposes of this section, let $\mfg$ be a $\Z$-graded Lie algebra with
a conjugation, as in the last paragraph of the previous section (this time a
contragradient metric is not required). Let $\mfb$ and $\mfu$ be the
subalgebras $\bigoplus_{n \geq 0} \mfg_n$ and $\bigoplus_{n > 0} \mfg_n$
respectively. These working assumptions are based on the standard conventions
in semi-infinite cohomology, see e.g. \cite{FGZ86}.  In this section we state a
version of Nakano's identity for the relative cohomology of $(\mfb,\mfg_0)$.
This version of Nakano's identity is a straight-forward generalization of a
version for loop groups due to Teleman \cite{Te95}.

Let $V$ be a locally finite $\mfb$-module, such that the action of $\mfg_0$
extends to an action of the complex conjugate $\bar{\mfb}$. Both the $\mfb$ and
the $\bar{\mfb}$ action will be denoted by $\pi$. The relative semi-infinite
chain complex with coefficients in $V$ is a bicomplex $C^{*,*}(V)$ defined by
\begin{equation*}
    C^{-p,q}(V) = \left(\bigwedge^{q} \hat{\mfu}^* \otimes \bigwedge^p \overline{\mfu}
            \otimes V\right)^{\mfg_0}.
\end{equation*}
There are truncated actions of $\mfu$ on $\overline{\mfu} = \mfg /
\overline{\mfb}$ and of $\overline{\mfu}$ on $\mfu = \mfg / \mfb$. Both will be
denoted by $\widetilde{\ad}$. The bicomplex $C^{*,*}$ has two differentials
$\deltabar$ and $D$, of degrees $(0,1)$ and $(1,0)$ respectively. $\deltabar$
is the differential for the Lie algebra cohomology of $(\mfb,\mfg_0)$ with
coefficients in $\bigwedge^* \overline{\mfu} \otimes V$, and can be explicitly
defined as
\begin{equation*}
    \sum_{k >0} \eps(z^k) \left(\frac{1}{2} \ad^t(z_k) + \widetilde{\ad}^t(z_k)
            + \pi(z_k) \right),
\end{equation*}
where $\{z_k\}_{k \geq 1}$ is a homogeneous basis of $\mfu$ as before and
$\eps$ is exterior multiplication. $D$ is the differential for the Lie algebra
homology of $\overline{\mfu}$ with coefficients in $\bigwedge^q \hat{\mfu}^* \otimes
V$, restricted to the $\mfg_0 = \overline{\mfb} / \overline{\mfu}$ invariants.
$D$ can be explicitly defined as
\begin{equation*}
    \sum_{k <0} \left(\frac{1}{2} \ad(z_k) + \widetilde{\ad}(z_k) + \pi(z_k) \right)
        \iota(z^k),
\end{equation*}
where $z_{-k} = \overline{z_k}$ and $\iota$ is the contraction operator on
$\bigwedge^* \overline{\mfu}$. Note that $\iota$ is extended to $C^{*,*}(V)$ so
as to respect super-commutativity, so $\iota(f) \alpha \otimes \beta \otimes v
= (-1)^{q} \alpha\otimes \iota(f) \beta \otimes v$ for $\alpha \in \bigwedge^q
\hat{\mfu}^*$, $\beta \in \bigwedge^* \overline{\mfu}$, and $v \in V$. 

Let $d$ be the total differential $d = \deltabar + D$. A standard fact from
semi-infinite cohomology is that $d^2 = \eps(\gamma)$, where $\gamma$ is the
the semi-infinite cocycle defined by $\gamma|_{\mfg_m \times \mfg_n} = 0$ if
$m+n \neq 0$ or $m=n=0$, and otherwise by
\begin{equation*}
    \gamma(x,y) = \sum_{0 \leq n < k} \tr_{\mfg_n} (\ad(x) \ad(y))
\end{equation*}
for $x \in \mfg_k$, $y \in \mfg_{-k}$, and $k>0$.  Since $\gamma$ has type
$(1,1)$, the operator $\eps(\gamma)$ on $C^{*,*}$ should be interpreted in the
semi-infinite sense, wherein $\eps(z^k \wedge z^{-l}) = \eps(z^k)
\iota(z^{-l})$, for $k,l > 0$.

\begin{defn}
    A \emph{Kahler metric} for the pair $(\mfb,\mfg_0)$ is a Hermitian form
    $(,)$ on $\mfg$ such that
    \begin{itemize}
        \item $(,)$ is positive-definite on $\mfu$ and zero on $\mfg_0$,
        \item $([x,y],z) = -(y,[\overline{x},z])$ for all $x \in \mfg_0$, and
        \item the fundamental form $\omega \in \mfu^* \otimes \overline{\mfu}^*
            \subset \bigwedge^2 \mfg^*$ defined by $\omega(a,b) =
            -i(a,\overline{b})$ for $a \in \mfu$, $b \in \overline{\mfu}$
            is a cocycle.
    \end{itemize}
\end{defn}
Note that we can define a Kahler metric by giving the the restricted Hermitian
form on $\mfu$ and then extending to $\mfg$ by zero on $\mfg_0$ and by
$(\overline{a},\overline{b}) = \overline{(a,b)}$ for $a,b \in \mfu$.

Suppose that there is a Kahler metric for the $(\mfb,\mfg_0)$. Let $L$ denote
multiplication by the fundamental form $\omega$, defined explicitly as
\begin{equation*}
    L = - i \sum_{k \geq 1} \eps(z^k) \iota(z^{-k}),
\end{equation*}
where the basis $\{z_k\}$ is now required to be orthonormal in the Kahler
metric. Let $\Lambda = L^*$ be the adjoint of $L$ on the complex $C^{*,*}(\C)$
with trivial coefficients, extended by $\otimes \Id$ on $C^{*,*}(V)$. Letting
$H = [\Lambda, L]$, it is not hard to check that $\{H,\Lambda,L\}$ is an
$\mfsl_2$-triple, and that $H$ acts on $C^{-p,q}$ by $p-q$ (in other words, if
the degree of $C^{-p,q}$ is defined to be $q-p$ then $H$ acts by $-\deg$). 
This $\mfsl_2$-action is used in Hodge theory to prove Nakano's identity.
Teleman adapted this proof to give an algebraic version of Nakano's identity
for the loop algebra \cite{Te95}. More generally, the same proof gives:
\begin{prop}[Nakano's identity]\label{P:nakano}
    Suppose there is a Kahler metric for $(\mfb,\mfg_0)$, and $V$ has a
    contragradient positive-definite Hermitian form.  Then in the induced
    metric on $C^{*,*}(V)$ we have
    \begin{equation*}
        \boxbar = \square + i [\eps(\gamma + \Theta), \Lambda],
    \end{equation*}
    where $\boxbar$ is the $\deltabar$-Laplacian, $\square$ is the
    $D$-Laplacian, $\gamma$ is the semi-infinite cocycle, and $\Theta$
    is the curvature form
    \begin{equation*}
        \Theta = \sum_{i,j\geq 1} z^{-i} \wedge z^{j} \left( [\pi(z_{-i}),\pi(z_j)] 
                        - \pi([z_{-i},z_j]) \right).
    \end{equation*}
\end{prop}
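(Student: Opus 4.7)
The plan is to mimic the Hodge-theoretic proof of Nakano's identity, following the adaptation to loop algebras in \cite{Te95} but extended to the relative pair $(\mfb, \mfg_0)$. The argument has three logical steps: algebraic K\"ahler identities, a formal Jacobi manipulation, and the identification of the cross term $\deltabar D + D\deltabar$.

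First, using the K\"ahler condition that $\omega$ is a cocycle together with the orthonormal homogeneous basis $\{z_k\}$, I would establish the algebraic K\"ahler identities expressing $\deltabar^*$ and $D^*$ as $\pm i[\Lambda, D]$ and $\pm i[\Lambda, \deltabar]$ respectively. The closure of $\omega$ is precisely the input ensuring that the extra terms picked up when $\Lambda$ is commuted past the internal brackets appearing in the definitions of $\deltabar$ and $D$ reassemble into the corresponding adjoints rather than producing an extraneous correction. The hypothesis on the Hermitian form on $V$ (contragradient and positive-definite) is used here so that $\pi(z_k)^* = \pi(\overline{z_k})$, allowing the module contribution of the Kähler identities to match up.

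Second, I would substitute these identities into $\boxbar = \{\deltabar, \deltabar^*\}$ and $\square = \{D, D^*\}$ and apply the graded Jacobi identity to the resulting nested commutators. Up to the sign conventions fixed by the K\"ahler identities, this collapses $\boxbar - \square$ to an expression of the form $i[\Lambda, \deltabar D + D\deltabar]$, or equivalently $-i[\eps(\deltabar D + D\deltabar), \Lambda]$. Third, I would compute the anticommutator $\deltabar D + D\deltabar$ directly. Since $\deltabar^2 = 0$ and $D^2 = 0$ by bidegree, the entire content of $d^2$ lies in this cross term. The semi-infinite cocycle $\gamma$ accounts for $d^2$ on the trivial-coefficient complex by standard semi-infinite cohomology, and over a non-trivial $\mfb$, $\overline{\mfb}$-module $V$ the additional contribution comes from the $\pi$-terms in $\deltabar$ and $D$: the mixed pieces assemble precisely into the components $[\pi(z_{-i}), \pi(z_j)] - \pi([z_{-i}, z_j])$ defining $\Theta$, which measures the failure of $\pi$ to extend to a $\mfg$-action. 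Thus $\deltabar D + D\deltabar = \eps(\gamma + \Theta)$, and substitution gives the stated identity.

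The main obstacle I expect is the sign and super-commutativity bookkeeping in the mixed exterior/symmetric bicomplex, where the semi-infinite conventions $\eps(z^k \wedge z^{-l}) = \eps(z^k) \iota(z^{-l})$ interact subtly with the adjoint operations, and a careful check that the restriction to $\mfg_0$-invariants preserves the K\"ahler identities. The latter is essentially automatic because the K\"ahler hypothesis forces $\mfg_0$ to act by operators that preserve the Hermitian form on $\mfu$, so $L$ and $\Lambda$ descend cleanly to the invariant subcomplex and commute with the $\mfg_0$-projection used in defining $C^{*,*}(V)$.
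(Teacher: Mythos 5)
Your proposal follows essentially the same approach the paper invokes: the paper does not spell out a proof but defers to Teleman's algebraic adaptation of the Hodge-theoretic Bochner--Kodaira--Nakano argument in \cite{Te95}, which is precisely your three-step scheme (algebraic K\"ahler identities via the $\mfsl_2$-triple $\{H,\Lambda,L\}$, graded Jacobi collapse, and identification of $\deltabar D + D\deltabar = \eps(\gamma+\Theta)$). The one thing to pin down is the sign in your middle step: with the K\"ahler identities $\deltabar^* = -i[\Lambda,D]$, $D^* = i[\Lambda,\deltabar]$ the Jacobi manipulation gives $\boxbar - \square = i[\deltabar D + D\deltabar,\Lambda]$ rather than $i[\Lambda,\deltabar D + D\deltabar]$, which then matches the stated $i[\eps(\gamma+\Theta),\Lambda]$.
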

On restricting to $p=0$, the complex $(C^{0,q}(V),\deltabar)$ becomes the
Koszul complex for the Lie algebra cohomology of the pair $(\mfb,\mfg_0)$ with
coefficients in $V$. The curvature term $i[\eps(\Theta),\Lambda]$ is
straight-forwardly shown to be 
\begin{equation}\label{E:curvature}
    -\sum_{i,j \geq 1} \eps(z^i) \iota(z_j) \left( [\pi(z_i),\pi(z_{-j})]
        - \pi([z_i,z_{-j}]) \right)
\end{equation}
on $C^{0,q}(V)$, where $\{z_i\}$ is a homogeneous basis orthonormal in the
Kahler metric.

\subsection{Kahler metrics for parahorics and the derivation $J$} 
Now we return to the case of a parahoric $\mfp$ in a twisted loop algebra
$\mfg$.  Recall that $\mfp = \bigoplus_{n \geq 0} \mfg_n$ for some grading of
type $d$, and that for the purposes of the proof we are assuming that $L$ is
semisimple. Consequently there is a Kac-Moody algebra $\tilde{\mfg}$ associated
to $\mfg$, and this Kac-Moody algebra has a standard non-degenerate invariant
symmetric bilinear form $\langle,\rangle$. The contragradient Hermitian form
$\{,\}$ on $\mfg$ defines a symmetric invariant bilinear form
$\{\cdot,\overline{\cdot}\}$, and this symmetric form extends to a scalar
multiple of the standard invariant form on $\tilde{\mfg}$. The twisted loop
algebra $\mfg$ is also graded by the root lattice of the Kac-Moody algebra
associated to $\mfg$. Let $\rho$ be a weight of the Kac-Moody defined on simple
coroots by $\rho(\alpha_i^{\vee}) = 0$ if $d_i=0$ and $\rho(\alpha_i^{\vee}) =
1$ if $d_i > 0$ (note that the $\alpha_i^{\vee}$'s are coroots of the
associated Kac-Moody, not of the twisted loop algebra $\mfg$).  Let $J$ be the
derivation of $\mfp$ acting on root spaces $\mfg_\alpha$ as multiplication by
$2 \langle \rho, \alpha \rangle$. 

\begin{prop}\label{P:kahler}
    Let $\{,\}$ be the contragradient positive definite Hermitian form on
    $\mfg$, normalized to match the standard invariant form on the associated
    Kac-Moody.  Then $J$ is positive-definite and $(\cdot,\cdot) =
    \{J\cdot,\cdot\} = \{\cdot,J \cdot\}$ is a Kahler metric for
    $(\mfp,\mfg_0)$ with fundamental form $i \gamma$, where $\gamma$ is the
    semi-infinite cocycle.
\end{prop}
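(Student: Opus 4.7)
The plan is to interpret the derivation $J$ as inner on the Kac--Moody completion $\tilde{\mfg}$, verify the Hermitian and $\mfg_0$-compatibility axioms directly from root arithmetic, and then identify the fundamental form $\omega$ with $i\gamma$, which furnishes the cocycle condition for free since $\gamma$ is the standard semi-infinite cocycle. Writing $\nu:\mfh\to\mfh^*$ for the isomorphism induced by $\langle,\rangle$, we have $J = \ad(\nu^{-1}(2\rho))$, so $J$ acts on the Kac--Moody root space $\mfg_\alpha$ by the scalar $\alpha(\nu^{-1}(2\rho)) = 2\langle\rho,\alpha\rangle$. Since all affine simple roots are real, $\langle\rho,\alpha_i\rangle = \rho(\alpha_i^\vee)\cdot\langle\alpha_i,\alpha_i\rangle/2$ is nonnegative and strictly positive exactly when $d_i>0$. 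Any root $\alpha$ of $\mfu$ expands as $\sum n_i\alpha_i$ with $n_i\geq 0$ and at least one $n_i>0$ satisfying $d_i>0$---even for imaginary roots $n\delta$, since $\delta$ involves all simple roots and the parahoric is proper---so $\langle\rho,\alpha\rangle>0$; roots of $\mfg_0$ involve only simple roots with $d_i=0$, giving $\langle\rho,\alpha\rangle=0$.

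Self-adjointness of $J$ for $\{,\}$ now follows because the root spaces are mutually $\{,\}$-orthogonal and $J$ acts by a real scalar on each, so $(\cdot,\cdot)=\{J\cdot,\cdot\}$ is Hermitian, positive-definite on $\mfu$, and vanishes on $\mfg_0$. The compatibility $([x,y],z)=-(y,[\overline{x},z])$ for $x\in\mfg_0$ holds because $\langle\rho,\beta\rangle=0$ for every $\mfg_0$-root $\beta$, whence $\ad(J)$ commutes with $\ad(\mfg_0)$; the identity is then immediate from the contragradient property of $\{,\}$ by pulling $J$ across the bracket.

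The substantive work is identifying $\omega$ with $i\gamma$, which then makes $\omega$ a cocycle automatically. Unwrapping the definitions, for $a\in\mfg_\alpha\subset\mfu$ and $b\in\mfg_{-\alpha}\subset\overline{\mfu}$ with $\alpha$ of $d$-degree $k>0$, one finds $\omega(a,b) = -i\{Ja,\overline{b}\} = -2i\langle\rho,\alpha\rangle\langle a,b\rangle$, using the normalization $\{x,y\}=\langle x,\overline{y}\rangle$. It therefore remains to show $\gamma(a,b) = -2\langle\rho,\alpha\rangle\langle a,b\rangle$. Expressing the slab $\bigoplus_{0\leq n<k}\mfg_n$ in a weight basis and computing each trace $\tr_{\mfg_n}(\ad(a)\ad(b))$ string by string reduces $\gamma(a,b)$ to a scalar multiple of $\langle a,b\rangle$; identifying the scalar with $-2\langle\rho,\alpha\rangle$ is a standard Kac--Moody identity governing the Tate central extension, and is precisely the point at which the normalization of $\{,\}$ against the standard form on $\tilde{\mfg}$ is used. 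This trace computation is the main obstacle; once it is in hand, $\omega=i\gamma$ and the proposition follows.
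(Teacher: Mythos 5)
Your framing of $J$ as $\ad(\nu^{-1}(2\rho))$ acting on root spaces, the positivity argument (writing a root of $\mfu$ as $\sum n_i\alpha_i$ with some $n_jd_j>0$), and the verification of the $\mfg_0$-compatibility axiom are all correct and essentially routine. The problem is that you stop exactly where the proposition begins: the identity
\begin{equation*}
\gamma(a,b) \;=\; -2\langle\rho,\alpha\rangle\,\langle a,b\rangle, \qquad a\in\mfg_\alpha,\ b\in\mfg_{-\alpha},
\end{equation*}
for the $d$-dependent cocycle $\gamma$ and the $d$-dependent weight $\rho$ is the whole content of the statement, and you defer it as ``a standard Kac--Moody identity governing the Tate central extension.'' For the principal grading of an untwisted loop algebra this reduces to the familiar $-2h^\vee$ critical-level shift, but for an arbitrary parahoric grading of type $d$ of a twisted loop algebra the relevant $\rho$ is defined from $d$, and I do not know of a citable reference that asserts the identity in this generality. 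You have correctly identified the key fact, but you have not proved it, and the slab-by-slab trace computation is genuinely delicate (the slabs are not all copies of $L$, and the twist and the parahoric change the weight multiplicities level by level).

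The paper sidesteps the direct trace computation with a short maneuver that is worth knowing. Rather than computing $\gamma$ and then matching against $J$, it introduces the positive operator
\begin{equation*}
J' \;=\; \sum_{k\geq1}\widetilde{\ad}_\mfp(x_k)\,\widetilde{\ad}_\mfp(x_k)^*,
\end{equation*}
where $\{x_k\}$ is an orthonormal basis of $\mfu$ for $\{,\}$. A three-line manipulation using only $\widetilde{\ad}_\mfp(x)^* = -\widetilde{\ad}_\mfp(\overline{x})$ and the contragradient property of $\{,\}$ shows that $(a,\overline{b})' := \{J'a,\overline{b}\}$ equals $-\sum_{l=1}^n\tr_{\mfg_{-l}}(\ad(b)\ad(a))$, which after reindexing is exactly $-\gamma(a,b)$; thus $(,)'$ has fundamental form $i\gamma$ by construction. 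Because $\gamma$ is a cocycle and $\{,\}$ is contragradient, $J'$ is automatically a derivation. Finally $J'=J$ is checked on the generators of $\mfp$: $J'$ kills $\mfg_0$, and $J'e_i = [e_i,[f_i,e_i]]/\{e_i,e_i\} = 2\langle\rho,\alpha_i\rangle e_i$. This converts the trace identity you need into a much easier check on Chevalley generators. If you want to keep your direct approach, you must actually carry out the slab trace computation for the general $d$-grading and twist; otherwise the cleanest repair is to adopt the $J'$ reformulation.
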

\begin{proof}
    The only thing to prove is that $(,)$ has fundamental form $i \gamma$.
    We go about the proof somewhat backwards: let $\widetilde{\ad}_{\mfp}$
    denote the truncated action of $\mfp \oplus \overline{\mfp}$ on $\mfp =
    \mfg  / \overline{\mfu}$, and define
    \begin{equation*}
        J' = \sum_{k \geq 1} \widetilde{\ad}_{\mfp}(x_k) \widetilde{\ad}_{\mfp} (x_k)^*,
    \end{equation*}
    where $\{x_k\}_{k \geq 1}$ is a homogeneous basis for $\mfu$, orthonormal
    in the contragradient metric. Define $(\cdot,\cdot)' = \{J'\cdot,\cdot\}$.
    Then $J$ is positive-semidefinite by definition, so $(,)'$ is a
    positive-semidefinite Hermitian form. Suppose $a \in \mfg_n$, $b \in
    \mfg_{-n'}$, $n,n' \geq 0$, and assume without loss of generality that
    $x_1,\ldots,x_m$ is a basis of $\mfg_1 \oplus \ldots \oplus \mfg_n$. Since
    $\widetilde{\ad}_{\mfp}(x_k)^* = -\widetilde{\ad}_{\mfp}(\overline{x_k})$
    we have
    \begin{align*}
        (a,\overline{b})' & = \{ J' a, \overline{b} \}
                          = \sum_{k=1}^m \{ [\overline{x_k},a],
                            [\overline{x_k},\overline{b}]\} \\
                         & =  -\sum_{k=1}^m \{ [b,[a,\overline{x_k}]], \overline{x_k}\}
                          = -\sum_{l=1}^n \tr_{\mfg_{-l}} (\ad(b) \ad(a)).
    \end{align*}
    Now $\tr_{\mfg_{-l}}(\ad(b)\ad(a)) = \tr_{\mfg_{n-l}}(\ad(a)\ad(b))$, so
    $-i(a,\overline{b})' = i \gamma(a,b)$. Since $\gamma$ is a cocycle and $\{,\}$
    is contragradient, it follows that $J'$ is a derivation. (It is possible to 
    show that $J'$ is a derivation directly but it takes a little work). 

    Now $\mfp$ is generated by $\mfg_0$ and the root vectors $e_i$ with $d_i >
    0$. $J'$ annihilates $\mfg_0$, and if $d_i > 0$ then 
    \begin{equation*}
        J' e_i = \frac{[e_i,[f_i,e_i]]}{\{e_i,e_i\}} = 2 \langle \rho,\alpha_i \rangle e_i,
    \end{equation*}
    where $f_i = - \overline{e_i}$.  It follows that $J' = J$, finishing the
    proof.
\end{proof}

More generally, if $\mfg$ is a $\Z$-graded Lie algebra with conjugation and a
contragradient positive-definite Hermitian form, then we can define a Kahler
metric simply by using the operator $J'$ form the proof of Proposition \ref{P:kahler}.
The hypothesis $\overline{\mfu}^{\mfb} = 0$ is needed to ensure that the metric
is positive-definite.

\subsection{Calculation of the curvature term}

If $S$ is a linear operator $\hat{\mfu}^* \arr \hat{\mfp}^*$, define an
operator $d_R(S)$ on $\bigwedge^* \hat{\mfu}^* \otimes S^* \hat{\mfp}^*$ by
\begin{equation*}
    d_R(S) (\alpha_1 \wedge \ldots \wedge \alpha_k \otimes b) = 
        \sum_i (-1)^{i-1} \alpha_1 \wedge \ldots \hat{\alpha}_i \ldots
            \wedge \alpha_k \otimes S(\alpha_i) \circ b.
\end{equation*}
If $T$ is an operator $\hat{\mfp}^* \arr \hat{\mfu}^*$, define a similar
operator $d_L(T)$ by
\begin{equation*}
    d_L(T) (\alpha \otimes b_1 \circ \cdots \circ \cdots b_l) = \sum_i T(b_i)
        \wedge \alpha \otimes b_1 \circ \cdots \hat{b}_i \cdots \circ b_l
\end{equation*}
Recall that truncated actions are denoted by $\widetilde{\ad}$, with subscripts
denoting the appropriate truncated space.  By abuse of notation, let $J^{-1}$
denote the inverse of the restriction of the derivation of Proposition
\ref{P:kahler} to $\mfu$. We will also use $J^{-1}$ to denote the dual operator
on $\hat{\mfu}^*$.
\begin{prop}\label{P:curvature}
    Let $\mfp$ be a parahoric subalgebra of a twisted loop algebra $\mfg$.  Let
    $V = S^* \hat{\mfp}^*$ with the contragradient metric.  The Laplacian on
    $C^*(V)$ with respect to the dual Kahler metric from Proposition
    \ref{P:kahler} has curvature term
    \begin{equation*}
        i[\eps(\gamma + \Theta),\Lambda] = \sum_{i > 0} 
            d_R\left(\widetilde{\ad}^t_{\mfp}(x_i) J^{-1}\right)^*
            d_R\left(\widetilde{\ad}^t_{\mfp}(x_i) J^{-1}\right),
    \end{equation*}
    where $\{x_i\}$ is a basis for $\mfu$ orthonormal in the contragradient
    metric, and $\hat{\mfu}^*$ is considered as the subset of $\hat{\mfp}^*$
    that is zero on $\mfg_0$ (so that $\widetilde{\ad}^t_{\mfp}$ sends
    $\hat{\mfu}^*$ to $\hat{\mfp}^*$). 
\end{prop}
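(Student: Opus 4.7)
The plan is to apply Nakano's identity from Proposition \ref{P:nakano} with the Kahler metric of Proposition \ref{P:kahler}, whose fundamental form is $i\gamma$. Restricted to the bottom row $C^{0,q}(V)$ with $V = S^*\hat{\mfp}^*$, the curvature term $i[\eps(\gamma+\Theta),\Lambda]$ is already presented as equation (\ref{E:curvature}), with the basis $\{z_i\}$ orthonormal in the Kahler metric. The task therefore reduces to rewriting that expression in terms of the contragradient-orthonormal basis $\{x_i\}$ and the derivation $J$ so that it matches $\sum_i d_R(\widetilde{\ad}^t_{\mfp}(x_i)J^{-1})^* d_R(\widetilde{\ad}^t_{\mfp}(x_i)J^{-1})$.

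First I would carry out the change of basis. Because $J$ acts diagonally on root vectors with positive eigenvalues $\lambda_i = 2\langle \rho, \alpha_i\rangle$, the Kahler-orthonormal basis is $z_i = \lambda_i^{-1/2} x_i$ with dual basis $z^i = \lambda_i^{1/2} x^i$, and the conjugate rescaling gives an orthonormal basis of $\bar{\mfu}$. Substituting into (\ref{E:curvature}) replaces $\eps(z^i)\iota(z_j)\pi(\cdot)\pi(\cdot)$ by $\lambda_i^{1/2}\lambda_j^{-1/2}\eps(x^i)\iota(x_j)\pi(\cdot)\pi(\cdot)$, and expansions of $\pi(z_{\pm k})$ introduce further $\lambda^{\pm 1/2}$ factors; the bookkeeping target is that these collapse into a single $J^{-1}$ on each side.

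Next I would compute the bracket defect $D_{ij}=[\pi(z_i),\pi(z_{-j})]-\pi([z_i,z_{-j}])$, where $\pi=-\widetilde{\ad}^t_{\mfp}$ on $\hat{\mfp}^*$ extends by derivations to $S^*\hat{\mfp}^*$. Since $\pi$ is a genuine anti-representation on each of $\mfp$ and $\bar{\mfp}$, the defect arises only from the mixed bracket of $\mfu$ with $\bar{\mfu}$. A direct Jacobi calculation on $\hat{\mfp}$ yields, for $y \in \hat{\mfp}$, the obstruction $D_{ij} f(y) = -f([z_i,[z_{-j},y]_{\bar{\mfu}}]_{\mfp})$, where subscripts denote projections with respect to $\mfg = \bar{\mfu}\oplus \mfp$. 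Using contragradient invariance $\{[x,y],z\}=-\{y,[\bar{x},z]\}$, the $\bar{\mfu}$-component $[z_{-j},y]_{\bar{\mfu}}$ can be expanded against the basis, converting $D_{ij}$ into an expression of the form $\widetilde{\ad}^t_{\mfp}(x_i)$ composed with $J^{-1}$ applied to a $\hat{\mfu}^*$-valued linear functional of $y$.

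Finally, summing over $j$ absorbs the contraction $\iota(z_j)$ against the pairing with $z_{-j}$, producing a single $d_R$-type operator on the cochain; pairing this with $\eps(z^i)$ and summing over $i$ then assembles the expression as $\sum_i d_R(\widetilde{\ad}^t_{\mfp}(x_i)J^{-1})^* d_R(\widetilde{\ad}^t_{\mfp}(x_i)J^{-1})$, with the adjoint structure emerging from the canonical pairing between $\hat{\mfu}$ and $\hat{\mfu}^*$. The main obstacle will be the careful tracking of signs, bars, and the several rescalings by $\lambda^{\pm 1/2}$, so that the three places where $\lambda$-factors appear (the basis change, the $\bar{\mfu}$-projection, and the $x_i$ rescaling) combine to leave exactly one $J^{-1}$ inside each $d_R$, and so that the defect calculation produces precisely the positive operator $d_R(\cdot)^* d_R(\cdot)$ rather than a more general bilinear combination.
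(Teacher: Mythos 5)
There is a genuine gap. You state that equation (\ref{E:curvature}) already presents $i[\eps(\gamma+\Theta),\Lambda]$ on $C^{0,q}(V)$, but that formula is only $i[\eps(\Theta),\Lambda]$; the $\gamma$-piece is not in it, and your plan never returns to it. This matters because when the $\Theta$-curvature is rewritten as a second-order operator on $\bigwedge^*\hat{\mfu}^*\otimes S^*\hat{\mfp}^*$ it does not come out as a sum of squares $\sum_i d_R(\cdot)^*d_R(\cdot)$; there is an unavoidable first-order residual term of the form $\sum_i \bigl(\widetilde{\ad}^t_{\mfp}(\overline{x_i})\,\widetilde{\ad}^t_{\mfp}(x_i)\,J^{-1}\bigr)^{\wedge}$, which equals $-\deg$ on the exterior factor (this falls out of the identity, essentially $\Switch(S,T)=d_L(T)d_R(S)-(TS)^{\wedge}$, that is needed to split a second-order ``switch'' operator into the $d_L$/$d_R$ pieces). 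The paper's proof is constructed precisely so that this residual cancels against $i[\eps(\gamma),\Lambda]=[L,\Lambda]=\deg$, and that cancellation is the whole reason the Kahler metric was chosen to have fundamental form $i\gamma$. As written, your approach would either silently drop a $-\deg$ or stall at an expression that is not of the claimed form.

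A secondary issue: your plan to stay on $V=S^*\hat{\mfp}^*$ and directly unwind the rescalings $\lambda_i^{\pm 1/2}$ together with the mixed-bracket defect is likely to become very painful to control. The paper sidesteps this by first computing the curvature on the contragradiently dual module $V'=S^*\overline{\mfp}$ (where the truncated $\overline{\mfp}$-action is transparent), and then transporting back through the two duality maps $\phi$ (Kahler) and $\psi$ (contragradient); the single factor $J^{-1}$ appears cleanly as $\psi\circ\phi^{-1}$ rather than as a delicate cancellation of half-powers of eigenvalues. You should either adopt that detour or be prepared to carry the $\lambda^{\pm 1/2}$ bookkeeping through several stages without error. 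In either case you must incorporate the $\gamma$-term and the $-\deg$ cancellation to reach the stated conclusion.
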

\begin{proof}
    Let $R = i[\eps(\Theta),\Lambda]$. The action $\widetilde{\ad}^t$ acts as a
    derivation on the symmetric algebra $S^* \hat{\mfp}^*$, so by Equation
    (\ref{E:curvature}), $R$ is a second-order operator.  This means that if
    $\alpha_0,\ldots,\alpha_k \in \hat{\mfu}^*$, $b_0,\ldots,b_l \in
    \hat{\mfp}^*$ then
    \begin{equation*}
        R(\alpha_0 \wedge \cdots \wedge \alpha_k \otimes b_0 \circ \cdots \circ b_l)
            = \sum_{i,j} (-1)^i R(\alpha_i \otimes b_j) \alpha_0 \cdots
            \hat{\alpha_ i} \cdots \hat{b_j} \cdots b_l.
    \end{equation*}
    In particular, $R$ is determined by its action on $\hat{\mfu}^* \otimes
    \hat{\mfp}^*$. 

    The truncated action on $V$ is isomorphic to the truncated action on $V' =
    S^* \overline{\mfp}$ via the contragradient metric. Let $R' =
    i[\eps(\Theta_{V'}),\Lambda]$.  If $f \in \hat{\mfu}^*$ and $w \in
    \overline{\mfp}$ then we claim that
    \begin{equation*}
        R'(f \otimes w) = \sum_{i > 0} \widetilde{\ad}^t_{\mfu}(w) z^i \otimes 
            \widetilde{\ad}_{\overline{\mfp}}(z_i) \phi^{-1}(f),
    \end{equation*}
    where $\{z_i\}$ is any homogeneous basis of $\mfu$, $\phi$ is the
    isomorphism $\overline{\mfu} \arr \hat{\mfu}^*$ induced by the Kahler
    metric, and $\overline{\mfu}$ is considered as a subset of
    $\overline{\mfp}$, so that $\widetilde{\ad}_{\overline{\mfp}}$ maps from
    $\overline{\mfu}$ to $\overline{\mfp}$. To prove this, let $\{z_i\}$ be
    orthonormal with respect to the Kahler metric, and think about $f = z^k$,
    $w$ arbitrary. Observe that
    \begin{equation*}
        \widetilde{\ad}_{\overline{\mfp}} (z) w = \sum_{s \geq 1} y^{-s}([z,w]) y_{-s},
    \end{equation*}
    where $\{y_s\}_{s\geq 1}$ is a homogeneous basis of $\mfp$ and $y_{-s} =
    \overline{y_s}$.
    So if $z_{-j} \in \mfg_{-m}$, then 
    \begin{equation*}
        \widetilde{\ad}_{\overline{\mfp}}(z_i) \widetilde{\ad}_{\overline{\mfp}}(z_{-j}) w 
            = \sum_{s \geq 1} y^{-s}([z_i,[z_{-j}, w]]) y_{-s},
    \end{equation*}
    \begin{align*}
        \widetilde{\ad}_{\overline{\mfp}} (z_{-j}) \widetilde{\ad}_{\overline{\mfp}} (z_i) w & 
            = \sum_{s \geq 1} y^{-s}([z_i,w]) [z_{-j},y_{-s}] \\
            & = \sum_{n \leq 0} \sum_{y_{-s} \in \mfg_{n-m}} 
                y^{-s}([z_{-j},[z_i,w]]) y_{-s}, \text{ and}
    \end{align*}
    \begin{equation*}
        \widetilde{\ad}_{\overline{\mfp}}([z_i,z_{-j}]) w 
            = \sum_{s \geq 1} y^{-s}([[z_i,z_{-j}],w]) y_{-s}.
    \end{equation*}
    Consequently
    \begin{equation*}
        \left(\left[\widetilde{\ad}_{\overline{\mfp}}(z_i),\widetilde{\ad}_{\overline{\mfp}}(z_{-j})\right]
             - \widetilde{\ad}_{\overline{\mfp}}([z_i,z_{-j}])\right)w 
            = \sum_{-m < n \leq 0} \sum_{y_{-s} \in \mfg_n} y^{-s} ([z_{-j},[z_i,w]]) y_{-s}.
    \end{equation*}
    After removing the reference to $m$ here, we get
    \begin{equation*}
        \left(\left[\widetilde{\ad}_{\overline{\mfp}}(z_i),\widetilde{\ad}_{\overline{\mfp}}(z_{-j})\right] 
            - \widetilde{\ad}_{\overline{\mfp}}([z_i,z_{-j}])\right)w = 
            \sum_{s \geq 1} \sum_{l \geq 1} y^{-s}([z_{-j},z_l]) z^l([z_i,w]) y_{-s}.
    \end{equation*}
    Now from Equation (\ref{E:curvature}),
    \begin{align*}
        R' (z^k \otimes w) & 
            = -\sum_{i>0} z^i \otimes\left(\left[\widetilde{\ad}_{\overline{\mfp}}(z_i),
                            \widetilde{\ad}_{\overline{\mfp}}(z_{-k})\right] 
                            - \widetilde{\ad}_{\overline{\mfp}}([z_i,z_{-k}])\right) w \\
        & = -\sum_{i,l,s > 0} z^i \otimes y^{-s}([z_{-k},z_l]) z^l([z_i,w]) y_{-s}.
    \end{align*}            
    Moving the $w$ action from $z_i$ to $z^i$, the last expression becomes
    \begin{equation*}
        - \sum_{i,s>0} \widetilde{\ad}^t_{\mfu}(w) z^i \otimes y^{-s}([z_{-k},z_i]) y_{-s}
            = \sum_{i>0} \widetilde{\ad}^t_{\mfu}(w) z^i \otimes
                    \widetilde{\ad}_{\overline{\mfp}}(z_i) z_{-k}.
    \end{equation*}
    The proof of the claim is finished by noting that this last expression is
    independent of the choice of basis $\{z_i\}$ for $\mfu$ and that that
    $z_{-k} = \phi^{-1}(z^k)$.

    Now we can translate from $V'$ to $V$ using the isomorphism $\psi :
    \overline{\mfp} \arr \hat{\mfp}^*$ induced by the contragradient form. 
    The operator $J$ on $\mfu$ has a basis $\{x_i\}$ of eigenvectors
    orthonormal in the contragradient metric.  If $J x_i = \lambda_i x_i$ then
    $\phi(\overline{x_i}) = \lambda_i x^i$, and thus $\psi \circ \phi^{-1}
    (x^i) = \lambda_i^{-1} x^i$. It follows that $\psi \circ \phi^{-1} =
    J^{-1}$ on $\hat{\mfu}^*$. Next, $\widetilde{\ad}^t_{\mfu}(w) \psi(x) = -
    \widetilde{\ad}^t_{\mfp}(x) \psi(w)$. Since $\psi(\overline{x_i}) = x^i$ we
    can conclude that
    \begin{equation*}
        R(f \otimes g) = -\sum_{i > 0} \widetilde{\ad}^t_{\mfp} (\overline{x_i}) g
            \otimes \widetilde{\ad}^t_{\mfp}(x_i) J^{-1} f, 
    \end{equation*}
    where  $\widetilde{\ad}^t_{\mfp}(\overline{x_i})$ is regarded as a map from
    $\hat{\mfp}^*$ to $\hat{\mfu}^*$. 

    If $S,T \in \End(\hat{\mfp}^*)$, let $\Switch(S,T)$ be the second order
    operator on $\bigwedge^* \hat{\mfp}^* \otimes S^* \hat{\mfp}^*$ sending
    $\alpha \otimes \beta \mapsto T \beta \otimes S \alpha$. Note that
    $\widetilde{\ad}^t_{\mfp}(x_i)^* = - \widetilde{\ad}^t_{\mfp}
    (\overline{x_i})$. We have shown that $R$ is the restriction of the 
    operator
    \begin{equation*}
        \sum_i \Switch\left(\widetilde{\ad}^t_{\mfp}(x_i) J^{-1}, \widetilde{\ad}^t_{\mfp}(x_i)^*\right)
    \end{equation*}
    to $\bigwedge^* \hat{\mfu}^* \otimes S^* \hat{\mfp}^*$, where $J^{-1}$ is
    zero on $\mfg_0$. It is easy to see that $\Switch(S,T) = d_L(T) d_R(S)
    - (T S)^{\wedge}$, where $(TS)^{\wedge}$ is the operator $TS$ extended to
    $\bigwedge^* \hat{\mfp}^*$ as a derivation. Also, $d_L(T)^* = d_R(T^*
    J^{-1})$, where $T^*$ is the adjoint of $T$ in the contragradient metric.
    Note that the $J^{-1}$ term comes from the difference between the
    contragradient metric on the symmetric factor and the Kahler metric on
    the exterior factor. Finally we have
    \begin{equation*}
        R = \sum_i d_R\left(\widetilde{\ad}^t_{\mfp}(x_i) J^{-1}\right)^* 
                        d_R\left(\widetilde{\ad}^t_{\mfp}(x_i) J^{-1}\right)
            + \sum_i \left(\widetilde{\ad}^t_{\mfp}(\overline{x_i}) \widetilde{\ad}^t_{\mfp}(x_i)
                J^{-1} \right)^{\wedge}.
    \end{equation*}
    Now $\sum_i \widetilde{\ad}^t_{\mfp}(\overline{x_i}) \widetilde{\ad}^t_{\mfp}(x_i)$
    is the negative of the dual of the derivation $J$ on $\mfu$, while $J^{-1}$ is
    the dual of the inverse of $J$. Thus on $\bigwedge^* \hat{\mfu}^*$, this
    second summand is simply $-\deg$. But since we have chosen a Kahler metric
    with fundamental form $i \gamma$, we have $i[\eps(\gamma),\Lambda] =
    [L,\Lambda] = -H = \deg$, finishing the proof of the Proposition.
\end{proof}

Similarly, given endomorphisms $S$ and $T$ of $\hat{\mfu}^*$ we can define
operators $d_R(S)$ and $d_L(T)$ on $\bigwedge^* \hat{\mfu}^* \otimes S^*
\hat{\mfu}^*$.
\begin{prop}\label{P:curvature2}
    Let $\mfp$ be a parahoric subalgebra of a twisted loop algebra $\mfg$, and
    let $\mfu$ be the nilpotent subalgebra.  Let $V = S^* \hat{\mfu}^*$ with
    the contragradient metric.  The Laplacian on $C^*(V)$ with respect to the
    dual Kahler metric from Proposition \ref{P:kahler} has curvature term
    \begin{equation*}
        i[\eps(\gamma + \Theta),\Lambda] = \sum_{i \geq 0} 
            d_R\left(\widetilde{\ad}^t_{\mfu}(y_i) J^{-1}\right)^*
            d_R\left(\widetilde{\ad}^t_{\mfu}(y_i) J^{-1}\right),
    \end{equation*}
    where $\{y_i\}_{i \geq 0}$ is a basis for $\mfp$ orthonormal in the contragradient
    metric. 
\end{prop}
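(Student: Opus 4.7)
The proof is a direct adaptation of the argument for Proposition \ref{P:curvature}, with the coefficient module $S^*\hat{\mfp}^*$ replaced everywhere by $S^*\hat{\mfu}^*$. First I would transport the problem to $V' = S^*\overline{\mfu}$ via the contragradient isomorphism $\psi : \overline{\mfu} \to \hat{\mfu}^*$, observing that $\overline{\mfu}$ carries both the restricted $\overline{\mfp}$-action (since $\overline{\mfu}$ is an ideal of $\overline{\mfp}$) and the truncated $\mfp$-action $\widetilde{\ad}_{\overline{\mfu}}(z)w = \pi_{\overline{\mfu}}([z,w])$, where the projection $\pi_{\overline{\mfu}}$ annihilates all of $\mfp$ (in particular killing the $\mfg_0$-piece that survived under $\pi_{\overline{\mfp}}$ in the previous proof).

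Since $R = i[\eps(\Theta),\Lambda]$ is a second-order operator, by Equation (\ref{E:curvature}) it suffices to compute $R'(z^k \otimes w)$ for $w \in \overline{\mfu}$. Running the Jacobi calculation from Proposition \ref{P:curvature} with $\widetilde{\ad}_{\overline{\mfu}}$ in place of $\widetilde{\ad}_{\overline{\mfp}}$ shows that
\begin{equation*}
    \bigl([\widetilde{\ad}_{\overline{\mfu}}(z_i),\widetilde{\ad}_{\overline{\mfu}}(z_{-j})]
        - \widetilde{\ad}_{\overline{\mfu}}([z_i,z_{-j}])\bigr) w = [z_{-j},[z_i,w]]
\end{equation*}
whenever $[z_i,w]$ lies in the portion of $\mfp$ of degree strictly less than $\deg(z_j)$, and vanishes otherwise. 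The key difference from the earlier calculation is that the intermediate element $[z_i,w]$ now ranges over all of $\mfp$ (including $\mfg_0$) rather than only over $\mfu$. Expanding $[z_i,w]$ in an orthonormal basis $\{y_s\}_{s\geq 0}$ of $\mfp$ with $y_{-s}=\overline{y_s}$ and collapsing the double sums as before gives an expression analogous to the one in Proposition \ref{P:curvature}, but with summation index ranging over the enlarged basis of $\mfp$.

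Translating back to $V$ via $\psi$ and using the identity $\psi\circ\phi^{-1}=J^{-1}$ on $\hat{\mfu}^*$, we obtain
\begin{equation*}
    R(f \otimes g) = -\sum_{i\geq 0} \widetilde{\ad}^t_{\mfu}(\overline{y_i})\, g \otimes
        \widetilde{\ad}^t_{\mfu}(y_i) J^{-1} f.
\end{equation*}
Applying the Switch identity $\Switch(S,T) = d_L(T) d_R(S) - (TS)^{\wedge}$ together with $d_L(T)^* = d_R(T^* J^{-1})$ exactly as in the earlier proof rewrites $R$ as the stated sum of squares plus a residual derivation term $\sum_i(\widetilde{\ad}^t_{\mfu}(\overline{y_i})\widetilde{\ad}^t_{\mfu}(y_i) J^{-1})^\wedge$ on $\bigwedge^*\hat{\mfu}^*$. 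The same Casimir computation identifies this residual with $-\deg$, which is cancelled by $i[\eps(\gamma),\Lambda] = \deg$, giving the desired formula. The main obstacle is the Jacobi bookkeeping that justifies the enlarged summation range: the $\mfg_0$-contribution absorbed into the exterior factor in Proposition \ref{P:curvature} is here pushed into the symmetric factor, enlarging the outer sum so as to include the basis element $y_0 \in \mfg_0$.
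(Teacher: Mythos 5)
Your proposal is correct and follows the same route the paper indicates (the paper itself gives essentially no proof beyond ``similar to Proposition~\ref{P:curvature}''). The Jacobi bookkeeping is right: for $z_i \in \mfu$ homogeneous, $z_{-j} \in \mfg_{-m}$, $w \in \overline{\mfu}$, one gets $\pi_{\overline{\mfu}}\bigl([z_{-j},\pi_{\mfp}([z_i,w])]\bigr)$ in place of $\pi_{\overline{\mfp}}\bigl([z_{-j},\pi_{\mfu}([z_i,w])]\bigr)$, so the inner projection target shifts from $\mfu$ to $\mfp$ and the outer target from $\overline{\mfp}$ to $\overline{\mfu}$; this is what enlarges the summation index from a basis of $\mfu$ to a basis of $\mfp$ (degrees $[-m,0)$ rather than $(-m,0]$).

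One caveat: the phrase ``the same Casimir computation identifies this residual with $-\deg$'' glosses over a genuinely new identity. At the end of Proposition~\ref{P:curvature} the residual $\sum_{i>0}\widetilde{\ad}^t_{\mfp}(\overline{x_i})\widetilde{\ad}^t_{\mfp}(x_i)$ (sum over a $\mfu$-basis, action $\widetilde{\ad}_{\mfp}$) is identified with $-J^{\mathrm{dual}}$ by directly invoking $J = J' = \sum_{k\geq 1}\widetilde{\ad}_{\mfp}(x_k)\widetilde{\ad}_{\mfp}(x_k)^*$ from Proposition~\ref{P:kahler}. In your setting the residual is $\sum_{i\geq 0}\widetilde{\ad}^t_{\mfu}(\overline{y_i})\widetilde{\ad}^t_{\mfu}(y_i)$, a sum over a $\mfp$-basis via the truncated action $\widetilde{\ad}_{\mfu}$; this is \emph{not} literally the $J'$ of Proposition~\ref{P:kahler}. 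You need to separately verify that $\sum_{i\geq 0}\widetilde{\ad}_{\mfu}(y_i)\widetilde{\ad}_{\mfu}(y_i)^* = J|_{\mfu}$. Repeating the trace argument of Proposition~\ref{P:kahler} gives $\{J''a,\overline{b}\} = -\sum_{l=0}^{n-1}\tr_{\mfg_{-l}}(\ad(b)\ad(a))$ for $a\in\mfg_n$, whereas $J$ gives $-\sum_{l=1}^{n}\tr_{\mfg_{-l}}(\ad(b)\ad(a))$; the two windows differ by the boundary terms $l=0$ and $l=n$. These agree because $\tr_{\mfg_0}(\ad(a)\ad(b)) = \tr_{\mfg_0}(\ad(b)\ad(a)) = \tr_{\mfg_n}(\ad(a)\ad(b))$: the first equality holds since $\ad(a)\ad(b)|_{\mfg_0}$ and $\ad(b)\ad(a)|_{\mfg_0}$ are transposes with respect to the invariant form, and the second is the trace cyclicity already used in the paper. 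With this one extra observation the residual is indeed $-\deg$ and your argument closes.
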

The proof of Proposition \ref{P:curvature2} is similar to the proof of Proposition
\ref{P:curvature}. A proof of the analogous result for symmetrizable Kac-Moody
algebras with the principal grading can be found in \cite{Sl10}.

\subsection{Proof of Theorems \ref{T:basicforms} and \ref{T:basicforms2}}
Once again let $J$ denote the operator on $\hat{\mfu}^*$ which is the dual of
the derivation $J$ on $\mfu$. We give a proof of Theorem \ref{T:basicforms};
the proof of Theorem \ref{T:basicforms2} is identical. Let $J_{\Delta}$ denote
the diagonal extension of $J$ to the exterior factor of $\bigwedge^*
\hat{\mfu}^* \otimes S^* \hat{\mfp}^*$. The adjoint of
$\widetilde{\ad}_{\mfu}^t(x)$ in the Kahler metric is $- J
\widetilde{\ad}_{\mfu}(\overline{x}) J^{-1}$. Thus we can directly calculate
that
\begin{equation*}
    D^* = - \sum_{i < 0} \eps(x_i) \left( (J \widetilde{\ad}_{\mfu}^t(x_{-i}) 
        J^{-1})^{\wedge} + \widetilde{\ad}_{\mfp}^t(x_{-i})^{Sym} \right),
\end{equation*}
where $\{x_i\}$ is a basis of $\mfu$ orthonormal in the contragradient metric
and $x_{-i} = \overline{x_i}$. On $C^{0,q}(V)$ the $D$-Laplacian is $\square = D
D^*$, so the set of harmonic cocycles is the joint kernel of the operators
$D^*$ above and $d_R\left(\widetilde{\ad}_{\mfp}^t(x_i) J^{-1}\right)$, $i\geq 1$. The
kernel of $D^*$ on $C^{0,q}(V)$ is the joint kernel of the operators $\left(J
\widetilde{\ad}_{\mfu}^t(x_{i}) J^{-1}\right)^{\wedge} +
\widetilde{\ad}_{\mfp}^t(x_i)^{Sym}$, $i \geq 1$. Now we have
\begin{align*}
    d_R\left(\widetilde{\ad}_{\mfp}^t(x_i) J^{-1}\right) J_{\Delta} & = 
            J_{\Delta} d_R\left(\widetilde{\ad}_{\mfp}^t(x_i)\right) \text{ and} \\
    \left(\left(J \widetilde{\ad}_{\mfu}^t(x_{i}) J^{-1}\right)^{\wedge} +
        \widetilde{\ad}_{\mfp}^t(x_i)^{Sym} \right) J_{\Delta}
        & = J_{\Delta} \left( \widetilde{\ad}_{\mfu}^t(x_i)^{\wedge} +
            \widetilde{\ad}^t_{\mfp}(x_i)^{Sym} \right).
\end{align*}
Thus we see that $J_{\Delta}^{-1}$ identifies the set of harmonic cocycles
with the joint kernels of the operators $d_R\left(\widetilde{\ad}_{\mfp}^t(x_i)\right)$,
$i \geq 1$, and $\left( \widetilde{\ad}_{\mfu}^t(x_i)^{\wedge} +
\widetilde{\ad}^t_{\mfp}(x_i)^{Sym} \right)$, $i \geq 1$. Since the elements
of $C^{0,q}(V)$ are $\mfg_0$-invariant by definition, the kernel of the 
latter family of operators is the set of $\mfp$-invariant cochains. The kernel
of the former family of operators is the set of $\mfu$-basic cochains,
finishing the proof.

\section{Twisted arc and jet schemes and the twisted arc group}\label{S:jetschemes}

This section covers background material on twisted arc and jet schemes, and
proves basic facts about the twisted arc group arising from a diagram
automorphism. The material in this section will be used to study adjoint orbits
in Section \ref{S:orbit}.

\subsection{Twisted arc and jet schemes}\label{SS:arcspace}
By a variety, we mean a separated, reduced, but not necessarily irreducible,
scheme of finite type over $\C$. The arc scheme $J_\infty X$ of a variety $X$ over
$\C$ is a separated scheme of infinite type representing the functor $Y \mapsto
\Hom(Y \times \Spec \C[[z]], X)$. Intuitively the arc scheme is the space of
maps from the formal arc $\Spec \C[[z]]$ into $X$. The $m$th jet scheme $J_m X$
($0 \leq m < +\infty$) is a separated scheme of finite type over $\C$
representing the functor $Y \mapsto \Hom(Y \times \Spec \C[z]/z^m,X)$.  If $m
\leq n$ then there is a morphism $J_n X \arr J_m X$, and $J_{\infty} X$ is the
inverse limit of the jet schemes of $X$.  The $\C$-points of $J_m X$ are
$m$-jets, ie. morphisms $\Spec \C[z] / z^m \arr X$.  For example, $J_0 X = X$
and $J_1 X$ is the tangent scheme of $X$. If $X$ is the affine subset of $\C^n$
cut out by the equations $f_1 = \ldots = f_k = 0$ then $J_m X$ is the subscheme
of $(\C[z]/z^m)^{n}$ cut out by the equations $f_i(x_1,\ldots,x_n) = 0$,
$i=1,\ldots,f_k$, where $x_i \in \C[z]/z^m$ and $(\C[z]/z^m)^{n}$ is regarded
as the affine space of dimension $mn$. The association $V \mapsto J_m V$ is
functorial, so if $G$ is an algebraic group then $J_m G$ is an algebraic group
when $m < +\infty$, and a pro-group when $m = +\infty$. The arc scheme of $X$
is sometimes denoted by $X[[z]]$, but we use the notation $J_{\infty} X$ so
that propositions can be stated uniformly for both arc and jet schemes.

The following well-known lemma is useful for working with jet schemes:
\begin{lemma}[\cite{Mu01}]\label{L:etalepullback} 
    If $X \arr Y$ is etale then $J_m X = X \times_Y J_m Y$ for all $0 \leq m
    \leq +\infty$.
\end{lemma}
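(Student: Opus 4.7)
The plan is to verify both identifications using the functors of points of jet and arc spaces, combined with the characterization of etale morphisms by the unique lifting property against nilpotent thickenings. Recall that if $f : X \arr Y$ is etale then for any closed immersion $T_0 \incl T$ defined by a nilpotent ideal, any commutative square with arrows $T_0 \arr X$ and $T \arr Y$ admits a unique diagonal $T \arr X$ making both triangles commute.

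First I would address the jet space identification $J_m X \iso X \times_Y J_m Y$. For a test scheme $T$, a $T$-point of $J_m X$ is a morphism $\phi : T \times \Spec \C[z]/z^m \arr X$, while a $T$-point of $X \times_Y J_m Y$ is a pair $(g, \tilde h)$ with $g : T \arr X$ and $\tilde h : T \times \Spec \C[z]/z^m \arr Y$ satisfying $\tilde h|_{T} = f \circ g$. The natural map $\phi \mapsto (\phi|_T,\, f \circ \phi)$ is the candidate bijection. The closed immersion $T \incl T \times \Spec \C[z]/z^m$ is cut out by the ideal $(z)/(z^m)$, which is nilpotent of order $m$, so formal etaleness of $f$ produces, from each pair $(g, \tilde h)$, a unique $\phi$ that reduces to $g$ modulo $z$ and lifts $\tilde h$ through $f$. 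This establishes the natural isomorphism of functors, hence of schemes.

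For the arc space identification $X[[z]] \iso X \times_Y Y[[z]]$, the closed immersion $T \incl T \times \Spec \C[[z]]$ is no longer a nilpotent thickening, so I would pass to an inverse limit. Two routes are available: take $\varprojlim_m$ of the jet space isomorphism, using that $X[[z]] = \varprojlim_m J_m X$, $Y[[z]] = \varprojlim_m J_m Y$, and that fibre products commute with cofiltered limits; or apply the unique lifting property at each finite truncation $T \times \Spec \C[z]/z^m$ directly and assemble the resulting lifts into a single arc. The uniqueness clause of formal etaleness guarantees that the lifts at successive truncations form a compatible system, so the two routes give the same answer.

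The main (if mild) obstacle is precisely this compatibility step in the arc-space case: one must verify that the inverse system of unique lifts has a coherent limit valued in $X$ rather than merely in a pro-object over $X$. This is forced by uniqueness — any two candidate lifts to $T \times \Spec \C[z]/z^m$ must agree, so successive lifts restrict correctly — but deserves a line of explicit checking. The finite presentation hypothesis implicit in etale (versus merely formally etale) is what upgrades the functor-of-points bijection to a scheme-theoretic identification, ensuring both sides are honest schemes of the expected type.
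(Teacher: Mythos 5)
The paper gives no proof of this lemma, citing only \cite{Mu01}, so there is no internal argument to compare against. Your proof via the functor of points and the infinitesimal lifting criterion for etale morphisms is the standard one and is correct. For jets, the closed immersion $T \incl T \times \Spec \C[z]/(z^m)$ is a nilpotent thickening, so unique lifting gives the bijection of $T$-points; for arcs, passing to the inverse limit over $m$ and commuting the fibre product past the cofiltered limit gives the claimed identity, and your two proposed routes are genuinely equivalent (the uniqueness clause is exactly what makes the system of finite-level lifts coherent).

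One small inaccuracy in your closing paragraph: finite presentation is not what upgrades the functorial bijection to a scheme-theoretic isomorphism. Both $J_m X$ and $X \times_Y J_m Y$ are schemes by general theory independently of whether $f$ is etale (jet spaces of a finite-type $\C$-scheme exist as schemes, and fibre products of schemes exist), so a natural isomorphism of their functors of points is automatically an isomorphism of schemes by Yoneda. Formal etaleness alone suffices for the lemma; the finite presentation in the definition of etale would matter if one wanted to transfer finiteness properties along $J_m X \arr J_m Y$, but that is not what is being claimed here. This does not affect the validity of your main argument.
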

Open immersions are etale, so if $U \subset X$ is an open subset then the
pullback of $U$ via $J_m X \arr X$ is equal to $J_m U$. In particular, $J_m X$
is covered by open subsets $J_m U$, where $U \subset X$ is an affine open
(if $m = +\infty$ then we use the fact that the inverse limit of affine
schemes is affine).  If there is an etale map $X \arr \mbA^n$ then $J_m X = X
\times \mbA^{mn}$ for all $m < +\infty$, while $J_{\infty} X = X \times
\mbA^{\infty}$. Consequently if $X$ is smooth of dimension $n$ then $J_m X \arr
J_k X$ is a (Zariski) locally-trivial $\mbA^{n(m-k)}$-bundle for all $k \leq m
< +\infty$.  In particular, $J_m X$ is smooth and the truncation morphisms $J_m
X \arr J_k X$ are surjective. Similarly $J_{\infty} X \arr J_k X$ is a
locally-trivial $\mbA^{\infty}$-bundle for all $0 \leq k <
+\infty$.\footnote{The infinite-type schemes we work with are nice enough that
they could be called ``smooth'' in their own right. However, we avoid this
complication and only use smoothness for schemes of finite type. See for
instance the wording of Lemma \ref{L:surjective}.}

The following lemma is likely well-known (and follows easily from formal
smoothness):
\begin{lemma}\label{L:surjective}
    If $X \arr Y$ is smooth and surjective then the maps $J_m X \arr J_m Y$ are
    smooth for all $0 \leq m < +\infty$, and surjective for all $0 \leq m \leq
    +\infty$
\end{lemma}

From Lemmas \ref{L:etalepullback} and \ref{L:surjective} we get the
following proposition:
\begin{prop}\label{P:principalbundle}
    Let $0 \leq m < +\infty$. If $E \arr M$ is an etale-locally trivial
    principal $G$-bundle then $J_m E \arr J_m M$ is an etale-locally trivial
    principal $J_m G$-bundle. 
\end{prop}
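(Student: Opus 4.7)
The plan is to exploit the functor-of-points definition of $J_m$ together with the standard infinitesimal lifting property of \'etale morphisms; the statement is then essentially formal.

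First I would observe that $J_m$, being defined by $J_m X(Y) = X(Y\times \Spec \C[z]/z^m)$, preserves all finite limits, since taking the product with the fixed scheme $\Spec \C[z]/z^m$ preserves limits. In particular $J_m(A\times_Z B) \iso J_m A \times_{J_m Z} J_m B$, and $J_m(A\times B)\iso J_m A\times J_m B$. Consequently $J_m G$ is a group scheme and the action $G\times E\arr E$ induces an action $J_m G\times J_m E\arr J_m E$. Moreover, the isomorphism $G\times E\arr E\times_M E,\ (g,e)\mapsto (ge,e)$ that witnesses the torsor structure pulls through $J_m$ to give an isomorphism $J_m G\times J_m E\arr J_m E\times_{J_m M} J_m E$. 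This shows that $J_m E\arr J_m M$ is a $J_m G$-pseudotorsor; it only remains to exhibit local \'etale trivializations.

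Next, choose an \'etale cover $\{U_i\arr M\}$ on which $E$ trivializes, so that $E\times_M U_i\iso U_i\times G$. Applying $J_m$ and using that it preserves fiber products and direct products yields
\begin{equation*}
    J_m E \times_{J_m M} J_m U_i \;\iso\; J_m(E\times_M U_i) \;\iso\; J_m(U_i\times G) \;\iso\; J_m U_i \times J_m G,
\end{equation*}
which is the desired local trivialization of $J_m E$, provided $\{J_m U_i\arr J_m M\}$ is itself an \'etale cover.

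For \'etaleness of $J_m U_i\arr J_m M$, apply Lemma \ref{L:etalepullback} to write $J_m U_i = U_i\times_M J_m M$; this is the base change of the \'etale morphism $U_i\arr M$ along $J_m M\arr M$, hence \'etale. For surjectivity of $\coprod J_m U_i\arr J_m M$, take a $\C$-point of $J_m M$, that is an $m$-jet $\phi\colon \Spec \C[z]/z^m\arr M$. Its restriction to $\Spec \C$ lifts to some $U_i$ because $\coprod U_i\arr M$ is surjective, and then the formal lifting property for \'etale morphisms along the nilpotent thickening $\Spec \C \incl \Spec \C[z]/z^m$ gives a unique extension of this lift to an $m$-jet in $U_i$ over $\phi$. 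This identifies $\phi$ with a $\C$-point of $J_m U_i$, completing the proof. The main (small) subtlety is this last surjectivity argument, where one needs the \'etale lifting property rather than the bare surjectivity used in Lemma \ref{L:surjective}; everything else is formal manipulation.
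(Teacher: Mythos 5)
Your proof is correct and follows essentially the same route as the paper: use Lemma \ref{L:etalepullback} plus base change to see that $J_m$ preserves \'etale maps, use that $J_m$ preserves fiber products (functor-of-points) to carry the local trivializations over, and conclude surjectivity of $J_m U \arr J_m M$. Your closing worry about ``needing the \'etale lifting property rather than the bare surjectivity of Lemma~\ref{L:surjective}'' is a misreading: \'etale maps are smooth, so Lemma~\ref{L:surjective} already gives exactly the surjectivity $J_m U \arr J_m M$ you want (its proof is precisely the formal-lifting argument you spell out), so there is no gap to patch.
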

\begin{proof}
    For $E \arr M$ to be etale-locally trivial means that there is a surjective
    etale morphism $U \arr M$ such that the pullback of $E$ over $U$ is
    isomorphic to the trivial $G$-bundle $U \times G$. Now $J_m$ preserves
    etale maps (by Lemma \ref{L:etalepullback} and the fact that etale maps are
    preserved under base change) and thus $J_m U \arr J_m M$ is etale and
    surjective. The proof is finished by observing that $J_m$ preserves
    pullbacks (which follows from the definition of the pullback via the
    functor of points).
\end{proof}

Proposition \ref{P:principalbundle} on jet schemes has the following corollary:
\begin{cor}\label{C:proquotient}
    Suppose $X$ has a free $G$-action such that an etale-locally trivial
    quotient $X \arr X / G$ exists. Then $J_m (X/G)$ is isomorphic to $J_m X /
    J_m G$, $0 \leq m < +\infty$, and $J_{\infty} (X/G)$ is isomorphic to
    $J_{\infty} X / J_{\infty} G$, where this last quotient is the pro-group
    quotient, ie. the inverse limit of the quotients $J_m X / J_m G$, $0 \leq m
    < +\infty$.
\end{cor}
If $X \arr X / G$ is etale-locally trivial then it is also surjective, so by
Corollary \ref{C:proquotient} and Lemma \ref{L:surjective} the map $J_\infty X
\arr J_\infty X / J_\infty G$ is surjective. If $X$ is affine with a free
$G$-action and $G$ is reductive then $X / G = X// G$, the GIT quotient, and $X
\arr X / G$ is etale-locally trivial by Luna's slice theorem \cite{Lu73} (the
theorem applies because all orbits under a free action are closed, see the
discussion on page 53 of \cite{Bo91}). All the quotients we study will be of
this type. 

Now suppose that $X$ has an automorphism $\sigma$ of finite order $k$. This
automorphism lifts to an automorphism $\sigma$ of the jet and arc schemes $J_m
X$.  Choose a fixed $k$th root of unity $q$, and let $m(q)$ denote the
automorphisms of $\C[z]/z^n$ and $\C[[z]]$ induced by sending $z \mapsto qz$.
\begin{defn}
    Let $\tilde{\sigma}$ denote the automorphism $\sigma \circ m(q)^{-1}$. The
    \emph{twisted jet (resp. arc) scheme} $J_m^{\tilde{\sigma}} X$ is the
    equalizer of the morphisms $\Id_{J_m X}$ and $\tilde{\sigma}$ in the
    category of schemes. 
    
    In other words, if $m < +\infty$ then $J_m^{\tilde{\sigma}} X$ represents
    the functor $Y \mapsto \{f \in \Hom(Y \times \Spec \C[z] / z^m, X) : f
    \circ m(q) = \sigma \circ f\}$, while $J_\infty^{\tilde{\sigma}} X$
    represents the functor $Y \mapsto \{f \in \Hom(Y \times \Spec \C[[z]], X) :
    f \circ m(q) = \sigma \circ f\}$.
\end{defn}

$J_m^{\tilde{\sigma}} X$ is a closed subscheme of $J_m X$, and $(J_m
X)^{\tilde{\sigma}}$ is separated for all $m$. Since $\Spec \C[[z]]$ is the
direct limit of schemes $\Spec \C[z]/z^n$, it follows from the functor of
points characterisation that $J_{\infty}^{\tilde{\sigma}} X$ is the inverse
limit of schemes $J_m^{\tilde{\sigma}} X$, $0 \leq m < +\infty$.  Since
$J_m^{\tilde{\sigma}} X$ is a closed subscheme of $J_m X$, it is covered by the
inverse images of the open subschemes $J_m U \subset J_m X$, for $U \subset X$
open affine. The inverse image of $J_m U$ in $J_m^{\tilde{\sigma}} X$ is the
same as the inverse image of $\tilde{\sigma}(J_m U) = J_m \sigma(U)$. Thus the
inverse image of $J_m U$ in $J_m^{\tilde{\sigma}} X$ is the same as in the
inverse image of $J_m V$, where $V = U \cap \sigma(U) \cap \ldots \cap
\sigma^{k-1}(U)$. By definition $\sigma(V) = V$, and $V$ is affine because $X$
is separated.  Finally, the pullback of $J_m V$ to $J_m^{\tilde{\sigma}} X$ is
$J_m^{\tilde{\sigma}} V$, and $J_m^{\tilde{\sigma}} V$ is affine. We conclude
that $J_m^{\tilde{\sigma}} X$ is covered by open affines $J_m^{\tilde{\sigma}}
U$ where $U\subset X$ runs through open affines such that $\sigma(U) = U$.

The following lemma is an immediate consequence of the definition of tangent
and jet (resp. arc) schemes via functor of points. 
\begin{lemma}\label{L:tangent}
    Let $\sigma_*$ be the automorphism induced by $\sigma$ on $TX$. Then the
    tangent scheme to $J_m^{\tilde{\sigma}} X$ is naturally isomorphic to
    the twisted jet (resp. arc) scheme $J_m^{\widetilde{\sigma_*}}(TX)$ of
    the tangent scheme to $X$.
\end{lemma}

Using known results for finite-dimensional varieties, we can show that the
twisted jet scheme of a smooth variety is also smooth. 
\begin{lemma}\label{L:twistedsmooth}
    Let $0 \leq m < +\infty$.  If $X$ is a smooth variety with a finite-order
    automorphism $\sigma$ then $J_m^{\tilde{\sigma}} X$ is a smooth variety.
    In addition, if $X$ and $Y$ are both smooth varieties with finite-order
    automorphisms $\sigma_X$ and $\sigma_Y$ and $X \arr Y$ is a
    $\sigma$-equivariant smooth map then $J_m^{\tilde{\sigma}} X \arr 
    J_m^{\tilde{\sigma}} Y$ is smooth.
\end{lemma}
\begin{proof}
    We can assume that $X$ is affine. Since $X$ is smooth, $J_m X$ is also a
    smooth variety. The twisted jet scheme $J_m^{\tilde{\sigma}} X$ is the
    fixed-point scheme of the finite group $\langle \tilde{\sigma} \rangle$. It
    is a well-known consequence of Luna's slice theorem that the fixed-point
    variety of a reductive algebraic group acting on a smooth variety is also
    smooth. This also holds for the fixed-point scheme by Proposition 7.4 of
    \cite{Fo73}, so $J_m^{\tilde{\sigma}} X$ is smooth.\footnote{If $X$ is
    not smooth then the fixed-point scheme of a reductive group action can be
    non-reduced.} 

    Since $J_m^{\tilde{\sigma}} X$ is a smooth variety the tangent scheme
    is a vector bundle. By Lemma \ref{L:tangent}, $T_x J_m^{\tilde{\sigma}} X =
    (T_x J_m X)^{\tilde{\sigma}_*}$ and similarly $T_y J_m^{\tilde{\sigma}} Y =
    (T_y J_m Y)^{\tilde{\sigma}_*}$.  If $X \arr Y$ is smooth then $J_m X \arr
    J_m Y$ is smooth by Lemma \ref{L:surjective}, hence $T J_m X \arr T J_m Y$
    is surjective on fibres, and it follows that $(T_x J_m
    X)^{\tilde{\sigma}_*} \arr (T_y J_m Y)^{\tilde{\sigma}_*}$ is surjective.
    Since both $ J_m^{\tilde{\sigma}} X$ and $J_m^{\tilde{\sigma}} Y$ are
    smooth, $J_m^{\tilde{\sigma}} X \arr J_m^{\tilde{\sigma}} Y$ is a smooth
    map.
\end{proof}
Note that $J_m^{\tilde{\sigma}} X$ is not necessarily irreducible, as
$X^{\sigma}$ can be disconnected. 

We also have the following analogue of Lemma \ref{L:etalepullback}.
\begin{lemma}\label{L:twistedetale}
    Let $0 \leq m \leq +\infty$. Suppose that $X$ and $Y$ have finite-order
    automorphisms $\sigma_X$ and $\sigma_Y$. If $X \arr Y$ is an etale
    $\sigma$-equivariant map then $J_m^{\tilde{\sigma}} X = X^\sigma
    \times_{Y^{\sigma}} J_m^{\tilde{\sigma}} Y$.
\end{lemma}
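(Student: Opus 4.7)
The plan is to prove the statement via the functor of points, reducing the twisted version to the untwisted etale-pullback identity of Lemma \ref{L:etalepullback}. For a test scheme $T$, I want to construct a natural bijection between $X[[z]]^{\tilde{\sigma}}(T)$ and $(X^\sigma \times_{Y^\sigma} Y[[z]]^{\tilde{\sigma}})(T)$. The bijection sends an arc $\phi : T \times \Spec \C[[z]] \arr X$ to the pair $(\phi_0, f \circ \phi)$, where $\phi_0$ is the restriction of $\phi$ to the zero section $T \times \{0\}$ and $f : X \arr Y$ is the given $\sigma$-equivariant etale map. Lemma \ref{L:etalepullback} guarantees that $\phi$ is uniquely determined by such a pair, subject to the compatibility $f \circ \phi_0 = (f\circ\phi)|_{T\times\{0\}}$.

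To verify this map lands in the twisted fiber product, I restrict the identity $\phi \circ m(q) = \sigma_X \circ \phi$ to the zero section of $\Spec \C[[z]]$, which is fixed by $m(q)$, obtaining $\phi_0 = \sigma_X \circ \phi_0$ and hence $\phi_0 \in X^\sigma(T)$. Composing with $f$ and using $\sigma$-equivariance gives $(f\circ\phi) \circ m(q) = \sigma_Y \circ (f \circ \phi)$, so $f \circ \phi \in Y[[z]]^{\tilde{\sigma}}(T)$. Conversely, given $(x, \psi)$ with $x \in X^\sigma(T)$, $\psi \in Y[[z]]^{\tilde{\sigma}}(T)$, and $f \circ x = \psi|_{T\times\{0\}}$, Lemma \ref{L:etalepullback} furnishes a unique arc $\phi$ with $\phi_0 = x$ and $f \circ \phi = \psi$. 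To check $\phi \circ m(q) = \sigma_X \circ \phi$, I compare the two arcs under the bijection $X[[z]](T) = X(T) \times_{Y(T)} Y[[z]](T)$: both compose with $f$ to give $\sigma_Y \circ \psi$, and both restrict at the zero section to $x$ (using $\sigma_X(x) = x$ on the right). By the uniqueness part of Lemma \ref{L:etalepullback} the two arcs therefore coincide.

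The identical argument, with $\Spec \C[z]/z^m$ in place of $\Spec \C[[z]]$, gives the jet-space version, since Lemma \ref{L:etalepullback} is stated in both settings and $m(q)$ descends to an automorphism of $\C[z]/z^m$ fixing the closed point. The only step requiring any care is the converse uniqueness above, and its content is simply that $m(q)$ fixes the zero section; once this is observed, compatibility of the twist with etale pullback is automatic, so I do not anticipate a real obstacle beyond bookkeeping. This will in particular let us transfer computations on twisted arc spaces along etale neighbourhoods, which is how the lemma will be used in the orbit analysis of Section \ref{S:orbit}.
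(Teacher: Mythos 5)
Your proof is correct and takes essentially the same route as the paper's: both reduce to the untwisted identity $X[[z]] \iso X \times_Y Y[[z]]$ of Lemma~\ref{L:etalepullback} and then identify the fixed locus of the induced automorphism via the functor of points. The paper merely states that ``the result follows from the functor of points characterisations,'' while you have written out the diagram chase at the level of $T$-points (in particular noting that $m(q)$ fixes the zero section, so evaluation at zero lands in the $\sigma$-fixed loci), which is exactly the content the paper is alluding to.
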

\begin{proof}
    By Lemma \ref{L:etalepullback}, $J_m X \iso X \times_Y J_m Y$.  The
    automorphism $\tilde{\sigma}_X$ on $J_m X$ translates to the unique
    automorphism on the latter space which lies above $\sigma_X$ on $X$,
    $\sigma_Y$ on $Y$, and $\tilde{\sigma}_Y$ on $J_m Y$. The result follows
    from the functor of points characterisations of the twisted jet and arc
    schemes and the fibre product.
\end{proof}

Finally, the jet structure distinguishes a subbundle of the tangent bundle
of a jet or arc space.
\begin{defn}
    If $X$ is a variety with finite-order automorphism $\sigma$, we let
    $T_{const} J_m^{\tilde{\sigma}} X$ denote the pullback $X^{\sigma}
    \times_{T X^{\sigma}} T J_m^{\tilde{\sigma}}$, where $T
    J_m^{\tilde{\sigma}} X \arr T X^{\sigma}$ is the differential of the
    projection $J_m^{\tilde{\sigma}} X \arr X^{\sigma}$ and $X^{\sigma} \arr T
    X^{\sigma}$ is the zero section. Intuitively $T_{const}
    J_m^{\tilde{\sigma}}$ is the space of infinitesimal families of jets (resp.
    arcs) which are constant at $z=0$.
\end{defn}

\subsection{Connectedness of the twisted arc group}

In this section $G$ will be a connected algebraic group with Lie algebra $L$,
such that the diagram automorphism $\sigma$ lifts to $G$ (for example, this
occurs if $G$ is simply-connected). $H$ will be the torus corresponding to the
chosen Cartan $\mfh$. 

We recall some basic facts about diagram automorphisms and the structure of
$L$, using terminology and basic results from Chapter 9, Section 5 of
\cite{Ca05}.  Let $\mfh_i$ denote the $q^i$th eigenspace of $\sigma$ acting on
$\mfh$. By definition, there is a choice of simple roots
$\alpha_1,\ldots,\alpha_l$ such that $\sigma$ permutes the corresponding
coroots $h_{\alpha_i}$ and Chevalley generators $e_{\alpha_i}$. If $J$ is an
orbit the $\sigma$-action on simple roots, let $\alpha_J = \frac{1}{|J|}
\sum_{\alpha \in J} \alpha$. Then the set $\{ \alpha_J |_{\mfh_0} : J \text{ is
an orbit of } \sigma \}$ is a set of simple roots for $L_0$. Restriction to
$\mfh_0$ gives an isomorphism between the subgroup $W^{\sigma}$ (where $W$ is
the Weyl group of $L$) and the Weyl group $W(L_0)$ of $L_0$. The simple
generator $s_J$ of $W(L_0)$ given by reflection through $\alpha_J$ on $\mfh_0$
corresponds to the element of $W^{\sigma} \subset W(L)$ which is the maximal
element in the subgroup of $W(L)$ generated by reflection through the simple
roots in $J$. In addition, we will need:
\begin{lemma}\label{L:diagramweyl}
    If $N(H)$ is the normalizer of $H$ in $G$, then $N(H)^{\sigma} =
    N_{G^{\sigma}}(H^{\sigma})$, the normalizer of $H^{\sigma}$ in
    $G^{\sigma}$. Consequently $W^{\sigma} = N(H)^{\sigma} / H^{\sigma} \subset
    W(L)$. Furthermore, the inclusion $W(L_0) \iso W^{\sigma} \incl W(L)$ is
    length-preserving, in the sense that if $w \in W^{\sigma}$, then it is
    possible to get a reduced expression for $w$ by first taking a reduced
    expression $w = s_{J_1} \cdots s_{J_r}$ for $w$ in $W(L_0)$, and then
    replacing each $s_{J_i}$ with a reduced expression in $W(L)$.
\end{lemma}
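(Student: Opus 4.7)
The plan is to prove the three assertions of Lemma~\ref{L:diagramweyl} in turn: the normalizer equality, the description of $W^\sigma$, and the length-additivity of the inclusion $W(L_0) \hookrightarrow W(L)$.

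For the first assertion, the containment $N(H)^\sigma \subset N_{G^\sigma}(H^\sigma)$ is immediate. For the converse, I would show that $C_G(H^\sigma) = H$: since the restriction map on $\mfh^*$ sends roots of $L$ to nonzero elements of $\mfh_0^*$ (by the description of simple roots of $L_0$ as $\sigma$-averages of $\sigma$-orbits of simple roots of $L$), no root space of $L$ is centralized by $H^\sigma$, so the Lie algebra of $C_G(H^\sigma)$ is $\mfh$, and by connectedness of centralizers of tori $C_G(H^\sigma) = H$. Any $g \in G^\sigma$ normalizing $H^\sigma$ then normalizes $C_G(H^\sigma) = H$, giving the reverse containment.

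For the second assertion, the map $N(H)^\sigma / H^\sigma \to W(L)$ is injective (its kernel is $H \cap N(H)^\sigma$ modulo $H^\sigma$, which is trivial) with image contained in $W^\sigma$. Surjectivity onto $W^\sigma$ is proved by constructing $\sigma$-fixed lifts in $N(H)$: for each $\sigma$-orbit $J$ of simple roots, the longest element $w_J$ of the parabolic subgroup $W_J \subset W(L)$ generated by reflections through $J$ is $\sigma$-fixed (by uniqueness, since $W_J$ is $\sigma$-stable), and lifts to $N(H)^\sigma$ via the standard Tits lift built from $\sigma$-compatible Chevalley generators. Since the $w_J$ generate $W^\sigma$ under the isomorphism $s_J \mapsto w_J$, this suffices.

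For the third assertion, I would use the characterization $\ell(w) = \#\{\alpha \in \Delta^+(L) : w\alpha \in \Delta^-(L)\}$. For $w \in W^\sigma$ the inversion set is $\sigma$-invariant, hence a union of $\sigma$-orbits. The key observation is that for each positive root $\beta$ of $L_0$, the corresponding $\sigma$-orbit $O_\beta \subset \Delta^+(L)$ is entirely inverted by $w$ if and only if $w\beta \in \Delta^-(L_0)$; this follows by averaging over $O_\beta$ and restricting to $\mfh_0$, combined with $\sigma$-equivariance of $w$. Hence $\ell(w) = \sum_{\beta > 0,\ w\beta < 0} |O_\beta|$. Applying this both to $w$ and to each partial product of a reduced expression $w = s_{J_1} \cdots s_{J_r}$ in $W(L_0)$, induction on $r$ yields $\ell(w) = \sum_i |O_{J_i}| = \sum_i \ell(w_{J_i})$, proving length-additivity. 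I expect the main obstacle to be the type $A_{2n}$ case, where the simple roots in an orbit $J$ are not orthogonal and $w_J$ is the longest element of an $A_2$ subgroup; here one must separately verify that the intermediate partial products inside a reduced expression for each $w_{J_i}$ contribute the correct length, but the argument goes through by the same averaging principle.
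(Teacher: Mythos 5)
Your argument for the first assertion is valid but differs from the paper's: the paper takes $\rho \in \mfh_0$ with $\alpha_i(\rho) = 1$ for all simple roots $\alpha_i$ of $L$, notes $\rho$ is a regular element of $L$, and observes that any $g \in N_{G^\sigma}(H^\sigma)$ sends $\rho$ into $\mfh_0 \subset \mfh$, hence to a regular element whose centralizer is still $\mfh$, forcing $g \in N(H)$. You instead establish $C_G(H^\sigma) = H$. Both are correct and of comparable length. For the second and third assertions the paper simply cites Proposition 9.17 of \cite{Ca05}, so your proposal is genuinely filling in the missing details, and your argument for the second assertion is fine.

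The third assertion contains a real gap, and although you correctly flag type $A_{2n}$ as the sensitive case, the diagnosis is off. The problem is that the assignment $\beta \mapsto O_\beta$ from positive roots of $L_0$ to $\sigma$-orbits in $\Delta^+(L)$ is not well-defined in type $A_{2n}$: as Lemma~\ref{L:diagramrs} is careful to say, restriction sends $\Delta^+(L)$ to \emph{positive multiples} of $\Delta^+(L_0)$, and over a short root $\beta$ of $B_n$ sit two $\sigma$-orbits, one of size $2$ restricting to $\beta$ and one (a $\sigma$-fixed root) of size $1$ restricting to $2\beta$. Taking $|O_\beta| = 2$, your identity $\ell(w) = \sum_{\beta > 0,\, w\beta < 0} |O_\beta|$ already fails for $w = s_J$ with $J = \{\alpha_n, \alpha_{n+1}\}$ the middle orbit: the left side is $\ell(w_J) = 3$ (longest element of an $A_2$) and the right side is $2$. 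The repair is to replace $|O_\beta|$ by $M_\beta$, the number of positive roots of $L$ restricting to a positive multiple of $\beta$. Then $M_\beta$ is $W(L_0)$-invariant along positive roots (any $w \in W^\sigma$ is a bijection of $\Delta(L)$ commuting with restriction, and the sign of a root of $L$ is detected by the sign of its restriction), one has $M_{\alpha_J} = \ell(w_J)$ since the fibre over the multiples of a simple root $\alpha_J$ is exactly the set of positive roots supported on $J$, and the $\sigma$-invariance of the inversion set gives $\ell(w) = \sum_{\beta > 0,\, w\beta < 0} M_\beta$. Combined with the standard description $\mathrm{Inv}(w) = \{s_{J_r} \cdots s_{J_{i+1}} \alpha_{J_i}\}_{i=1}^r$ for a reduced word $w = s_{J_1} \cdots s_{J_r}$ in $W(L_0)$, this yields $\ell(w) = \sum_i M_{\alpha_{J_i}} = \sum_i \ell(w_{J_i})$. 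The ``intermediate partial products'' you worry about play no role.
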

\begin{proof}
    For the first part, let $\rho \in \mfh$ be the element such that
    $\alpha(\rho)=1$ for all simple roots $\alpha$ of $L$. Then $\rho$ is
    regular in $\mfh$ and belongs to $\mfh_0$. Any element of
    $N_{G^{\sigma}}(H^{\sigma})$ sends $\rho$ to another regular element of
    $\mfh$, and hence belongs to $N(H)$.

    For the second part, we we refer to the proof of Proposition 9.17 of
    \cite{Ca05}.
\end{proof}

We can use Lemma \ref{L:diagramweyl} to prove:
\begin{lemma}\label{L:bigcell}
    Choose a Borel subgroup $\mcB$ of $G$ containing $H$ and compatible with
    $\sigma$ and let $X = \overline{B} B$ be the big cell of the corresponding
    Bruhat decomposition. If $x \in G$ belongs to a Bruhat cell $B w B$ with $w
    \in W^{\sigma}$ then there is $g \in N(H)^{\sigma}$ such that $g x \in X$.
\end{lemma}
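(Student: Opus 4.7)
The plan is to set $g = \dot{w}_1^{-1}$, where $\dot{w}_1 \in N(H)^\sigma$ is a lift of $w$; such a lift exists because Lemma \ref{L:diagramweyl} gives the identification $N(H)^\sigma / H^\sigma = W^\sigma$, so the projection $N(H)^\sigma \to W^\sigma$ is surjective.

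First I would apply the Bruhat decomposition to write $x = u \dot{w} b$ with $u \in U$ (the unipotent radical of $B$), $b \in B$, and $\dot{w}$ any fixed lift of $w$ in $N(H)$. Writing $\dot{w}_1 = \dot{w} t$ for some $t \in H$ (possible since both lift $w$), direct computation yields
\begin{equation*}
    g x = \dot{w}_1^{-1} u \dot{w}_1 \cdot \dot{w}_1^{-1} \dot{w} \cdot b = v \cdot t^{-1} b,
\end{equation*}
where $v := \dot{w}_1^{-1} u \dot{w}_1$ lies in $\dot{w}_1^{-1} U \dot{w}_1$. This latter group equals $\dot{w}^{-1} U \dot{w}$, because $H$ normalises $U$ and therefore also its conjugates by elements of $N(H)$.

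Next I would invoke the standard product decomposition of the connected unipotent subgroup $\dot{w}^{-1} U \dot{w}$, obtained by splitting the positive roots $\alpha$ according to the sign of $w^{-1}\alpha$:
\begin{equation*}
    \dot{w}^{-1} U \dot{w} = (\dot{w}^{-1} U \dot{w} \cap \overline{U}) \cdot (\dot{w}^{-1} U \dot{w} \cap U),
\end{equation*}
the two factors being generated by the root subgroups $U_{w^{-1}\alpha}$ for $\alpha \in \Phi^+$ with $w^{-1}\alpha \in \Phi^-$ and $w^{-1}\alpha \in \Phi^+$ respectively. Writing $v = v_- v_+$ in this order, with $v_- \in \overline{U}$ and $v_+ \in U$, one obtains
\begin{equation*}
    g x = v_- \cdot (v_+\, t^{-1} b) \in \overline{U} \cdot B = \overline{B} B = X,
\end{equation*}
as required.

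The main obstacle, and the reason the hypothesis $w \in W^\sigma$ is actually used, is arranging for $g$ to lie in $N(H)^\sigma$ rather than merely in $N(H)$; this is exactly what Lemma \ref{L:diagramweyl} provides. The rest is a direct manipulation of Bruhat-style normal forms, with the particular ordering of the factors in the decomposition of $\dot{w}^{-1} U \dot{w}$ (negative part first) being what makes the final line fall into $\overline{U} \cdot B$ rather than the wrong product $U \cdot \overline{U} \cdot B$.
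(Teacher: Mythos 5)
Your proof is correct and follows the same strategy as the paper: choose a representative of $w^{-1}$ in $N(H)^{\sigma}$ (available via Lemma \ref{L:diagramweyl}) and check directly that it moves $BwB$ into the big cell. The paper states this in one line with the details left to the reader; you have simply filled in the Bruhat-cell computation, correctly using the decomposition of the $T$-stable unipotent group $\dot{w}^{-1}U\dot{w}$ into its intersections with $\overline{U}$ and $U$.
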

\begin{proof}
    If we take for $g$ a representative of $w^{-1}$ in $N(H)^{\sigma}$, then 
    $g B w B \subset \overline{B} B$.
\end{proof}

\begin{prop}
    $G^{\sigma}$ is connected.
\end{prop}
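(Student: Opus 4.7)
The plan is to exploit the Bruhat decomposition $G = \bigsqcup_{w \in W(L)} \mcB w \mcB$ with respect to the $\sigma$-stable Borel $\mcB \supset H$ chosen in Lemma \ref{L:bigcell}. Since $\sigma$ permutes the cells by $\sigma(\mcB w \mcB) = \mcB \sigma(w) \mcB$, any $x \in G^{\sigma}$ must lie in a cell $\mcB w \mcB$ with $w \in W^{\sigma}$. Lemma \ref{L:bigcell} then supplies a representative $g \in N(H)^{\sigma}$ with $gx \in X = \overline{\mcB} \mcB$, so $G^{\sigma} = N(H)^{\sigma} \cdot X^{\sigma}$. It therefore suffices to show that $X^{\sigma}$ lies in the identity component $G^{\sigma,0}$, and that every such $g$ also lies in $G^{\sigma,0}$.

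First I would analyze the big cell. Let $\mcN, \overline{\mcN}$ denote the unipotent radicals of $\mcB, \overline{\mcB}$. The multiplication map $\overline{\mcN} \times H \times \mcN \to X$ is a $\sigma$-equivariant isomorphism of varieties, so $X^{\sigma} \iso \overline{\mcN}^{\sigma} \times H^{\sigma} \times \mcN^{\sigma}$. The factors $\mcN^{\sigma}$ and $\overline{\mcN}^{\sigma}$ are connected: the exponential $\sigma$-equivariantly identifies $\mcN$ and $\overline{\mcN}$ with their Lie algebras, on which $\sigma$ acts linearly, and the fixed locus of a linear action is a linear (hence connected) subspace.

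The main obstacle is showing $H^{\sigma}$ is connected, since fixed points of a finite-order automorphism of a torus can be disconnected in general. We may reduce to the simply connected semisimple case: the $\sigma$-stable central torus contributes a connected identity component, and for the semisimple derived subgroup one passes to the simply connected cover, to which the diagram automorphism lifts. Steinberg's theorem on centralizers of semisimple automorphisms of simply connected semisimple groups then guarantees that $H^{\sigma}$ is connected. Combined with the above, $X^{\sigma}$ is a connected variety containing the identity, hence is contained in $G^{\sigma,0}$.

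Finally, I would verify that the representatives $g$ produced by Lemma \ref{L:bigcell} lie in $G^{\sigma,0}$. By Lemma \ref{L:diagramweyl} every $w \in W^{\sigma}$ admits a reduced expression as a product of the simple reflections $s_J$ generating $W(L_0)$; each $s_J$ is the longest element of the $\sigma$-stable Weyl group of the connected $\sigma$-stable Levi subgroup attached to the $\sigma$-orbit $J$. Applying the previous analysis inside this Levi produces a $\sigma$-fixed lift of $s_J$ in its identity component, and multiplying such lifts gives $g \in G^{\sigma,0}$. Hence $x = g^{-1}(gx) \in G^{\sigma,0}$, proving $G^{\sigma}$ is connected.
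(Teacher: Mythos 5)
Your proof follows the paper's route closely: Bruhat decomposition with respect to a $\sigma$-stable Borel, reduction to the big cell via Lemma~\ref{L:bigcell}, the factorization $X \iso \overline{U} \times H \times U$, and the exponential-map argument for the unipotent factors. The treatment of the representatives in $N(H)^{\sigma}$ also agrees in substance with the paper's direct appeal to Lemma~\ref{L:diagramweyl}.

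You are right that connectedness of $H^{\sigma}$ is the delicate point, since fixed points of a finite-order automorphism of a torus can be disconnected. But the reduction you propose has a gap: passing to the simply connected cover $\tilde{G} \to G$ and invoking Steinberg yields $\tilde{G}^{\sigma}$ (and hence its maximal torus $\tilde{H}^{\sigma}$) connected, yet the image of $\tilde{H}^{\sigma}$ in $H$ need not be all of $H^{\sigma}$ --- taking $\sigma$-fixed points is left-exact but not right-exact under an isogeny. Concretely, for $G = (\SL_2 \times \SL_2)/\Delta\mu_2$ with $\sigma$ swapping the factors, $\tilde{G}^{\sigma}$ is connected while $G^{\sigma}$, and $H^{\sigma}$, are not; so the proposition carries an implicit hypothesis on the isogeny type of $G$ that your reduction does not secure. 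The paper's one-line justification is the correct direct argument in the intended setting: because $\sigma$ permutes the simple coroots, it permutes a $\Z$-basis of the cocharacter lattice $X_*(H)$ (when $G$ is simply connected), so $H \iso (\C^*)^n$ with $\sigma$ permuting coordinates, and $H^{\sigma}$ is the subtorus obtained by equating coordinates within each $\sigma$-orbit, hence connected. You should substitute this cocharacter argument for the Steinberg detour; the rest of your proof is sound.
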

\begin{proof}
    The connected component $(G^{\sigma})^{\circ}$ of $G^{\sigma}$ is a
    connected reductive group with Lie algebra $L_0$. Since $\sigma$ permutes
    coroots, it is easy to see that $H^{\sigma}$ is a connected torus, and in
    fact is a Cartan in $(G^{\sigma})^{\circ}$. As in Lemma \ref{L:bigcell},
    let $\mcB$ be a Borel subgroup of $G$ containing $H$ and compatible with
    $\sigma$, and let $X$ be the corresponding big cell. If $g \in G^{\sigma}$
    belongs to a Bruhat cell $B w B$ then $g \in B w B \cap \sigma(B w B)$, so
    $w \in W^{\sigma}$. By Lemma \ref{L:diagramweyl}, every element of
    $N(H)^{\sigma}$ can be implemented by an element of $(G^{\sigma})^{\circ}$.
    So by Lemma \ref{L:bigcell}, we just need to prove that $G^{\sigma} \cap X$
    is contained in $(G^{\sigma})^{\circ}$.

    Now as an algebraic variety, $X \iso \overline{U} \times H \times U$, where
    $U$ is the unipotent radical of $\mcB$. The action of $\sigma$ on $X$
    translates to the action of $\sigma$ on each factor. Let $\mfu$ be the Lie
    algebra of $U$. The exponential map for nilpotent Lie algebras is bijective,
    so $U^{\sigma}$ is the unipotent subgroup corresponding to the nilpotent
    Lie algebra $\mfu^{\sigma}$. In particular $U^{\sigma}$ is connected, and
    similarly with $\overline{U}^{\sigma}$. We conclude that $X^{\sigma} =
    G^{\sigma} \cap X$ is connected. 
\end{proof}

Using the fact that the exponential map for nilpotent (resp. pro-nilpotent) Lie
algebras is bijective, we immediately get the following corollary.
\begin{cor}
    If $0 \leq m < +\infty$ then $J_m^{\tilde{\sigma}} G$ is a connected
    algebraic group with Lie algebra $L[z] / z^m$. Similarly
    $J_\infty^{\tilde{\sigma}} G$ is a connected pro-algebraic group with Lie
    algebra $L[[z]]^{\tilde{\sigma}}$.
\end{cor}
As a scheme the Lie algebra of $J_{m}^{\tilde{\sigma}} G$ can be
identified with $J_m^{\tilde{\sigma}} L$.

The following proposition will be crucial in the next section, since it proves
that $J_m^{\tilde{\sigma}} (G/H)$ is a $J_m^{\tilde{\sigma}} G$-homogeneous
space. 
\begin{prop}\label{P:homog}
    $J_m^{\tilde{\sigma}} (G/H) \iso J_m^{\tilde{\sigma}} G /
    J_m^{\tilde{\sigma}} H$, where the latter space is either the group
    quotient if $0 \leq m < +\infty$, or the pro-group quotient if $m =
    +\infty$.
\end{prop}
\begin{proof}
    $G \arr G/ H$ is an etale-locally trivial principal bundle, so $J_m (G/H)
    \iso J_m G / J_m H$. There is an inclusion $J_m^{\tilde{\sigma}} G /
    J_m^{\tilde{\sigma}} H \incl (J_m G / J_m H)^{\tilde{\sigma}}$. To prove
    the proposition, we will show that this inclusion is surjective for all $m
    < +\infty$. If $m < +\infty$ then biregularity follows from bijectivity
    because $(J_m G / J_m H)^{\tilde{\sigma}}$ will be a homogeneous space.
    Biregularity for $m = +\infty$ follows from the universal property of
    inverse limits.

    Define $\alpha : J_m G \arr J_m G$ by $g \mapsto g^{-1} \tilde{\sigma}(g)$. 
    To show that the inclusion is surjective we need to show that every element
    of $(J_m G/ J_m H)^{\tilde{\sigma}}$ has a representative $x \in J_m G$
    such that $\alpha(x) =e$. The map $\alpha$ has a number of nice properties.
    First, the fibres of $\alpha$ are left $J_m^{\tilde{\sigma}} G$-cosets.
    Second, $g \in J_m G$ represents an element of $(J_m G / J_m
    H)^{\tilde{\sigma}}$ if and only if $\alpha(g) \in J_m H$.  Third, if
    $\alpha(g) \in J_m H$ and $h \in J_m H$ then $\alpha(gh) = \alpha(g)
    \alpha(h)$. By these last two properties, we will have $(J_m G /
    J_m H)^{\tilde{\sigma}} = J_m^{\tilde{\sigma}} G /
    J_m^{\tilde{\sigma}} H$ if and only if $\alpha(J_m G) \cap J_m H =
    \alpha(J_m H)$. 

    Our proof depends on the Bruhat geometry of $G$, so pick a Borel subgroup
    $B \subset G$ compatible with $\sigma$. Let $X = \overline{B} B$ be the big
    cell. Suppose $x \in J_m G$ and $\alpha(x) \in J_m H$. Writing $x(0) =
    b_0 w b_1$, we get $\alpha(x(0)) = b_1^{-1} w^{-1} \alpha(b_0) \sigma(w)
    \sigma(b_1) \in H$. But $\alpha(b_0) \in B$, so $w B \cap B \sigma(w) B
    \neq \emptyset$, and thus $w$ belongs to $W^{\sigma}$. Consequently there
    is $g_0 \in G^{\sigma}$ such that $g_0 x(0) \in X$, implying that $g_0 x
    \in J_m X$. Since $\alpha(x) = \alpha(g_0 x)$ for $g_0 \in G^{\sigma}$, we
    just need to show that $\alpha(J_m X) \cap J_m H$ is contained in
    $\alpha(J_m H)$.

    The space $X$ is isomorphic to $\overline{U} \times B$ via the
    multiplication map, where $\overline{U}$ is the unipotent subgroup of
    $\overline{B}$. Thus we can write any element of $J_m X$ uniquely as $a(z)
    b(z)$, where $a(z) \in J_m \overline{U}$ and $b(z) \in J_m B$. Suppose
    $\alpha(a(z)b(z)) = h(z) \in J_m H$. Since $\alpha(a(z)b(z)) = 
    b(z)^{-1} \alpha(a(z)) \tilde{\sigma}(b(z))$, we see that $\alpha(a(z)) =
    b(z) h(z) \tilde{\sigma}(b(z))^{-1} \in J_m B$.  Since $\alpha(a(z)) \in
    J_m \overline{U}$, this implies that $\alpha(a(z)) = e$ and consequently
    $\alpha(b(z)) = h(z)$. To finish the proof, observe that $B \iso U \times
    H$ via the multiplication map, where $U$ is the unipotent subgroup of $B$.
    Writing $b(z) = b'(z)h'(z) $ for $b'(z) \in J_m U$ and $h'(z) \in J_m H$,
    we get $\alpha(b(z)) = h'(z)^{-1} \alpha(b'(z)) \tilde{\sigma}(h'(z))$, and
    hence $\alpha(b'(z))$ can be written as an element of $J_m H$. This
    implies that $\alpha(b'(z)) = e$, finishing the proof, since $\alpha(h'(z))
    = h(z)$.
\end{proof}

\section{Slice theorems for the adjoint action}\label{S:orbit}

We continue to use the notation from Section \ref{S:jetschemes}. In particular,
$G$ is a connected algebraic group with Lie algebra $L$ such that $\sigma$
extends to $G$, and the Lie algebra of $J_m^{\tilde{\sigma}} G$ is identified
with $J_m^{\tilde{\sigma}} L$. In addition, we fix a standard parabolic
subalgebra $\mfp_0 \subset L_0$, and let $\mfp_m = \{f \in J_m^{\tilde{\sigma}}
: f(0) \in \mfp_0\}$. Note that $\mfp_\infty$ is the completion of a standard
parahoric in $L[z^{\pm 1}]^{\tilde{\sigma}}$, which we also denote by
$\hat{\mfp}$.  We let $\mcP_{m}$ be the connected algebraic (resp.
pro-algebraic) subgroup of $J_m^{\tilde{\sigma}} G$ corresponding to $\mfp_m$,
and $\mcN_m$ be the nilpotent (resp.  pro-nilpotent) radical of $\mcP_m$.  The
reductive factor $\mfp_0 \cap \overline{\mfp_0}$ of $\mfp_0$ is denoted by
$\mfg_0$.

In this section we prove two slice theorems for the adjoint action of $\mcP_m$
on $\mfp_m$. The first is an analogue of the well-known slice theorem for
regular semisimple elements in $L$, and is given in Subsection
\ref{SS:regularss}.  The second is an analogue of the Kostant slice theorem,
and is given in Subsection \ref{SS:regular}.  These theorems will be used in
the next section to determine the $\mcP_\infty$-invariant $\mcN_\infty$-basic
elements of $\Omega_{>0}^* \mfp_{\infty}$. 

The slice theorems are stated in terms of the GIT quotients $Q := L // G$ (ie.
$Q$ is the affine variety with coordinate ring $\C[Q] = (S^* L^*)^G$) and $R :=
\mfp_0 // \mcP_0$. Recall that $\C[Q]$ is a free algebra generated by
homogeneous elements in degrees $m_1+1,\ldots,m_l+1$, where $l$ is the rank of
$L$ and $m_1,\ldots, m_l$ are the exponents. A similar result holds for
$\C[R]$:
\begin{lemma}\label{L:parabolicinv} 
    Let $\mfu_0$ the nilpotent radical of $\mfp_0$, so that $\mfp_0 = \mfg_0
    \oplus \mfu_0$. If $f \in \C[R]$ then $f(x,y) = f(x,0)$ for all $x \in
    \mfg_0$, $y \in \mfu_0$. Consequently, if $\mcM$ is the Levi subgroup of
    $\mcP_0$ then $R\iso \mfg_0 // \mcM \iso \mfh_0 // W(\mfg_0)$, where
    $W(\mfg_0)$ is the Weyl group of $\mfg_0$, and $\C[R]$ is a free algebra
    generated by homogeneous elements in degrees given by the exponents of
    $\mfg_0$.
\end{lemma}
\begin{proof}
    The set of regular elements $\mfh_0^r$ is dense in $\mfh_0$. Since
    $[\mfg_0,x] + \mfh_0 = \mfg_0$ for any element
    $x \in \mfh_0^r$, the set $\mcM \mfh_0^r$ is dense in $\mfg_0$.
    Let $\mcN_0$ be the unipotent subgroup corresponding to $\mfu_0$. If $x$
    belongs to $\mfh_0^r$ then $\mcN_0 x = x + \mfu_0$. Since $\mcN_0$ is
    normal in $\mcP_0$, this property extends to any $x \in \mcM \mfh_0^r$. So
    if $f$ is invariant then $f(x,y) = f(n(x,0)) = f(x,0)$ for $x$ in an open
    dense subset of $\mfp_0 \cap \overline{\mfp_0}$.
\end{proof}

\subsection{The regular semisimple slice}\label{SS:regularss}

Let $L^{rs} \subset L$ be the subset of regular semisimple elements. $L^{rs}$
is an affine open subset of $L$ (its complement is the vanishing set of a
single $G$-invariant function) and consequently the image $Q^r$ of $L^{rs}$ in
$Q = L // G$ is open. The well-known regular semisimple slice theorem states
that there is a commutative square
\begin{equation}\label{E:finiteslice}
    \xymatrix{ G/H \times_W \mfh^r \ar[r] \ar[d] & L^{rs} \ar[d] \\
                \mfh^r / W \ar[r] & Q^r },
\end{equation}
where $W$ is the Weyl group of $L$ and $\mfh^r$ is the set of regular elements
in $\mfh$. The notation $G / H \times_W \mfh^r$ denotes the quotient of $G / H
\times \mfh^r$ under the free action of $W = N(H) / H$ acting by right
multiplication on $G / H$ and by the adjoint action on $\mfh^r$. Both
horizontal maps are isomorphisms. The top horizontal map is given by
multiplication, while the bottom horizontal map is projection to $Q$.

Since $\sigma$ is an automorphism, the sets $L^{rs}$ and $\mfh^r$ are closed
under $\sigma$ and we can apply $J_m^{\tilde{\sigma}}$ to both spaces. The image
$R^{r}$ of $\mfp_0 \cap L_0^{rs}$ in $R$ is open, since it's complement is the
zero set of a single $\mcP_0$-invariant function. As usual, let $\mcP_\infty /
J_\infty^{\tilde{\sigma}} H$ denote the pro-group quotient.  Similarly
$\mcP_{\infty} / J_\infty^{\tilde{\sigma}} H \times_{W(\mfg_0)}
J_\infty^{\tilde{\sigma}} \mfh^r$ will denote the pro-group quotient of
$\mcP_{\infty} / J_\infty^{\tilde{\sigma}} H \times J_\infty^{\tilde{\sigma}}
\mfh^r$ by $W(\mfg_0)$, and $J_\infty^{\tilde{\sigma}} \mfh^r / W(\mfg_0)$
denotes the pro-group quotient of $J_m^{\tilde{\sigma}} \mfh^r$ by $W(\mfg_0)$.
We have the following analogue of Equation (\ref{E:finiteslice}) for twisted
jet and arc schemes. 
\begin{thm}\label{T:parahoricslice}
    Let $0 \leq m \leq +\infty$. Then there is a commutative diagram
    \begin{equation}\label{E:parahoricslice}
        \xymatrix{ \mcP_m / J_m^{\tilde{\sigma}} H \times_{W(\mfg_0)}
            J_m^{\tilde{\sigma}} \mfh^r \ar[r] \ar[d] & 
                \mfp_m \cap J_m^{\tilde{\sigma}} L^{rs} \ar[d] \\
            \left(J_m^{\tilde{\sigma}} \mfh^r\right) / W(\mfg_0) \ar[r] & 
                R^r \times_{Q^{\sigma}} J_m^{\tilde{\sigma}} Q^r}
    \end{equation}
    in which the horizontal maps are isomorphisms, with the top map induced
    by multiplication and the bottom map induced from the two projections
    $J_m^{\tilde{\sigma}} \mfh^r / W(\mfg_0) \arr J_m^{\tilde{\sigma}} Q^r$ and
    $\mfh_0^r / W(\mfg_0) \iso R^r$. 
\end{thm} 

To prove Theorem \ref{T:parahoricslice}, we start with the case $m=0$ (likely
well-known, but we give the proof for completeness).
\begin{lemma}\label{L:parabolicslice}
    There is a commutative diagram
    \begin{equation*}
        \xymatrix{ \mcP_0 / H^{\sigma} \times_{W(\mfg_0)} \mfh_0^r \ar[r] \ar[d]
                        & \mfp_0 \cap L_0^{rs} \ar[d] \\
                    \mfh^r_0 / W(\mfg_0) \ar[r] & R^{r} } 
    \end{equation*}
    in which both horizontal maps are isomorphisms. The top horizontal map is
    induced by multiplication, while the bottom horizontal map is induced by
    the projection $\mfh_0 \arr R$.
\end{lemma}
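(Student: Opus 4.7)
The plan is to reduce to the classical regular-semisimple slice theorem applied to the Levi pair $(\mcM, \mfg_0)$ and then spread the result over the unipotent radical $\mcN_0$ of $\mcP_0$. Using the Levi decomposition $\mcP_0 = \mcN_0 \rtimes \mcM$ and the fact that $H^{\sigma} \subset \mcM$, we get a variety isomorphism $\mcP_0 / H^{\sigma} \iso \mcN_0 \times \mcM / H^{\sigma}$. The finite slice theorem applied to $\mcM$ acting on $\mfg_0$ gives
\begin{equation*}
    \mcM/H^{\sigma} \times_{W(\mfg_0)} \mfh_0^r \xrightarrow{\sim} \mfg_0 \cap L_0^{rs}
\end{equation*}
by the adjoint action, since $H^{\sigma}$ is a Cartan of $\mcM$ with Weyl group $W(\mfg_0)$.

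The main step is then to show that the adjoint-action map
\begin{equation*}
    \Phi : \mcN_0 \times \left(\mfg_0 \cap L_0^{rs}\right) \arr \mfp_0 \cap L_0^{rs}, \qquad (n, x) \mapsto \Ad(n) x,
\end{equation*}
is an isomorphism of varieties; combined with the previous step and the normality of $\mcN_0$ in $\mcP_0$, this yields the top horizontal map of the diagram. Fixing $x \in \mfh_0^r$, the image $\Ad(n)x$ lies in $x + \mfu_0$ because $\mcN_0$ acts trivially on $\mcP_0/\mcN_0$, and the differential at the identity of the map $n \mapsto \Ad(n) x$ is $y \mapsto [y,x] = -\ad(x) y$. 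Every root appearing in $\mfu_0$ is a nonzero root of $L_0$ and therefore nonzero on $x$, so $\ad(x)$ is invertible on $\mfu_0$. Hence $\mcN_0 \to x + \mfu_0$ is étale between affine spaces of equal dimension, and thus an isomorphism; by $\mcM$-equivariance this propagates to an isomorphism onto $\mcP_0 \cdot \mfh_0^r$.

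Surjectivity onto all of $\mfp_0 \cap L_0^{rs}$ then reduces to showing that every Cartan $\mfh'$ of $L_0$ contained in $\mfp_0$ is $\mcP_0$-conjugate to $\mfh_0$: project $\mfh'$ onto $\mfg_0$ (injective, since $\mfh' \cap \mfu_0 = 0$), use $\mcM$-conjugacy of Cartans in $\mfg_0$ to align the projection with $\mfh_0$, then use the $\mcN_0$-step just established to correct by an element of $\mfu_0$. For the bottom row, Lemma \ref{L:parabolicinv} gives $R \iso \mfg_0 // \mcM$, and Chevalley restriction for the reductive pair $(\mcM, \mfh_0)$ identifies this with $\mfh_0 // W(\mfg_0)$, under which $R^{rs}$ pulls back to $\mfh_0^r / W(\mfg_0)$, making the bottom horizontal map an isomorphism; commutativity of the square is built into the construction. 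The main obstacle is the $\mcN_0$-slice step, which however reduces cleanly to the invertibility of $\ad(x)|_{\mfu_0}$ for $x \in \mfh_0^r$.
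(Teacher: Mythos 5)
Your overall strategy is genuinely different from the paper's: you reduce to the Levi pair $(\mcM,\mfg_0)$ and try to spread over $\mcN_0$, whereas the paper instead includes the desired square into the $G^\sigma$-slice theorem for $L_0$ (Equation~(\ref{E:parabolicslice})) and verifies that the vertical inclusions restrict to a bijection, with surjectivity handled by Jordan decomposition followed by a neat argument that two Borels of $L_0$ containing $\mfh_0$ and both dominating $x$ are $N(H^\sigma)$-conjugate. The paper reserves the Levi-plus-$\mcN_0$ iteration for Lemmas \ref{L:parabolicregular} and \ref{L:semisimple}, where it is carried out by an explicit degree-by-degree correction. So your instinct about where the real content lives is right, but your shortcut through it does not close.

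The decisive gap is the claim that $\mcN_0 \arr x + \mfu_0$, $n \mapsto \Ad(n)x$, is ``\'etale between affine spaces of equal dimension, and thus an isomorphism.'' An everywhere-\'etale self-map of $\mbA^n$ being an isomorphism is precisely the Jacobian conjecture, which is open; you cannot invoke it. And in fact you have only checked the differential at $n=e$, not everywhere: at a general point the differential is controlled by $\ad(\Ad(n)x)\big|_{\mfu_0} = \ad(x+u)\big|_{\mfu_0}$ with $u \in \mfu_0$, and its invertibility requires a filtration argument (it is $\ad(x)$ plus a strictly height-raising perturbation). To make the step work you need either (i) an explicit inductive correction on the height grading of $\mfu_0$, exactly as in the paper's proof of Lemma~\ref{L:parabolicregular}, or (ii) to combine injectivity (trivial stabilizer, since $\mfu_0^x=0$) with the Kostant--Rosenlicht fact that orbits of unipotent groups on affine varieties are closed, so that the open (\'etale) dense orbit inside the irreducible affine space $x+\mfu_0$ is everything; a bijective \'etale morphism is then an isomorphism. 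Either fix is fine, but the proposal as stated does not contain one.

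The final surjectivity paragraph is also too compressed. You reduce to conjugating a Cartan $\mfh'$ of $L_0$ contained in $\mfp_0$ into $\mfh_0$, but for a given $x \in \mfp_0 \cap L_0^{rs}$ it is not a priori stated why $L_0^x \subset \mfp_0$; and ``correct by an element of $\mfu_0$'' is again the $\mcN_0$-step whose proof is the gap above. The paper avoids these issues by first using the Jordan decomposition to move $x$ into $\mfh_0 \oplus \mfu_0 \subset \mfb_0$, reducing to the Borel case, and then arguing directly with self-normalizing Borels; it does not need any statement about which Cartans lie in $\mfp_0$.
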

\begin{proof}
    That the bottom map is an isomorphism comes from Lemma \ref{L:parabolicinv}.
    
    The Weyl groups of $\mfg_0$ and $L_0$ can be expressed in terms of $\mcM$ and
    $G^{\sigma}$ as $W(\mfg_0) = N_{\mcM}(H^{\sigma} \cap \mcM) / (H^{\sigma}
    \cap \mcM)$ and $W(L_0) = N_{G^{\sigma}}(H^{\sigma}) / H^{\sigma}$. Using
    the Bruhat decomposition for $G^{\sigma}$ and $\mcM$ simultaneously, as
    well as the Levi decomposition for $\mcP_0$, it is possible to show that
    $N_{G^{\sigma}}(H^{\sigma}) \cap \mcP_0 \subset N_{\mcM}(H^{\sigma} \cap
    \mcM)$.  The resulting inclusion $N_{G^{\sigma}}(H^{\sigma}) \cap \mcP_0 /
    H^{\sigma} \subset N_{\mcM}(H^{\sigma} \cap \mcM) / H^{\sigma} \cap \mcM$
    is an isomorphism.
         
    Now the commutative diagram in Equation (\ref{E:finiteslice}) can be
    extended by adding the commutative square
    \begin{equation}\label{E:parabolicslice}
        \xymatrix{ \mcP_0 / H^{\sigma} \times_{W(\mfg_0)} \mfh_0^r \ar[r] \ar[d]
            & \mfp_0 \cap L_0^{rs} \ar[d] \\
                G^{\sigma}/H^{\sigma} \times_{W(L_0)} \mfh_0^r \ar[r] & L_0^{rs} },
    \end{equation}
    in which the vertical maps are the natural inclusions. To show that the
    left vertical map is injective take two elements $([p],x)$ and $([p'],x')$
    which are equal in the codomain. This means that there is $w \in
    N_{G^{\sigma}}(H^{\sigma})$ with $[p w^{-1}] = [p']$ and $w x_0 = x_0'$.
    The former condition implies that $w \in \mcP_0 \cap N_G(H)$, so $[w] \in
    W(L_0)$ represents an element of $W(\mfg_0)$, and $([p],x) = ([p'],x')$ in
    $\mcP_0 / H_0 \times_{W(\mfg_0)} \mfh_0^r$. 

    Since the bottom map of Equation (\ref{E:parabolicslice}) is an
    isomorphism, we just need to show that $\mcP_0 / H^{\sigma}
    \times_{W(\mfg_0)} \mfh_0^r$ maps onto $\mfp_0 \cap L_0^{rs}$. Suppose $x \in
    \mfp_0$ is semisimple in $L_0$. Since diagonalizability is preserved by
    restriction to an invariant subspace and by descent to a quotient by an
    invariant subspace, we can write $x = x_0 + x_1$, where $x_0$ is a
    semisimple element of $\mfp_0 \cap \overline{\mfp_0}$ and $x_1 \in \mfu_0$.
    Conjugating $x_0$ by an element of the Levi factor $\mcM$ to be in
    $\mfh_0$, we can assume that $x \in \mfb_0$, a Borel subalgebra of $L_0$
    contained in $\mfp_0$. Thus the problem is reduced to showing that $\mfb_0
    \cap L_0^{rs} \subset \mcB_0 \mfh_0^r$. Given $x$ in the former set, take
    $g \in G^{\sigma}$ such that $g x = y \in \mfh^r_0$. Then $\mfb_0$ and $g
    \mfb_0$ both contain $\mfh_0$, so there is $w \in
    N_{G^{\sigma}}(H^{\sigma})$ such that $w \mfb_0 = g \mfb_0$. Since Borel's
    are self-normalizing, $g^{-1} w \in B_0$ and $x = (g^{-1} w) (w^{-1} y) \in
    B_0 \mfh_0^r$.
\end{proof}

We need two facts about diagram automorphisms and the structure of $L$. We use
the convention from Section \ref{S:jetschemes} to express the simple roots
$\{\alpha_J\}$ of $L_0$ in terms of simple roots $\{\alpha\}$ of $L$. 
\begin{lemma}\label{L:diagramrs}
     $\mfh_0 \cap \mfh^r = \mfh_0^r$, the set of elements in $\mfh_0$ which are
    regular in $L_0$. Similarly, $L_0 \cap L^{rs} = L_0^{rs}$.
\end{lemma}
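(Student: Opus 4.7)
The plan is to handle the three assertions in sequence. For the $\sigma$-invariance of $L^{rs}$: since $\sigma$ is a Lie algebra automorphism, it preserves the Jordan decomposition of every element and sends the centralizer $\mfz_L(x)$ bijectively onto $\mfz_L(\sigma(x))$; in particular $\sigma$ preserves both semisimplicity and the dimension of the centralizer, which are the two defining conditions for $L^{rs}$. For the $\sigma$-invariance of $\mfh^r$: recall $\sigma$ preserves $\mfh$ and acts on $\mfh^*$ by $(\sigma\alpha)(x)=\alpha(\sigma^{-1}x)$, so if $e_\alpha$ is a root vector for a root $\alpha$ then $\sigma(e_\alpha)$ is a root vector for $\sigma\alpha$; hence $\sigma$ permutes the root system of $L$, and since regularity in $\mfh$ is just the non-vanishing of every root, $\sigma(\mfh^r)=\mfh^r$.

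For the identity $\mfh_0\cap\mfh^r=\mfh_0^r$, the key observation is that for $x\in\mfh_0=\mfh^{\sigma}$ and any root $\alpha$ of $L$ one has $\alpha(x)=(\sigma\alpha)(x)$, so $\alpha(x)$ depends only on the $\sigma$-orbit $J$ of $\alpha$ and equals $\alpha_J|_{\mfh_0}(x)$. As recalled at the start of the section, the roots of $L_0$ are precisely the restrictions $\alpha_J|_{\mfh_0}$ as $J$ ranges over the $\sigma$-orbits of roots of $L$. Therefore for $x\in\mfh_0$, no root of $L$ vanishes on $x$ if and only if no root of $L_0$ vanishes on $x$, which gives the desired equality.

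For the identity $L_0\cap L^{rs}=L_0^{rs}$, I use both inclusions together with the connectedness of $G^{\sigma}$ established above. Since $G^{\sigma}$ is connected reductive with Lie algebra $L_0$ and Cartan $\mfh_0$, all Cartan subalgebras of $L_0$ are $G^{\sigma}$-conjugate, so any semisimple element of $L_0$ is $G^{\sigma}$-conjugate into $\mfh_0$. If $x\in L_0\cap L^{rs}$, then $x$ is in particular semisimple, and conjugating by $G^{\sigma}$ we may assume $x\in\mfh_0$; regularity in $L$ then gives $x\in\mfh^r$, so by the previous paragraph $x\in\mfh_0^r$, whence $x\in L_0^{rs}$. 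Conversely, if $x\in L_0^{rs}$ then $x$ is semisimple in $L_0$; I claim $x$ is also semisimple in $L$: write $L=\bigoplus_{i=0}^{k-1}L_i$ as $L_0$-modules, each finite-dimensional, and since a semisimple element of a reductive Lie algebra acts semisimply on every finite-dimensional representation, $\ad_L(x)$ is semisimple on each $L_i$ and hence on $L$. Conjugating by $G^{\sigma}$ to assume $x\in\mfh_0^r$ and applying the previous paragraph gives $x\in\mfh^r\subset L^{rs}$.

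The arguments are essentially formal once the basic structure theory of diagram automorphisms recalled just above is in place; the only point that requires a moment of thought is the implication ``$x$ semisimple in $L_0$ implies $x$ semisimple in $L$,'' and this is handled by the $L_0$-module decomposition of $L$ together with semisimplicity of semisimple elements on finite-dimensional representations of a reductive Lie algebra.
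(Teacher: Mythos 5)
Your proof is correct and follows the same route as the paper's: automorphism invariance, restriction of roots to $\mfh_0$, and conjugation into $\mfh_0$ combined with the equivalence of semisimplicity in $L_0$ and in $L$. One small overstatement worth flagging: you assert that the roots of $L_0$ are \emph{precisely} the restrictions $\alpha_J|_{\mfh_0}$ over $\sigma$-orbits of roots of $L$, but in general the restriction of a root of $L$ to $\mfh_0$ is only a positive multiple of a root of $L_0$ (this is Proposition 9.18 of Carter, which the paper cites); for instance in the $A_{2n}/B_n$ fold a $\sigma$-fixed root of $A_{2n}$ restricts to twice a short root of $B_n$. Since positive multiples share the same vanishing locus and every root of $L_0$ arises this way, your conclusion $\mfh_0\cap\mfh^r=\mfh_0^r$ still holds, but the intermediate set identity is not literally correct as stated.
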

\begin{proof}
    The restriction map $\mfh^* \arr \mfh_0^*$ sends roots of $L$ to positive
    multiples of roots of $L_0$ by Proposition 9.18 of \cite{Ca05}.  All the
    roots of $L_0$ are covered by this map, so $\mfh_0 \cap \mfh^r = \mfh^r_0$.
    An element $x \in L_0$ is semisimple in $L_0$ if and only if it is
    semisimple in $L$. If it is semisimple in $L_0$ then it can be conjugated
    to an element of $\mfh_0$, so the statement for $L_0$ follows from the
    statement for $\mfh_0$.
\end{proof}

\begin{lemma}\label{L:diagrampara}
    There is a parabolic $\mfp'$ of $L$ preserved by $\sigma$ such that $\mfp'
    \cap L_0 = \mfp_0$. If $\mfm$ is the standard reductive factor of $\mfp'$
    then $\mfm \cap L_0 = \mfg_0$, the reductive factor of $\mfp_0$, and
    $W(\mfm)^{\sigma} = W(\mfg_0)$, where both are regarded as subgroups of
    $W(L)$. 
\end{lemma}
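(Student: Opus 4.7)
The plan is to construct $\mfp'$ explicitly as a standard parabolic of $L$ whose defining simple roots form a $\sigma$-stable set, and then verify the three claims by a root-theoretic calculation.

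Recall that $\mfp_0$ is determined by the set $S$ of simple roots $\alpha_J$ of $L_0$ with $d_i = 0$, where each $\alpha_J = \frac{1}{|J|} \sum_{\alpha \in J} \alpha$ is indexed by a $\sigma$-orbit $J$ of simple roots of $L$. First I would define
\begin{equation*}
    S' = \bigcup_{J \in S} J,
\end{equation*}
which is by construction a $\sigma$-stable subset of the simple roots of $L$, and let $\mfp'$ be the standard parabolic of $L$ corresponding to $S'$, with standard Levi $\mfm$ generated by $\mfh$ and the root spaces $L_{\pm \alpha}$ for $\alpha \in \Z[S'] \cap \Delta(L)$. Since $\sigma$ permutes the simple roots and preserves $S'$, it preserves both $\mfp'$ and $\mfm$.

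Next I would verify $\mfp' \cap L_0 = \mfp_0$. Using Proposition 9.18 of \cite{Ca05}, restriction to $\mfh_0$ carries roots of $L$ in $\Z[S']$ to roots of $L_0$ in $\Z[S]$ and covers all such roots. The weight-space decomposition of $L_0$ under $\mfh_0$ is obtained by averaging root spaces of $L$ over $\sigma$-orbits, so that the $\alpha$-weight space of $L_0$ in $\mfh_0^*$ is the $\sigma$-invariants in the sum of the root spaces of $L$ projecting to $\alpha$. A weight $\alpha$ of $L_0$ thus appears in $\mfp_0$ iff every (equivalently, any) $\sigma$-preimage of $\alpha$ lies in $\Delta^+(L) \cup (\Delta^-(L) \cap \Z[S'])$, which is precisely the condition defining the root spaces of $\mfp'$. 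Intersecting the $\sigma$-invariants gives $\mfp' \cap L_0 = \mfp_0$. The same argument applied to $\Z[S']$ instead of $\Delta^+ \cup (\Delta^- \cap \Z[S'])$ gives $\mfm \cap L_0 = \mfg_0$.

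Finally I would identify the Weyl groups. By Lemma \ref{L:diagramweyl}, $W(L_0) \iso W^\sigma \subset W(L)$ with length-preserving embedding, where the simple generator $s_J$ is the longest element of the parabolic subgroup of $W(L)$ generated by reflections in the simple roots belonging to $J$. For $J \in S$ we have $J \subset S'$, so $s_J \in W(\mfm)$, giving $W(\mfg_0) = \langle s_J : J \in S \rangle \subset W(\mfm)^\sigma = W(\mfm) \cap W^\sigma$. Conversely, if $w \in W(\mfm) \cap W^\sigma$, take a reduced expression $w = s_{J_1} \cdots s_{J_r}$ in $W^\sigma = W(L_0)$; by the length-preserving property, replacing each $s_{J_i}$ by a reduced expression in $W(L)$ yields a reduced expression for $w$ in $W(L)$. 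Since $w \in W(\mfm)$, every simple reflection appearing in this expression must come from $S'$ (a standard fact for parabolic subgroups of Coxeter groups), which forces each $J_i \subset S'$ and hence $J_i \in S$. Therefore $w \in W(\mfg_0)$.

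The main obstacle is bookkeeping the relationship between the root systems of $L$, $L_0$, $\mfm$, and $\mfg_0$ carefully enough to get the identification of Weyl groups; everything hinges on the fact that the simple generator $s_J$ of $W(L_0)$ is already the longest element of the parabolic $\langle J \rangle \subset W(L)$, so that containment in a parabolic subgroup of $W(L_0)$ exactly matches $\sigma$-invariant containment in the corresponding parabolic of $W(L)$.
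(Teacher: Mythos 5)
Your proof is correct and takes essentially the same approach as the paper: both construct $\mfp'$ from the $\sigma$-stable set $S' = \bigcup_{J \in S} J$ and verify the claims using Lemma \ref{L:diagramweyl}. The only minor difference is that for $\mfp' \cap L_0 = \mfp_0$ you use a weight-space decomposition while the paper uses a generators argument (Chevalley generators $e_J$, $f_J$), but these are equivalent; your treatment of the Weyl group identification fills in details that the paper leaves terse.
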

\begin{proof}
    Let $S$ be the subset of simple roots $\{\alpha_J\}$ determining $\mfp_0$
    and let $S'$ be the subset of simple roots of $L$ which appear in some
    $\sigma$-orbit $J$ for $\alpha_J \in S$. Let $\mfp'$ be the parabolic
    subalgebra determined by $S'$. Clearly $\mfp'$ is $\sigma$-invariant. By
    Lemma \ref{L:diagramweyl}, an element $w \in W^{\sigma}$ belongs to
    $W(\mfg_0)$ if and only if it has a reduced expression consisting of
    reflections through simple roots in $S'$, which is exactly the condition
    that $w$ belongs to $W(\mfm)$.  If $\alpha_J$ is a simple root of $L_0$,
    then the corresponding positive Chevalley generator $e_J$ is a linear
    combination of the positive Chevalley generators corresponding to the
    simple roots of $L$ in $J$, and similarly for the negative Chevalley
    generator $f_J$. Since $\mfp_0$ is generated as a Lie algebra by $\mfh_0$,
    all the $e_J$'s, and the $f_J$'s such that $\alpha_J \in S$, it follows
    that $\mfp_0 \subset \mfp' \cap L_0$.  Since the $f_J$'s with $\alpha_J \in
    S$ are the only negative generators in $\mfp' \cap L_0$, and $\mfp' \cap
    L_0$ is a parabolic subalgebra of $L_0$, it follows that $\mfp' \cap L_0 =
    \mfp_0$. Similarly $\mfm \cap L_0 = \mfg_0$.
\end{proof}

The real form $\mfh_{\R}$ of $\mfh$ is the real subspace where all roots take
real values, or equivalently the real span of the coroots. If $x \in \mfh$ let
$\re x$ be the projection of $x$ to $\mfh_{\R}$ under the (real-linear)
splitting $\mfh = \mfh_{\R} \oplus i \mfh_{\R}$. Note that $\re \sigma x =
\sigma \re x$ and $\re w x = w \re x$ for all $w \in W$. 
\begin{proof}[Proof of Theorem \ref{T:parahoricslice}]
    First we show that the bottom map of Equation (\ref{E:parahoricslice}) is
    an isomorphism.  Let $\mfp'$ be the parabolic of $L$ over $\mfp_0$, as in
    Lemma \ref{L:diagrampara}. We start by proving that $J_m^{\tilde{\sigma}}
    \mfh^r / W(\mfg_0) \iso J_m^{\tilde{\sigma}} (\mfh^r / W(\mfm))$, where
    $W(\mfm)$ is the Weyl group of the reductive factor of $\mfp'$. Since
    $J_m^{\tilde{\sigma}} (\mfh^r / W(\mfm))$ is smooth when $m < +\infty$ by
    Lemma \ref{L:twistedsmooth}, it is sufficient to prove that the map is
    bijective. By Corollary \ref{C:proquotient}, $J_m \mfh^r/W(\mfm) \iso J_m
    (\mfh^r/W(\mfm))$, so every element of $J_m^{\tilde{\sigma}}
    (\mfh^r/W(\mfm))$ is represented by an element of $f \in J_m \mfh^r$ such
    that $w \tilde{\sigma}(f) = f$ for some $w \in W(\mfm)$. Let $S'$ be the
    set of simple roots determining $\mfp'$, let $\Delta'$ be the set of all
    roots of $\mfm$, and let $D = \{x \in \mfh_{\R} : \alpha(x) \neq 0, \alpha
    \in \Delta'\}$. The connected components of $D$ are of the form $C \times
    \R^r$, where $C$ is an open Weyl chamber of $\mfm$ and $r = \dim \mfh -
    |S'|$.  Consequently $W(\mfm)$ acts transitively and freely on the
    connected components of $D$, so we can assume that $\re f(0) \in D_0 = \{ x
    \in \mfh_{\R} : \alpha(x) > 0, \alpha \in S'\}$. But $S'$ is
    $\sigma$-invariant, so $D_0$ is also $\sigma$-invariant, and thus $\re
    \sigma f(0) = \sigma \re f(0) \in D_0$.  Since $\re f(0) = w \sigma \re
    f(0)$, this implies that $w = e$ and consequently $f \in
    J_m^{\tilde{\sigma}} \mfh^r$. Thus the map $J_m^{\tilde{\sigma}} \mfh^r
    \arr J_m^{\tilde{\sigma}} (\mfh^r/W(\mfm))$ is surjective. Suppose $f,g \in
    J_m^{\tilde{\sigma}}\mfh^r$ are equal in
    $J_m^{\tilde{\sigma}}(\mfh^r/W(\mfm))$. Then there is $w \in W(\mfm)$ such
    that $w f = g$. Since $f(0),g(0) \in \mfh^r_0$, we have $\sigma(w) f(0) =
    g(0) = w f(0)$, and consequently $\sigma(w) = w$. Thus $f$ and $g$ are
    related by an element of $W(\mfm) \cap W^{\sigma} = W(\mfg_0)$. 
 
    As a special case of the above argument, we have $(\mfh^r /
    W(\mfm))^{\sigma} \iso \mfh_0^r / W(\mfg_0) = R^r$. Consequently
    $J_m^{\tilde{\sigma}}(\mfh^r/W(\mfm))$ maps to $R^r$ via evaluation at
    zero, and we conclude that the map $J_m^{\tilde{\sigma}}\mfh^r / W(\mfg_0)
    \arr R^r \times_{Q^{\sigma}} J_m^{\tilde{\sigma}} Q^r$ factors through the
    isomorphism to $J_m^{\tilde{\sigma}}(\mfh^r / W(\mfm))$.  Since $\mfh^r /
    W(\mfm) \arr \mfh^r / W(L)$ is etale, the space
    $J_m^{\tilde{\sigma}}(\mfh^r / W(\mfm))$ is isomorphic to $R^r
    \times_{Q^{\sigma}} J_m^{\tilde{\sigma}}Q^r$ by Lemma \ref{L:twistedetale}.

    We have shown that the bottom map of Equation (\ref{E:parahoricslice}) is
    an isomorphism, so we just need to do the same for the top map.
    Consider the case when $\mfp_0 = L_0$, so that $\mcP_m =
    J_m^{\tilde{\sigma}} G$ and $W(\mfg_0) = W(L_0)$.  Combining Corollary
    \ref{C:proquotient} (note that $H$ is reductive so that $G/H$ is affine)
    and the isomorphism $(G/H) \times_W \mfh^r \arr L^{rs}$, we get an
    isomorphism $J_m (G/H) \times_W J_m \mfh^r \arr J_m L^{rs}$, where the
    former space is the quotient (resp.  pro-quotient). The automorphism
    $\tilde{\sigma}$ on $J_m L^{rs}$ translates to the diagonal action on $J_m
    (G/H) \times_{W(L)} J_m \mfh^r$, and we can show that this isomorphism
    identifies $J_m^{\tilde{\sigma}} L^{rs}$ with $J_m^{\tilde{\sigma}} (G/H)
    \times_{W(L_0)} J_m^{\tilde{\sigma}}\mfh^r$ by a similar argument to the
    proof of Lemma \ref{L:parabolicslice}. Namely, $([f],g) \in J_m (G/H) \times
    J_m \mfh^r$ represents an element of $J_m^{\tilde{\sigma}}L^{rs}$ if and
    only if there is $w \in W(L)$ such that $[\tilde{\sigma}(f)]w^{-1} = [f]$
    and $w \tilde{\sigma}(g) = g$. Assuming that $\re g(0)$ is in the open Weyl
    chamber we get that $w = e$ and thus $[f] \in J_m^{\tilde{\sigma}}(G/H)$,
    $g \in J_m^{\tilde{\sigma}}\mfh^r$.  Similarly, any two elements of
    $J_m^{\tilde{\sigma}}(G/H) \times J_m^{\tilde{\sigma}}\mfh^r$ with the same
    image in $J_m L^{rs}$ are $W(L_0)$-translates. Finally we can apply
    Proposition \ref{P:homog} to replace $J_m^{\tilde{\sigma}} (G/H)$ with
    $J_m^{\tilde{\sigma}}G / J_m^{\tilde{\sigma}}H$.

    Now for the general case look at the square
    \begin{equation*}
        \xymatrix{ \mcP_m / J_m^{\tilde{\sigma}}H\times_{W(\mfg_0)} J_m^{\tilde{\sigma}}\mfh^r \ar[r] \ar[d] 
                            & \mfp_m \cap J_m^{\tilde{\sigma}}L^{rs} \ar[d] \\
                    J_m^{\tilde{\sigma}}G / J_m^{\tilde{\sigma}}H \times_{W(L_0)} J_m^{\tilde{\sigma}}\mfh^r \ar[r] 
                            & J_m^{\tilde{\sigma}}L^{rs} }.
    \end{equation*}
    The group quotient (resp. pro-group quotient) $\mcP_m /
    J_m^{\tilde{\sigma}}H$ is a closed subscheme of
    $J_m^{\tilde{\sigma}}G/J_m^{\tilde{\sigma}}H$.  As in
    Lemma \ref{L:parabolicslice}, both vertical maps are inclusions and
    consequently the top horizontal map is injective. Every $x \in
    J_m^{\tilde{\sigma}}L^{rs}$ can be written as $g y$ for $g \in
    J_m^{\tilde{\sigma}}G$ and $y \in J_m^{\tilde{\sigma}}\mfh^r$. If $x \in
    \mfp_m$ then $x(0) \in \mfp_0$, after which Lemma \ref{L:parabolicslice}
    implies that there is $w \in W(L_0)$ such that $g(0) w^{-1} \in \mcP_0$.
    Consequently $g w^{-1} \in \mcP_m$ and $(gw^{-1},w y)$ maps to $x$, so the
    top map is surjective as required. 
\end{proof}

\subsection{Arcs in the regular locus}\label{SS:regular}

Let $L^{reg}$ denote the open subset of regular elements in $L$, ie. the set of
elements $x$ such that the stabilizer $L^x$ has dimension equal to the rank $l$
of $L$. Note that $L^{reg}$ is $\sigma$-invariant. Kostant famously proved
that the map $L^{reg} \arr Q$ is surjective and smooth, and furthermore is a
$G$-orbit map, in the sense that every fibre is a single $G$-orbit
\cite{Ko63b}. The proof uses the Kostant slice, an affine subspace $\nu
\subset L^{reg}$ of the form $e + L^{f}$, where $\{h,e,f\}$ is a principal
$\mfsl_2$-triple. Kostant showed that  $\nu$ intersects each regular $G$-orbit
in a unique point, and that $\nu \incl L^{reg} \arr Q$ is an isomorphism. 
The following theorem extends this idea to jet and arc groups. 
\begin{thm}\label{T:kostantslice}
    There is a Kostant slice $\nu$ of $L$ which is $\sigma$-invariant and such
    that $\nu^{\sigma}$ is a Kostant slice for $L_0$. If $\nu$ is such a slice
    then $J_m^{\tilde{\sigma}} \nu \arr J_m^{\tilde{\sigma}}
    Q^{\tilde{\sigma}}$ is an isomorphism for all $0 \leq m \leq +\infty$, and
    every $J_m^{\tilde{\sigma}} G$-orbit in $J_m^{\tilde{\sigma}} L^{reg}$
    intersects $J_m^{\tilde{\sigma}} \nu$ in a unique point.
\end{thm}
At $m=0$, Theorem \ref{T:kostantslice} implies that $Q^{\sigma} = L_0 //
G^{\sigma}$. 
    
For Kostant's smoothness result it is possible to incorporate a parabolic
component.
\begin{thm}\label{T:kostantsmooth}
    The map $\mfp_m \cap J_m^{\tilde{\sigma}} L^{reg} \arr R
    \times_{Q^{\sigma}} J_m^{\tilde{\sigma}} Q$ is a surjective $\mcP_m$-orbit
    map for all $0 \leq m \leq +\infty$, and is smooth for $0 \leq m <
    +\infty$. 
\end{thm}

Finally, we have a technical corollary which we will need in the next section.
Recall the definition of $T_{const}$ from the previous section, and define
$T_{>0} \mfp_m$ to be the subbundle of $T \mfp_m$ of the form $\mfp_m \times
\mfu_m$ where $\mfu_m$ is the nilpotent subalgebra of $\mfp_m$, ie. the subset
of elements $f \in \mfp_m$ with $f(0) \in \mfu_0$, the nilpotent radical of
$\mfp_0$.
\begin{cor}\label{C:bundlemap}
    Let $0 \leq m \leq +\infty$.  The differential of the map $\mfp_m \arr R
    \times_{Q^{\sigma}} J_m^{\tilde{\sigma}} Q$ induces a bundle map $T_{>0}
    \mfp_m \arr R \times_{Q^{\sigma}} T_{const} J_m^{\tilde{\sigma}}$. Over
    $\mfp_m \cap J_m^{\tilde{\sigma}} L^{reg}$ the bundle map is surjective on
    fibres. 
\end{cor}

To prove Theorem \ref{T:kostantslice}, we start by proving some simple facts
about regular elements in $L_0$, using Kostant's characterisation of regular
elements (Proposition 0.4 of \cite{Ko63b}) in $L$: if $x = y+z$ is the Jordan
decomposition of $x$, so that $y$ is semisimple, $z$ is nilpotent, and
$[y,z]=0$, then $x$ is regular if and only if $z$ is a principal nilpotent in
the reductive subalgebra $L^y$. Note that, by definition, a nilpotent element
of a reductive algebra $L$ is required to be in $[L,L]$, and if $z$ is a
nilpotent in $L$ commuting with a semisimple element $y$, then $z$ is also a
nilpotent in $L^y$.
\begin{lemma}\label{L:diagramregular} $L^{reg} \cap L_0 = L_0^{reg}$, the set
    of regular elements in $L_0$.
\end{lemma}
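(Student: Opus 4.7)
The plan is to proceed via Kostant's characterization of regular elements and reduce everything to the nilpotent case, then use the relationship between principal $\mathfrak{sl}_2$-triples in $L_0$ and in $L$. Given $x \in L_0$, I would first observe that the Jordan decomposition $x = y + z$ (with $y$ semisimple, $z$ nilpotent, $[y,z]=0$) is preserved by $\sigma$, since the semisimple and nilpotent parts are canonically associated to $x$; consequently $y, z \in L_0$. By Kostant's criterion cited in the excerpt, $x$ is regular in $L$ iff $z$ is principal nilpotent in the reductive subalgebra $L^y$, and similarly $x$ is regular in $L_0$ iff $z$ is principal nilpotent in $L_0^y = (L^y)^\sigma$. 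Since $L^y$ is a $\sigma$-stable Levi whose $\sigma$-fixed subalgebra is $L_0^y$, and since $\mathfrak{h} \cap L^y$ is a $\sigma$-invariant Cartan of $L^y$ with fixed part $\mathfrak{h}_0 \cap L^y$ a Cartan of $L_0^y$, the pair $(L^y, L_0^y)$ has the same formal structure as $(L, L_0)$. So it suffices to prove the lemma when $y=0$, i.e., to show that a nilpotent $z \in L_0$ is principal in $L_0$ iff it is principal in $L$.

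For the forward direction, suppose $z \in L_0$ is principal in $L_0$ and extend $z$ to a principal $\mathfrak{sl}_2$-triple $\{h_0, z, f_0\}$ in $L_0$, so that $h_0 \in \mathfrak{h}_0$ satisfies $\alpha_J(h_0) = 2$ for each simple root $\alpha_J$ of $L_0$. Since $h_0 \in \mathfrak{h}_0 = \mathfrak{h}^\sigma$, every $\alpha \in J$ takes the same value on $h_0$, so $\alpha(h_0) = \alpha_J(h_0) = 2$ for every simple root $\alpha$ of $L$. Hence $h_0$ is the principal semisimple element of $L$, $\{h_0, z, f_0\}$ is a principal $\mathfrak{sl}_2$-triple in $L$, and $z$ is principal in $L$.

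For the reverse direction, suppose $z \in L_0$ is principal in $L$. Apply Jacobson--Morozov inside $L_0$ to extend $z$ to an $\mathfrak{sl}_2$-triple $\{h, z, f\}$ in $L_0$. Since $z$ is principal in $L$, any $\mathfrak{sl}_2$-triple containing $z$ is $L$-conjugate to a principal $\mathfrak{sl}_2$-triple, so $h$ is principal semisimple in $L$. After a $W(L_0)$-conjugation we may assume $h \in \mathfrak{h}_0$. Let $h^\star = 2\rho^\vee \in \mathfrak{h}$ be the standard principal semisimple element; it is $\sigma$-fixed and therefore lies in $\mathfrak{h}_0$. Write $h = wh^\star$ for some $w \in W(L)$; applying $\sigma$ and using that $h, h^\star \in \mathfrak{h}_0$ gives $w^{-1}\sigma(w) \in W(L)^{h^\star}$, which is trivial since $h^\star$ is regular. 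Hence $w \in W^\sigma = W(L_0)$, so $h$ is $W(L_0)$-conjugate to $h^\star$, which is principal in $L_0$ by the forward calculation. Therefore $\{h, z, f\}$ is principal in $L_0$, and $z$ is principal in $L_0$ as required.

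I expect the main obstacle to be the reverse direction: carefully arguing that the $\mathfrak{sl}_2$-triple produced by Jacobson--Morozov in $L_0$ is indeed principal in $L$, for which the key input is the triviality of the stabilizer $W(L)^{h^\star}$ together with Lemma \ref{L:diagramweyl} identifying $W(L_0)$ with $W^\sigma$. The reduction from general $y$ to the nilpotent case is conceptually routine but requires the observation that $\sigma|_{L^y}$ retains enough of the structure of a diagram automorphism for the Cartan and simple-root combinatorics above to apply inside $L^y$.
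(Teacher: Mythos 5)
Your argument is correct and reaches the same conclusion, but it handles the nilpotent case by a genuinely different route from the paper. The paper, after reducing to a principal nilpotent $z$ living in the parabolic centralizer $L^y$ (with $y$ conjugated into $\mfh_0$), invokes Kostant's root-space criterion (Theorem 5.3 of \cite{Ko59}): $z$ is principal iff its component along every simple root space is nonzero, and then observes that because each simple Chevalley generator $e_J$ of $L_0$ is a specific linear combination of the $e_\alpha$, $\alpha\in J$, the nonvanishing conditions for $L_0^y$ and for $L^y$ coincide. This is a direct, coordinate-level comparison that requires no auxiliary structure theory. Your proof instead runs through Jacobson--Morozov plus Kostant's conjugacy theorem for $\mfsl_2$-triples (a triple is determined up to conjugacy by its neutral element), identifying the resulting $h$ with $2\rho^\vee$ and then using regularity of $2\rho^\vee$ together with $W^\sigma = W(L_0)$ from Lemma~\ref{L:diagramweyl} to force $h$ to be $W(L_0)$-conjugate to $2\rho^\vee$. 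This is heavier machinery, but the Weyl-group fixed-point argument in your reverse direction is elegant and avoids inspecting root-space components entirely. The other organizational difference is that you formally reduce to the case $y=0$, which, as you correctly flag, requires checking that $\sigma|_{L^y}$ is again a diagram automorphism and that the simple roots of $L_0^y$ are still averages over $\sigma$-orbits of simple roots of $L^y$; this holds (after conjugating $y$ into $\mfh_0$, which you leave implicit), but the paper sidesteps the meta-reduction by working directly in $L^y$ with $y$ already in the dominant chamber. Both buy you the result; the paper's route is more elementary, yours is more structural.
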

\begin{proof}
    Suppose $x$ in $L_0$ has Jordan decomposition $x = y+z$ in $L$. Then $x =
    y+z$ is also the Jordan decomposition in $L_0$, and in particular $y$ and
    $z$ are in $L_0$. Now by conjugating by an element of $G^{\sigma}$ we can
    assume that $y \in \mfh_0$, and in fact that $y$ is in the closed Weyl
    chamber corresponding to the Borel $L_0 \cap \mfb$, where $\mfb$ is the
    Borel in $L$ compatible with $\sigma$. Since the simple roots of $L$
    project to positive multiples of the simple roots of $L_0$, $y$ is also in
    the closed Weyl chamber of $L$ corresponding to $\mfb$.  Let $S$ be the set
    of simple roots $\alpha_J$ for $L_0$ that are zero on $y$, and similarly
    let $S'$ be the set of simple roots for $L$ that are zero on $y$. Since $y$
    is in the closed Weyl chamber, the stabilizer $L_0^y$ (respectively $L^y$)
    is the reductive Lie algebra $\mfh_0 \oplus \bigoplus_{\alpha \in \Z[S]}
    (L_0)_{\alpha}$ (respectively $\mfh \oplus \bigoplus_{\alpha \in \Z[S']}
    L_{\alpha}$). Now $x$ is regular in $L_0$ (respectively $L$) if and only if
    $z$ is a principal nilpotent in $L_0^y$ (respectively $L^y$). Every
    nilpotent element of $L_0^y$ is contained in a Borel, and all Borels are
    conjugate, so we can conjugate $z$ by an element of $(G^{\sigma})^y$ to get
    $z$ contained in the Borel $L_0^y \cap \mfb$ (since it does not have a 
    component in the centre, $z$ will in fact be in the nilpotent radical of
    $L_0^y \cap \mfb$). By Theorem 5.3 of \cite{Ko59}, $z$ is a principal
    nilpotent in $L_0^y$ if and only if the component of $z$ in
    $(L_0)_{\alpha}$ is non-zero for all $\alpha \in S$.  But by the
    construction of the simple Chevalley generators of $L_0$, this is
    equivalent to the component of $z$ in $L_\alpha$ being non-zero for all
    $\alpha \in S'$. So $z$ is a principal nilpotent in $L_0^y$ if and only if
    $z$ is a principal nilpotent in $L^y$, and hence $x$ is regular in $L_0$ if
    and only if $x$ is regular in $L$. 
\end{proof}
We also need the following standard technical lemma.
\begin{lemma}\label{L:technical}
    Let $\mfq$ be a $\Z_{\geq 0}$-graded Lie algebra, and let $\mfn$ denote the
    ideal $\bigoplus_{k>0} \mfq_k$. Suppose $y$ is an element of $\mfq_0$, and
    that $\mfr \subset \mfn$ is a graded subspace such that $\mfn = [\mfn,y]
    \oplus \mfr$. Then for every $x$ in the completion $\hat{\mfn}$ there is
    $g$ in the pro-nilpotent group $\exp (\hat{\mfn})$ such that $g (y+x) \in y
    + \hat{\mfr}$.
\end{lemma}
\begin{proof}
    Let $\{x_i\}$ be the sequence in $\hat{\mfn}$ with $x_0 = x$ and $x_{i+1} =
    \exp(-z_i) (y+x_i) - y$, where $z_i \in \hat{\mfn}$ is chosen so that $x_i
    = [z_i,y] + r_i$ for $r_i \in \hat{\mfr}$. Since $\exp(-z_i) (y+x_i) = y +
    r_i - [z_i,x_i]$, we can show by induction that $z_i$ and the component of
    $x_i$ in $[\mfn,y]$ are both zero below degree $i+1$. Hence the element $g
    = \cdots \exp(-z_2) \exp(-z_1) \exp(-z_0)$ is a well-defined element of
    $\exp(\hat{\mfn})$, and $g (y+x)$ is contained in $y+\hat{\mfr}$ as
desired.
\end{proof}
\begin{proof}[Proof of Theorem \ref{T:kostantslice}]
    If $m < +\infty$ then there is a homomorphism of Lie groups
    $J_m^{\tilde{\sigma}} G \arr J_{m-1}^{\tilde{\sigma}} G$, so the induced map
    $J_m^{\tilde{\sigma}} L \arr J_{m-1}^{\tilde{\sigma}} L$ on Lie algebras
    preserves semisimple (resp. nilpotent) elements. We say that an element of
    $J_{\infty}^{\tilde{\sigma}} L$ is pro-semisimple (resp. pro-nilpotent) if
    the image of the element is semisimple (resp. pro-nilpotent) in
    $J_m^{\tilde{\sigma}} L$ for every $m < +\infty$. Just as in the
    finite-dimensional case, every element of $J_\infty^{\tilde{\sigma}} L$ can
    be written uniquely as $y + z$ where $y$ is pro-semisimple, $z$ is
    pro-nilpotent, and $[y,z] = 0$.

    If $y \in J_m^{\tilde{\sigma}} L$ is semisimple (resp. pro-semisimple) then
    $y(0)$ is semisimple in $L$, and hence $L = L^{y(0)} \oplus [L,y(0)]$. It
    follows from Lemma \ref{L:technical} that there is $g \in J_m^{\tilde{\sigma}}
    G$ such that $g y = y(0) + z$, where $z \in J_m^{\tilde{\sigma}} L^{y(0)}$
    and $z(0) = 0$. Since $z$ is nilpotent (resp. pro-nilpotent), uniqueness of
    the Jordan decomposition implies that $z=0$.

    More generally, if $x$ is an arbitrary element of $J_m^{\tilde{\sigma}} L$
    then there is $g \in J_m^{\tilde{\sigma}}$ such that $g x = y + z$, where
    $y \in L_0$ is semisimple and $z \in J_m^{\tilde{\sigma}} L^y$ is
    nilpotent (resp.  pro-nilpotent). In particular $e = z(0)$ is nilpotent in
    $L_0^y$, so pick an $\mfsl_2$-triple $\{h,e,f\}$ in $L_0^y$ containing $e$.
    Then $L^y = L^{\{y,f\}} \oplus [L^y,e]$, so applying Lemma
    \ref{L:technical} again there is $g' \in J_m^{\tilde{\sigma}} G^y$ such
    that $g' z \in e + J_m^{\tilde{\sigma}} L^{\{y,f\}} = J_m^{\tilde{\sigma}}
    (e + L^{\{y,f\}})$ and $g'(0) z(0) = e$.

    Using this canonical form, we move on to the proof of the theorem
    statement. Pick a principal $\mfsl_2$-triple $\{h,e,f\}$ in $L_0$.
    By Lemma \ref{L:diagramregular} $\{h,e,f\}$ is also principal in
    $L$, so $\nu = e + L^f$ is a Kostant slice in $L$ invariant under
    $\sigma$, and $\nu^{\sigma} = e + L_0^f$ is a Kostant slice in $L_0$.
    It follows immediately that $J_m^{\tilde{\sigma}} \nu \arr
    J_m^{\tilde{\sigma}} Q$ is an isomorphism, and also that $Q^{\sigma} = L_0
    // G^{\sigma}$. Since $J_m^{\tilde{\sigma}} L^{reg} \arr
    // J_m^{\tilde{\sigma}} Q$ is $J_m^{\tilde{\sigma}} G$-invariant,
    each orbit in $J_m^{\tilde{\sigma}} L^{reg}$ can intersect
    $J_m^{\tilde{\sigma}} \nu$ at most once. So we just need to show that the
    multiplication map $J_m^{\tilde{\sigma}} G \times J_m^{\tilde{\sigma}} \nu
    \arr J_m^{\tilde{\sigma}} L^{reg}$ is surjective, or equivalently that
    every fibre of the map $J_m^{\tilde{\sigma}} L^{reg} \arr
    J_m^{\tilde{\sigma}} Q$ is a $J_m^{\tilde{\sigma}} G$-orbit.

    The projection $L^{reg} \arr Q$ is smooth and every fibre is a $G$-orbit,
    so the multiplication map $G \times \nu \arr L^{reg}$ is surjective and
    smooth. Hence by Lemma \ref{L:surjective} the multiplication map
    $J_m G \times J_m \nu \arr J_m L^{reg}$ is surjective. Suppose $x_1$
    and $x_2$ are two points of $J_m^{\tilde{\sigma}} L^{reg}$ with
    the same value in $J_m^{\tilde{\sigma}} Q$. Using the $m=0$ case and
    the canonical form above, we can assume that $x_1(0) = x_2(0) = y + e'$,
    where $y$ is semisimple in $L_0$ and $e'$ is a principal nilpotent in
    $L_0^y$, and that $x_1$ and $x_2$ are in $y + J_m^{\tilde{\sigma}} \nu'$,
    where $\nu'$ is the Kostant slice $e' + L^{\{y,f'\}}$ in $L^y$. Since $x_1$
    and $x_2$ have the same image in $J_m Q$, there is $g \in J_m G$ such that
    $g x_1 = x_2$. Multiplication by $g$ preserves Jordan decomposition, so $g
    \in (J_m G)^y$. The subgroup $G^y$ is a connected reductive subgroup of $G$
    by Lemma 5, page 353 of \cite{Ko63b}, and the exponential map is a
    bijection for nilpotent (resp.  pro-nilpotent) groups, so $(J_m G)^y =
    G^y \cdot \exp (z J_m L^y) = J_m G^y$, the connected subgroup of $J_m L$
    with Lie algebra $J_m L^y$.  Hence $x_1 - y$ and $x_2 - y$ are in the same
    regular $J_m G^y$-orbit of $J_m (L^y)^{reg}$. But $x_1 - y$ and $x_2 - y$
    belong to $J_m^{\tilde{\sigma}} \nu' \subset J_m \nu'$, which we have
    already observed intersects each $J_m G^y$-orbit exactly once, implying
    that $x_1 = x_2$ as desired.
\end{proof}    

Theorem \ref{T:kostantslice} implies that the map $J_m^{\tilde{\sigma}} L^{reg}
\arr J_m^{\tilde{\sigma}} Q$ is surjective for $0 \leq m \leq +\infty$, and
smooth for $0 \leq m < +\infty$. To prove Theorem \ref{T:kostantsmooth}, we
need to account for the parabolic component. Recall that $\mfg_0 = \mfp_0 \cap
\overline{\mfp_0}$. 
\begin{lemma}\label{L:parabolicsmooth}
    The projection $\mfp_0 \cap L_0^{reg} \arr R$ is a surjective smooth
    $\mcP_0$-orbit map. In addition, if $g \in G^{\sigma}$ fixes an element of
    $\mfp_0 \cap L_0^{reg}$ then $g$ belongs to $\mcP_0$.
\end{lemma}
\begin{proof}
    Let $\mfb_0$ be a Borel of $L_0$ contained in $\mfp_0$ and compatible
    with $\mfh_0$. Let $\mfu_0$ be the unipotent radical of
    $\mfp_0$, so that $\mfg_0 = \mfp_0 / \mfu_0$. Finally let $\Delta$ be the
    set of roots of $L_0$, and let $S \subset \Delta$ be the set of simple
    roots.  Similarly let $S_0 \subset S$ be the set of simple roots of
    $\mfg_0$ corresponding to the Borel $\mfb_0 \cap \mfg_0$, and let $\Delta_0
    = \Delta \cap \Z[S_0]$ be the set of roots of $\mfg_0$.

    Now suppose $y \in \mfp_0$ is semisimple in $L_0$, and let $y = y_0 + y_1$
    where $y_0 \in \mfg_0$ and $y_1 \in \mfu_0$. Then $\mfu_0 = [\mfu_0,y_0]
    \oplus \mfu_0^{y_0}$, so by Lemma \ref{L:technical} there is $p \in \mcP_0$ 
    such that $p y = y_0 + z$, where $z \in \mfu_0^{y_0}$. Since $p y$ is
    semisimple, we conclude that $z = 0$, and ultimately that $y$ is conjugate
    by $\mcP_0$ to an element of $\mfh_0$.
    
    Every element $x \in \mfp_0$ can be written as $x = y+z$ where $y,z \in
    \mfp_0$, $y$ is semisimple in $L_0$, $z$ is nilpotent in $L_0$, and $[y,z]
    = 0$. By the previous paragraph, it is possible to conjugate $x$ by an
    element of $\mcP_0$ so that $y \in \mfh_0$. We can then conjugate $x$ by an
    element of $\mcP_0^y$ so that $z$ belongs to $\mfb_0^y$. Assume $x$ is
    given with $y \in \mfh_0$ and $z \in \mfb_0^y$.  By a dimension argument,
    $\mfb_0^y$ is a Borel for the reductive Lie algebra $L_0^y$. The
    corresponding simple roots are the indecomposable elements $S_y$ of
    $\Delta^+_y = \{\alpha \in \Delta^+ : \alpha(y) = 0\}$. Similarly $\mfb_0^y
    \cap \mfg_0^y$ is a Borel for $\mfg_0^y$, and the simple roots are the
    elements of $S_y \cap \Delta_0$.  The element $x$ is regular in $L_0$ if
    and only if $z$ is a principal nilpotent in $L_0^y$, which is true
    if and only if the projection to $(L_0)_{\alpha}$ is non-zero for
    all $\alpha \in S_y$. If this latter condition holds then the image of
    $x$ in $\mfg_0 = \mfp_0 / \mfu_0$ is regular in $\mfg_0$. The projection
    $\mfp_0 \arr \mfg_0$ is $\mcP_0$-equivariant, so we conclude that the
    projection sends regular elements of $L_0$ to regular elements of $\mfg_0$.
    
    Conversely, if $x \in \mfg_0^{reg}$ then we can conjugate $x$ by an element
    of the subgroup of $\mfg_0$ to be of the form $y + z$ where $y \in \mfh_0$
    and $z \in \mfb_0^y$ is a principal nilpotent. This means that the
    projection of $z$ to $(L_0)_{\alpha}$ is non-zero for every $\alpha \in S_y
    \cap \Delta_0$. Let $z'$ be an element of $L_0$ such that the projection
    of $z'$ to $(L_0)_{\alpha}$ is non-zero if $\alpha \in S_y \setminus
    \Delta_0$, and is zero otherwise. Then $x + z'$ is a regular element of 
    $L_0$ which projects $x$. Using equivariance again, we conclude that the
    projection $\mfp_0 \arr \mfg_0$ induces a surjection $\mfp_0 \cap L_0^{reg}
    \arr \mfg_0^{reg}$. The map $\mfg_0^{reg} \arr R$ is a smooth surjection,   
    and $\mfp_0 \arr \mfg_0$ is smooth, so we conclude that $\mfp_0 \cap
    L_0^{reg} \arr R$ is a smooth surjection.
    
    Now suppose $x_1$ and $x_2$ in $\mfp_0 \cap L_0^{reg}$ map to the same
    element of $R$. As in the third paragraph, we can assume without loss of
    generality that $x_i = y_i + z_i$ with $y_i \in \mfh_0$ and $z_i$ a
    principal nilpotent element of $L_0^{y_i}$ contained in $\mfb_0^{y_i}$. In
    addition, the images of $x_1$ and $x_2$ in $\mfg_0$ are conjugate by an
    element of $\mcP_0$, so in particular we can assume that $y_1 = y_2$.
    Thus $z_1$ and $z_2$ are both principal nilpotents of $L_0^{y_1}$ contained
    in $\mfb_0^{y_1}$, and hence are conjugate by an element of the Borel
    subgroup of $\mfb_0^{y_1}$. We conclude that the projection $\mfp_0 
    \cap L_0^{reg} \arr R$ is a $\mcP_0$-orbit map.

    For the last part of the lemma, we again assume that $x \in \mfp_0 \cap
    L_0^{reg}$ is of the form $y + z$ with $y \in \mfh_0$ and $z \in \mfb_0^y$.
    If $x$ is regular then $L_0^x = (L_0^y)^z$ is contained in $\mfb_0 \subset
    \mfp_0$. By Proposition 14, page 362 of \cite{Ko63b}, $(G^{\sigma})^x$ is
    connected, and hence a subgroup of $\mcP_0$.
\end{proof}

\begin{proof}[Proof of Theorem \ref{T:kostantsmooth}]
    Suppose $x_1,x_2 \in \mfp_m \cap J_m^{\tilde{\sigma}} L^{reg}$ have the
    same image in $R \times_{Q^{\sigma}} J_m^{\tilde{\sigma}} Q$. By Theorem
    \ref{T:kostantslice} there is $g \in J_m^{\tilde{\sigma}} G$ such that $g
    x_1 = x_2$, while by Lemma \ref{L:parabolicsmooth} there is $p_0 \in
    \mcP_0$ such that $p_0 x_1(0) = x_2(0)$. Thus $p_0^{-1} g(0)$ fixes $x_1(0)
    \in \mfp_0 \cap L_0^{reg}$, so $g \in \mcP_m$ by Lemma \ref{L:parabolicsmooth},
    and $\mfp_m \cap J_m^{\tilde{\sigma}} L^{reg} \arr R \times_{Q^{\sigma}}
    J_m^{\tilde{\sigma}} Q$ is a $\mcP_m$-orbit map.

    To show surjectivity, observe that $\mfp_m \cap J_m^{\tilde{\sigma}}
    L^{reg} = (\mfp_0 \cap L_0^{reg}) \times_{L_0} J_m^{\tilde{\sigma}}
    L^{reg}$. A point of $R \times_{ Q^{\sigma}} J_m^{\tilde{\sigma}} Q$ is
    determined by a pair of points $x \in R$ and $y \in J_m^{\tilde{\sigma}} Q$
    which have the same image in $Q^{\sigma}$. Given a point specified in this
    manner, choose $x' \in \mfp_0 \cap L_0^{reg}$ mapping to $x$ and $y' \in
    J_m^{\tilde{\sigma}} L^{reg}$ mapping to $y$. Since $x$ and $y$ have the
    same image in $Q^{\sigma}$, there is $g \in G^{\sigma}$ such that
    $g y(0) = x$. Then $g y$ belongs to $\mfp_m$ and maps to the point
    $(x,y) \in R \times_{Q^{\sigma}} J_m^{\tilde{\sigma}} Q$.

    Since $\mfp_m \cap J_m^{\tilde{\sigma}} L^{reg} \arr R \times_{Q^{\sigma}}
    J_m^{\tilde{\sigma}} Q$ is a $\mcP_m$-orbit map, to show smoothness it is
    enough to show that the map $T \mfp_m \cap J_m^{\tilde{\sigma}} L^{reg}
    \arr T (R \times_{Q^{\sigma}} J_m^{\tilde{\sigma}} Q)$ is surjective.
    This follows from a similar argument to the last paragraph. As mentioned
    in the proof of Theorem \ref{T:kostantslice}, if $\nu_0$ is a Kostant
    slice in $L_0$ then $G^{\sigma} \times \nu_0 \arr L^{reg}_0$
    is smooth and surjective, so $T G^{\sigma} \times T \nu_0 \arr T L^{reg}_0$
    is also surjective, and hence if two elements of $T L^{reg}_0$ have the
    same image in $T Q^{\sigma}$ then they are conjugate by an element of $T
    G^{\sigma}$. Surjectivity of $\mfp_0 \cap L^{reg}_0 \arr R$ and
    $J_m^{\tilde{\sigma}} L^{reg} \arr J_m^{\tilde{\sigma}} Q$ follows from
    Lemma \ref{L:parabolicsmooth} and Theorem \ref{T:kostantslice}.
\end{proof}

\begin{proof}[Proof of Corollary \ref{C:bundlemap}]
    $R \times_{Q^{\sigma}} T_{const} J_m^{\tilde{\sigma}} Q$ is isomorphic to
    the pullback $R \times_{T Q^{\sigma}} T J_m^{\tilde{\sigma}}Q$, where the
    map $R \arr T Q^{\sigma}$ is the composition of the zero section $R \arr
    TR$ with the differential $TR \arr TQ^{\sigma}$. The restriction of the
    differential $T \mfp_m \arr T R$ to $T_{>0} \mfp_m$ factors through the
    zero section $R \arr TR$, so the image of $T_{>0} \mfp_m$ is contained in
    $R \times_{ TQ^{\sigma}} T J_m^{\tilde{\sigma}} Q$. To show that this bundle
    map is surjective on fibres, observe that, in the argument for smoothness
    in the proof of Theorem \ref{T:kostantsmooth}, if $x \in T R$ is a zero
    tangent vector, then we can pick $x' \in T \mfp_0 \cap T L^{reg}$ mapping
    to $x$ which is also a zero tangent vector, and hence the resulting point
    of $T \mfp_m$ will be contained in $T_{>0} \mfp_m$.
\end{proof}

\section{Calculation of parahoric cohomology}\label{S:mainproof}

In this section we finish the proofs of Theorems \ref{T:relativecoh} and
\ref{T:nilpotentcoh} and Proposition \ref{P:twistedexp}. We continue to
use the notation of Section \ref{S:orbit}.

\subsection{Proof of Proposition \ref{P:twistedexp}}\label{SS:twistedexp}

Pick a principal $\mfsl_2$-triple $\{h,e,f\}$ in $L_0$, and note that
$\{h,e,f\}$ is principal in $L$ by Lemma \ref{L:diagramregular}. We need to
show that the eigenvalues of $h/2$ on $L_a^e$ agree with the subset of the
exponents defined in Definition \ref{D:twistedexp}. Let $L = \bigoplus L^{(i)}$
denote the principal grading of $L$ induced by the eigenspace decomposition of
$h/2$.  Then $m \geq 0$ appears in the list of exponents of $L$ with
multiplicity $\dim \left(L^{(m)}\right)^e$. 

Let $\nu$ denote the Kostant slice $f + L^e$. As previously mentioned,
Kostant's theorem states that the restriction map $\C[Q] \arr \C[\nu]$ is an
isomorphism.  Actually, a stronger statement is true.  Identity $\C[\nu]$ with
polynomials on $L^e$ in the obvious way. Filter $\C[\nu]$ by setting
$\C[\nu]_m$ to be the subring of polynomials on $\bigoplus_{i=0}^m
\left(L^{(i)}\right)^e$.  Choose homogeneous generators for $\C[Q] = (S^*
L^*)^G$ and let $\C[Q]_m$ be the subring generated by generators of degree at
most $m+1$. Then, by Theorem 7, page 381 of \cite{Ko63b}, the restriction map
gives an isomorphism between $\C[Q]_m$ and $\C[\nu]_m$. Furthermore, if $I$ is
a generator of degree $m+1$ then the restriction of $I$ to $\nu$ takes the form
$f + I_0$ where $f$ is in the dual space of $\left(L^{(m)}\right)^e$ and $I_0
\in \C[\nu]_{m-1}$ does not have constant term.

The automorphism $\sigma$ acts on both $\C[\nu]$ and $\C[Q]$, preserving the
filtration in both cases, and the restriction map is $\sigma$-equivariant.  As
before, let $\mcM$ denote the ideal in $(S^* L^*)^G$ containing all elements of
degree greater than zero, so that $\mcM / \mcM^2$ is the space of generators.
By definition, the multiplicity of $m$ as an exponent is the multiplicity of
$q^{-a}$ as an eigenvalue of $\sigma$ acting on the degree $m+1$ subspace of
$\mcM / \mcM^2$. By the previous paragraph, this is equal to the multiplicity
of $q^{-a}$ as an eigenvalue of $\sigma$ acting on the dual space of
$\left(L^{(m)}\right)^e$, or equivalently the dimension of $q^a$ as an
eigenvalue of $\sigma$ acting on $\left(L^{(m)}\right)^e$ itself.

\subsection{Proof of Theorem \ref{T:relativecoh}}\label{SS:standardbasic}

Let $\Omega^{*}_{const} R \times_{Q^{\sigma}} J_m^{\tilde{\sigma}} Q$ denote
the sections of $\bigwedge^* R \times_{Q^{\sigma}} T_{const}^*
J_m^{\tilde{\sigma}} Q$, where $T_{const}^* J_m^{\tilde{\sigma}} Q$ is the dual
bundle to $T_{const} J_m^{\tilde{\sigma}} Q$. Similarly, let  $\Omega^{*}_{>0}
\mfp_m$ denote the sections of $\bigwedge^* T_{>0}^* \mfp_m$, where $T_{>0}^*
\mfp_m$ is the dual bundle to $T_{>0} \mfp_m$. As per Theorem
\ref{T:basicforms}, we want to calculate the algebra of
$\mcP_{\infty}$-invariant $\mcN_{\infty}$-basic elements of $\Omega^*_{>0}
\mfp_{\infty}$.

\begin{prop}\label{P:pullback}
    Pullback via the bundle map $T_{>0} \mfp_m \arr R \times_{Q^{\sigma}}
    T_{const} J_m^{\tilde{\sigma}}$ gives an isomorphism from the
    algebra $\Omega_{const}^* R \times_{Q^{\sigma}} J_m^{\tilde{\sigma}} Q$ to
    the algebra of $\mcP_m$-invariant $\mcN_m$-basic elements of $\Omega_{>0}^*
    \mfp_m$.
\end{prop}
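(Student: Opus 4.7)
The plan is to bootstrap from Lemma \ref{L:rspullback} across the discriminant using Proposition \ref{P:kostant1}: the map $\pi : \hat{\mfp}^{reg} \arr B$ is smooth and surjective, where $\hat{\mfp}^{reg} = \hat{\mfp} \cap L^{reg}[[z]]^{\tilde{\sigma}}$ and $B = R \times_{Q^{\sigma}} Q[[z]]^{\tilde{\sigma}}$. Injectivity of the pullback is already contained in Lemma \ref{L:pullback}. For the easy direction, every pullback is $\mcP$-invariant because $\hat{\mfp} \arr B$ is $\mcP$-invariant, and it is $\mcN$-basic because tangents to $\mcN$-orbits project to zero in $TR$ (being in the $\hat{\mfu}$-direction) and to zero in $TQ[[z]]^{\tilde{\sigma}}$ (being $G[[z]]^{\tilde{\sigma}}$-orbit tangents in the kernel of the differential of the invariant map).

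For the converse, let $\omega$ be a $\mcP$-invariant $\mcN$-basic section of $\bigwedge^* T_{>0}^* \hat{\mfp}$. Lemma \ref{L:rspullback} produces a unique $\tilde{\omega}^r \in \Omega_{const}^*(R^r \times_{Q^{\sigma}} Q^r[[z]]^{\tilde{\sigma}})$ with $\omega|_{\hat{\mfp}^{rs}} = (\pi^{rs})^* \tilde{\omega}^r$. Let $K \subset T_{>0} \hat{\mfp}|_{\hat{\mfp}^{reg}}$ denote the kernel of the bundle map; by Lemma \ref{L:pullback} this map is surjective on fibres over $\hat{\mfp}^{reg}$, so $K$ is an algebraic sub-bundle. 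The condition that $\omega(x)$ annihilate $K_x$ is algebraic in $x$, and by the pullback relation it holds on the dense open $\hat{\mfp}^{rs}$; therefore it extends to all of $\hat{\mfp}^{reg}$, making $\omega|_{\hat{\mfp}^{reg}}$ a section of $\pi^* \bigwedge^* (R \times_{Q^{\sigma}} T_{const}^* Q[[z]]^{\tilde{\sigma}})$.

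Next I would show the fibres of $\pi|_{\hat{\mfp}^{reg}}$ are $\mcP$-orbits: two regular arcs with the same image in $Q[[z]]^{\tilde{\sigma}}$ are $G[[z]]^{\tilde{\sigma}}$-conjugate by the arc version of the Kostant slice theorem, and Lemma \ref{L:parabolicregular} allows the conjugating element to be chosen with value in $\mcP_0$ at $z=0$, hence in $\mcP$. The stabiliser $\mcP^x$ acts trivially on $T_{>0,x}\hat{\mfp}/K_x$ because the bundle map is $\mcP$-invariant; thus the $\mcP$-action on $T_{>0} \hat{\mfp}/K$ matches the trivial fibre action on the pullback bundle $\pi^*(R \times_{Q^{\sigma}} T_{const} Q[[z]]^{\tilde{\sigma}})$. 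Since $\mcP$ is connected, fibres are connected, and the $\mcP$-invariance of $\omega$ descends $\omega|_{\hat{\mfp}^{reg}}$ to a unique section $\tilde{\omega}$ of $\bigwedge^*(R \times_{Q^{\sigma}} T_{const}^* Q[[z]]^{\tilde{\sigma}})$ on $B$ with $\pi^* \tilde{\omega} = \omega|_{\hat{\mfp}^{reg}}$; uniqueness forces $\tilde{\omega}|_{B^r} = \tilde{\omega}^r$.

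Finally, $\pi^* \tilde{\omega}$ and $\omega$ are algebraic sections of $\bigwedge^* T_{>0}^* \hat{\mfp}$ on all of $\hat{\mfp}$ that agree on the dense open $\hat{\mfp}^{reg}$---density follows from density of $\mfp_0 \cap L_0^{reg}$ in $\mfp_0$ via Lemma \ref{L:parabolicregular}---so they agree everywhere on $\hat{\mfp}$, giving $\omega = \pi^*\tilde\omega$. The main obstacle is the identification of the fibres with $\mcP$-orbits together with the compatibility of the $\mcP$-action on $T_{>0} \hat{\mfp}/K$ with the pullback bundle structure; once these are in hand, the algebraic extension of the vanishing condition on $K$ and the descent of $\omega$ proceed routinely.
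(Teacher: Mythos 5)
Your proposal correctly reuses the ingredients in the right order up through Lemma~\ref{L:rspullback} and the observation that the annihilation of $K=\ker(T_{>0}\hat{\mfp}\arr R\times_{Q^{\sigma}}T_{const}Q[[z]]^{\tilde{\sigma}})$ propagates from $\hat{\mfp}^{rs}$ to $\hat{\mfp}^{reg}$. But from there you diverge from the paper's route in a way that introduces a genuine gap.

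You hinge the converse direction on the claim that the fibres of $\phi\colon\hat{\mfp}^{reg}\arr R\times_{Q^{\sigma}}Q[[z]]^{\tilde{\sigma}}$ are single $\mcP$-orbits, and you invoke ``the arc version of the Kostant slice theorem'' together with Lemma~\ref{L:parabolicregular} as if this were a routine input. It is not: this orbit statement is precisely Proposition~\ref{P:orbit}, which the paper proves only in Section~\ref{SS:kostant}, using Jordan decomposition of arcs (Lemma~\ref{L:semisimple}), conjugacy of pro-nilpotent lifts of a principal nilpotent (Lemma~\ref{L:principal}), and the parabolic orbit Lemma~\ref{L:parabolicorbit}. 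Lemma~\ref{L:parabolicregular} only says the projection $\mfp_0\cap L_0^{reg}\arr\mfg_0^{reg}$ is surjective; it does not give the statement you need, that the conjugating element of $G[[z]]^{\tilde\sigma}$ can be modified to land in $\mcP$ --- that is exactly what Lemma~\ref{L:parabolicorbit} provides (that $\mfp_0\cap L_0^{reg}\arr R$ is a $\mcP_0$-orbit map, and that stabilizers of $\mfp_0\cap L_0^{reg}$ in $G^\sigma$ lie in $\mcP_0$). Even then you would still owe the fppf descent argument for sections of $\pi^*\bigwedge^*(\cdots)$ along the orbit map.

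The paper's proof of Proposition~\ref{P:pullback} deliberately avoids all of this. It only uses that $\phi$ is smooth and surjective (Proposition~\ref{P:kostant1}), not that it is an orbit map. The trick: write the form $\alpha\in\Omega^*_{const}R^r\times_{Q^\sigma}Q^r[[z]]^{\tilde\sigma}$ from Lemma~\ref{L:rspullback} as a finite sum $\alpha=\sum f_i\alpha_i$ where the $\alpha_i$ are globally defined over $R\times_{Q^\sigma}Q[[z]]^{\tilde\sigma}$ and fibrewise linearly independent and the $f_i$ are regular only on the regular-semisimple locus. Because the bundle map is fibrewise surjective over $\hat{\mfp}^{reg}$, the pullbacks $\phi^*\alpha_i$ stay fibrewise independent there, and since $\phi^*\alpha=\omega$ extends to $\hat{\mfp}^{reg}$, each $\phi^*f_i$ extends; smoothness and surjectivity of $\phi$ then force the $f_i$ themselves to extend. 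One concludes by density. If you want your argument to be self-contained at this point in the paper, you either need to prove Proposition~\ref{P:orbit} first (accepting the extra page or two of work, none of which turns out to be circular), or replace the descent step with this linear-algebra-plus-extension-of-scalars argument.
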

\begin{proof}
    Every section of $\Omega^*_{>0} \mfp_{\infty}$ is a pullback from
    $\Omega^*_{>0} \mfp_m$ for some $m < +\infty$. By Corollary
    \ref{C:bundlemap}, the pullback map is injective, so it is enough to prove
    surjectivity when $m < +\infty$.

    Let $\mfp^{rs}_m$ denote the open subset $\mfp_m \cap J_m^{\tilde{\sigma}}
    L^{rs}$ of $\mfp_m$. We start by showing that the pullback map is an
    isomorphism from $\Omega_{const}^* R^r \times_{Q^{\sigma}}
    J_m^{\tilde{\sigma}} Q^r$ to the algebra of $\mcP_m$-invariant
    $\mcN_m$-basic elements of $\Omega_{>0}^* \mfp_m^{rs}$. By Theorem
    \ref{T:parahoricslice}, $\mfp_m^{rs}$ is isomorphic to $\mcP_m /
    J_m^{\tilde{\sigma}} H \times_{W(\mfg_0)} J_m^{\tilde{\sigma}} \mfh^r$. By
    Proposition \ref{P:principalbundle},
    \begin{equation*}
        T \left(\mcP_m / J_m^{\tilde{\sigma}}H  \times_{W(\mfg_0)}
            J_m^{\tilde{\sigma}} \mfh^r\right) \iso T \mcP_m / J_m^{\tilde{\sigma}} H
            \times_{W(\mfg_0)} T J_m^{\tilde{\sigma}} \mfh^r.  
    \end{equation*}
    $\mcP_m / \mcN_m$ is isomorphic to the connected reductive subgroup of
    $\mcP_m$ corresponding to the subalgebra $\mfg_0 \subset
    J_m^{\tilde{\sigma}} L$. We work for a moment in the analytic category.
    Suppose $\gamma_t$ is a curve in $\mfp_m^{rs}$ representing an element of
    $T_{>0} \mfp_m$, so that the image $\overline{\gamma_t}$ of $\gamma_t$ in
    $\mfg_0$ is constant.  There are curves $\alpha_t$ and $\beta_t$ in
    $\mcP_m$ and $J_m^{\tilde{\sigma}} \mfh^r$ respectively such that $\alpha_t
    \beta_t = \gamma_t$. Let $\overline{\alpha_t}$ denote the image of
    $\alpha_t \in \mcP_m / \mcN_m$. Then $\overline{\gamma_t} =
    \overline{\alpha_t} \beta_t(0)$ is a constant curve in $\mfg_0$, so
    $\overline{\alpha_0}^{-1} \overline{\alpha_t} \beta_t(0)$ is a constant
    curve in $\mfh_0^r$. This implies that $\overline{\alpha_0}^{-1}
    \overline{\alpha_t} \in w H^{\sigma}$ for some $w \in N(H)^{\sigma}$, from
    which we can conclude that $\overline{\alpha_0}^{-1} \overline{\alpha_t}
    \beta_t(0) = w \beta_t(0)$, so $\beta_t(0)$ is constant, and hence
    represents an element of $T_{const} J_m^{\tilde{\sigma}} \mfh^r$. Since
    $w^{-1} \overline{\alpha_0}^{-1} \overline{\alpha_t} \in H^{\sigma}$, the
    curves $\alpha_t$ and $\alpha_t \overline{\alpha_t}^{-1}
    \overline{\alpha_0} w$ are equal in $\mcP_m / J_m^{\tilde{\sigma}} H$. The
    latter curve projects to a constant curve in $\mcP_m / \mcN_m$, and since
    $\mcP_m \iso \mcP_m / \mcN_m \ltimes \mcN_m$, is tangent to a left
    $\mcN_m$-coset in $\mcP_m$. Since $\mcN_m$ is normal, every left
    $\mcN_m$-coset is a right $\mcN_m$-coset.  We conclude that over
    $\mfp_m^{rs}$, $T_{>0} \mfp_m$ is isomorphic to the subbundle $T_{\mcN_m}
    \mcP_m / J_m^{\tilde{\sigma}} H \times_{W(\mfg_0)} T_{const}
    J_m^{\tilde{\sigma}} \mfh^r$ of $T \left(\mcP_m / J_m^{\tilde{\sigma}} H
    \times_{W(\mfg_0)} J_m^{\tilde{\sigma}} \mfh^r\right)$, where $T_{\mcN_m}
    \mcP_m / J_m^{\tilde{\sigma}} H$ is the subbundle of tangents to
    $\mcN_m$-orbits. 

    Recall from the proof of Theorem \ref{T:parahoricslice} that
    $J_m^{\tilde{\sigma}} \mfh^r / W(\mfg_0)$ is isomorphic to
    $J_m^{\tilde{\sigma}} (\mfh^r / W(\mfm))$, so $T_{const}$ of the former
    space is well-defined.  By Proposition \ref{P:principalbundle}
    again, $(T J_m^{\tilde{\sigma}} \mfh^r) / W(\mfg_0) \iso T
    (J_m^{\tilde{\sigma}} \mfh^r / W(\mfg_0))$, so $T_{const} (J_m
    ^{\tilde{\sigma}} \mfh^r/ W(\mfg_0))$ is a subbundle of $(T
    J_m^{\tilde{\sigma}} \mfh^r) / W(\mfg_0)$. A tangent vector $v \in T
    J_m^{\tilde{\sigma}} \mfh^r$ represents an element of $T_{const} (J_m
    ^{\tilde{\sigma}}\mfh^r / W(\mfg_0))$ if and only if the projection of
    $v(0)$ to $\mfh_0^r / W(\mfg_0) \iso (\mfh^r / W(\mfm))^{\sigma}$ is a zero
    tangent vector, where $v(0)$ is the image of $v$ in $T \mfh_0^r$. Since
    $\mfh^r_0 \arr \mfh^r_0 / W(\mfg_0)$ is etale, this is true if and only if
    $v(0)$ is a zero tangent vector, so $T_{const} (J_m^{\tilde{\sigma}} \mfh^r
    / W(\mfg_0)) \iso \left(T_{const} J_m^{\tilde{\sigma}} \mfh^r\right) /
    W(\mfg_0)$. Similarly the isomorphism $J_m^{\tilde{\sigma}} \mfh^r /
    W(\mfg_0) \iso R^r \times_{Q^{\sigma}} J_m^{\tilde{\sigma}} Q^r$ sends
    $T_{const} J_m^{\tilde{\sigma}} \mfh^r / W(\mfg_0)$ to $R^r
    \times_{Q^{\sigma}} T_{const} J_m^{\tilde{\sigma}} \mfh^r$ (see the proof
    of Corollary \ref{C:bundlemap}). Applying Theorem \ref{T:parahoricslice},
    we want to show that the bundle map
    \begin{equation*} 
        T_{\mcN_m} \mcP_m / J_m^{\tilde{\sigma}} H \times_{W(\mfg_0)} T_{const}
        J_m^{\tilde{\sigma}} \mfh^r \arr T_{const}
        J_m^{\tilde{\sigma}} \mfh^r / W(\mfg_0)
    \end{equation*}
    induced by projection on the second factor gives an isomorphism from
    $\Omega^*_{const} J_m^{\tilde{\sigma}} \mfh^r / W(\mfg_0)$ to the ring of
    $\mcP$-invariant $\mcN$-basic sections of $\bigwedge^* T^*_{\mcN_m} \mcP_m /
    J_m^{\tilde{\sigma}} H \times_{W(\mfg_0)} T^*_{const}
    J_m^{\tilde{\sigma}} \mfh^r$. 

    By pulling back to $\mcP_m / J_m^{\tilde{\sigma}} H \times
    J_m^{\tilde{\sigma}} \mfh^r$, we can identify the ring of
    $\mcP_m$-invariant $\mcN_m$-basic sections of $\bigwedge^* T^*_{\mcN_m}
    \mcP_m / J_m^{\tilde{\sigma}} H \times_{W(\mfg_0)} T^*_{const}
    J_m^{\tilde{\sigma}} \mfh^r$ with a subring of the $\mcP_m$-invariant
    $\mcN_m$-basic sections of $\bigwedge^* T_{\mcN_m} \mcP_m /
    J_m^{\tilde{\sigma}} H \times T_{const}^* J_m^{\tilde{\sigma}} \mfh^r$.
    This latter ring is isomorphic to the ring $\Omega_{const}^*
    J_m^{\tilde{\sigma}} \mfh^r$ by pullback via projection on the second
    factor.  An element of $\Omega_{const}^* J_m^{\tilde{\sigma}} \mfh^r$
    descends to a section over $\mcP_m / J_m^{\tilde{\sigma}} H \times_{W(\mfg_0)}
    J_m^{\tilde{\sigma}} \mfh^r$ if and only if it is $W(\mfg_0)$-equivariant.
    The splitting $T J_m^{\tilde{\sigma}} \mfh^r =
    J_m^{\tilde{\sigma}} \mfh^r \times \mfh_0 \oplus T_{const}
    J_m^{\tilde{\sigma}} \mfh^r$ allows us to identify the $W(\mfg_0)$-module
    $\Omega_{const}^* J_m^{\tilde{\sigma}} \mfh^r$ with the subalgebra of
    differential forms which vanish on $J_m^{\tilde{\sigma}} \mfh^r \times
    \mfh_0$.\footnote{In contrast, there is no such identification for the
    $\mcP_m$-module $\Omega_{>0} \mfp_m$.} There is a similar splitting for
    $T J_m^{\tilde{\sigma}} \mfh^r / W(\mfg_0)$, and thus a similar
    identification for $\Omega_{const}^* J_m^{\tilde{\sigma}} \mfh^r /
    W(\mfg_0)$. The differential $T J_m^{\tilde{\sigma}} \mfh^r \arr T
    J_m^{\tilde{\sigma}} \mfh^r/ W(\mfg_0)$ preserves this splitting, so the
    pullback map $\Omega_{const}^* J_m^{\tilde{\sigma}} \mfh^r / W(\mfg_0) \arr
    \Omega_{const}^* J_m^{\tilde{\sigma}} \mfh^r$ agrees with the pullback map
    on differential forms. Thus pullback gives an isomorphism from
    $\Omega_{const}^* J_m^{\tilde{\sigma}} \mfh^r/ W(\mfg_0)$ to the
    $W(\mfg_0)$-equivariant elements of $\Omega_{const}^*
    J_m^{\tilde{\sigma}} \mfh^r$ (see, e.g., Theorem 1 of \cite{Br98}), and
    this implies that all $\mcP_m$-invariant $\mcN_m$-basic sections of
    $\bigwedge^* T^*_{\mcN_m} \mcP_m / J_m^{\tilde{\sigma}} H \times_{W(\mfg_0)}
    T^*_{const} J_m^{\tilde{\sigma}} \mfh^r$ come from pullback on the
    second factor.

    To finish the proof, let $\mfp_m^{reg} = \mfp \cap J_m^{\tilde{\sigma}}
    L^{reg}$, and let $\phi$ denote the map $\mfp_m^{reg} \arr R
    \times_{Q^{\sigma}} J_m^{\tilde{\sigma}} Q$. By Theorem
    \ref{T:kostantsmooth}, $\phi$ is smooth and surjective. Hence if $f$ is a
    regular function defined on an open dense subset of $R \times_{Q^{\sigma}}
    J_m^{\tilde{\sigma}} Q$ such that $\phi^* f$ extends to $\mfp_m^{reg}$,
    then $f$ has a unique extension to $R \times_{Q^{\sigma}}
    J_m^{\tilde{\sigma}} Q$. 

    Suppose $\omega \in \Omega_{>0}^* \mfp_m^{reg}$ is $\mcP_m$-invariant and
    $\mcN_m$-basic. Then there is $\alpha \in \Omega_{const}^* R^r
    \times_{Q^{\sigma}} J_m^{\tilde{\sigma}} Q$ such that $\phi^* \alpha =
    \omega$ over $\mfp_m^{rs}$. We can write $\alpha = \sum f_i \alpha_i$,
    where the $\alpha_i$'s are elements of $\Omega_{const}^* R
    \times_{Q^{\sigma}} J_m^{\tilde{\sigma}} Q$ which are linearly independent
    in fibres, and the $f_i$'s are functions on $R^r \times_{Q^{\sigma}}
    J_m^{\tilde{\sigma}} Q^r$. Since the bundle map is surjective on fibres,
    the pullbacks $\phi^* \alpha_i$ are linearly independent in fibres. Since
    $\phi^* \alpha = \sum_i \phi^* f_i \phi^* \alpha_i$ extends to
    $\mfp_m^{reg}$, the functions $\phi^* f_i$ must extend to
    $\mfp_m^{reg}$, and consequently $\alpha$ extends to $R
    \times_{Q^{\sigma}} J_m^{\tilde{\sigma}} Q$. The pullback $\phi^* \alpha$
    agrees with $\omega$ on an open dense subset, so every $\mcP_m$-invariant
    $\mcN_m$-basic element of $\Omega_{>0}^* \mfp_m^{reg}$ is the pullback of
    an element of $\Omega_{const}^* R \times_{Q^{\sigma}}
    J_m^{\tilde{\sigma}} Q$ as desired.
\end{proof}

\begin{proof}[Proof of Theorem \ref{T:relativecoh}]
    Let $I_i^{a}$ and $R_i$ be generators for $\C[Q]$ and $\C[R]$ as in the
    statement of Theorem \ref{T:relativecoh}. Choose coordinates $\{y_{ia}\}$ for
    $Q$ such that pullback of $y_{ia}$ via the projection $L \arr Q$ is 
    $I_i^{a}$. Similarly, choose coordinates $\{r_i\}$ for $R$ such that the
    pullback of $r_i$ via the projection $\mfp_0 \arr R$ is $R_i$. Note that
    the coordinates $\{y_{ia}\}$ with $a$ fixed correspond to the subspace
    $Q_a$ of $Q$ on which $\sigma$ acts as multiplication by $q^a$ (by
    previously established convention, this means that $\sigma y_{ia} = q^{-a}
    y_{ia}$). Consider the $r_i$'s as functions on $R \times_{Q^{\sigma}}
    J_{\infty}^{\tilde{\sigma}} Q$, and let $\tilde{y}_{ia}$ denote the induced
    map $J_{\infty} Q \arr Q$. Then the coordinate ring of $R \times_{Q^{\sigma}}
    J_m^{\tilde{\sigma}} Q$ is the free ring generated by the $r_i$'s and
    the functions $[z^{nk-a}] \tilde{y}_{i,-a}$ for $a=0,\ldots,k-1$
    and $n \geq 1$. Consequently the ring $\Omega_{const}^* R \times_{Q^{\sigma}}
    J_m^{\tilde{\sigma}} Q$ is the free super-commutative ring generated
    by the above generators for the coordinate ring, along with the restrictions
    of the differential forms $d [z^{nk-a}] \tilde{y}_{i,-a}$, $a=0,\ldots,k-1$
    and $n \geq 1$. Let $\hat{\mfp} = \mfp_{\infty}$ denote the completion of
    a standard parahoric.  Applying Proposition \ref{P:pullback} we conclude
    that $\Omega_{>0}^* \hat{\mfp}$ is the free super-commutative algebra 
    generated by the $R_i$'s, the functions $[z^{nk-a}] \tilde{I}_i^{-a}$
    for  $a=0,\ldots,k-1$ and $n \geq 1$, and the restrictions of the $1$-forms
    $d [z^{kn-a}] \tilde{I}_i^{-a}$ to $T_{>0} \hat{\mfp}$, again for
    $a=0,\ldots,k-1$ and $n \geq 1$. Theorem \ref{T:relativecoh} then follows
    from Theorem \ref{T:basicforms}.
\end{proof}

\subsection{Proof of Theorem \ref{T:nilpotentcoh}}\label{SS:nilpotentbasic}
The proof of Theorem \ref{T:relativecoh} can be simplified and used to prove
that pullback via the map $\mfp_m \arr R \times_{Q^{\sigma}}
J_m^{\tilde{\sigma}} Q$ gives an isomorphism between algebraic forms on $R
\times_{Q^{\sigma}} J_m^{\tilde{\sigma}} Q$ and $\mcP_m$-basic and invariant
forms on $\mfp_m$. When $\mfp_0 = L_0$, this can be proven without Theorem
\ref{T:parahoricslice}. Namely, if $\nu$ is a Kostant slice in $L$ then, as
previously mentioned, $G \times \nu \arr L^{reg}$ is surjective and smooth. By
Lemma \ref{L:twistedsmooth} and Theorem \ref{T:kostantslice}, the
multiplication map $J_m^{\tilde{\sigma}} G \times J_m^{\tilde{\sigma}} \nu \arr
J_m^{\tilde{\sigma}} L^{reg}$ is surjective for all $m$, and smooth for $m <
+\infty$. Since $J_m^{\tilde{\sigma}} \nu$ is isomorphic to
$J_m^{\tilde{\sigma}} Q$, identification of algebraic forms on
$J_m^{\tilde{\sigma}} Q$ with $J_m^{\tilde{\sigma}} G$-basic and invariant
algebraic forms on $J_m^{\tilde{\sigma}} L$ follows by pulling back to
$J_m^{\tilde{\sigma}} G \times J_m^{\tilde{\sigma}} \nu$. 

This idea can be adapted to determine the algebra of $\mcB$-basic and invariant
forms on $\hat{\mfn}$, where $\mfb$ is an Iwahori subalgebra, $\mcB$ is the
subgroup corresponding to the completion $\hat{\mfb}$, and $\hat{\mfn}$ is the
completion of the nilpotent subalgebra of $\mfb$. More specifically, let
$\mfb_m$ be the image of $\hat{\mfb}$ in $J_m^{\tilde{\sigma}} L$, let $\mcB_m$
be the corresponding connected subgroup of $J_m^{\tilde{\sigma}} G$, and let
$\mfn_m$ be the image of $\hat{\mfn}$ in $J_m^{\tilde{\sigma}} L$. If $X$ is a
variety with finite order automorphism $\sigma$, and $p \in X$, let
$J_{m,p}^{\tilde{\sigma}} X$ denote the subscheme $\{f \in J_m^{\tilde{\sigma}}
X : f(0) = p\}$ of jets with a fixed base point.

\begin{prop}\label{P:nilpotentbasic}
    There is a map $\mfn_m \arr J_{m,0}^{\tilde{\sigma}} Q$,
    and pullback via this map gives an isomorphism between the ring of
    algebraic forms on $J_{m,0}^{\tilde{\sigma}} Q$ and the
    ring of $\mcB_m$-basic and invariant algebraic forms on $\mfn_m$.
\end{prop}
\begin{proof}
    Once again it is sufficient to give the proof for $m < +\infty$. Let $e$
    be a principal nilpotent of $L_0$, contained in $\mfn_0$. Recall that $G^e$
    is a connected subgroup of $\mcB_0$. Let $\left(J_m^{\tilde{\sigma}}
    G\right)_e$ denote the connected subgroup $\{f \in J_m^{\tilde{\sigma}} G :
    f(0) \in G^e\}$ of $J_m^{\tilde{\sigma}} G$ with Lie algebra $\{f \in
    J_m^{\tilde{\sigma}} L : f(0) \in L^e_0\}$. Since $f \in \mfn_m$ belongs to
    $J_m^{\tilde{\sigma}} L^{reg}$ if and only if $f(0)$ is a principal
    nilpotent in $\mfn_0$, and all principal nilpotents in $\mfn_0$ are
    conjugate by an element of $\mcB_0$, it follows that the map
    \begin{equation*}
        \mcB_m \times_{\left(J_m^{\tilde{\sigma}} G\right)_e} J_{m,e}^{\tilde{\sigma}}
            L \arr \mfn_m \cap J_{m}^{\tilde{\sigma}} L^{reg} 
    \end{equation*}
    is an isomorphism. Consequently $\mcB_m$-basic and invariant forms on
    $\mfn_m \cap J_m^{\tilde{\sigma}} L^{reg}$ correspond to
    $\left(J_m^{\tilde{\sigma}} G\right)_e$-basic and invariant forms on 
    $J_{m,e}^{\tilde{\sigma}} L$. 
    
    The projection $L \arr Q$ sends $e$ to zero, so the restriction of
    $J_m^{\tilde{\sigma}} L \arr J_m^{\tilde{\sigma}} Q$ to
    $J_{m,e}^{\tilde{\sigma}} L$ factors through $J_{m,0}^{\tilde{\sigma}} Q$.
    Choose a principal $\mfsl_2$-triple $\{h,e,f\}$ in $L_0$ containing $e$,
    and let $\nu = e + L^f$. The isomorphism $J_m^{\tilde{\sigma}} \nu \arr
    J_m^{\tilde{\sigma}} Q$ identifies $J_{m,e}^{\tilde{\sigma}} \nu$ with
    $J_{m,0}^{\tilde{\sigma}} Q$, and every $\left(J_m^{\tilde{\sigma}}
    G\right)_e$-orbit on $J_{m,e}^{\tilde{\sigma}} L$ intersects
    $J_{m,e}^{\tilde{\sigma}} \nu$ in a unique point. Consequently the map
    $J_{m,e}^{\tilde{\sigma}} L \arr J_{m,0}^{\tilde{\sigma}} Q$ is a
    surjective smooth $\left(J_m^{\tilde{\sigma}} G\right)_e$-orbit map. It
    follows that the multiplication map $\left(J_m^{\tilde{\sigma}} G\right)_e
    \times J_{m,e}^{\tilde{\sigma}} \nu \arr J_{m,e}^{\tilde{\sigma}} L$ is
    smooth and surjective. We conclude that the pullback map from algebraic
    forms on $J_{m,e}^{\tilde{\sigma}} L$ to algebraic forms on
    $\left(J_m^{\tilde{\sigma}} G\right)_e \times J_{m,e}^{\tilde{\sigma}} \nu$
    is injective, and thus pullback via the map $J_{m,e}^{\tilde{\sigma}} L
    \arr J_{m,0}^{\tilde{\sigma}} Q$ gives an isomorphism between algebraic
    forms on $J_{m,0}^{\tilde{\sigma}} Q$ and $\left(J_m^{\tilde{\sigma}}
    G\right)_e$-basic and invariant forms on $J_{m,e}^{\tilde{\sigma}} L$.

    Thus every $\mcB_m$-basic and invariant form on $\mfn_m \cap
    J_m^{\tilde{\sigma}} L^{reg}$ is the pullback of a form from
    $J_{m,0}^{\tilde{\sigma}} Q$. Since $\mfn_m \cap J_m^{\tilde{\sigma}}
    L^{reg}$ is dense in $\mfn_m$, the proposition follows. 
\end{proof}
This proof does not extend to nilpotent subalgebras of other parahorics, as
$\mfu \cap L^{reg}[[z]]$ is non-empty only in the Borel case. As in the proof
of Theorem \ref{T:relativecoh}, Theorem \ref{T:basicforms2} and Proposition
\ref{P:nilpotentbasic} together imply Theorem \ref{T:nilpotentcoh}. 

\section{Spectral sequence argument for the truncated algebra}\label{S:truncatedcoh}

In this section we finish the proof of Theorem \ref{T:truncatedcoh} using a
spectral sequence argument. Recall from Section \ref{S:laplacian} the
definition of the operators $d_R(S)$ and $d_L(T)$ on $\bigwedge^*\hat{\mfp}
\otimes S^* \hat{\mfp}$. The operator $d_R(S)$ is a generalized interior
product, while $d_L(T)$ is a generalized exterior derivative. Hence we
have the following version of Cartan's identity:
\begin{lemma}\label{L:anticommutator}
    $d_R(S) d_L(T) + d_L(T) d_R(S) = (ST)^{Sym} + (TS)^{\wedge}$, where
    $(ST)^{Sym}$ is the extension of $ST$ to the symmetric factor as a
    derivation, and $(TS)^{\wedge}$ is the extension of $TS$ to the exterior
    factor as a derivation.
\end{lemma}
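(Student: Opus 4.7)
The plan is to verify the identity directly on a pure monomial $\omega = \alpha_1 \wedge \cdots \wedge \alpha_k \otimes b_1 \circ \cdots \circ b_l$ and then appeal to linearity. Both $d_R(S)$ and $d_L(T)$ are odd operators (each shifts wedge-degree by $\pm 1$ while shifting symmetric-degree by $\mp 1$), so the anticommutator is the natural object. The strategy is to expand each of $d_L(T)d_R(S)\omega$ and $d_R(S)d_L(T)\omega$ into a ``diagonal'' contribution plus ``cross terms,'' then show the diagonal contributions give the two summands of the right-hand side while the cross terms cancel against each other.

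For $d_L(T)d_R(S)\omega$, I would first apply $d_R(S)$, which produces a sum over $i$ of terms $(-1)^{i-1}\,\alpha_1\wedge\cdots\widehat{\alpha}_i\cdots\wedge\alpha_k \otimes S(\alpha_i)\circ b_1\circ\cdots\circ b_l$. Then $d_L(T)$ either pulls the newly created symmetric factor $S(\alpha_i)$ back into the wedge as $TS(\alpha_i)$ (placed at the front, contributing sign $(-1)^{i-1}$ that matches reinsertion into position $i$), producing exactly the standard formula for the derivation $(TS)^{\wedge}$ applied to $\alpha_1\wedge\cdots\wedge\alpha_k$; or it pulls some original $b_j$ into the wedge as $T(b_j)$, yielding cross terms of the form $(-1)^{i-1} T(b_j)\wedge \alpha_1\wedge\cdots\widehat{\alpha}_i\cdots\wedge\alpha_k \otimes S(\alpha_i)\circ b_1\circ\cdots\widehat{b}_j\cdots\circ b_l$.

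For $d_R(S)d_L(T)\omega$, I would apply $d_L(T)$ first, producing a sum over $j$ of terms $T(b_j)\wedge\alpha_1\wedge\cdots\wedge\alpha_k \otimes b_1\circ\cdots\widehat{b}_j\cdots\circ b_l$. Then $d_R(S)$ either strips the new front factor $T(b_j)$ (sign $(-1)^0=1$) and places $ST(b_j)$ on the symmetric side, which assembles into $(ST)^{Sym}$ acting on $b_1\circ\cdots\circ b_l$; or it strips an original $\alpha_i$, now sitting at position $i+1$ and hence contributing sign $(-1)^{i}$, giving the same cross-term monomial as above but with the opposite sign.

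The cross terms therefore cancel in $d_R(S)d_L(T)+d_L(T)d_R(S)$, leaving exactly $(ST)^{Sym}+(TS)^{\wedge}$. The only real obstacle is careful sign bookkeeping, which is controlled by the convention that reinserted factors go to position $1$ of the wedge; the rest is a mechanical expansion.
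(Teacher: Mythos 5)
Your proof is correct, and since the paper omits the argument (it only remarks that the identity ``is not hard to prove''), the direct expansion on a pure monomial $\alpha_1 \wedge \cdots \wedge \alpha_k \otimes b_1 \circ \cdots \circ b_l$ is precisely the kind of mechanical verification that is intended. The sign bookkeeping checks out: the diagonal term of $d_L(T)d_R(S)$ carries $(-1)^{i-1}$, which exactly compensates the $(-1)^{i-1}$ cost of moving $TS(\alpha_i)$ from the front back into slot $i$, recovering the derivation $(TS)^\wedge$; the diagonal term of $d_R(S)d_L(T)$ carries no sign and assembles directly into $(ST)^{Sym}$; and the cross terms appear with signs $(-1)^{i-1}$ and $(-1)^i$ respectively, so they cancel in the anticommutator.
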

Let $P : \hat{\mfp}^* \arr \hat{\mfp}^*$ be the dual of multiplication by
$z^N$ on $\hat{\mfp}$. Define $Q : \hat{\mfp}^* \arr \hat{\mfp}^*$ by
$(Qf)(x) = f\left(\frac{\overline{x}}{z^N}\right)$, where $\overline{x}$
is the projection to $z^N \hat{\mfp}$ using the splitting $\hat{\mfp} = (z^N
\hat{\mfp}) \oplus (\hat{\mfp} / z^N \hat{\mfp})$ suggested by the root
grading. Note that $P Q = \Id$, while $QP$ is projection to $(z^N
\hat{\mfp})^*$ using the corresponding splitting of $\hat{\mfp}^*$. Thus
$(d_R(P) d_L(Q) + d_L(Q) d_R(P)) \omega = (n+q) \omega$ if $\omega
\in \bigwedge^* (\hat{\mfp} / z^N \hat{\mfp})^* \otimes \bigwedge^n (z^N
\hat{\mfp})^* \otimes S^q \hat{\mfp}^*$. Then $d_R(P)^2 = 0$, and we can use
Cartan's identity to show that
\begin{equation*}
    \xymatrix{ 0 \ar[r] & \bigwedge^* (\hat{\mfp} / z^N \hat{\mfp})^* \ar[r] &
        \bigwedge^* \hat{\mfp}^* \ar[r]^-{d_R(P)} & \bigwedge^{*-1} \hat{\mfp}^*
        \otimes S^1 \hat{\mfp}^* \ar[r]^-{d_R(P)} & \ldots }
\end{equation*}
is exact. Further, $d_R(P)$ commutes with the Lie algebra cohomology operator
$\deltabar$ with coefficients in $S^* \hat{\mfp}^*$. Since $d_R(P)$ is
$\mfp$-equivariant and preserves the subset of cochains which vanish on
$\mfg_0$, we can restrict to the relative cochain complex to get an
exact sequence
\begin{equation*}
    \xymatrix{ 0 \ar[r] & \left(\bigwedge^* (\hat{\mfu} / z^N \hat{\mfp})^*
        \right)^{\mfg_0}
        \ar[r] & K^{*,0} \ar[r]^{d_R(P)} & K^{*,1} \ar[r]^{d_R(P)} & \ldots },
\end{equation*}
where $K^{*,*}$ is the bigraded algebra $\left( \bigwedge^* \hat{\mfu}^*
\otimes S^* \hat{\mfp}^*\right)^{\mfg_0}$ graded by tensor (ie. combined
exterior and symmetric) degree and symmetric degree, regarded as a bicomplex
with differentials  $\deltabar$ (the Lie algebra cohomology differential for
$\hat{\mfu}$ with coefficients in $S^* \hat{\mfp}^*$) and $d_R(P)$. Note that
both $\deltabar$ and $d_R(P)$ are derivations of the algebra structure. 
\begin{lemma}\label{L:truncresolution}
    Give $K^{*,*}$ a $z$-grading by taking the usual $z$-degree for the
    exterior factor, and $z$-degree $+\ N$ on $\hat{\mfp}^*$ for the symmetric
    factor. This $z$-grading descends to $H^*(\total K^{*,*})$, and there is an
    isomorphism $H^*(\mfp/z^N \mfp, \mfg_0) \arr H^*(\total K^{*,*})$ which
    preserves $z$-degrees. 
\end{lemma}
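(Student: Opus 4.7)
The plan is to combine the displayed resolution with a standard spectral sequence argument. First I would identify the subcomplex on the left of the displayed exact sequence with the relative Koszul complex $C^*(\mfp/z^N\mfp,\mfg_0)$: because $N$ is a multiple of $k$ we have $z^N\mfp\subset\mfu$, so $\mfu/z^N\mfp$ is a finite-dimensional quotient whose dual is exactly the kernel of $P$ acting on $\hat{\mfu}^*$, and the restriction of $\deltabar$ to $\bigl(\bigwedge^*(\hat{\mfu}/z^N\hat{\mfp})^*\bigr)^{\mfg_0}$ is the relative Koszul differential. Since $d_R(P)$ annihilates this subcomplex by construction, the inclusion $\iota:C^*(\mfp/z^N\mfp,\mfg_0)\hookrightarrow\total K^{*,*}$ is a cochain map with differentials $\deltabar$ on the source and $D:=\deltabar+d_R(P)$ on the target.

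Next I would run the spectral sequence for $\total K^{*,*}$ coming from the filtration by symmetric degree $q$. Since $K^{p,q}=0$ unless $0\leq q\leq p$, on each total degree $n=p+q$ we have $q\leq n/2$, so the filtration is finite in each total degree and the spectral sequence converges strongly to $H^*(\total K^{*,*})$. The $E_0$-differential is $d_R(P)$, and by exactness of the displayed sequence
\begin{equation*}
    E_1^{p,q} = \begin{cases} C^p(\mfp/z^N\mfp,\mfg_0), & q=0,\\ 0, & q>0.\end{cases}
\end{equation*}
The induced $E_1$-differential is $\deltabar$, so $E_2^{p,0}=H^p(\mfp/z^N\mfp,\mfg_0)$ and $E_2^{p,q}=0$ for $q>0$. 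Collapse at $E_2$ then yields an isomorphism $H^*(\mfp/z^N\mfp,\mfg_0)\iso H^*(\total K^{*,*})$ induced by $\iota$.

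Finally I would check compatibility with the modified $z$-grading. The Koszul differential $\deltabar$ preserves the natural $z$-grading (since $\mfp$ is weight-graded and the coefficient module is weight-graded) and preserves symmetric degree, hence preserves the modified grading. For $d_R(P)$, an element of $\hat{\mfu}^*$ of natural $z$-degree $n$ is sent into $\hat{\mfp}^*$ of natural $z$-degree $n-N$ because $P$ is dual to multiplication by $z^N$; the simultaneous unit increment of symmetric degree contributes $+N$ to the modified $z$-degree, so the modified grading is preserved. Since $\iota$ lands in symmetric degree zero, where the modified and natural $z$-gradings agree on $(\mfu/z^N\mfp)^*$, the induced isomorphism on cohomology is $z$-degree preserving.

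The main obstacle is not a deep geometric step but careful calibration of conventions: the shift $+N$ per symmetric factor must exactly cancel the $z$-drop produced by $P$, and the convention for total degree on $\total K^{*,*}$ must match the cohomological grading of $C^*$ via $\iota$ (which forces total degree $p+q$, so that $\iota$ preserves degree by sitting in $q=0$). Once these are fixed, the argument is a routine spectral sequence collapse, with no convergence subtleties thanks to the bounded filtration in each total degree.
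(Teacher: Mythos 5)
Your argument follows the paper's approach closely: view the total complex of $K^{*,*}$, run one of the two bicomplex spectral sequences, observe that the exact $d_R(P)$-resolution collapses everything onto $q=0$, and then identify $E_2 = H^*(\mfp/z^N\mfp,\mfg_0)$. Your verification of the $z$-grading compatibility (the $-N$ drop from $P$ cancelled by the $+N$ shift per symmetric factor) is a useful addition, and the identification of the subcomplex with $C^*(\mfp/z^N\mfp,\mfg_0)$ via $z^N\mfp \subset \mfu$ is correct.

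However, there is a calibration error in setting up the spectral sequence: you say you filter by symmetric degree $q$, but that (descending) filtration makes $\deltabar$ the $E_0$-differential, since $\deltabar$ preserves symmetric degree while $d_R(P)$ raises it. Filtering by symmetric degree therefore gives as $E_1$-page the cohomology $H^*_{cts}(\hat{\mfp},\mfg_0;S^q\hat{\mfp}^*)$ with $d_1 = d_R(P)$ — this is the \emph{other} spectral sequence, used later in the paper for the $E_2$-term computation, and its $E_1$ is not concentrated in $q=0$. To get $d_R(P)$ as the $E_0$-differential and $E_1$ concentrated in the column $q=0$, you must filter by the \emph{first} index (tensor degree), i.e.\ the descending filtration $F^p = \bigoplus_{a\geq p} K^{a,b}$, exactly as the paper does. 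Once the filtration is relabelled, the rest of your argument — the $E_1$-page via exactness, the $E_2$-page, the collapse, and the induced isomorphism from $\iota$ — goes through unchanged.
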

\begin{proof}
    We have just shown that there is a chain map from the Koszul complex for
    $(\mfp/z^N\mfp, \mfg_0)$ to $\total K^{*,*}$. Consider the spectral sequence
    induced by the column-wise filtration of $K^{*,*}$, ie. the descending
    filtration where the $p$th level contains all elements of $K^{a,b}$ with $a
    \geq p$. The $E_1$-term of this spectral sequence is 
    \begin{equation*}
        E_1^{p,q} = \begin{cases} 
                        \left(\bigwedge^p (\hat{\mfu} / z^N \hat{\mfp})^*\right)^{\mfg_0} & q=0 \\ 
                        0 & q>0 
                    \end{cases},
    \end{equation*}
    with differential the restriction of $\deltabar$. Hence
    \begin{equation*}
        E_2^{p,q} = \begin{cases} H^p(\mfp / z^N \mfp, \mfg_0) & q=0 \\
                                    0 & q>0 \end{cases}.
    \end{equation*}
    It follows from naturality of the spectral sequence that the induced map
    $H^*(\mfp/z^N \mfp, \mfg_0) \arr H^*(\total K^{*,*})$ is an isomorphism. The
    $z$-degrees on $K^{*,*}$ are preserved by $\deltabar$ and $d_R(P)$, so the
    $z$-grading descends to homology and likewise is preserved by the isomorphism.
\end{proof}

To calculate $H^*(\total K^{*,*})$, consider the spectral sequence of the
bicomplex $K^{*,*}$ induced by the row-wise filtration, ie. the descending
filtration where the $p$th level contains all elements of $K^{a,b}$ with $b
\geq p$. This spectral sequence has $E_{1}^{p,q} =
H^{q-p}_{cts}(\hat{\mfp},\mfg_0; S^p \hat{\mfp})$ with differential $d_R(P)$
(note that the order of the degrees is swapped compared to $K^{*,*}$, so $p$ is
symmetric degree and $q$ is tensor degree). Thus $E_1^{*,*}$ is a freely
generated differential super-commutative algebra, with generating cocycles
explicitly described in Theorem \ref{T:relativecoh} as follows. If
$r_1,\ldots,r_{l_0}$ is a list of exponents for $\mfg_0$ then there is a
generator in $E_1^{r_i+1,r_i+1}$ represented by a cocycle $R_i$.  If
$m_1^{(-a)},\ldots,m_{l_{-a}}^{(-a)}$ is a list of twisted exponents then there
is a generator in $E_1^{m_i^{(-a)}+1, m_i^{(-a)}+1}$ for every $n \geq 1$,
represented by a cocycle $f^{nk-a}_i = [z^{nk-a}] \tilde{I}_i^{(-a)}$, and a
generator in $E_1^{m_i^{(-a)},m_i^{(-a)}+1}$ for every $n \geq 1$, represented
by a cocycle $\omega^{nk-a}_i = J_{\Delta} d [z^{nk-a}] \tilde{I}_i^{(-a)}$.
Since $d_R(P)$ is a derivation, we just need to determine its action on these
generators. By degree considerations, $d_R(P)$ kills the generators $R_i$ and
$f_i^{nk-a}$.  Note that $f_i^{0} = [z^0] \tilde{I}_i^{(0)}$ lies in
$E_1^{*,*}$, as it belongs to the algebra $\C[R]$ generated by the $R_i$'s
(apply Theorem \ref{T:kostantslice} with $m=0$).  If the reductive algebra $L$
splits as a direct sum $L = \mfz \oplus \bigoplus L^{(i)}$, where $\mfz$ is the
centre and the $L^{(i)}$'s are $\sigma$-invariant simple components, then we
can assume that the generators $I_i^{(-a)}$ of $(S^* L^*)^L$ used to construct
the cochains $f^{nk-a}_i$ belong either to $S^* \mfz^*$ or to $\left(S^*
\left(L^{(i)}\right)^* \right)^{L}$ for some $i$. With this assumption we have:
\begin{lemma}\label{L:differential}
    The differential $d_R(P)$ on $K^{*,*}$ sends $\omega^{nk-a}_i$ to a
    non-zero scalar multiple of $f_i^{nk-a-N}$ if $nk - a \geq N$, and to zero
    otherwise. 
\end{lemma}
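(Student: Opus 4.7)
The plan is to compute $d_R(P)\omega^{nk-a}_i$ by unpacking the explicit formula $\omega^{nk-a}_i = J_\Delta df$ with $f := f_i^{nk-a} = [z^{nk-a}]\tilde{I}_i^{(-a)} \in S^d\hat{\mfp}^*$ and $d := m_i^{(-a)}+1$.  A direct check against the definitions of $d_R(P)$ and the symmetric polarization yields the identity $d_R(P)(J_\Delta df) = \widetilde{PJ}(f)$, where $\widetilde{PJ}$ is the unique derivation of $S^*\hat{\mfp}^*$ extending the linear map $PJ \colon \hat{\mfp}^* \to \hat{\mfp}^*$.  Dualizing, on the level of polynomial functions this becomes $\widetilde{PJ}(f)(x) = \partial_{J(z^Nx)}f(x)$, since the adjoint of $PJ$ under the natural pairing is the composition of multiplication by $z^N$ followed by $J$ on $\hat{\mfp}$.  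Thus the lemma reduces to evaluating $\partial_{J(z^Nx)}f_i^{nk-a}(x)$.

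Next I will exploit the realization of $J$ inside the associated affine Kac-Moody algebra.  Because $J$ acts on the root space $\mfg_{\alpha+n\delta}$ as multiplication by $2\langle\rho,\alpha+n\delta\rangle$, matching eigenvalues on root spaces shows that on $\hat{\mfp}$ one has a decomposition $J = \ad(h_0) + \lambda\cdot z\tfrac{d}{dz}$ for some $h_0\in\mfh_0$ and the scalar $\lambda = 2\langle\rho,\delta\rangle$; the standard parahoric condition $d_0 > 0$ guarantees $\lambda\neq 0$.  Using $z\tfrac{d}{dz}(z^Nx) = N z^Nx + z^N\cdot z\tfrac{d}{dz}(x)$ gives
\begin{equation*}
    J(z^Nx) = [z^N h_0,\,x] + \lambda N z^Nx + \lambda\, z^N\cdot z\tfrac{d}{dz}(x).
\end{equation*}
The first summand contributes zero to $\partial_{\cdot}f(x)$: since $N$ is a multiple of $k$ and $h_0\in L_0$, the element $z^N h_0$ lies in $L[[z]]^{\tilde{\sigma}}$, and the $L[[z]]^{\tilde{\sigma}}$-invariance of $\tilde{I}_i^{(-a)}$ forces $\partial_{[z^N h_0,x]}\tilde{I}_i^{(-a)}(x) = 0$.

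For the two remaining summands I would compute directly.  The homogeneity identity $\tilde{I}_i^{(-a)}((1+tz^N)x) = (1+tz^N)^d\tilde{I}_i^{(-a)}(x)$, a consequence of $I_i^{(-a)}$ being homogeneous of degree $d$, yields $\partial_{z^Nx}f_i^m(x) = d\cdot f_i^{m-N}(x)$ after extracting $[z^m]$-coefficients.  For the scaling term, expressing $f_i^m$ via the symmetric polarization $\bar{I}_i^{(-a)}$ of $I_i^{(-a)}$ on $L^d$ and using that $z^N\cdot z\tfrac{d}{dz}(x)$ has $j$-th $z$-coefficient $(j-N)x_{j-N}$, a short resummation using the symmetry of $\bar{I}_i^{(-a)}$ produces $\partial_{z^N\cdot z\tfrac{d}{dz}x}f_i^m(x) = (m-N)f_i^{m-N}(x)$.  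Both vanish when $m-N < 0$, as $\tilde{I}_i^{(-a)}(x)$ has no terms of negative $z$-degree.  Assembling,
\begin{equation*}
    d_R(P)\omega^{nk-a}_i = \lambda\,\bigl(nk - a + Nm_i^{(-a)}\bigr)\,f_i^{nk-a-N},
\end{equation*}
a non-zero scalar multiple of $f_i^{nk-a-N}$ when $nk-a\geq N$ (both factors are strictly positive) and zero otherwise.  The main obstacle will be the combinatorial identity for $\partial_{z^N\cdot z\tfrac{d}{dz}x}f_i^m$; the Kac-Moody decomposition of $J$ on $\hat{\mfp}$ is routine once one matches the two derivations on each root space against the eigenvalue $2\langle\rho,\alpha+n\delta\rangle$.
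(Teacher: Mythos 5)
Your proposal is correct and follows essentially the same route as the paper: both reduce $d_R(P)\omega^{nk-a}_i$ to the derivation $(PJ)^{Sym}$ applied to $f_i^{nk-a}$ (the paper via Lemma~\ref{L:anticommutator}, you by a direct check, which amounts to the same thing once one observes $d_R(P)f_i^{nk-a}=0$ on degree grounds), and then both split the derivation $J$ into a Cartan piece that dies by invariance and an Euler-type piece that produces the scalar. Your $J = \ad(h_0) + \lambda\,z\frac{d}{dz}$ is the same decomposition the paper encodes as $\rho = \rho_0 + Ac^*$, with $h_0$ corresponding to $\tilde\rho_0$ and $\lambda$ corresponding to $A\langle c^*,d^*\rangle$ (up to the factor of $2$ that the paper itself drops between the definition of $J$ and the computation, which does not affect the conclusion), and your final scalar $\lambda\bigl(nk-a+Nm_i^{(-a)}\bigr)$ matches the paper's after simplifying.

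The one genuine omission is the reduction to the case that $L$ is simple (or abelian). Your decomposition $J = \ad(h_0)+\lambda\,z\frac{d}{dz}$ relies on the Kac--Moody weight $\rho$, which is defined factor-by-factor on the $\sigma$-invariant simple components of $L$, so the scalar $\lambda$ is constant only on each factor; it need not be a single scalar on all of $\hat{\mfp}$, and for the centre $\mfz[z]$ the paper takes $J$ to be the identity, which is \emph{not} of the form $\lambda\,z\frac{d}{dz}$. The paper addresses this by noting that the generators $I_i^{(-a)}$ can be chosen to lie in $S^*\mfz^*$ or in $\bigl(S^*(L^{(j)})^*\bigr)^L$ for a single simple factor $L^{(j)}$, and that both $d_R(P)$ and $d_L(J)$ preserve these subalgebras, reducing the computation to the simple or abelian case. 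Your argument works once one inserts this reduction (for the abelian piece the homogeneity identity $\partial_{z^Nx}f_i^m = f_i^{m-N}$ alone suffices since $d=1$ and $J$ is the identity), so this is a small gap rather than a flaw in the approach.
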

\begin{proof}
    The generator $\omega^{nk-a}_i$ can be rewritten as $d_L(J) f_{i}^{nk-a}$. 
    Both $d_R(P)$ and $d_L(J)$ preserve the subalgebras $(S^* (L^{(i)})^*)^L$
    and $S^* \mfz^*$, so we can assume that $L$ is either simple or abelian.
    Since $d_R(P) f_i^{nk-a} = 0$, we can use Lemma \ref{L:anticommutator} to
    get $d_R(P) \omega_i^{nk-a} = (P J)^{Sym} f_{i}^{nk-a}$. As an element of
    the dual of $S^* \hat{\mfp}$, $(PJ)^{Sym} f_{i}^{nk-a}$ is defined by
    \begin{equation*}
        x_1 \circ \cdots \circ x_{m^{(-a)}_i+1} \mapsto \sum_j [z^{nk-a}]
            I_i^{(-a)}(\cdots \circ J z^N x_j \circ \cdots).
    \end{equation*}
    Suppose $L$ is abelian. Then, as noted after the statement of Theorem
    \ref{T:basicforms}, we can assume that $J$ is the identity, so
    $(PJ)^{Sym} f_i^{nk-a} = f_i^{nk-a-N}$ as required. 

    This leaves the case that $L$ is simple, in which case $J$ is defined as
    the derivation of $\hat{\mfp}$ acting on weight spaces $\mfg_{\alpha}$ as
    multiplication by $\langle \rho,\alpha \rangle$, where $\rho$ is the weight
    of the associated Kac-Moody satisfying $\rho(\alpha_i^{\vee}) = 1$ if
    $d_i>0$ in the grading of type $d$ determining $\mfp$, and
    $\rho(\alpha_i^{\vee}) = 0$ otherwise. Following the Kac convention in
    \cite{Ka83}, the Kac-Moody associated to $\mfg$ is $\tilde{\mfg} = \mfg
    \oplus \C c \oplus \C d$, where $c$ is central and $d$ acts by $z
    \frac{d}{dz}$. The roots of $\tilde{\mfg}$ belong to the dual of the Cartan
    $\mfh_0 \oplus \C c \oplus \C d$, and are defined similarly to the roots of
    $\mfg$, with $d^*$ replacing $\delta$.  If $\alpha_0 = d^* -
    \psi,\alpha_1,\ldots,\alpha_l$ is a list of simple roots for
    $\tilde{\mfg}$, then the associated coroots are $\alpha_0^{\vee} = c^* -
    \psi_0,\alpha_1^{\vee},\ldots,\alpha_{l}^{\vee}$, where $\psi_0$ is either
    $\psi^{\vee}$ in the untwisted case, or the element of $\mfh_0$ such that
    $\langle x,\psi_0\rangle = \psi(x)$ in the twisted case.  The standard
    non-degenerate invariant form $\langle,\rangle$ for $\tilde{\mfg}$
    satisfies $\langle \mfh_0,c\rangle = \langle \mfh_0,d\rangle = \langle
    c,c\rangle = \langle d,d\rangle = 0$ and $\langle c,d\rangle \neq 0$.

    Write $\rho = \rho_0 + A c^*$ for some $\rho_0$ in $\mfh_0^*$.  If $x_j \in
    \mfg_{\alpha}$ in the above equation then $z^N x_j \in \mfg_{\alpha+Nd^*}$,
    so $J z^N x_j = z^N (N \langle \rho, d^* \rangle + J) x_j$. Since $\langle
    \rho, d^*\rangle = A \langle c^*, d^* \rangle$, we have
    \begin{equation*}
        d_R(P) \omega_i^{nk-a} = \begin{cases} \left(N A (m_i^{(-a)}+1)\langle c^*,d^*\rangle 
                                    + J^{Sym}\right) f^{nk-a-N}_i & nk>N \\
                                    0 & nk \leq N
                                \end{cases}.
    \end{equation*}
    Take a basis $\{x_{\alpha,i}\}$ for $\mfg_{\alpha}$, and let $x_{\alpha}^i$ be
    the dual basis. Then $J^{Sym} x_{\alpha}^i = \langle \rho,\alpha \rangle
    x^{i}_{\alpha} = (\langle \rho_0,\alpha\rangle + A \langle c^*,\alpha
    \rangle) x^i_{\alpha}$.  There is $\tilde{\rho}_0 \in \mfh_0$ such that
    $\alpha(\tilde{\rho}_0) = \langle \rho_0,\alpha \rangle$ for all roots
    $\alpha$, so $\ad^t(\tilde{\rho}_0) x_{\alpha}^i = - \langle \rho_0,\alpha
    \rangle x_{\alpha}^i$. Hence on the subring of $\mfh_0$-invariant functions
    of $S^*\hat{\mfp}^*$, $J^{Sym}$ agrees with the derivation which sends
    $x_{\alpha}^i$ to $A \langle c^*,\alpha \rangle x_{\alpha}^i$. The product
    $\langle c^*, \alpha \rangle$ is equal to $\langle c^*, d^* \rangle$ times
    the $z$-degree of $x_{\alpha}^i$. We conclude that $J^{Sym} f^{nk-a-N}_i =
    A \langle c^*, d^* \rangle (nk-a-N) f^{nk-a-N}_i$, and consequently that
    \begin{equation*}
        d_R(P) \omega_i^{nk-a} = A \langle c^*, d^* \rangle \left(
            N (m_i^{(-a)}+1) + nk-a-N \right) f^{nk-a-N}_i
    \end{equation*}
    if $N \leq nk-a$. Since $nk-a-N \geq 0$, the coefficient is non-zero as
    required. 
\end{proof}
We now have a situation parallel to when we defined $d_R(P)$. Let $V_0$ be the
free vector space spanned by basis elements $v_i^{nk+a}$ for $n \geq 0$,
$a=0,\ldots,k-1$, and $i=1,\ldots,l_a$. For any integer $m$, let $V_m$
be the subspace of $V_0$ spanned by the $v_i^{nk+a}$'s with $nk+a \geq m$.
Identify $\bigwedge^* V_1 \otimes S^* V_0$ with a subalgebra of $E_1^{*,*}$ by
sending $v_i^{nk+a}$ to $f_i^{nk+a}$ in the symmetric term and to
$\omega_i^{nk+a}$ in the exterior term. Let $P'$ be the linear map $V_0 \arr
V_0$ sending $v_i^{nk+a}$ to $v_i^{nk+a-N}$ if $nk+a \geq N$, and to zero
otherwise.  Let $Q'$ be the operator $V_0 \mapsto V_1$ sending $v_i^{nk+a}
\mapsto v_i^{nk+a+N}$. Then $P' Q' = \Id$, while $Q' P'$ is projection to
$V_N$. So $d_R(P') d_L(Q') + d_L(Q') d_R(P')$ acts as multiplication by the
combined symmetric degree and exterior $V_N$-degree. By Lemma
\ref{L:differential}, the differential on $E_1^{*,*}$ restricts to $d_R(P')$ on
$\bigwedge^* V_1 \otimes S^* V_0$, and hence the homology of the differential
on this subspace is the subalgebra $\Lambda_1$ of the $E_2$-term generated
by $\omega^{nk+a}_i$'s with $0 < nk + a < N$. To get the whole $E_2$-term, recall:
\begin{lemma}\label{L:free}
    $(S^*\mfg_0^*)^{\mfg_0}$ is a free $(S^* L_0^*)^{L_0}$-module. 
\end{lemma}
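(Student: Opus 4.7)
The plan is to reduce the statement to a classical freeness result about invariants of reflection groups. First I would apply the Chevalley restriction theorem twice: since both $L_0$ and $\mfg_0$ are reductive with common Cartan $\mfh_0$, restriction gives ring isomorphisms $(S^* L_0^*)^{L_0} \iso (S^* \mfh_0^*)^{W(L_0)}$ and $(S^* \mfg_0^*)^{\mfg_0} \iso (S^* \mfh_0^*)^{W(\mfg_0)}$, and these are compatible in the sense that the inclusion $(S^* L_0^*)^{L_0} \incl (S^* \mfg_0^*)^{\mfg_0}$ obtained by restriction of functions corresponds to the inclusion $(S^* \mfh_0^*)^{W(L_0)} \incl (S^* \mfh_0^*)^{W(\mfg_0)}$. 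Thus it suffices to show that $(S^* \mfh_0^*)^{W(\mfg_0)}$ is free over $(S^* \mfh_0^*)^{W(L_0)}$. Since $\mfg_0$ is a Levi in the parabolic $\mfp_0$ of $L_0$, the subgroup $W(\mfg_0) \subset W(L_0)$ is the reflection subgroup generated by the simple reflections corresponding to the roots in $S$, i.e.\ a standard parabolic subgroup.

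Next I would invoke the Chevalley--Shephard--Todd theorem: both $W(L_0)$ and $W(\mfg_0)$ act on $\mfh_0^*$ as finite complex reflection groups, so $S^* \mfh_0^*$ is free as a module over both $(S^* \mfh_0^*)^{W(L_0)}$ and $(S^* \mfh_0^*)^{W(\mfg_0)}$. In particular $S^* \mfh_0^*$ is a free (hence faithfully flat) $(S^* \mfh_0^*)^{W(L_0)}$-module.

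The key remaining step is to show that $(S^* \mfh_0^*)^{W(\mfg_0)}$ is a direct summand of $S^* \mfh_0^*$ as a $(S^* \mfh_0^*)^{W(L_0)}$-module. This is immediate from the Reynolds operator: averaging over $W(\mfg_0)$ gives a $(S^* \mfh_0^*)^{W(L_0)}$-linear projection $S^* \mfh_0^* \arr (S^* \mfh_0^*)^{W(\mfg_0)}$ splitting the inclusion, because $W(\mfg_0)$ acts trivially on elements of $(S^* \mfh_0^*)^{W(L_0)}$. Consequently $(S^* \mfh_0^*)^{W(\mfg_0)}$ is a direct summand of a free $(S^* \mfh_0^*)^{W(L_0)}$-module, hence projective.

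Finally, since the grading on $(S^* \mfh_0^*)^{W(L_0)}$ makes it a connected nonnegatively graded ring with $\C$ in degree zero (in particular graded-local), any finitely generated graded projective module over it is free; this is just the graded Nakayama / Quillen--Suslin-style argument. Our module is not finitely generated, but it decomposes as a direct sum of finitely generated graded pieces in each total degree, and the conclusion follows by applying the graded freeness criterion degree by degree, or by noting directly that for a graded-local ring any graded projective module is free. The main (mild) obstacle is the non-finite generation, but this is handled as indicated. This completes the freeness of $(S^* \mfg_0^*)^{\mfg_0}$ over $(S^* L_0^*)^{L_0}$.
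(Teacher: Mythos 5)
Your proof is correct, but it takes a more homological route than the paper. You both begin the same way: Chevalley restriction to reduce to $(S^* \mfh_0^*)^{W(\mfg_0)}$ over $(S^* \mfh_0^*)^{W(L_0)}$, and the Chevalley--Shephard--Todd theorem. The divergence is at the next step. The paper uses the stronger, $W(L_0)$-equivariant form of Chevalley--Shephard--Todd: there is a graded $W(L_0)$-stable complement $H_0 \subset S^*\mfh_0^*$ (the harmonics, isomorphic to the regular representation of $W(L_0)$) such that multiplication gives $S^*\mfh_0^* \iso A \otimes H_0$ as $W(L_0)$-modules, where $A = (S^*\mfh_0^*)^{W(L_0)}$. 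Taking $W(\mfg_0)$-invariants of both sides then yields $(S^*\mfh_0^*)^{W(\mfg_0)} \iso A \otimes H_0^{W(\mfg_0)}$ directly, which is freeness on the nose and, as a bonus, identifies the rank as $|W(L_0)|/|W(\mfg_0)|$ (used later in the text). Your route instead uses only the bare freeness statement of Chevalley--Shephard--Todd, produces projectivity via the Reynolds projector, and then upgrades projective to free using the graded-local freeness criterion. Both are valid; the paper's argument is shorter and extracts slightly more information, while yours is arguably more standard in its appeal to general nonsense.

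One small correction: your worry about non-finite generation at the end is misplaced. The module $(S^*\mfh_0^*)^{W(\mfg_0)}$ is a $(S^*\mfh_0^*)^{W(L_0)}$-submodule of $S^*\mfh_0^*$, which is free of finite rank $|W(L_0)|$ over $(S^*\mfh_0^*)^{W(L_0)}$; since $(S^*\mfh_0^*)^{W(L_0)}$ is a polynomial ring and hence Noetherian, the submodule is automatically finitely generated. So the finitely generated graded-local case of the criterion applies directly and the degree-by-degree workaround is unnecessary.
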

\begin{proof}
    Let $S = (S^* \mfg_0^*)^{\mfg_0}$ and $A = (S^* L_0^*)^{L_0}$.  Restriction
    to the Cartan $\mfh_0$ gives isomorphisms $S \iso (S^* \mfh_0^*)^{W(\mfg_0)}$
    and $A \iso (S^* \mfh_0^*)^{W(L_0)}$, so $A$ is a subalgebra of $S$. By 
    the Chevalley-Shephard-Todd theorem \cite{Ch55} \cite{Ko63b}, there is a
    subset $H_0 \subset S^* \mfh_0^*$ such that $S^* \mfh_0^* \iso A \otimes
    H_0$ as a $W(L_0)$-module, where the isomorphism is given by
    multiplication, and $H_0$ is isomorphic to the regular representation.
    Hence $S \iso A \otimes H$ where $H = H_0^{W(\mfg_0)}$. 
\end{proof}
The algebra $\C[Q^{\sigma}]$ generated by the $f_i^0$'s is a subalgebra of
$\bigwedge^* V_1 \otimes S^* V_0$. Note that $d_R(P')$ is
$\C[Q^{\sigma}]$-linear, since it kills $\C[Q^{\sigma}]$ and is a derivation.
The $E_1$-term is isomorphic to the base extension $\C[R]
\otimes_{\C[Q^{\sigma}]} \bigwedge^* V_1 \otimes S^* V$, with differential
given by the base extension $\Id \otimes d_R(P')$ of $d_R(P')$. Freeness
implies that the $E_2$-term is $\C[R] \otimes_{\C[Q^{\sigma}]} \Lambda_1$.
Since the action of $\C[Q^{\sigma}]$ on $\Lambda_1$ sends everything of
symmetric degree $>0$ to zero, the $E_2$-term is isomorphic to
$\Coinv(L_0,\mfg_0) \otimes \Lambda_1$. 
\begin{lemma}\label{L:spectral}
    The spectral sequence collapses at the $E_2$-term. Consequently the
    graded algebra of $H^*(\total K^{*,*})$ with respect to the row-wise
    filtration is isomorphic to $\Coinv(L_0,\mfg_0) \otimes \Lambda_1$.
\end{lemma}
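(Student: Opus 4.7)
The strategy is to check that every differential $d_r$ with $r \geq 2$ vanishes on a set of algebra generators of $E_2$. Since the row-wise filtration is by symmetric degree and satisfies $F^p\cdot F^q \subset F^{p+q}$, the spectral sequence is multiplicative and each $d_r$ is a graded derivation on $E_r$. Furthermore, the $z$-grading on $K^{*,*}$ of Lemma \ref{L:truncresolution} is preserved by both $\deltabar$ (which acts only on the exterior factor and commutes with the $z$-weight on $\hat{\mfu}^*$) and $d_R(P)$: indeed $P$ decreases natural $z$-degree by $N$ and increases symmetric degree by $1$, and these effects exactly cancel against the $+N$ shift per symmetric degree built into the adjusted grading. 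Thus every $E_r$ is bigraded by adjusted $z$-degree and each $d_r$ is $z$-degree-preserving. By the analysis preceding the lemma, $E_2$ is generated as an algebra by the classes $R_i$ ($i = 1,\ldots,l_0$), with bidegree $(p,q) = (r_i+1, r_i+1)$ and adjusted $z$-degree $N(r_i+1)$, together with the classes $\omega_i^{nk+a}$ for $0 < nk+a < N$, with bidegree $(m_i^{(a)}, m_i^{(a)}+1)$ and adjusted $z$-degree $nk+a+N m_i^{(a)}$.

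The differentials $d_r$ have bidegree $(+r,+1)$ in $(p,q) = ($symmetric, tensor$)$. Hence $d_r R_i$ would sit in exterior degree $1-r$ and $d_r\omega_i^{nk+a}$ would sit in exterior degree $2-r$; both are negative whenever $r \geq 3$, and $d_r R_i = 0$ for all $r \geq 2$ by exterior degree alone. The only remaining case is $d_2(\omega_i^{nk+a}) \in E_2^{m_i^{(a)}+2,\,m_i^{(a)}+2}$, which is the exterior-degree-zero, symmetric-degree-$(m_i^{(a)}+2)$ part of $E_2 \cong \Coinv(L_0,\mfg_0)\otimes\Lambda_1$ and hence is spanned by polynomials in the $R_j$'s of total symmetric degree $m_i^{(a)}+2$. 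Any such polynomial has adjusted $z$-degree $N(m_i^{(a)}+2)$, so by the $z$-degree preservation of $d_2$, a non-zero $d_2\omega_i^{nk+a}$ would force $nk+a+Nm_i^{(a)} = N(m_i^{(a)}+2)$, i.e. $nk+a = 2N$, contradicting $nk+a < N$. Hence $d_2\omega_i^{nk+a} = 0$.

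Since each $d_r$ with $r \geq 2$ is a graded derivation vanishing on algebra generators, it vanishes on all of $E_2$. Therefore $E_\infty = E_2 \cong \Coinv(L_0,\mfg_0)\otimes\Lambda_1$, which identifies the associated graded of $H^*(\total K^{*,*})$ with respect to the row-wise filtration as stated. The delicate step is the $z$-degree bookkeeping that kills $d_2$ on the $\omega$-generators; once the $z$-degrees of the generators and the compatibility of the $z$-grading with the spectral sequence are in hand, the other vanishings are immediate from exterior-degree considerations.
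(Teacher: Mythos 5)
Your proof reaches the correct conclusion, but it contains a computational error in the bidegree of the spectral sequence differential, and this error is what generates the spurious case that your $z$-degree argument is then deployed to handle.

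In the cohomological spectral sequence of a filtered complex, $d_r$ has bidegree $(r,1-r)$ in the $(p,q)$ coordinates where $p$ is filtration degree and $q$ is complementary degree, since $d_r$ must raise total degree $p+q$ by exactly one. Here $p$ is symmetric degree and $q$ is tensor degree (which you state correctly), and the paper itself records that $d_2$ has bidegree $(2,-1)$. Your claim that $d_r$ has bidegree $(+r,+1)$ is therefore wrong: it would raise total degree by $r+1$. With the correct bidegree $(r,1-r)$, the exterior degree $q-p$ drops by $2r-1\geq 3$ for every $r\geq 2$. Since $E_2\cong\Coinv(L_0,\mfg_0)\otimes\Lambda_1$ vanishes in negative exterior degree and its generators $R_i$ and $\omega_i^{nk+a}$ sit in exterior degrees $0$ and $1$, every $d_r$ with $r\geq 2$ annihilates all generators by exterior degree alone; in particular $d_2\omega_i^{nk+a}$ lands in exterior degree $-2$, not $0$, so the case you treat separately does not arise. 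This is precisely the paper's one-line argument.

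The $z$-degree bookkeeping you give for that spurious case is sound in its own right: the adjusted $z$-grading of Lemma~\ref{L:truncresolution} is preserved by $\deltabar$ and by $d_R(P)$ (hence by each $d_r$), the symmetric-degree-$s$ part of $\Coinv(L_0,\mfg_0)$ lives entirely in adjusted $z$-degree $Ns$, and $0<nk+a<N$ rules out the required equality $nk+a=2N$. So the error is contained and does not propagate to a false conclusion, but you should correct the bidegree, after which the $z$-degree computation becomes unnecessary and the proof collapses to the paper's.
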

\begin{proof}
    $\Lambda_1$ is a free algebra with a generator $\omega^{nk-a}_i \in
    E_2^{m_i^{(-a)},m_i^{(-a)}+1}$ for every twisted exponent $m_i^{(-a)}$ of
    $L$ and $n$ such that $0 < nk-a < N$. The subring $\Coinv(L_0,\mfg_0)$ lies
    in bidegrees $(a,a)$, so the entire $E_2$-term is contained in bidegrees
    $(a,b)$ with $a \leq b$. Suppose more generally that the $E_r$-term is
    contained in bidegrees $(a,b)$ with $a \leq b$, and is generated in
    bidegrees $(a,a+1)$ and $(a,a)$. The $E_2$-term differential has bidegree
    $(2,-1)$, and thus annihilates $\Coinv(L_0,\mfg_0)$ and the generators
    $\omega_i^{nk-a}$. The same argument works for higher $E_r$-terms
    as well.
\end{proof}
    
Now we just need to determine the ring structure of $H^*(\total K^{*,*})$.  The
row-wise filtration of $K^{*,*}$ is the descending filtration where $F^P
K^{*,*} = \bigoplus_{r \geq p} K^{*,r}$. Likewise $F^p H^*(\total K^{*,*})$ is
the subspace of homology classes which have a representative cocycle in $F^p
K^{*,*}$. If $k \in K^{q,p}$ is such that $\deltabar k = d_R(P) k = 0$, then
$k$ determines a homology class $[k]$ in $F^p H^{p+q}(\total K^{*,*})$.
Referring to the construction of the spectral sequence of a filtered
differential module (see, e.g., pages 34-37 in \cite{MC01}), we also see that
$k$ determines a persistent element of the spectral sequence, ie. $k$ 
represents an element in each $E^{p,q}_r$ (once again, note that the degrees
are swapped between $K^{*,*}$ and $E^{*,*}$) that is killed by the $r$th
differential, and the homology class of this element corresponds to the element
represented by $k$ in $E^{p,q}_{r+1}$. The projection $F^p H^{p+q}(\total
K^{*,*}) \arr E^{p,q}_{\infty}$ sends $[k]$ to the element represented by $k$
in $E_{\infty}$. Finally, when $E_1^{p,q}$ is identified with
$H^q(K^{*,p},\deltabar)$ the element of $E_1$ represented by $k$ is simply the
homology class represented by $k$ in $H^q(K^{*,p},\deltabar)$, and consequently
the same is true of the identification of $E_2$ with
$H^*(H^*(K^{*,*},\deltabar), d_R(P))$. Note that this would not necessarily be
true if $k$ was not homogeneous.

We know that the $E_2$-term is generated by classes represented by elements
$R_i$, $i=1,\ldots,l_0$ and $\omega^{nk+a}_i$, $i=1,\ldots,l_a$ and $0 < nk+a <
N$ in $K^{*,*}$. Let $\Lambda$ denote the subalgebra of $K^{*,*}$
generated by the elements $\omega_i^{nk+a}$, $i=1,\ldots,l_a$, $0 < nk+a<N$. 
By Theorem \ref{T:relativecoh} and Lemma \ref{L:differential}, $\C[R] \otimes
\Lambda \subset K^{*,*}$ is annihilated by both $\deltabar$ and
$d_R(P)$. Hence there is a homomorphism $\C[R] \otimes \Lambda \arr
H^*(\total K^{*,*})$. Since $\deltabar \omega_i^{N} = 0$, Lemma
\ref{L:differential} implies that the image of $f_i^0$ in $H^*(\total K^{*,*})$
is zero, so the homomorphism $\C[R] \otimes \Lambda \arr H^*(\total
K^{*,*})$ descends to a map $\Coinv(L_0,\mfg_0) \otimes \Lambda \arr H^*(\total
K^{*,*})$. By Lemma \ref{L:spectral} and the argument of the last paragraph,
this map is a bijection. We record this calculation in the following proposition.
\begin{prop}\label{P:truncatedrel}
    Let $\Coinv(L_0,\mfg_0)$ denote the algebra of Lemma \ref{L:free}, graded
    by symmetric degree. Give $\Coinv(L_0,\mfg_0)$ a cohomological grading by
    doubling the symmetric grading, and a $z$-grading by multiplying the
    symmetric grading by $N$. Then $H^*(\mfp / z^N \mfp, \mfg_0)$ is
    isomorphic to $\Coinv(L_0,\mfg_0) \otimes \Lambda$, where $\Lambda$ is
    the free algebra generated in cohomological degree $2m_i^{(a)}+1$,
    $z$-degree $N m_i^{(a)} + nk+a$, for $a=0,\ldots,k-1$, $i=1,\ldots,l_a$,
    and $n$ such that $0 < nk+a < N$.
\end{prop}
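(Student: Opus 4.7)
The plan is to use the bicomplex $K^{*,*}$: by Lemma \ref{L:truncresolution}, $H^*(\mfp/z^N\mfp, \mfg_0)$ is identified with $H^*(\total K^{*,*})$, where the cohomological degree on $K^{*,*}$ is tensor plus symmetric degree (equivalently, exterior $+\ 2$ times symmetric), and the $z$-grading shifts each symmetric factor's $z$-degree by $+N$. Analyze $H^*(\total K^{*,*})$ via the row-wise spectral sequence (filtering by symmetric degree); by Theorem \ref{T:relativecoh}, the $E_1$-term is the free supercommutative algebra on generators $R_i$ at bidegree $(p,q) = (r_i+1, r_i+1)$, the $f_i^{nk+a}$ at $(m_i^{(a)}+1, m_i^{(a)}+1)$, and the $\omega_i^{nk+a}$ at $(m_i^{(a)}, m_i^{(a)}+1)$ (with $a \in \{0,\ldots,k-1\}$, $n \geq 0$, $nk+a \geq 1$), where $(p,q)$ denotes (symmetric, tensor); in addition, $f_i^0$ lies in the subalgebra $\C[R] \subset E_1$.

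Lemma \ref{L:differential} then computes $d_1 = d_R(P)$: it kills each $R_i$ and each $f_i^{nk+a}$, and sends $\omega_i^{nk+a}$ to a nonzero scalar multiple of $f_i^{nk+a-N}$ if $nk+a \geq N$ and to zero otherwise. A Koszul-style contracting homotopy built from the dual operator $d_L(Q')$ on the auxiliary algebra $\bigwedge^* V_1 \otimes S^* V_0$ of the excerpt then shows that the $d_1$-cohomology of the subalgebra generated by the $f_i^{nk+a}$ (with $nk+a \geq 1$) and all $\omega_i^{nk+a}$ is the free algebra $\Lambda_1$ on those $\omega_i^{nk+a}$ with $0 < nk+a < N$. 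Combined with Lemma \ref{L:free}, which exhibits $\C[R]$ as a free $\C[Q^\sigma]$-module with quotient $\Coinv(L_0,\mfg_0)$, base change along $\C[Q^\sigma] \subset \C[R]$ gives $E_2 \iso \Coinv(L_0,\mfg_0) \otimes \Lambda_1$. Collapse at $E_2$ (Lemma \ref{L:spectral}) is then a degree argument: every generator of $E_2$ sits at bidegree $(p,q)$ with $q - p \in \{0,1\}$, while $d_r$ has bidegree $(r, 1-r)$, so $d_r$ of a generator would land at $q'-p' \leq 1 - 2r < 0$ for $r \geq 2$, outside the support of $E_r$.

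The main obstacle is upgrading this associated-graded identification to a genuine graded ring isomorphism that tracks the correct cohomological and $z$-gradings. For this I lift the $E_\infty$-generators to honest cocycles in $K^{*,*}$: the cocycles $R_i$ and $\omega_i^{nk+a}$ (for $0 < nk+a < N$) are $\deltabar$-closed by Theorem \ref{T:relativecoh} and $d_R(P)$-closed by Lemma \ref{L:differential} in this range, hence define a graded ring map $\C[R] \otimes \Lambda \to H^*(\total K^{*,*})$. Because $d_R(P) \omega_i^N$ is a nonzero scalar multiple of $f_i^0$, every positive-degree element of $\C[R]$ maps to zero in $H^*(\total K^{*,*})$, so this ring map descends to $\Coinv(L_0,\mfg_0) \otimes \Lambda \to H^*(\total K^{*,*})$. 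The descended map induces the identity on associated graded $E_\infty$ and is therefore an isomorphism, and reading off the total and $z$-degrees of $\omega_i^{nk+a}$ recovers the claimed values $2m_i^{(a)}+1$ and $Nm_i^{(a)}+nk+a$.
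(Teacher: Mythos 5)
Your argument reproduces the paper's proof step for step: Lemma \ref{L:truncresolution} for the bicomplex $K^{*,*}$, the row-wise spectral sequence with $E_1$ read off from Theorem \ref{T:relativecoh}, Lemma \ref{L:differential} and the auxiliary Koszul contraction on $\bigwedge^* V_1 \otimes S^* V_0$, base change via Lemma \ref{L:free}, collapse at $E_2$ by the degree argument of Lemma \ref{L:spectral}, and the final lift of $R_i$ and $\omega_i^{nk+a}$ to genuine cocycles to pin down the ring structure and gradings.

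One slip to correct: the coboundary relation $d_R(P)\,\omega_i^N = c\, f_i^0$ (with $c\neq 0$) does not imply that ``every positive-degree element of $\C[R]$'' maps to zero in $H^*(\total K^{*,*})$ --- if it did, the ring map $\C[R]\otimes\Lambda \to H^*(\total K^{*,*})$ would factor through $\Lambda$ alone, contradicting your own conclusion. What the relation does imply is that each $f_i^0$ dies, hence so does the ideal of $\C[R]$ generated by $\C[Q^{\sigma}]_{>0}$; that is exactly the ideal you must kill to descend to $\Coinv(L_0,\mfg_0)\otimes\Lambda = \bigl(\C[R]/(\C[Q^{\sigma}]_{>0})\bigr)\otimes\Lambda$, while the classes of the $R_i$ themselves persist.
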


Consider the untwisted case where $n=1$. In this case, $\mfp / z \mfp$ is the
semi-direct product $\mfp_0 \ltimes L_0 / \mfp_0$, where $L_0 / \mfp_0$ has Lie
bracket equal to zero. Then Proposition \ref{P:truncatedrel} implies that
$H^*(\mfp_0 \ltimes L_0 / \mfp_0,\mfg_0)$ is isomorphic to
$\Coinv(L_0,\mfg_0)$. The following Lemma implies that Proposition
\ref{P:truncatedrel} actually gives a direct Lie algebra proof of Borel's
theorem that $\Coinv(L_0,\mfg_0)$ is isomorphic to $H^*(L_0,\mfg_0)$. Note that
the $z$-grading on $H^*(\mfp_0 \ltimes L_0 / \mfp_0,\mfg_0)$ is half the
cohomological grading, and thus corresponds to the holomorphic grading on
$H^*(L_0,\mfg_0)$. 
\begin{lemma}\label{L:borelpic}
    Let $\mfp_0 \ltimes L_0 / \mfp_0$ be the semi-direct product where $L_0 /
    \mfp_0$ has Lie bracket equal to zero. The cohomology ring $H^*(\mfp_0
    \ltimes L_0 / \mfp_0,\mfg_0)$ is isomorphic to $H^*(L_0,\mfg_0)$.
\end{lemma}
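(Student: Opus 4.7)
The plan is to realize both cochain complexes on a common graded vector space and to compare their differentials via a bigrading. Using the triangular decomposition $L_0 = \overline{\mfu_0} \oplus \mfg_0 \oplus \mfu_0$, one has $L_0/\mfg_0 \cong \mfu_0 \oplus \overline{\mfu_0}$ as $\mfg_0$-modules. Likewise $(\mfp_0 \ltimes L_0/\mfp_0)/\mfg_0 \cong \mfu_0 \oplus L_0/\mfp_0$, and $L_0/\mfp_0 \cong \overline{\mfu_0}$ as $\mfg_0$-modules, so the cochain space $C := (\bigwedge^*(\mfu_0 \oplus \overline{\mfu_0})^*)^{\mfg_0}$ is common to both complexes. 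Bigrade $C$ by $(a,b)$ where $a$ is the $\mfu_0^*$-degree and $b$ the $\overline{\mfu_0}^*$-degree.

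Dualizing the bracket of $L_0$ gives a differential $d_L = d_1 + d_2$ on $C$: the piece $d_1$, of bidegree $(+1,0)$, collects the $\mfu_0$-valued part of $[\mfu_0,\mfu_0]$ and the $\overline{\mfu_0}$-valued part of $[\mfu_0,\overline{\mfu_0}]$; the piece $d_2$, of bidegree $(0,+1)$, collects the $\mfu_0$-valued part of $[\mfu_0,\overline{\mfu_0}]$ together with $[\overline{\mfu_0},\overline{\mfu_0}]$. For $\mfp_0 \ltimes L_0/\mfp_0$ both $d_2$-pieces disappear, because $L_0/\mfp_0$ is abelian and the $\mfu_0$-action on it takes values in $L_0/\mfp_0 \cong \overline{\mfu_0}$; hence its Koszul differential is exactly $d_P = d_1$.

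Filter $C^*(L_0,\mfg_0)$ by $b$-degree. The $E_0$-differential is $d_1 = d_P$, so $E_1$ is $H^*(\mfp_0 \ltimes L_0/\mfp_0, \mfg_0)$ as a bigraded algebra, and the spectral sequence converges to $H^*(L_0,\mfg_0)$. To force degeneration, apply Proposition \ref{P:truncatedrel} in the untwisted case with $N=1$: it identifies $H^*(\mfp_0 \ltimes L_0/\mfp_0, \mfg_0)$ with $\Coinv(L_0,\mfg_0)$, in which cohomological degree is twice the symmetric degree and $z$-degree equals the symmetric degree. Since $\mfp_0$ sits in $z$-degree $0$ and $L_0/\mfp_0$ in $z$-degree $1$, the $z$-degree on cochains coincides with $b$, so a class of cohomological degree $a+b$ and $z$-degree $b$ must satisfy $a+b = 2b$, i.e.\ $a=b$. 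Thus $E_1$ is concentrated on the diagonal, and since the spectral-sequence differential $d_r$ sends bidegree $(a,b) \mapsto (a-r+1, b+r)$ off the diagonal for every $r \geq 1$, all higher differentials vanish and $E_1 = E_\infty = \gr H^*(L_0,\mfg_0)$. Diagonal concentration also implies that each $H^n(L_0,\mfg_0)$ has only one nonzero graded piece, so the filtration splits canonically and the spectral-sequence identification lifts to a ring isomorphism $H^*(L_0,\mfg_0) \cong H^*(\mfp_0 \ltimes L_0/\mfp_0, \mfg_0)$. The main obstacle is establishing the diagonal concentration; once the identification of $z$-degree with $b$-degree is recorded, the rest is spectral-sequence bookkeeping.
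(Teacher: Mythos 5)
Your proof is correct, but it takes a genuinely different route from the paper's. The paper realizes the Koszul complex for $(L_0,\mfg_0)$ as the $K$-invariant de Rham complex of the flag variety $X = G^\sigma/\mcP_0$, splits $d = \partial + \deltabar$ via the complex structure, and then invokes the Kahler identities (to equate the $d$- and $\deltabar$-Laplacians) together with the Chevalley--Eilenberg quasi-isomorphism to conclude; the identification of $(C(L_0,\mfg_0;\C),\deltabar)$ with the Koszul complex for $\mfp_0 \ltimes L_0/\mfp_0$ is the last step. You instead set up the same bigrading purely algebraically, filter by $\overline{\mfu_0}^*$-degree, and force degeneration at $E_1$ by a degree count: the key is that Proposition \ref{P:truncatedrel} (untwisted, $N=1$) says cohomological degree equals twice $z$-degree on $H^*(\mfp_0 \ltimes L_0/\mfp_0,\mfg_0)$, and matching $z$-degree with $b$-degree pins $E_1$ to the diagonal, off which every $d_r$ ($r\geq 1$) must map. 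I checked the bidegree bookkeeping: $d_L$ does decompose as $d_1 + d_2$ with $d_1$ of bidegree $(+1,0)$ (from $[\mfu_0,\mfu_0]\to\mfu_0$ and the $\overline{\mfu_0}$-valued part of $[\mfu_0,\overline{\mfu_0}]$) and $d_2$ of bidegree $(0,+1)$, $d_2$ does vanish in the semidirect product, the $E_r$-differential shifts $(a,b)\mapsto(a-r+1,b+r)$, and diagonal concentration eliminates extension problems and lifts to a ring isomorphism as you say.

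The one caveat is contextual rather than mathematical. The paper intends Lemma \ref{L:borelpic} as an argument independent of Theorem \ref{T:truncatedcoh}, precisely so that the two combine to give a Lie-algebraic proof of Borel's theorem; your argument instead feeds Proposition \ref{P:truncatedrel} back in to get the diagonal concentration. This is not circular --- Proposition \ref{P:truncatedrel} is established by the spectral sequence of Section \ref{S:truncatedcoh} and never uses Lemma \ref{L:borelpic} --- so the combined proof of Borel's theorem still goes through, and in fact your route avoids any appeal to Hodge theory or the Chevalley--Eilenberg quasi-isomorphism, keeping everything inside the paper's algebraic framework. The trade-off is that the paper's version establishes the degeneration from an external, geometric principle (the Kahler identities), whereas yours derives it from the already-computed answer; the paper's proof would survive if Section \ref{S:truncatedcoh} were deleted, while yours would not.
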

\begin{proof}
    Let $X$ be the generalized flag variety $G^{\sigma} / \mcP_0$, where
    $\mcP_0$ is the parabolic subgroup of $G^{\sigma}$ corresponding to
    $\mfp_0$. The complex-valued de Rham complex of $X$ can be realized as the
    relative Koszul complex 
    \begin{equation*}
        C^*\left(L_0,\mfg_0; C^{\infty}(K;\C)\right) = 
            \left(\bigwedge^* (L_0 / \mfg_0)^* \otimes C^{\infty}(K;\C)
                \right)^{\mfg_0},
    \end{equation*}
    where $K$ is a compact form of $X$. The de Rham differential $d$ translates
    to the Lie algebra cohomology boundary operator for $(L_0,\mfg_0)$.  Let
    $\mfu_0$ be the nilpotent radical of $\mfp_0$. The holomorphic structure on
    $X$ gives the de Rham complex a bigrading, which can be written in terms of
    $C^*(L_0,\mfg_0; C^{\infty}(K;\C)$ as
    \begin{equation*}
        C^{p,q}(C^{\infty}(K;\C)) = \left(\bigwedge^p \overline{\mfu_0}^*
            \otimes \bigwedge^q \mfu_0^* \otimes C^{\infty}(K;\C) 
                \right)^{\mfg_0},
    \end{equation*}
    where $p$ is the holomorphic degree, and $q$ is the anti-holomorphic
    degree. The differential $d = \partial + \deltabar$, where $\partial$
    and $\deltabar$ are the holomorphic and anti-holomorphic differentials
    respectively. On $C^{*,*}$, $\partial$ is the Lie algebra cohomology
    differential of $\overline{\mfu_0}$ with coefficients in $\bigwedge^*
    \mfu_0^* \otimes C^{\infty}(K;\C)$, where $\mfu_0$ is the
    $\overline{\mfu_0}$-module $L_0 / \overline{\mfp_0}$. Similarly $\deltabar$
    is the Lie algebra cohomology differential of $\mfu_0$ with coefficients in
    $\bigwedge^* \overline{\mfu_0}^* \otimes C^{\infty}(K;\C)$. The Kahler
    identities then imply that the Laplacian $d d^* + d^* d$ of $d$ with
    respect to a Kahler metric is equal to twice the Laplacian $\deltabar
    \deltabar^* + \deltabar^* \deltabar$. In particular the two differentials
    give the same cohomology. 

    A theorem of Chevalley-Eilenberg implies that the de Rham complex is
    quasi-isomorphic to the subcomplex $C(L_0,\mfg_0; \C)$ of equivariant forms
    \cite{CE48}. Since $K$ acts by holomorphic maps on $X$, the same is true of
    the de Rham complex with the anti-holomorphic differential. Hence the
    Kahler identities imply that the cohomology of $C^*(L_0,\mfg_0; \C)$ is
    the same with respect to either $d$ or $\deltabar$. Finally $\left(C(L_0,\mfg_0;
    \C), \deltabar\right)$ can be identified with the Koszul complex for the
    Lie algebra cohomology of $H^*(\mfp_0 \ltimes L_0 / \mfp_0, \mfg_0)$.
\end{proof}

To finish the section, we observe that if $\mfp$ is an Iwahori, then a similar
spectral sequence calculation can be made with $S^* \hat{\mfp}^*$ replaced by
$S^* \hat{\mfu}$.  In this case the spectral sequence will converge to
$H^*(\hat{\mfp} / z^N \mfn,\mfh_0)$, while the $E_1$-term of the
spectral sequence is the free super-commutative algebra $H^*_{cts}(\hat{\mfp},\mfh_0;
S^* \hat{\mfu})$ generated by elements $f_i^{nk-a} \in
E^{m_i^{(-a)}+1,m_i^{(-a)}+1}_1$ and $\omega_i^{nk-a} \in
E^{m_i^{(-a)},m_i^{(-a)}+1}$ for every $n \geq 1$, $a=0,\ldots,k-1$, and
$i=1,\ldots,l_a$. The differential on the $E_1$-term sends $\omega_i^{nk-a}$ to
$f_i^{nk-a-N}$ if $nk-a > N$, and to zero otherwise.  Thus the $E_2$-term will
be the free algebra generated by the $\omega^{nk-a}_i$'s with $0 < nk-a \leq
N$. Since the algebra is free, the isomorphism on graded algebras lifts to
give:
\begin{prop}\label{P:nilpotentrel}
    Let $\mfb$ be an Iwahori subalgebra of $\mfg$, and let $\mfn$ be
    the nilpotent subalgebra. Then $H^*(\mfb/z^N \mfn, \mfh_0)$
    is a free algebra generated in cohomological degree $2m_i^{(a)}+1$,
    $z$-degree $N m_i^{(a)}+nk+a$, for $a=0,\ldots,k-1$, $i=1,\ldots,l_a$, and
    $n$ such that $0 < nk+a \leq N$.
\end{prop}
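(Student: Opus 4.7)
The plan is to mimic the spectral sequence argument from the proof of Proposition~\ref{P:truncatedrel}, replacing $S^*\hat{\mfp}^*$ with $S^*\hat{\mfn}^*$ and $\mfg_0$ with $\mfh_0$ throughout. I will form the bicomplex
\[
K^{*,*} = \left(\bigwedge^* \hat{\mfn}^* \otimes S^* \hat{\mfn}^*\right)^{\mfh_0}
\]
equipped with the Koszul differential $\deltabar$ for $\hat{\mfn}$-cohomology with coefficients in $S^*\hat{\mfn}^*$ and the operator $d_R(P)$, where $P:\hat{\mfn}^*\to\hat{\mfn}^*$ is dual to multiplication by $z^N$ on $\hat{\mfn}$. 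The splitting $\hat{\mfn}=z^N\hat{\mfn}\oplus(\hat{\mfn}/z^N\hat{\mfn})$ produces the required $Q$ with $PQ=\Id$, and the calculation of Lemma~\ref{L:truncresolution} carries over verbatim to give an exact resolution of $(\bigwedge^*(\hat{\mfn}/z^N\hat{\mfn})^*)^{\mfh_0}$, and hence a $z$-grading preserving isomorphism $H^*(\mfb/z^N\mfn,\mfh_0)\iso H^*(\total K^{*,*})$.

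Next I will analyze the row-wise spectral sequence filtered by symmetric degree. Its $E_1$-term $E_1^{p,q}=H^{q-p}_{cts}(\hat{\mfb},\mfh_0;S^p\hat{\mfn}^*)$ is, by Theorem~\ref{T:nilpotentcoh}, the free super-commutative algebra on cocycles $f_i^{nk-a}$ in bidegree $(m_i^{(-a)}+1,m_i^{(-a)}+1)$ and $\omega_i^{nk-a}$ in bidegree $(m_i^{(-a)},m_i^{(-a)}+1)$ for $n\ge1$, $a=0,\ldots,k-1$, $i=1,\ldots,l_{-a}$. The crucial distinction from the parabolic case is that no $R_i$-type generators appear and, moreover, the element $f_i^0=[z^0]\tilde I_i^{-a}$ vanishes identically on $\hat{\mfn}$: for $f\in\hat{\mfn}$ the constant term $f(0)$ lies in $\mfn_0$, and since every element of $\mfn_0$ is $\ad$-nilpotent in $L_0$ and hence in $L$, it belongs to the nilpotent cone on which all positive-degree $L$-invariants such as $I_i^{-a}$ vanish. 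Repeating the calculation of Lemma~\ref{L:differential} without change (the derivation $J$ is unaffected) gives $d_R(P)\omega_i^{nk-a}=Cf_i^{nk-a-N}$ for a nonzero scalar $C$ whenever $nk-a\ge N$, and zero otherwise. In the Iwahori case the boundary value $nk-a=N$ therefore yields $d_R(P)\omega_i^N=Cf_i^0=0$, so $\omega_i^N$ survives as a cocycle — this is precisely what shifts the range from $0<nk+a<N$ in Proposition~\ref{P:truncatedrel} to $0<nk+a\le N$ here.

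The $E_2$-page then reduces to the free super-commutative algebra on $\{\omega_i^{nk-a}:0<nk-a\le N\}$, since every $f_i^m$ with $m\ge1$ is a coboundary of $\omega_i^{m+N}$, every $\omega_i^m$ with $m>N$ is not a cocycle, and the remaining $\omega_i^m$ with $0<m\le N$ survive. These generators all have exterior degree one; a higher differential $d_r$ with $r\ge2$ changes exterior degree by $1-2r\le-3$ and so must vanish on every generator, hence (by the derivation property) on all of $E_r$, so the spectral sequence collapses at $E_2$. To promote this to a ring isomorphism, I will use that each $\omega_i^m$ with $0<m\le N$ has an explicit lift in $K^{*,*}$ (provided by Theorem~\ref{T:nilpotentcoh}) which is $\deltabar$-closed and, as computed above, killed by $d_R(P)$; these lifts generate a free super-commutative subalgebra of total cocycles whose image in $H^*(\total K^{*,*})$ induces the isomorphism on the associated graded algebras, hence is itself an isomorphism. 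Translating bidegrees gives cohomological degree (total $=$ exterior $+\;2\cdot\text{symmetric}$) equal to $2m_i^{(-a)}+1$ and shifted $z$-degree $Nm_i^{(-a)}+(nk-a)$; the relabeling $a\mapsto -a\bmod k$ (valid since $l_a=l_{-a}$ and $m_i^{(a)}=m_i^{(-a)}$) puts this in the form of the proposition. The main technical point throughout is the vanishing of $f_i^0$ in $S^*\hat{\mfn}^*$, which distinguishes the Iwahori case from the parabolic one and is responsible for the appearance of the endpoint $nk+a=N$ among the generators.
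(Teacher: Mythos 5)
Your proposal is correct and follows essentially the same route as the paper, which dispatches Proposition~\ref{P:nilpotentrel} in a short paragraph at the end of Section~\ref{S:truncatedcoh} by observing that the spectral sequence argument for Proposition~\ref{P:truncatedrel} carries over with $S^*\hat{\mfp}^*$ replaced by $S^*\hat{\mfn}^*$ and Theorem~\ref{T:relativecoh} replaced by Theorem~\ref{T:nilpotentcoh}. You have filled in the details the paper leaves implicit, in particular isolating the reason the endpoint $nk+a=N$ joins the generating set: the class $f_i^0$ restricts to zero on $\hat{\mfn}$ (either because $f(0)\in\mfn_0$ lies in the nilpotent cone where positive-degree invariants vanish, or more simply because a nonzero $z$-degree-zero element of $S^{>0}\hat{\mfn}^*$ cannot exist), so $d_R(P)\omega_i^N=0$ rather than a nonzero multiple of $f_i^0$ as in the parahoric case. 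Your degree bookkeeping, the collapse argument via the exterior-degree shift $1-2r$, and the lift of the $E_2$-isomorphism to a ring isomorphism using the explicit $\deltabar$- and $d_R(P)$-closed representatives all agree with the mechanism used for Proposition~\ref{P:truncatedrel}.
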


\bibliographystyle{plain}

\end{document}